\documentclass[a4paper,11pt,leqno]{article}
\usepackage{enumerate}
\usepackage[shortlabels]{enumitem}
\usepackage[utf8]{inputenc}
\usepackage[T1]{fontenc}
\usepackage{amsthm}
\usepackage[font={footnotesize}]{caption}
\usepackage{amssymb}
\usepackage{mathrsfs}
\usepackage{amsmath}
\usepackage{amsfonts}
\usepackage{mathtools}
\usepackage{graphicx}
\usepackage{float}
\usepackage{dsfont}
\usepackage[a4paper]{geometry}
\usepackage{url}
\usepackage{csquotes}
\usepackage{IEEEtrantools}
\usepackage{appendix}

\usepackage{hyperref}
\hypersetup{linktocpage,
  colorlinks   = true, 
  urlcolor     = blue,  
  linkcolor    = blue,
  citecolor   = blue 
}

\usepackage{tikz-cd}
\usepackage{multicol}
\usepackage{parcolumns}
\usepackage{stackrel}
\usepackage{comment}
\usepackage{constants}
\usepackage{centernot}

\usepackage{mathabx} %

\newtheorem{The}{Theorem}[section]

\newtheorem{Lemme}[The]{Lemma}
\newtheorem{Prop}[The]{Proposition}
\newtheorem{Cor}[The]{Corollary}
\theoremstyle{definition}

\theoremstyle{remark}
\newtheorem{Rk}[The]{Remark}

\usepackage{titlesec}
\titleformat{\subsection}[runin]{\normalfont\bfseries}{\thesubsection.}{.5em}{}[.]\titlespacing{\subsection}{0pt}{2ex plus .1ex minus .2ex}{.8em}
\titleformat{\subsubsection}[runin]{\normalfont\bfseries}{\thesubsubsection.}{.5em}{}[.]
\titlespacing{\subsubsection}{0pt}{2ex plus .1ex minus .2ex}{.8em}

\title{\normalsize 
\textbf{
FRACTAL DIMENSION %
OF %
CRITICAL GAUSSIAN FREE FIELD CLUSTERS}}
\author{}
\date{}
\makeatletter
\newcommand{\E}{\mathbb{E}}

\newcommand{\Q}{\tilde{\mathbb{P}}}
\newcommand{\R}{\mathbb{R}}
\newcommand{\Z}{\mathbb{Z}}
\newcommand{\N}{\mathbb{N}}

\newcommand{\Mac}{\mathsf{Mac}}
\newcommand{\Mact}{\widetilde{\mathsf{Mac}}{}} 
\newcommand{\Dia}{\mathrm{D}}     %
\newcommand{\Capa}{\mathrm{C}}    %
\newcommand{\Vol}{\mathrm{V}}     %
\newcommand{\Xt}{\mathrm{X}}      %
\newcommand{\Yt}{\mathrm{Y}}
\newcommand{\Zt}{\mathrm{Z}}

\newcommand{\G}{\mathcal{G}}
\newcommand{\K}{\mathcal{K}}
\newcommand{\A}{\mathcal{A}}

\renewcommand{\P}{\mathbb{P}}
\newcommand{\eps}{\varepsilon}

\newcommand{\I}{{\cal I}}

\renewcommand{\phi}{\varphi}
\renewcommand{\tilde}{\widetilde}
\renewcommand{\hat}{\widehat}
\renewcommand{\epsilon}{\varepsilon}

\@addtoreset{equation}{section}

\newcommand{\tend}[2]{\displaystyle\mathop{\longrightarrow}_{#1\rightarrow#2}}
\newconstantfamily{c}{symbol=c}

\newcommand{\PK}{\P^K}
\newcommand{\PPK}{P^K}

\newcommand{\OP}{\overline{\P}}

\newcommand{\PKK}[1]{\P^{#1}}
\newcommand{\PPKK}[1]{P^{#1}}
\newcommand{\EKK}[1]{\E^{#1}}

\newcommand{\LV}{M}

\newcommand{\rr}{\bar{r}}
\newcommand{\RR}{\bar{R}}
\newcommand{\dd}{\overline{\delta}}

\makeatother

\usepackage{color}
\definecolor{Red}{rgb}{1,0,0} %
\definecolor{Blue}{rgb}{0,0,1}
\definecolor{Olive}{rgb}{0.41,0.55,0.13}
\definecolor{Yarok}{rgb}{0,0.5,0}
\definecolor{Green}{rgb}{0,1,0}
\definecolor{MGreen}{rgb}{0,0.8,0}
\definecolor{DGreen}{rgb}{0,0.55,0}
\definecolor{Yellow}{rgb}{1,1,0}
\definecolor{Cyan}{rgb}{0,1,1}
\definecolor{Magenta}{rgb}{1,0,1}
\definecolor{Orange}{rgb}{1,.5,0}
\definecolor{Violet}{rgb}{.5,0,.5}
\definecolor{Purple}{rgb}{.75,0,.25}
\definecolor{Brown}{rgb}{.75,.5,.25}
\definecolor{Grey}{rgb}{.7,.7,.7}
\definecolor{Black}{rgb}{0,0,0}

\definecolor{MergeOrange}{rgb}{0.85,0.4,0}

\usepackage{titletoc}

\newconstantfamily{index_prob_cont_time}{
	symbol=\mathcal{T},
}

\newconstantfamily{index_prob}{
	symbol=\mathcal{N},
}
\newconstantfamily{constant}{
	symbol=\mathcal{C},
}
\newconstantfamily{small}{
	symbol=c,
}

\begin{document}
\maketitle
\thispagestyle{empty}
\vspace{0.01cm}
\begin{center}
\vspace{-1.9cm}

Alexander Drewitz\footnote{Universit\"at zu K\"oln, Department Mathematik/Informatik, Weyertal 86--90, 50931 Köln, Germany, \url{adrewitz@uni-koeln.de}}, Alexis Pr\'evost\footnote{University of Bonn, Institute for Applied Mathematics,
Endenicher Allee 60,
53115 Bonn,
Germany, \url{prevost@iam.uni-bonn.de}} and Pierre-Fran\c cois Rodriguez\footnote{University of Cambridge, Department of Pure Mathematics and Mathematical Statistics, Wilberforce Road, Cambridge CB3 0WB, United Kingdom, \url{pfr26@cam.ac.uk}} 
\end{center}
\begin{abstract}
We study metric graph sign clusters of the Gaussian free field on a large class of transient weighted graphs $\G$ that are sufficiently \emph{low}-dimensional, i.e.~below the mean-field regime, and prove a \emph{macroscopic equivalence property}: several natural ways to define macroscopic clusters -- involving any of diameter, volume, or capacity functionals -- are in fact the same. As a corollary, macroscopic clusters are \emph{characterized} in low dimensions as those containing large loops in the corresponding loop soup picture, in contrast with the typical behavior in high dimensions. We then apply these results to the small-mesh scaling limit of critical clusters on $\varepsilon\Z^d$, $d=3,4,5$, as $\varepsilon \downarrow 0$. We  prove that every subsequential scaling limit consists of sets with positive Brownian capacity whose Hausdorff and Minkowski dimensions are both equal to $1+\tfrac d2$, as conjectured by Werner. We also derive two-point crossing estimates in the limit, and show that each limiting loop soup cluster is the closure of the union of the scaling limits of the macroscopic loops it contains. Remarkably, unlike with planar Bernoulli percolation for instance, the determination of the Hausdorff dimension does not emerge from direct calculations involving a limiting object (such as $\text{SLE}_6$), but rather from the distinctive features of the discrete model itself.
\end{abstract}

\vspace{-0.8cm}

{\setcounter{tocdepth}{2} %
\tableofcontents}

\section{Main results}\label{sec:mainresults}

This article concerns Brownian loop soup clusters at intensity $1/2$, or, equivalently, sign clusters of the Gaussian free field on metric graphs in low transient dimensions, in a sense to be made precise below. We refer to \cite{DrePreRod8,DrePreRod9,cai2024onearm,cai2025heterochromatic,cai2025separation,cai2025gapdimensions,jego2023crossing,rodriguez2025cable,werner2025switching,dremaux2025singularity, jego2026threedim}, and references below, for a sample of very recent related results.
The main result of this article is Theorem~\ref{the:equivmacro} below, which exhibits several characterizations of macroscopic clusters in a box, in terms of either their diameter, (renormalized) volume, or capacity. Theorem~\ref{the:equivmacro} has numerous consequences. It implies for instance that macroscopic clusters contain a macroscopic loop (in the corresponding loop soup picture), see Corollary~\ref{cor:macrocontainsbigloops}. This is in stark contrast with the high-dimensional case, where macroscopic clusters are known to proliferate, but the number of macroscopic loops remains of unit order. In Theorem~\ref{the:Hausdorff} (see Section~\ref{subsec:scalinglimitintro}), we use Theorem~\ref{the:equivmacro} on $\Z^d$, $d\in\{3,4,5\}$, to prove tightness of the families of macroscopic clusters in the scaling limit and to determine their fractal (Hausdorff) dimension, and derive various other properties of these limiting clusters. 

\medskip

 We start by specifying the setup for this article. Our results hold on a general class of graphs satisfying suitable volume growth and Green function decay assumptions, that we introduce shortly. These graphs need to be sufficiently low-dimensional, as captured by the condition $\sup q<\infty$, see \eqref{eq:qPsiBd}, which concerns the decay of the one-arm probability; in particular, this condition is satisfied throughout the entire regime below the upper critical dimension $d=6$ on $\Z^d$. Without a low-dimension condition, the results we prove do not hold; see Remark~\ref{R:main-thm}. 
 
 Our choice of setup is not just a matter of generality. Although the example of $\Z^d$, for $d\in \{3,4,5\}$, to which our results apply, is of primary interest, let us stress that the consequences alluded to above -- even in this specific example -- rely on versions of Theorem~\ref{the:equivmacro} that immediately depart from the framework of $\Z^d$, since they are typically applied  on ``perforated'' versions of the (metric) graph $\Z^d$, due to exploration arguments/use of so-called obstacles, etc, which are potentially ``rough'' sets that effectively act as a Dirichlet boundary condition. This warrants a robust approach. Notwithstanding, the reader may choose to focus on the case of $G=\Z^d$, $d\in \{3,4,5\}$, with uniform weights between nearest neighbors, in what follows.

We consider a countable weighted graph $\G = (G, \lambda)$. That is, $G$ is a countable (vertex) set and $\lambda_{x,y}=\lambda_{y,x} \geq 0$ are non-negative symmetric weights. Two points $x,y \in G$ are neighbors, denoted by $x\sim y$, if and only if $\lambda_{x,y}>0$; in this case, we refer to $\{x,y\}$ as an edge. 
We define $\lambda_x = \sum_{y \sim x} \lambda_{x,y}$ for $x \in G$ and think of $\lambda_{\cdot}$ as a positive measure on $G$. Thus we write $\lambda(A)=\sum_{x\in A}\lambda_x$ for $A\subset G$.

We will \emph{always} work under three assumptions on $\G$, traditionally \cite{MR1853353,MR3616731} called  \eqref{eq:ellipticity}, \eqref{eq:intro_sizeball} and \eqref{eq:intro_Green}. The first of these, also referred to as ``controlled weights,'' is a standard ellipticity assumption on the weight function $\lambda$. It requires that for some constant $c_0 \in (0,\infty)$,
\begin{align}
&\label{eq:ellipticity} \tag{$p_0$}
\ \, \lambda_{x,y}/\lambda_x\geq c_0 \text{ for all $x,y \in G$ such that $\lambda_{x,y}>0$}. 
\end{align}
 The remaining two assumptions on $\G$ depend on the choice of a metric $d(\cdot,\cdot)$ defined on $G$. For simplicity, we will assume that $d$ is the graph distance on $\G$, i.e.~$d(x,y)=1$ if and only if $\lambda_{x,y}>0$ (extended to a geodesic distance on $G$). We write $B(x,r)=\{ y \in G : d(x,y) \leq r\}$, $x \in G$, $r>0$, for the discrete balls in the distance $d$. We further arbitrarily fix some vertex $0$ as the origin of the graph, and abbreviate $B(r)=B(0,r)$. Note that we use $0$ only to simplify the exposition, and our results will not depend on the choice of this point. With a slight abuse of notation, in the special case of $\Z^d$ (see e.g.~Sections~\ref{subsec:scalinglimitintro} and~\ref{sec:proofhausdorffdim}), we depart from this convention and use $d$ to denote the $\ell^{\infty}$-distance, which is a matter of convenience.

 There are interesting examples which may require different choices of $d$, see \cite[Section 3]{DrePreRod2} for an extensive discussion. In fact all our results hold under weaker assumptions than $d$ being the graph distance, see Remarks~\ref{rk:ballsconnected} and~\ref{rk:generaldistance},\ref{rk:generaldistance1}. In particular, as detailed in  Remark~\ref{rk:generaldistance},\ref{rk:generaldistance2}, they hold for all examples  from \cite[(1.4)]{DrePreRod2}, such as the product of the Sierpinski gasket with $\Z$, or finitely generated Cayley graphs with polynomial growth.

Equipped with the (graph) distance $d$, we require the weighted graph $\G$ to be $\alpha$-Ahlfors regular. That is,~there exist $\alpha>0$ and $\Cl[c]{c:alpha-lb}, \Cl{c:alpha-ub} \in (0 ,\infty)$ such that the volume growth condition
\begin{equation}
\label{eq:intro_sizeball} \tag{$V_{\alpha}$}
\Cr{c:alpha-lb}r^{\alpha}\leq \lambda(B(x,r))\leq \Cr{c:alpha-ub}r^{\alpha}\quad \text{ for all }x\in{G}\text{ and }r\geq1,
\end{equation}
is satisfied. In particular, \eqref{eq:intro_sizeball} entails that 
the sets $B(x,r)$ have finite cardinality for all $x \in G$ and $r>0$, and that the degree of all vertices in $G$ is bounded uniformly; see \cite[Lemma 2.3]{MR3616731}.

The third condition involves the \emph{random walk} on $\G$, which is the continuous-time Markov chain $X$ on $G$ with generator
$Lf(x) = \sum_{y \in G} \lambda_{x,y} (f(y) - f(x)),$
for $f: G \to \R$. We write $P_x$ for its canonical law when started in $x \in G.$ We require that there exist constants $\Cl[c]{c:green-lb}, \Cl{c:green-ub} \in (0,\infty)$ and an exponent $\nu \in (0,\infty)$ such that the Green function $g$ on $G$ defined as 
\begin{align} \label{eq:green}
g(x,y) = \int_{0}^\infty P_x(X_t =y)\, {\rm  d}t, \quad x,y \in G,
\end{align}
 satisfies
\begin{equation}
\label{eq:intro_Green}\tag{$G_{\nu}$}
\begin{split}
&  \Cr{c:green-lb} (d(x,y) \vee1)^{-\nu}\leq g(x,y)\leq  \Cr{c:green-ub} (d(x,y) \vee1)^{-\nu}, \quad \text{ for all }x, y\in{G}.
\end{split}
\end{equation}
In particular \eqref{eq:intro_Green} implies that $X$ is transient. Together, the conditions \eqref{eq:ellipticity}, \eqref{eq:intro_sizeball} and \eqref{eq:intro_Green} imply that $0<\nu\leq \alpha-2$, see~\cite{MR2076770}, and, as shown in \cite[(2.10)]{DrePreRod2}, they also imply that
\begin{equation}
\label{eq:lambdabounded}
\begin{split} 
c\leq \lambda_{x,y}\leq \lambda_x\leq C\text{ for all }x\sim y\in{G}.
\end{split}
\end{equation}
 In particular, combining \eqref{eq:intro_sizeball} and \eqref{eq:lambdabounded}, the cardinality of a ball of size $r$ is of order $r^{\alpha}$. 
 
 The conditions \eqref{eq:ellipticity}, \eqref{eq:intro_sizeball} and \eqref{eq:intro_Green} are for instance  clearly true for $G=\Z^d$, $d \geq 3$, with $d=\alpha =\nu+2$ and equal weights between $x$ and $y$ at unit graph distance such that $\lambda_{\cdot,\cdot}=1/2$ (say). When referring to $\Z^d$ in the sequel we will for definiteness implicitly assume these choices of normalization, which are inconsequential.

The Gaussian free field on ${\G}$ is the centered Gaussian field $(\varphi_x)_{x\in {G}}$, with covariance $g(\cdot,\cdot)$. We denote its canonical law by \(\P\) and the corresponding expectation by \(\E\), under which we also define an independent family $U_e$, $e$ edge of $\G$, of i.i.d.\ uniform random variables in $[0,1]$. Let $a \in \mathbb{R}$ be a (height) parameter. 
Each edge $e=\{x,y\}$ is declared open (at height $a$) if and only if 
\begin{equation}
\label{eq:edge-perco}
U_e \leq 1-\exp\big(-
 2\lambda_{x,y}(\varphi_x-a)_+(\varphi_y-a)_+ \big), 
\end{equation}
where $t_+ = t\vee0$ denotes the positive part of $t \in \R$. Hence, an edge being open requires that the field at both its endpoints exceeds value $a$, and in addition, the uniform variable attached to this edge cannot be too large, in a  manner depending on the field at the endpoints. 

Let $E^{\geq a}$ denote the random subset of open edges of $\G$ at height $a$ thus obtained. 
It is clear from \eqref{eq:edge-perco} that  $E^{\geq a}$ is decreasing in $a$. The set $E^{\geq a}$ 
induces (maximal) connected components of edges, called \textit{clusters} (of $E^{\geq a}$): two vertices belong to the same cluster if there is a path of edges in $E^{\geq a}$ linking them (two successive edges in a path must share a common vertex). One could also equivalently (for the purposes we have in mind) define $E^{\geq a}$ as the level sets associated to the metric graph Gaussian free field, and we refer to the discussion around \eqref{eq:newnotation} for details. All our results hold for either choice.

\bigskip

The assumptions  \eqref{eq:ellipticity}, \eqref{eq:intro_sizeball} and \eqref{eq:intro_Green} will always be in force in the sequel. As mentioned, an additional condition is needed to ensure the ``low-dimensionality'' of $\G$. This is conveniently encapsulated in terms of the function $q: [1,\infty)\to  (0,\infty]$, defined as
\begin{equation}
    \label{eq:qPsiBd}
q(R)\stackrel{\text{def.}}{=}\sup_{x \in G} R^{\nu/2} \P \big( x \leftrightarrow \partial {B}(x,R) \big);
\end{equation}
here, with hopefully obvious notation, $\partial B$ denotes the interior vertex boundary of $B\subset {G}$, and we denote by $\{U {\leftrightarrow} V\}$  for $U,V\subset G$, the event that $U$ and $V$ are connected by a path in $E^{\geq 0}$.

 It is known that $q$ is bounded away from zero for all $\nu > 0$, cf.\ \cite[(1.22)–(1.23)]{DrePreRod5}, and our condition on ``low dimension'' is that $\sup q < \infty$, where here and in the sequel, we abbreviate $\sup q \equiv \sup_{R \geq 1} q(R)$ with $q(\cdot)$ as in \eqref{eq:qPsiBd}.
 As a result of the series of works \cite{DrePreRod8,DrePreRod9,cai2024onearm} one now knows that $\sup q< \infty$ when $\alpha > 2\nu$ and on $\Z^d$, $d=3,4,5$. On the other hand $q(R)\asymp R^{(d-6)/2}$ on $\Z^d$ for $d\geq 7$ by \cite{werner2020clusters,cai2023onearm}, yielding a decay for the connection probability appearing in \eqref{eq:qPsiBd} (with $\nu=d-2$) independent of $d$, indicative of mean-field behaviour. The assumption $\sup q <
\infty$ is crucial for our main results to hold, but various intermediate results do not require it; hence it will be mentioned explicitly whenever it forms part of the assumptions.
 
\subsection{Equivalence of macroscopic clusters}
\label{subsec:equivalenceintro}
Our main results assert that for graphs satisfying \eqref{eq:ellipticity}, \eqref{eq:intro_sizeball} and \eqref{eq:intro_Green}, and for which $\sup q <
\infty$, various ways of measuring macroscopic sign clusters of $\varphi$ are equivalent, and that the number of these clusters is tight. We will mainly use three different notions of size, involving diameter ($\Dia$), capacity ($\Capa$) and (renormalized) volume ($\Vol$), respectively; cf.~\eqref{eq:defMRs} below. A fourth one will involve clusters containing large loops; cf.~\eqref{eq:defMRSL}. We now introduce these notions of size.  The first one is straightforwardly expressed in terms of the diameter of $K\subset G$, defined as
 \begin{equation}\label{eq:diam}
 \delta(K)\stackrel{\text{def.}}{=}\sup_{x,y\in{K}}d(x,y).
 \end{equation}
 with the convention $\delta(\emptyset)=0$. The second notion of size involves the capacity $\text{cap}(\cdot)$,  which we briefly introduce. For a finite set $K\subset G$, we write $e_K(x)$ for the equilibrium measure of $K$, that is $\lambda_x$ times the probability that the random walk on $\G$ starting in $x$ never returns to $K$ if $x\in{K}$, and zero otherwise. The capacity $\mathrm{cap}(K)$ of $K$ is the total equilibrium measure:
\begin{equation}
\label{eq:capintro}
\mathrm{cap}(K) \stackrel{\text{def.}}{=}\sum_{x\in{K}}e_K(x).
\end{equation}
The third relevant notion of size is the (renormalized) volume.  For $r \geq 0$
and an ``approximate lattice'' $\Lambda_r (\subset G)$ at scale $r$ (to be introduced in \eqref{eq:defLambdaL}), and $K\subset G$, we let
\begin{equation}\label{eq:vol}
|K|_r\stackrel{\text{def.}}{=}\big|\big\{y\in{\Lambda_{r}}:B(y,r)\cap K\neq\emptyset\big\}\big|,
\end{equation}
where $|\cdot|$ denotes the cardinality of a subset of $G$.  It remains to specify $\Lambda_r$. As we now briefly explain, one can find for each $r\geq1$ a set $\Lambda_{r}\subset G$ such that
\begin{equation}
\label{eq:defLambdaL}
\begin{cases} 
\bigcup_{y\in{\Lambda_{r}}}B(y,r)=G,\ B(y,\tfrac{r}{3}) \cap B(y',\tfrac{r}{3})=\emptyset ,\forall  y \neq  y'\in{\Lambda_{r}},\\ \text{and } cN^{\alpha } \leq |\Lambda_{r}\cap B(x,\bar{r}N)|\leq CN^\alpha, \ \forall x\in{G},\,  r, N \geq 1;
\end{cases}
\end{equation}
indeed, as observed in \cite[(2.2)]{DrePreRod8}, a set $\Lambda_r$ satisfying the properties in the first line of \eqref{eq:defLambdaL} and the upper bound on $|\Lambda_{r}\cap B(x,rN)|$ exists under \eqref{eq:ellipticity}, \eqref{eq:intro_sizeball} and \eqref{eq:intro_Green}. Furthermore  $|B(x,r(N-1))|\leq |\bigcup_{y\in{\Lambda_{r}\cap B(x,rN)}}B(y,r)|\leq |\Lambda_{r}\cap B(x,rN)|\sup_{y\in{G}}|B(y,r)|$, and so \eqref{eq:intro_sizeball} and \eqref{eq:lambdabounded} imply the remaining lower bound. For convenience, we  define $\Lambda_{r}=G$ for $r\in{[0,1)}$ 
so that \eqref{eq:defLambdaL} holds for any $\Lambda_{r}$, $r \geq 0$. To improve legibility, we have introduced in \eqref{eq:defLambdaL} the notation 
\begin{equation}
\label{eq:r1}
\begin{split} 
\rr \stackrel{\text{def.}}{=}r\vee1  
\end{split}
\end{equation}
that we will use frequently throughout the article 
(and similarly $\RR = R\vee1$, $\rr' = r' \vee 1$, $\dd(K)=\delta(K)\vee1$ etc.). We now fix once and for all a choice of $(\Lambda_r)_{r \geq 0}$, and in the case of $\Z^d$ we simply let $\Lambda_{r}=  \lceil \rr \rceil \Z^d$, 
which clearly satisfies \eqref{eq:defLambdaL} (recall that we fix $d(\cdot,\cdot)$ to be the sup-distance in this case).

Let us denote by $\mathcal{C}_{x,R}$ the set of connected components of non-isolated vertices linked by  open edges in $E^{\geq 0}$ whose endpoints are both within  
${B}(x,R)$. We introduce three different subsets of $\mathcal{C}_{x,R}$ that extract its macroscopic clusters, in terms of diameter, capacity, or volume on the renormalized lattice $\Lambda_{r}$ (cf.~\eqref{eq:diam}, \eqref{eq:capintro} or \eqref{eq:vol}, resp.), as follows. For each $R>r\geq0$, $x\in{G}$ and $s\in{(0,1]}$, let 
\begin{equation}
\label{eq:defMRs}
\begin{split} 
&\Mac_{x,R}^s(\Dia)=\{K\in{\mathcal{C}_{x,R}}:\, \delta(K)\geq sR\},
\\&\Mac_{x,R}^s(\Capa)=\{K\in{\mathcal{C}_{x,R}}:\, \mathrm{cap}(K)\geq (sR)^{\nu}\},
\\&\Mac_{x,R}^s(\Vol_r)=\{K\in{\mathcal{C}_{x,R}}:\, (sR/\rr)^{\alpha-\frac\nu2}\leq |K|_r\leq (R/(s\rr))^{\alpha-\frac\nu2}\}.
\end{split}
\end{equation}
Observe on the one hand that the scaling of $\delta(K)$ and $\mathrm{cap}(K)$ in \eqref{eq:defMRs} is of maximal order when $s$ is a constant (in the latter case, by virtue of \eqref{eq:capBallBd} below), that is for $s\leq c$ one can add therein w.l.o.g.\ upper bounds by $R/s$ and $(R/s)^{\nu}$ respectively, and the results below all remain true. On the other hand, there is no deterministic bound to prevent the existence of clusters of size larger than $(R/(s\rr))^{\alpha-\frac\nu2}$ when $s$ is a small constant. Such clusters actually only exist with vanishing probability as $s\rightarrow0$, and the largest cluster in ${B}(R)$ has renormalized volume of order $(R/\rr)^{\alpha-\frac\nu2}$, see \eqref{eq:MR0tight} in Proposition~\ref{thm:macroscopiclocalcluster} below, which sheds light on the more intricate scaling in the definition of $\Mac_{x,R}^s(\Vol_r)$. Note that the results from this section would remain true if one only required the lower bound condition on $|K|_r$ in the above definition of $\Mac_{x,R}^s(\Vol_r)$, see Remark~\ref{rk:otherchoicesforE},\ref{rk:Vr-}.

The following result states that any cluster with either large diameter, volume or capacity,  actually also has large diameter, volume and capacity. It further yields a bound on the numbers of macroscopic clusters of any of the three types, which entails their tightness.

The positive constants $c,C, C_i$ etc.~appearing below may implicitly depend on the choice of $\alpha$, $\nu$ and the various constants entering conditions \eqref{eq:ellipticity}, \eqref{eq:intro_sizeball}, \eqref{eq:intro_Green}, as well as the value of $\sup q$; see the end of this introduction regarding our convention with constants.

\begin{The}
\label{the:equivmacro}
 For all graphs  $\G$ satisfying  \eqref{eq:ellipticity}, \eqref{eq:intro_sizeball} and \eqref{eq:intro_Green} and for which $\sup q< \infty$, all $t\in{(0,1]}$,  $s\in{(0,ct^{\Cl{C:condst}})}$, $r\geq0$ and all $\Xt,\Yt\in{{\{\Dia,\Capa,\Vol_r\}}}$, $x \in G$  and $R\geq C\rr/s$,
\begin{equation}
\label{eq:equivmacro}
\begin{split} 
\P\big(\Mac^t_{ x,R}(\Xt)\subset \Mac^s_{ x, R}(\Yt)\big)\geq 1-Cs^c.
\end{split}
\end{equation}
Moreover, for all $\Xt\in{{\{\Dia,\Capa,\Vol_0\}}}$, 
\begin{equation}
\label{eq:volumeMrs}
\begin{split} 
 \inf_{x \in G} \inf_{s\in{(0,1]}}\inf_{R\geq C/s}\P\big(\lfloor\eta s^{-\alpha}\rfloor\leq |\Mac^s_{ x,R}(\Xt)|\leq \eta^{-1}s^{-\alpha}\big)\tend{\eta}{0}1.
\end{split}
\end{equation}
\end{The}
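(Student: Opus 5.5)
The plan is to reduce \eqref{eq:equivmacro} to a cyclic chain of implications, rather than proving all nine pairs $(\Xt,\Yt)$ directly. I would prove, with probability at least $1-Cs^c$:
\begin{enumerate}[(i)]
\item large diameter implies large capacity, i.e.\ $\Mac^t(\Dia)\subset\Mac^{s}(\Capa)$;
\item large capacity implies large renormalized volume, i.e.\ $\Mac^t(\Capa)\subset\Mac^s(\Vol_r)$;
\item large volume implies large diameter, i.e.\ $\Mac^t(\Vol_r)\subset\Mac^s(\Dia)$.
\end{enumerate}
The thresholds are adjusted polynomially at each step, $t\mapsto t^{C}$, which accounts for the condition $s\le ct^{C}$. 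Composing the three inclusions gives every pair. Step (iii) is the easy one. The lower bound $|K|_r\ge (tR/\rr)^{\alpha-\nu/2}$ forces $\delta(K)\ge c\,\rr\,|K|_r^{1/\alpha}$. Since $\alpha-\nu/2<\alpha$, this is not yet of order $sR$, so more is needed. I would combine it with a first-moment bound: the expected number of clusters of diameter at most $sR$ that have renormalized volume at least $(tR/\rr)^{\alpha-\nu/2}$ is small. This uses the two-point function $g$ together with the one-arm bound $\P(y\leftrightarrow\partial B(y,sR))\le \sup q\,(sR)^{-\nu/2}$, summed over $\Lambda_r\cap B(x,R)$. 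The upper volume bound in $\Mac^s(\Vol_r)$ would come from the tightness estimate for the largest cluster, \eqref{eq:MR0tight}.

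Step (ii) is also manageable. A cluster of capacity at least $(tR)^\nu$ has diameter at least $ctR$ by \eqref{eq:capBallBd}. Its volume is then controlled through the capacity–volume comparison $\mathrm{cap}(K)\lesssim$ (volume at scale $r$)$^{\nu/(\alpha-\nu/2)\cdot\ldots}$ for sets that are unions of boxes. This still needs a first-moment argument showing that thin, large-capacity clusters are unlikely.

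The main obstacle is step (i): a cluster with large diameter must have large capacity, equivalently it cannot be a thin path. Here $\sup q<\infty$ is essential (see Remark~\ref{R:main-thm}), so I would argue via exploration and the isomorphism/loop-soup picture.
\begin{itemize}
\item Explore a cluster crossing an annulus at scale $tR$. Conditionally on the explored part, the harmonic extension of the field is governed by the capacity of the explored set, via the Cameron–Martin/martingale description of the explored cluster's field.
\item If the capacity is smaller than $(sR)^\nu$, then the unexplored field at distance $tR$ sees a small boundary value. The probability of crossing the next dyadic annuli then decays like the one-arm bound with an extra factor, using $\sup q<\infty$ iterated over $\log(t/s)$ scales. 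This yields a bound of the form $s^c$.
\item Summing over the at most $Ct^{-\alpha}$ boxes of size $tR$ covering $B(x,R)$ gives the probability bound.
\end{itemize}
I expect the delicate point to be the uniform control of this iteration on perforated graphs. That is why the statement is made for general $\G$, so that it can be applied after the exploration.

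Finally, for \eqref{eq:volumeMrs}, equivalence reduces the problem to $\Xt=\Dia$. For the upper bound, a first-moment computation gives $\E|\Mac^s(\Dia)|\le Cs^{-\alpha}$: cover $B(x,R)$ by $Cs^{-\alpha}$ boxes of size $sR$, each hosting a crossing cluster with probability bounded by the one-arm estimate. The diameter-to-capacity bound from (i) then limits each box to $O(1)$ such clusters. For the lower bound, I would use a second-moment (or independence) argument: place $cs^{-\alpha}$ well-separated boxes and show that each independently contains a crossing cluster with probability bounded below. This lower bound comes from $q$ being bounded away from zero, together with decoupling via the loop soup restricted to disjoint regions.
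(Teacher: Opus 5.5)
\textbf{Assessment.} The proposal has genuine gaps: steps (i) and (ii), which carry all the difficulty, lack a working mechanism, and two structural issues are not addressed.

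\textbf{Steps (i) and (ii).} Your step (iii) is fine. It is essentially Lemma~\ref{lem:bigvolumeimpliesbigradius}, with the upper volume cutoff taken from \eqref{eq:MR0tight}.

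In (i), nothing in your sketch produces the claimed per-scale gain. A small harmonic extension at distance $tR$ from a low-capacity explored set does not make continuation of the cluster from its own frontier, where $\phi>0$, less likely than a one-arm event. Chaining conditional one-arm bounds over $\log(t/s)$ scales also costs a factor $C^{\log(t/s)}$, which swamps $s^c$. To avoid this you would need a uniform $(1-c)$ improvement per scale against a sharp conditional one-arm estimate. That conditional fatness statement is the whole content of the step, and you give no mechanism for it.

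The paper's mechanism is obstacle avoidance:
\begin{itemize}
\item A cluster of small capacity (resp.\ small volume) must avoid the set $\mathcal{O}$ of all \emph{other} sign clusters of large capacity (resp.\ large capacity and volume).
\item By Proposition~\ref{thm:macroscopiclocalcluster} and independence of the loop soup in disjoint mesoscopic boxes, $\mathcal{O}$ is a good obstacle with probability $1-e^{-c/s}$ (Lemma~\ref{lem:fpploops}).
\item Lemma~\ref{lem:avoidmacroloops} (FKG plus Poisson independence) converts avoidance of $\mathcal{O}$ into connection for the field killed on $\mathcal{O}$.
\item Proposition~\ref{pro:boxtobox} bounds the latter by one-arm factors times $P_z(H_{U'}<H_{\mathcal{O}})$, which is $e^{-c/s}$ times a free hitting probability (Lemma~\ref{lem:goodobstacle}).
\end{itemize}

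Step (ii) is not ``manageable'' as you describe. Covering only gives $|K|_r\ge c(tR/\rr)^{\nu}$. This is far below $(sR/\rr)^{\alpha-\nu/2}$, since $\nu<\alpha-\nu/2$ (on $\Z^d$ this means $d<6$). A first-moment count of thin clusters fails as well. Points of $\Lambda_r\cap B(x,R)$ lying on clusters of diameter $\ge ctR$ number about $(R/\rr)^{\alpha-\nu/2}t^{-\nu/2}$ in expectation. Dividing by the covering bound leaves $(R/\rr)^{\alpha-3\nu/2}t^{-3\nu/2}\to\infty$. ``Large diameter $\Rightarrow$ large volume'' is as hard as (i). The paper obtains it from the same obstacle argument, with $\mathcal{O}_2$ consisting of clusters of large capacity \emph{and} volume (Corollary~\ref{cor:supqimpliessupp}).

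\textbf{Components inside the ball, and the counting estimate.} The elements of $\mathcal{C}_{x,R}$ are components of $E^{\ge0}\cap\tilde B(x,R)$, not full clusters. This causes two problems your plan does not address:
\begin{itemize}
\item A fat global cluster can leave thin components inside the ball. The paper handles this in Section~\ref{sec:clusterinside} (bad loops, Lemma~\ref{lem:nomacroloop}), which upgrades the estimate from $\K^0(x,r')$ to $\K^0_B(x,r')$.
\item A macroscopic component can sit in a layer of width $o(R)$ along $\partial B(x,R)$, with no room around it for any local argument. The paper excludes this with Proposition~\ref{pro:noclusterinboundary}.
\end{itemize}

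For \eqref{eq:volumeMrs}, you cannot ``reduce to $\Xt=\Dia$ by equivalence''. The inclusion \eqref{eq:equivmacro} loses polynomially in the threshold ($s\le ct^{C}$), which would destroy the $s^{-\alpha}$ scaling. Nor does a capacity lower bound limit the number of disjoint crossing clusters in a box, since capacity is not additive. The paper instead:
\begin{itemize}
\item for the upper bound, counts through volume (Lemma~\ref{lem:volumeMrsV}), combined with \eqref{eq:caseX=D} at a fixed ratio inside boxes of size $2sR$;
\item for the lower bound, uses more than $q\ge c$. It applies Proposition~\ref{thm:macroscopiclocalcluster} in disjoint boxes, which itself needs the diameter-to-capacity input \eqref{eq:pPos}. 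It adds a second-moment bound on the events that no loop cluster joins $B(y,usR)$ to $\partial B(y,sR/10)$, so that the local loop clusters are not merged into larger ones.
\end{itemize}
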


We start by discussing an important consequence of the above results. Further comments on Theorem~\ref{the:equivmacro} are given in Remark~\ref{R:main-thm} below. One can use Theorem~\ref{the:equivmacro} to better understand the structure of the macroscopic critical clusters from a loop soup point of view.

We say that $\gamma$ is a rooted loop on $\G$ of length $n$ if $\gamma=(\gamma_k)_{0\leq k\leq n}$ is a path in $G$ with $\gamma_0=\gamma_n$.  Let us denote by $\mu$ the measure on rooted loops on $\G$, which gives weight $\tfrac1n\prod_{i=1}^n(\lambda_{\gamma_{i-1},\gamma_i}/\lambda_{\gamma_{i-1}})$ to any rooted loop $\gamma$ of length $n$. Under a probability measure $\tilde{\P}$, denote by $\mathcal{L}$ the associated loop soup with parameter $1/2$, that is the point process of rooted loops on $G$ with intensity $\tfrac12\mu$.  We denote by $(L_x)_{x\in{G}}$ the associated field of local times, that is $L_x$ is a sum of independent exponential variables with mean $\lambda_x^{-1}$, one per visit of $x$ by any of the loops in $\mathcal{L}$, plus an additional independent $\Gamma(1/2,\lambda_x^{-1})$-distributed variable for each $x$. Let us further assume that $\tilde{\P}$ is suitably extended so as to carry independent uniform random variables $U_e$ on $[0,1]$ for each edge $e$ of $\G$.  An edge $e=\{x,y\}$ for the loop soup is declared open if either there is a loop in $\mathcal{L}$ which crosses $e$, or \eqref{eq:edge-perco} for $a=0$ occurs, with $\phi_x$, $\phi_y$ therein replaced by $\sqrt{L_x}$, $\sqrt{L_y}$. For $A\subset G$, a cluster of $\mathcal{L}$ within $A$ is a maximal connected component of at least $2$ points in $A$, all connected by a finite number of open edges with endpoints in $A$. This corresponds to clusters associated to the metric graph loop soup, see the discussion around \eqref{eq:newnotation} for details. Let 
\begin{equation}
\label{eq:deftildeMRS}
\Mact_{x,R}^s(\Xt), \quad \Xt\in{\{\Dia,\Capa,\Vol_r\}},
\end{equation}
denote the loop clusters in  ${B}(x,R)$ with either large diameter, capacity or renormalized volume,  defined similarly as in \eqref{eq:defMRs} but replacing $\mathcal{C}_{x,R}$ by the set $\tilde{\mathcal{C}}_{x,R}$, consisting of the clusters of $\mathcal{L}$ within ${B}(x,R)$. 

The set $\Mact_{x,R}^s(\Xt)$ has the same law as the union of two (dependent) copies of $\Mac_{x,R}^s(\Xt)$, see \eqref{eq:isoloopK}, and inspection of the proofs shows that Theorem~\ref{the:equivmacro} continues to hold with $\Mact$ in place of $\Mac$ everywhere. It then readily follows from Theorem~\ref{the:equivmacro} that, for some $s=s(\varepsilon,t)>0$ and large enough $R$, with probability at least $1-\varepsilon$,
\begin{equation}\label{eq:loopinclusion-easy}
\text{any loop $\gamma$  in ${B}(x,R)$ with $\delta(\text{range}(\gamma)) \geq tR$ is part of a cluster in $\bigcap_{\Xt} \Mact_{x,R}^s(\Xt)$,}
\end{equation}
where the intersection ranges over $\Xt \in \{\Dia,\Capa,\Vol_r\}$, and $\text{range}(\gamma)$ denotes the set of vertices visited by $\gamma$. With slightly more work, essentially to show that loops in $\mathcal{L}$ with large capacity or volume also typically have large diameter, one can deduce variants of \eqref{eq:loopinclusion-easy}, with the condition on the diameter of $\text{range}(\gamma)$ replaced by the requirement that its capacity be at least $ (tR)^{\nu\wedge(\alpha-\nu)}\log(tR)^{-1\{\alpha=2\nu\}}$, or its volume at least $(tR)^{\alpha-\nu}$. These scalings correspond to the typical capacity/volume of the range of $X$ started in the bulk prior to exiting  ${B}(x,R)$, cf.~\cite[(5.8), (5.8')]{DrePreRod5} regarding capacity, and the proof of \cite[Lemma 3.1]{DrePreRod8} as to how to transfer such estimates to statements concerning the intensity measure of $\mathcal{L}$.

Actually, one can deduce that the converse of \eqref{eq:loopinclusion-easy} is also true: as the next statement asserts, any macroscopic cluster contains a macroscopic loop. Indeed, denoting by
\begin{equation}
\label{eq:defMRSL}
\begin{split} 
\Mact_{x,R}^s(\mathcal{L})&\stackrel{\text{def.}}{=}\big\{K\in{\tilde{\mathcal{C}}_{x,R}}:\, \exists\,\gamma\in{\mathcal{L}}:\text{range}(\gamma)\subset K\text{ and }\delta(\text{range}(\gamma))\geq sR\big\},
\end{split}
\end{equation}
we have the following result.

\begin{Cor}
\label{cor:macrocontainsbigloops}
For $\G$ as in Theorem~\ref{the:equivmacro}, all $\eps>0$ and $t\in{(0,1]}$, there exists $s=s(\eps,t)$ such that for all $x\in G$, $r\geq0$, $R\geq C\rr/s$ and $\Xt\in{{\{\Dia,\Capa,\Vol_r\}}}$, 
\begin{equation}
\label{eq:macrocontainsbigloops}
\begin{split} 
\Q\big(\Mact_{x,R}^t(\Xt)\subset \Mact_{x,R}^s(\mathcal{L})\big)\geq 1-\eps.
\end{split}
\end{equation}
\end{Cor}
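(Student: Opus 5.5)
First reduce to $\Xt=\Dia$. By the loop-soup version of Theorem~\ref{the:equivmacro}, which holds with $\Mact$ in place of $\Mac$, we have $\Mact^t_{x,R}(\Xt)\subset\Mact^{t'}_{x,R}(\Dia)$ with probability at least $1-\eps/3$, for some $t'=t'(\eps,t)$. It therefore suffices to show that, with high probability, every loop cluster in $B(x,R)$ of diameter at least $t'R$ contains a loop of diameter at least $sR$.

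The strategy is a counting argument. Fix an intermediate scale $s_1\ll t'$ and split $\mathcal{L}$ (restricted to loops inside $B(x,R)$) into big loops, with range diameter at least $s_1R$, and small loops. Let $\mathcal{L}_{<}$ denote the small-loop soup together with the Gamma/uniform edge mechanism. Its clusters are clusters of a loop soup in which all large loops have been removed. Each cluster $K$ of the full soup is a union of clusters of $\mathcal{L}_<$ glued together by big loops. So if $K$ contains no big loop, then $K$ is itself a cluster of $\mathcal{L}_<$.

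The key claim is then
\[
\Q\big(\exists\ \text{cluster of }\mathcal{L}_{<}\text{ in }B(x,R)\text{ with diameter}\geq t'R\big)\leq \eps/3 \quad\text{for } s_1 \text{ small.}
\]
To prove it, cover $B(x,R)$ by $C(t')^{-\alpha}$ balls $B(y,t'R/4)$ with $y\in\Lambda_{t'R/4}$. A cluster of diameter at least $t'R$ must cross from some $B(y,t'R/4)$ to $\partial B(y,t'R/2)$. It is then enough to bound, for each $y$, the probability that $\mathcal{L}_<$ realizes such a crossing by $o_{s_1}(1)$, uniformly in $y$ and $R$.

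\textbf{Main obstacle.} With the cutoff, one expects the crossing probability to decay like a power of $s_1$. The mechanism is that the small-loop soup behaves like a soup that is critical only up to scale $s_1R$, so that connections at scale $t'R$ become subcritical. I would try to obtain this from the one-arm bound $\sup q<\infty$, via a first-moment/renormalization argument: the crossing must be realized by a chain of at least of order $t'/s_1$ disjoint connections at scale $s_1R$. Alternatively, and more robustly, I would use the capacity characterization from Theorem~\ref{the:equivmacro}. A crossing cluster of $\mathcal{L}_<$ would have capacity of order $(t'R)^{\nu}$. By the isomorphism, conditionally on such a cluster, the total mass of loops hitting it at scale at least $s_1R$ is Poisson with mean comparable to $\mathrm{cap}(K)\,(s_1R)^{-\nu}\gtrsim (t'/s_1)^{\nu}$. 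Hence with high probability some big loop hits it. I would make this rigorous by exploring $K$ using only small loops and applying the restriction property of Poisson processes, since big loops are independent of $\mathcal{L}_<$.

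Finally, big loops that touch $K$ merge into $K$, and a big loop of diameter at least $s_1R$ inside $B(x,R)$ is contained in the cluster. This gives the statement with $s=s_1$, after summing the three error terms, each at most $\eps/3$.
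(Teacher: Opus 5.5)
\textbf{The gap is the ``key claim''.} You assert that, with high probability, $\mathcal{L}_<$ has no cluster of diameter $\geq t'R$. This is stronger than what you need, and neither of your two routes proves it.

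Your first route, chain counting from $\sup q<\infty$, cannot work as stated:
\begin{itemize}
\item A link of your chain is an annulus crossing at scale $s_1R$. Its probability is bounded below by a constant, since loops of diameter just below $s_1R$ already produce it.
\item At aspect ratio $K$, the bound $CK^{-\nu/2}$ from \eqref{eq:crossingannuli} is beaten by the roughly $K^{\alpha-1}$ choices of the next link.
\item More fundamentally, $\sup q<\infty$ and disjoint-occurrence product bounds hold equally for the full soup, which does have macroscopic clusters by \eqref{eq:volumeMrs}. Any such count must therefore use the cutoff in an essential way, and you do not say how.
\end{itemize}

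Your second route, via capacity and hitting by big loops, proves something else. It shows that a macroscopic $\mathcal{L}_<$-cluster of large capacity is hit by some big loop. Being hit does not destroy an $\mathcal{L}_<$-cluster, so this says nothing about whether such clusters exist.

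\textbf{What the second route does give.} Combined with the observation in your last paragraph, it gives the statement actually needed, which is the one the paper proves. A full cluster with no big loop is a macroscopic cluster of the small-loop process that no big loop contained in $\tilde{B}(x,R)$ intersects. The task is to show this is unlikely.

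Reformulated this way, your route is the paper's mechanism: condition on the small loops, then use Poisson independence of the big ones. The difference is how you lower-bound the mass of big loops hitting $K$.
\begin{itemize}
\item You would use $\mu(\cdot)\gtrsim (s_1R)^{-\nu}\sum_y \mathrm{cap}(K\cap B(y,s_1R))\geq (s_1R)^{-\nu}\mathrm{cap}(K)$. This follows from the local estimate behind \eqref{eq:boundEyc} plus subadditivity of capacity. Only this lower bound holds, not comparability, and it comes from the Poisson structure, not from the isomorphism.
\item The paper instead introduces the class $\mathcal{T}$ of sets having capacity at least $(uR)^{\nu}$ in every $sR$-box they meet. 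It shows via \eqref{eq:caseX=D} that macroscopic clusters lie in $\mathcal{T}$, multiplies over $\gtrsim t'/s$ disjoint boxes, and then optimizes $u$.
\end{itemize}
Your global-capacity version would be a legitimate simplification of that step.

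\textbf{Three points still missing from the reformulated argument.}

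First, clusters in $\tilde{\mathcal{C}}_{x,R}$ also use pieces of loops that leave $\tilde{B}(x,R)$. By \eqref{eq:defMRSL}, however, a big loop only counts if its range lies in $K\subset\tilde{B}(x,R)$. So with $\mathcal{L}_<$ restricted to loops inside the ball, your assertion that a cluster without big loops is an $\mathcal{L}_<$-cluster is false. The fix is to put into the small process every loop that is not a big loop with range in $\tilde{B}(x,R)$. This process is still independent of the big ones.

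Second, only big loops inside the ball are then available. You therefore need a lower bound on $\mathrm{cap}(K\cap B(x,(1-\epsilon)R))$, not on $\mathrm{cap}(K)$, because the capacity could sit in a thin boundary layer.
\begin{itemize}
\item This is where Proposition~\ref{pro:noclusterinboundary} enters, as in the paper, together with Theorem~\ref{the:equivmacro} applied in bulk sub-balls.
\item Theorem~\ref{the:equivmacro} concerns clusters of the full soup. First intersect with the event that all full macroscopic clusters have this bulk capacity; only then treat it as an $\mathcal{L}_<$-measurable property of $\mathcal{L}_<$-clusters.
\end{itemize}

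Third, after conditioning you sum $\exp(-c(s'/s_1)^{\nu})$ over all macroscopic clusters of the small process. You therefore need their number controlled uniformly in $R$. This is not inherited pointwise from the full soup, since big loops can merge several small-process clusters into one.
\begin{itemize}
\item It does follow if you additionally require volume at least $(s''R)^{\alpha-\frac\nu2}$. Full macroscopic clusters have this with high probability by Theorem~\ref{the:equivmacro}.
\item You can then count as in Lemma~\ref{lem:volumeMrsV}, which uses only the monotone bound $\P(|\mathcal{K}^0(y)|\geq n)\leq Cn^{-\frac{\nu}{2\alpha-\nu}}$.
\end{itemize}
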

\begin{Rk} \label{R:main-thm}
\begin{enumerate}[1)]
\item The assumptions of Theorem~\ref{the:equivmacro} (and Corollary~\ref{cor:macrocontainsbigloops}) are met on $\Z^d$ when $d \in \{3,4,5\}$. The tightness of the number of macroscopic clusters for $d\leq 5$ expressed by Theorem~\ref{the:equivmacro} is in sharp contrast with the situation for $d\geq7$, where there are at least $R^{d-6}/\log(R)^2$ disjoint macroscopic clusters, see \cite[Proposition~4]{werner2020clusters}.
\item The inclusion \eqref{eq:loopinclusion-easy} is complementary to \eqref{eq:macrocontainsbigloops} and together, they imply that macroscopic clusters under the low-dimensionality condition $\sup q<\infty$ can also be \emph{characterized} by the property that they contain a loop of macroscopic diameter. Similarly as with the discussion below \eqref{eq:loopinclusion-easy}, one could also change the definition of $\Mact_{x,R}^s(\mathcal{L})$ in \eqref{eq:defMRSL} by requiring the loops to have large capacity or volume instead of diameter, and so the conclusions of the previous sentence remain true with these substitutions.
\item (High dimensions). Since for any possible values of $\alpha$ and $\nu$ the number of macroscopic loops in ${B}(R)$ is tight -- which follows for instance from \cite[Lemma~3.1]{DrePreRod8} -- the conclusion of Corollary~\ref{cor:macrocontainsbigloops} does not hold true on $\Z^d$ for $d\geq7$ by \cite[Proposition~4]{werner2020clusters}; that is, there are macroscopic clusters which do not intersect any macroscopic loop, and in fact the vast majority of them don't. In fact, more is true. By recent results \cite{lupu2025intensity,werner2025switching} one knows that the clusters containing a macroscopic loop on $\Z^d$, $d \geq 7$, are (asymptotically) indistinguishable (in law) from clusters where the macroscopic cycle of the cluster is achieved by chaining many small (at most mesoscopic) loops instead. This means not only that, unlike  Corollary~\ref{cor:macrocontainsbigloops}, big loops play (essentially) no role in characterizing macroscopic clusters, but even fail to do so when adding the (atypical in high dimension) constraint that the macroscopic cluster must contain a large cycle. See also \cite{carpenter2025loopscriticalhighdimensionalpercolation} for related results in high-dimensional Bernoulli percolation, implying for instance that the number of clusters having a large cycle is tight. We refer to \cite{cai2023onearm,ganguly2024ant} for other recent results on critical clusters in the high-dimensional regime.
\end{enumerate}
\end{Rk}

\subsection{Application: scaling limits on $\Z^d$}\label{subsec:scalinglimitintro}

In this subsection, we consider
\begin{equation}
\label{eq:lowdim}
G=\Z^d,\,d\in{\{3,4,5\}},
\end{equation}
and we recall that we chose the weights $\lambda_{\cdot,\cdot}=1/2$ on $\Z^d$.  We write $d(x,y)=|x-y|_{\infty}$ for all $x,y\in{\R^d}$.  With a slight abuse of notation, we identify each cluster in $E^{\geq a}$ or $\mathcal{L}$ with the subset of $\R^d (\supset \Z^d)$ obtained by replacing each edge $e=\{x,y\}$ in the cluster by the unit length line segment joining $x$ and $y$. In particular, each cluster thus becomes a connected subset of $\R^d$.

Our interest is in describing properties of the rescaled clusters of $E^{\geq 0}$ in the fine-mesh scaling limit, i.e.~on $\tfrac{1}{n}\Z^d$ as $n \to \infty$ (as the \lq ultraviolet\rq\ cut-off is removed, in the parlance of physicists). We start by introducing a space $\mathcal{S}$ of families of subsets of $[-1,1]^d$, which will be the natural configuration space for the rescaled clusters of interest. 
We will always consider families as unordered, that is two families $(K_i)_{i\in{I}}$ and $(K'_j)_{j\in{J}}$ are equal if and only if there is a bijection $p$ between the index sets $I$ and $J$ such that $K_i=K'_{p(i)}$ for all $i\in{I}$.  An unordered family $\mathscr{E}=(K_i)_{i \in I}$ is in $\mathcal{S}$ if the index set $I$ is countable, $K_i$ is a closed subset of $[-1,1]^d$ with positive diameter for all $i\in{I}$, and if the number of indices $i\in{I}$ such that $K_i$ has diameter larger than $\varepsilon$ is finite for each $\varepsilon >0$. 
Note that these properties do not depend on the specific choice of the representative family $(K_i)_{i \in I}$ for ${\mathscr{E}}$.  For technical reasons, the family $\mathcal{S}$ includes the empty family $\emptyset$ when $I=\emptyset$, containing no set, for which the previous requirements trivially hold. 
  
  On $\mathcal{S}$, we introduce a distance $\mathbf{d}$,  defined rigorously in \eqref{eq:defdistance} below, such that the resulting metric space $(\mathcal{S}, \mathbf{d})$ has desirable topological properties, see Lemma~\ref{L:d-metric}. For the purposes of this introduction, it is sufficient to know that for two non-empty families $\mathscr{E}=(K_i)_{i\in{I}}$ and $\mathscr{E}'=(K'_j)_{j\in{J}}$ in $\mathcal{S}$, one has $\mathbf{d}(\mathscr{E},\mathscr{E}')<\eps$ roughly if and only if one can match each set $K_i$ with diameter larger or equal to $\eps$ to a set $K'_j$ at Hausdorff distance strictly smaller than $\eps/2$, and vice-versa. The factor $1/2$ relative to the bound on the diameter is linked to the factor $2$ appearing in \eqref{eq:d_H-diam}; see Section~\ref{sec:dist} for details. Proximity to the empty family $\emptyset$ entails that all clusters have small diameter. For all $n\in{\N}$, we introduce the notation
\begin{equation}
\label{eq:renormalizedlevelset-intro}
\begin{split} 
\mathscr{E}^{\geq 0}_n\stackrel{\text{def.}}{=}\text{ an enumeration of }\big\{n^{-1}K:\,K\text{ is a cluster of }E^{\geq 0}\cap [-n,n]^d\big\},
\end{split}
\end{equation}
where $E^{\geq 0}\cap [-n,n]^d$ corresponds to the edges in $E^{\geq 0}$ having both endpoints in $[-n,n]^d$. 
In view of our above identification, the components (or clusters) of $\mathscr{E}^{\geq 0}_n$ are viewed as closed, connected subsets of $[-1,1]^d \subset \R^d$. In particular, as $\mathscr{E}^{\geq 0}_n$ comprises finitely many clusters, this entails that $\mathscr{E}^{\geq 0}_n$ is an $\mathcal{S}$-valued random variable (recall that we always consider families, and so enumerations, as unordered).

Our main results concern clusters in $\mathscr{E}^{\geq 0}_n$ and their scaling limits as $n \to \infty$. We refer to Remark~\ref{R:ext-signclusters} below for extensions to other clusters of interest. In what follows, for a closed and connected set $K\subset[-1,1]^d$, we denote by $\mathrm{cap}_{\R^d}(K)$ its Brownian capacity, defined as in \cite[p.58]{zbMATH03642454}, by $\text{dim}_H(K)$ its Hausdorff dimension, by $\text{dim}_M(K)$ its Minkowski dimension, and by $\delta(K)$ its diameter (relative to the sup-norm); see Section~\ref{sec:proofhausdorffdim} for details. Furthermore, we write $\{U {\leftrightarrow} V\text{ in }K\}$  for $U,V,K \subset\R^d$ in the sequel to denote the event that $U$ and $V$ are connected by a (continuous) path in $K$. For $\mathscr{E}=(K_i)_{i\in{I}}\in{\mathcal{S}}$, we also write $\{U {\leftrightarrow} V\text{ in }\mathscr{E}\}$  for the union of the events $\{U {\leftrightarrow} V\text{ in }K_i\}$ over $i\in{I}$. Our first result deals with clusters in \eqref{eq:renormalizedlevelset-intro}.

\begin{The}
\label{the:Hausdorff}
Assume $d\in{\{3,4,5\}}$. There exists a strictly increasing sequence $\sigma:\N\rightarrow\N$ and a probability measure $\P^{\infty}_{\sigma}$ on $\mathcal{S}$ such that, with respect to $\mathbf{d}$, 
\begin{equation}
\label{eq:Hausdorffconvergence}
\begin{split} 
\text{ the law of }\mathscr{E}^{\geq0}_{\sigma(n)}\text{ converges weakly to }\P^{\infty}_{\sigma} \text{ as } n \to \infty.
\end{split}
\end{equation}
Moreover, for any $(\sigma,\P^{\infty}_{\sigma})$ such that \eqref{eq:Hausdorffconvergence} is satisfied, if $\mathscr{E}=(K_i)_{i\in{I}}$ has law $\P^{\infty}_{\sigma}$, then a.s.
\begin{align}
&\label{eq:TIGHT} \big(\eps^{d}(|\{i\in{I}\text{ such that } {\delta}(K_i)\geq \eps\}|\vee1)\big)_{\eps\in{(0,1]}} \text{ is tight in $(0,\infty)$},\\
&\label{eq:DIM} \text{dim}_H(K_i)=\text{dim}_M(K_i)=1+\tfrac{d}{2}, \text{ for all $i\in{I}$},\\
&\label{eq:CAP} \mathrm{cap}_{\R^d}(K_i)>0 \text{ and } K_i \cap (-1,1)^d \neq \emptyset \text{ for all $i\in{I}$,}
\end{align}
and for all $\varepsilon \in (0,1)$, $r,R>0$ and $x,y\in{[-1,1]^d}$ such that $d(x,y)\geq 2(r+R)$ and  $B(x,r),B(y,R)\subset [-1+\varepsilon, 1-\varepsilon]^d$,
\begin{align}
&\label{eq:CROS}
c(\varepsilon)d(x,y)^{2-d}(Rr)^{\frac{d-2}2}\leq \P_{\sigma}^{\infty}\big({B}(x,r)\leftrightarrow {B}(y,R)\text{ in }\mathscr{E}\big)\leq C(\varepsilon)d(x,y)^{2-d}(Rr)^{\frac{d-2}2}.
\end{align}
\end{The}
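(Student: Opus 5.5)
The plan is to derive everything from Theorem~\ref{the:equivmacro} applied on $\Z^d$ and its boxes, together with the known one-arm bound $\sup q<\infty$ and the two-point/crossing estimates on $\Z^d$.

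\textbf{Tightness.} Since $(\mathcal{S},\mathbf{d})$ is Polish with a convenient compactness criterion (Lemma~\ref{L:d-metric}), it suffices to control two things. First, the number of clusters of $\mathscr{E}^{\geq0}_n$ with diameter $\geq\eps$ should be tight; this follows from \eqref{eq:volumeMrs} applied with $\Xt=\Dia$ on a covering of $[-n,n]^d$ by boxes of size $\eps n$. Second, we need some regularity of the sets, but Hausdorff compactness of closed subsets of $[-1,1]^d$ is automatic, so only the counting is needed. Prokhorov then gives the subsequence $\sigma$ and the limit $\P^\infty_\sigma$. The tightness in \eqref{eq:TIGHT}, of $\eps^d$ times the count, again comes from \eqref{eq:volumeMrs} uniformly in $n$. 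The key point is to pass the lower and upper count bounds to the limit through the portmanteau theorem, using that $\{\mathscr{E}: \text{count of diam}\geq\eps\}$ is suitably semicontinuous in $\mathbf{d}$.

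\textbf{Crossing estimates \eqref{eq:CROS}.} For the upper bound I would use the discrete estimate $\P(B(nx,nr)\leftrightarrow B(ny,nR))\leq C d(x,y)^{2-d}(Rr)^{(d-2)/2}$. This comes from the one-arm bound plus the isomorphism/capacity formula (the connection probability between two sets is comparable to capacity ratios). Limits of connections are connections of limit sets up to slight enlargement of radii, and the estimate is continuous in $r,R$. The lower bound is subtler: a discrete crossing must survive in the limit. Here I would use Theorem~\ref{the:equivmacro}, which says that a crossing cluster has macroscopic diameter, so it appears in $\mathscr{E}$ and its limit still crosses, closedness handling the boundary. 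The $\eps$-dependence enters from the boundary of $[-1,1]^d$ (clusters are restricted to the box), via Harnack-type comparisons.

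\textbf{Capacity and interior \eqref{eq:CAP}.} Each limiting $K_i$ of diameter $\geq\eps$ is a Hausdorff limit of discrete clusters in $\Mac^{\eps}(\Dia)$. By \eqref{eq:equivmacro} these lie in $\Mac^{s}(\Capa)$ with probability $1-Cs^c$, so their discrete capacity is at least $(sn)^{d-2}$. Discrete capacity of $nK$ divided by $n^{d-2}$ is comparable to the Brownian capacity of an $O(1/n)$-neighbourhood, and Brownian capacity is upper semicontinuous under Hausdorff convergence of compacts. Hence $\mathrm{cap}_{\R^d}(K_i)\geq c s^{d-2}>0$ with probability $\geq 1-Cs^c$; letting $s\to0$ and taking a union over $\eps$ gives a.s.\ positivity. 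Touching the interior follows because a set contained in $\partial[-1,1]^d$ of large capacity in a face would require, discretely, clusters hugging the boundary, and the Dirichlet boundary makes this unlikely; I would bound this by a first-moment count over boundary boxes.

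\textbf{Dimension \eqref{eq:DIM}, the main obstacle.} The upper bound $\dim_M\leq 1+d/2$ comes from a first moment. The expected number of boxes of side $\rho$ (in unit scale) hit by a cluster of diameter $\geq\eps$ is $\rho^{-d}\cdot\rho^{(d-2)/2}$, using the two-point estimate above. Combined with Borel--Cantelli along $\rho=2^{-k}$ and tightness of the number of macroscopic clusters, this gives $N_\rho(K_i)\leq \rho^{-(1+d/2)-o(1)}$. The lower bound on $\dim_H$ is the hard part. The crucial input is the $\Vol_r$ equivalence in Theorem~\ref{the:equivmacro}, applied \emph{at every scale and location} inside a cluster: for a cluster of diameter $\geq\eps$, any ball of radius $\rho$ centred on it whose sub-cluster reaches distance $\rho/2$ contains renormalized volume $\gtrsim (\rho/r)^{(d+2)/2}$ at mesh $r$. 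One must get this uniformly over all small balls along the cluster. I would apply \eqref{eq:equivmacro} on balls $B(z,\rho n)$ with $\Xt=\Dia$, $\Yt=\Vol_r$ and take a union bound over $z$ in a $\rho$-grid, which is affordable only if the failure probability $Cs^c$ is paid only at the $\approx\rho^{-(1+d/2)}$ grid points met by the cluster. To do so, I would couple exploration of the cluster with the conditional (perforated) graph, where Theorem~\ref{the:equivmacro} still applies thanks to its generality. This yields a Frostman-type mass distribution, namely the limit of normalized counting measure on $\Lambda_r$-boxes, with $\mu(B(z,\rho))\leq C\rho^{1+d/2-\delta}$, and so $\dim_H\geq 1+d/2$. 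The identification then follows from $\dim_H\leq\dim_M$.
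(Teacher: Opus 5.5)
Most of your plan is sound: tightness via the diameter counts, \eqref{eq:TIGHT}, positivity of the capacity, \eqref{eq:CROS}, and the box-counting upper bound. Your first-moment count there is a fine substitute for the paper's use of the upper volume bound in \eqref{eq:equivmacro}. The genuine gap is the lower bound $\text{dim}_H(K_i)\geq 1+\frac d2$, and it has two parts.

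\emph{Wrong direction.} A \emph{lower} bound on the local volume in every ball $B(z,\rho)$ centred on the cluster, combined with the upper bound on the total volume, only bounds from above the number of disjoint $\rho$-balls meeting $K_i$. That again yields an \emph{upper} bound on the dimension. The mass distribution principle for $\mu=|\cdot\cap K|_r/|K|_r$ needs the opposite inequality, $|K\cap B(z,\rho)|_r\lesssim\rho^{-\delta}(\rho/r)^{(d+2)/2}$, for all (or $\mu$-a.e.) $z$ and all small $\rho$.

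\emph{No uniform control.} Even in the right direction, the tails available in the paper do not give this uniformly. First, $nK_i^{(n)}\cap\tilde B(nz,\rho n)$ is in general a union of several clusters of $E^{\geq0}\cap\tilde B(nz,\rho n)$, not a single element of some $\Mac^s_{nz,\rho n}(\Dia)$, so \eqref{eq:equivmacro} does not bound it directly. Second, the failure probabilities are only polynomial: $Cs^c$ with an unspecified small $c$ in \eqref{eq:equivmacro}, or $Ct^{-1}$ from Markov's inequality in \eqref{eq:MR0tight}. A union bound over the $\approx\rho^{-(1+d/2)}$ balls met by the cluster at each dyadic scale forces $s$ to be a power of $\rho$. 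That costs an uncontrolled power of $\rho$ in the exponent. Exploring the cluster and working on the perforated graph does not change this count.

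\emph{What the paper does instead.} It avoids any uniform-in-balls control by a second-moment (energy) argument.
\begin{itemize}
\item It takes $\nu^{(n)}_{i,\eta}=\eta^{-\nu/2}\lambda|_{V_i^{(n)}}$ with $V_i^{(n)}=B(K_i^{(n)},\eta)\cap B(x_i^{(n)},ct)$. The $\Vol_{\eta n}$ part of \eqref{eq:equivmacro} is used only to control its total mass.
\item The expected energy $\E[I_\eps(\nu^{(n)}_{i,\eta})]$ is a double sum over pairs of $\eta n$-boxes of the probability that both connect to distance $tn/4$.
\item That sum is controlled by the two-point bound of Lemma~\ref{lem:lowMink}, $\P\big(B(x,r),B(y,r')\leftrightarrow\partial B(x,R)\big)\leq C\big(\rr\rr'/(R\,d(x,y))\big)^{\nu/2}$, and by Proposition~\ref{cor:crossingboxseccrit} for the diagonal terms.
\item A weak limit then gives a positive finite measure on $K_i$ with finite $\eps$-energy, and Frostman's criterion concludes.
\end{itemize}
This two-point estimate is the missing ingredient. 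It does not follow from Theorem~\ref{the:equivmacro} or from one-arm bounds; the paper proves it separately in Appendix~\ref{App:lowMink}.

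\emph{Smaller issue: $K_i\cap(-1,1)^d\neq\emptyset$.} There is no Dirichlet boundary in $\mathscr{E}^{\geq0}_n$: the field is the infinite-volume one, restricted to $\tilde B(n)$. A first-moment count over $sn$-boxes in the boundary layer with one-arm bounds gives $s^{-(d-1)}(s/t)^{(d-2)/2}\to\infty$ as $s\to0$. You need Proposition~\ref{pro:noclusterinboundary}, in its renormalized form \eqref{eq:noclusterinboundaryH}. It uses the constraint that the cluster stays in the layer, through Lemma~\ref{lem:boundmukilled} on loops avoiding $B((1-s)R)$ and the obstacle arguments.
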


In particular, notice that \eqref{eq:TIGHT} implies that $\P^{\infty}_{\sigma}$ is non-trivial, in the sense that~
if $\mathscr{E}$ has law $\P^{\infty}_{\sigma}$, then $\mathscr{E}\neq\emptyset$-a.s. It is also natural to ask what the crossing formula \eqref{eq:CROS} becomes when $x=y$, which  corresponds to the annulus crossing probability. This can in fact be deduced directly from \eqref{eq:CROS}, but the formulas we obtain are then slightly different from the one in \eqref{eq:CROS}, and we refer to \eqref{eq:crossingseccrit} and Remark~\ref{rk:othercrossingbounds} for more details. 

During the preparation of this article, it  was announced in \cite{cai2026scalinglimitcriticalloop} that the family $\hat{\mathscr{E}}_n^{\geq 0}$ of sets $K\in{\mathscr{E}_{n}^{\geq 0}}$ such that $nK$ is a cluster of $E^{\geq 0}$ has a unique scaling limit along dyadic scales (note that $\mathscr{E}_n^{\geq 0}$ additionally contains renormalized clusters of $E^{\geq 0}\cap [-n,n]^d$ which are part of a cluster of $E^{\geq 0}$ intersecting $([-n,n]^d)^{\mathsf{c}}$). More precisely, for Lebesgue almost all $\eps>0$ \cite{cai2026scalinglimitcriticalloop} asserts convergence in law for the distance $\mathbf{d}$ (see Remark~\ref{rk:choiceofd}) for the subfamily of $\hat{\mathscr{E}}_n^{\geq 0}$ consisting of all its clusters with diameter larger than $\eps$, which, combined with tightness (see \eqref{eq:tightness-pf1}), trivially implies convergence in law for the full family $\hat{\mathscr{E}}_n^{\geq 0}$, see \eqref{eq:defdistance}. In particular, \eqref{eq:TIGHT}--\eqref{eq:CROS} are also in force for this scaling limit.

\bigskip

We now discuss some extensions of Theorem~\ref{the:Hausdorff}. One can replace the clusters defining $\mathscr{E}^{\geq 0}_n$ in \eqref{eq:renormalizedlevelset-intro} by the excursion sets above height $0$ of the (continuous) Gaussian free field on the metric graph corresponding to $n^{-1}\Z^d$ (see Section~\ref{subsec:metric} for relevant definitions), and the conclusions of Theorem~\ref{the:Hausdorff} still hold. This is because the two sets of clusters in question are at $\mathbf{d}$-distance $o(1)$ as $n \to \infty$, cf.~the discussion around \eqref{eq:new-clusters-ok}. Other natural modifications are to consider the Gaussian free field killed on $B(n)^{\mathsf{c}}$ in the definition of $\mathscr{E}_{n}^{\geq 0}$, or to replace $\mathscr{E}_{n}^{\geq 0}$ by all the connected components of $\frac1nE^{\geq 0}\subset\R^d$. Up to adapting the distance $\mathbf{d}$, most of the results from Theorem~\ref{the:Hausdorff} are still in force for these modifications, and we refer to Remark~\ref{rk:otherchoicesforE} for a related discussion.

As detailed in Remark~\ref{R:ext-signclusters} below, Theorem~\ref{the:Hausdorff} also applies to the closure of the rescaled sign clusters of the metric graph Gaussian free field (of \emph{any} sign). By means of Lupu's extension of Le Jan's isomorphism theorem \cite{zbMATH05757657,MR3502602}, the latter can also be identified with the (closures of) the clusters of loops stemming from the metric-graph loop soup on $\Z^d$ at intensity $1/2$. Theorem~\ref{the:Hausdorff} therefore equally applies to them. Actually, one can even deduce from Corollary~\ref{cor:macrocontainsbigloops} that in the scaling limit, each of these macroscopic clusters of loop soup entirely consists of macroscopic loops. In other words, the microscopic loops are used to connect together macroscopic loops, but do not create new parts in the cluster away from these macroscopic loops.

We now make this statement precise. Let us define the renormalized family of loop clusters within $[-n,n]^d$ as
\begin{equation}
\label{eq:deftildeEnC}
{\mathscr{E}}_n\stackrel{\text{def.}}{=}\text{ an enumeration of }\big\{n^{-1}{K}:\,K\text{ is a cluster of }\mathcal{L}\text{ within }[-n,n]^d\big\},
\end{equation} 
and the renormalized family of loops within $[-n,n]^d$ as
\begin{equation}
\label{eq:deftildeEL}
{\mathscr{L}}_n\stackrel{\text{def.}}{=}\text{ an enumeration of }\big\{n^{-1}\text{range}(\gamma):\,\gamma\text{ is a loop in }\mathcal{L}\text{ with }\text{range}(\gamma)\subset[-n,n]^d\big\}.
\end{equation}
By definition, see above \eqref{eq:renormalizedlevelset-intro}, both ${\mathscr{E}}_n$ and ${\mathscr{L}}_n$ a.s.\ belong to $\mathcal{S}$. Note however that contrary to the families  appearing in \eqref{eq:renormalizedlevelset-intro} and \eqref{eq:deftildeEnC}, the sets (i.e.\ traces of loops) in the family ${\mathscr{L}}_n$ can intersect each other. 
 We also write $\mathbf{d}_2$ for the $\ell^{\infty}$-product metric on $(\mathcal{S},\mathbf{d})\times(\mathcal{S},\mathbf{d})$, that is $\mathbf{d}_2((\mathscr{E}_1,\mathscr{E}'_1),(\mathscr{E}_2,\mathscr{E}'_2))=\max(\mathbf{d}(\mathscr{E}_1,\mathscr{E}_2),\mathbf{d}(\mathscr{E}'_1,\mathscr{E}'_2))$.

\begin{Cor}
\label{cor:loopscaling}
Assume $d\in{\{3,4,5\}}$. There exists a strictly increasing sequence $\sigma:\N\rightarrow\N$ and a probability measure $\tilde{\P}_{\sigma}^{\infty}$ on $\mathcal{S}\times\mathcal{S}$ such that, with respect to $\mathbf{d}_2$,
\begin{equation}
\label{eq:Hausdorffconvergenceloops}
\begin{split} 
\text{ the law of }({\mathscr{E}}_{\sigma(n)},{\mathscr{L}}_{\sigma(n)})\text{ converges weakly to }\tilde{\P}^{\infty}_{\sigma} \text{ as } n \to \infty.
\end{split}
\end{equation}
Moreover, for any $(\sigma,\tilde{\P}^{\infty}_{\sigma})$ such that \eqref{eq:Hausdorffconvergenceloops} is satisfied, if $({\mathscr{E}},{\mathscr{L}})=((K_i)_{i\in{I}},(K'_j)_{j\in{J}})$ has law $\tilde{\P}^{\infty}_{\sigma}$, then a.s.\ the family $(K_i)_{i\in{I}}$ satisfies \eqref{eq:TIGHT}--\eqref{eq:CROS}. Moreover, $\tilde{\P}^{\infty}_{\sigma}$-a.s.\ for each $i\in{I}$
\begin{equation}
\label{eq:loopbuildclusters}
K_i\text{ is equal to the closure of }\bigcup_{j\in{J}:\,K'_j\subset K_i}K'_j.
\end{equation}
\end{Cor}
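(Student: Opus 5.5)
Three steps, in order: tightness of the pair, transfer of \eqref{eq:TIGHT}--\eqref{eq:CROS} to the first marginal, and the identity \eqref{eq:loopbuildclusters}.

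\emph{Tightness and the first marginal.} The pair $({\mathscr{E}}_n,{\mathscr{L}}_n)$ is tight in $(\mathcal{S}\times\mathcal{S},\mathbf{d}_2)$ because each marginal is tight. By Lupu's isomorphism (cf.~\eqref{eq:isoloopK}), ${\mathscr{E}}_n$ has the law of the union of two dependent copies of the sign-cluster families. Hence the tightness argument behind \eqref{eq:Hausdorffconvergence} (presumably via Lemma~\ref{L:d-metric} and a tightness criterion \eqref{eq:tightness-pf1}) applies verbatim. For ${\mathscr{L}}_n$, tightness follows from the tightness of the number of loops of diameter $\geq\eps n$ in $[-n,n]^d$ (cf.~\cite[Lemma~3.1]{DrePreRod8}) together with equicontinuity of random walk loops. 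Prokhorov then gives $\sigma$ and $\tilde{\P}^\infty_\sigma$. The first marginal of $\tilde{\P}^\infty_\sigma$ is a subsequential limit of ${\mathscr{E}}_n$, so the proof of Theorem~\ref{the:Hausdorff} transfers. That proof uses only Theorem~\ref{the:equivmacro}, crossing estimates and the low-dimensionality, all of which hold for loop clusters; by inspection Theorem~\ref{the:equivmacro} holds with $\Mact$. This yields \eqref{eq:TIGHT}--\eqref{eq:CROS} for $(K_i)$.

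\emph{The inclusion $\supseteq$ in \eqref{eq:loopbuildclusters}.} Work with a Skorokhod coupling in which $({\mathscr{E}}_{\sigma(n)},{\mathscr{L}}_{\sigma(n)})\to({\mathscr{E}},{\mathscr{L}})$ a.s. A macroscopic loop $K'_j$ is a limit of discrete loop ranges $K'_{j,n}$. Each $K'_{j,n}$ lies in a single discrete cluster $K_{i(n),n}$, and that cluster has diameter $\geq\delta(K'_j)/2$, so it is matched to some $K_i$ in the limit. Since $\mathbf{d}$ controls Hausdorff distances, this gives $K'_j\subset K_i$ for some $i$. Disjointness of distinct limiting clusters is not guaranteed, but it is not needed: the definition of $\{j:K'_j\subset K_i\}$ sidesteps it. The closure of the union is then contained in $K_i$ because $K_i$ is closed.

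\emph{The reverse inclusion: main obstacle.} Fix $i$, a point $z\in K_i$ and $\rho>0$. We need a $j$ with $K'_j\subset K_i$ and $K'_j\cap B(z,\rho)\neq\emptyset$. The plan is to apply Corollary~\ref{cor:macrocontainsbigloops} locally, on boxes $B(w,\rho n)$ with $w\in\rho\Z^d$ near $nz$, in place of $[-n,n]^d$.
\begin{enumerate}[1)]
\item Since $K_{i}$ is connected with positive diameter, the discrete cluster near $z$ crosses from $B(z,\rho/2)$ to $\partial B(z,\rho)$ at scale $n$. Its restriction to $B(nz,\rho n)$ therefore contains a cluster of diameter $\geq c\rho n$, i.e.~an element of $\Mact^{c}_{nz,\rho n}(\Dia)$.
\item Corollary~\ref{cor:macrocontainsbigloops} then gives, with probability $\geq1-\eps$, a loop of diameter $\geq s\rho n$ inside this restricted cluster, hence inside $K_{i(n),n}$.
\item A union bound over the $O(\rho^{-d})$ boxes, with $\eps=\eps(\rho)$ small, is affordable. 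The uniformity of Corollary~\ref{cor:macrocontainsbigloops} in $x$ is exactly what makes this work.
\item The loop's diameter $s\rho n$ is macroscopic, so it persists in the limit as some $K'_j$, which intersects $B(z,2\rho)$ and is contained in $K_i$ by the previous step.
\end{enumerate}
Letting $\rho\downarrow0$ along a countable sequence and summing failure probabilities gives density, up to a.s.\ issues handled by Borel--Cantelli on $\rho=2^{-k}$ with $\eps_k$ summable.

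The subtle point is identifying the limiting loop with one lying in the \emph{same} limiting cluster $K_i$ rather than in another cluster at small Hausdorff distance. I would try to handle this using that loops inside a discrete cluster converge into its limit together with it, i.e.~the matching in $\mathbf{d}$ is consistent for the pair. Boundary effects near $\partial[-1,1]^d$ are treated by restricting $z$ to $(-1+\eps,1-\eps)^d$ and using closure.
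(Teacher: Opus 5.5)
Your route to \eqref{eq:loopbuildclusters} is viable but differs from the paper's.

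\textbf{How the two routes compare.} The paper first proves the global statement \eqref{eq:loopbuildclusters1}, that every limiting cluster contains some limiting loop, by applying Corollary~\ref{cor:macrocontainsbigloops} once on $[-n,n]^d$. It then obtains density by contradiction. Suppose a small box $B(y,1/P)$ near a point of $K_i$ avoided every loop contained in $K_i$. The paper takes a joint subsequential limit of the global pair together with the local pair $(\mathscr{E}_{n_P,Py_n},\mathscr{L}_{n_P,Py_n})$ of a sub-box of $[-n,n]^d$. It finds a local limiting cluster inside $K_i$, and applies \eqref{eq:loopbuildclusters1} to it via translation invariance.

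You instead stay at the discrete level. You take a union bound over the $O(\rho^{-d})$ boxes at scale $\rho=2^{-k}$, with per-scale failure probabilities satisfying $\sum_k 2^{kd}\eps_k<\infty$ (so $s_k=s(\eps_k,\cdot)$ degrades with $k$, which is harmless), and then pass to the limit once. This is more direct and quantitative, and it avoids auxiliary local limits and translation invariance. The price is that your good events $G_{k,n}$ must be expressed through $(\mathscr{E}_n,\mathscr{L}_n)$, namely components of $K\cap B$ for $K\in\mathscr{E}_n$ and loops of $\mathscr{L}_n$ inside them, so that they transfer to the Skorokhod copies. The paper sidesteps this by putting the local families into its coupling.

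\textbf{Point 1: the step you call subtle.} This step is routine, but item 4) justifies it wrongly. Your $\supseteq$ step only places a limiting loop inside \emph{some} limiting cluster, which need not be $K_i$. Instead, take the box piece inside $K_i^{(n)}$, the discrete cluster matched to $K_i$, so that $d_H(K_i^{(n)},K_i)\le\eta_n$, where $\eta_n:=\mathbf{d}_2((\mathscr{E}_n,\mathscr{L}_n),(\mathscr{E},\mathscr{L}))\to0$. The loop $\Gamma^{(n)}\subset K_i^{(n)}$ with $\delta(\Gamma^{(n)})\ge cs\rho$ is then, for large $n$, matched to some $K'_{j_n}$ satisfying $\delta(K'_{j_n})\ge cs\rho/2$ and $K'_{j_n}\subset B(\Gamma^{(n)},\eta_n)\subset B(K_i,2\eta_n)$. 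Since $J_{>cs\rho/4}$ is finite, some $j$ recurs infinitely often, and $K'_j\subset K_i$ because $K_i$ is closed. No identification of limiting clusters is needed, and overlapping limits are harmless. Because $G_{k,n}$ only has high probability for each fixed $n$, you must pass through $\P(\limsup_n G_{k,n})\ge\limsup_n\P(G_{k,n})$. This is exactly the mechanism of the paper's proof of \eqref{eq:loopbuildclusters1}.

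\textbf{Point 2: the boundary.} Restricting $z$ to $(-1+\eps,1-\eps)^d$ and ``using closure'' presupposes $K_i\subset\overline{K_i\cap(-1,1)^d}$. That is not available; \eqref{eq:CAP} only gives $K_i\cap(-1,1)^d\neq\emptyset$. Near $\partial[-1,1]^d$, use boxes contained in $B(n)$ and flush with its boundary, as the paper's choice of $y_n$ does. Corollary~\ref{cor:macrocontainsbigloops} covers all clusters within such a box, including those touching its boundary. The component of $K_i^{(n)}\cap$(box) through a point near $z$ still has diameter at least $c\rho$. Indeed, it must leave the box through a face not contained in $\partial[-1,1]^d$, and such faces lie at distance at least $c\rho$ from $z$.

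\textbf{Minor points.} The first half of your $\supseteq$ paragraph is unnecessary, since $K_i$ is closed. Tightness of $\mathscr{L}_n$ only requires control of $N_{>s}(\mathscr{L}_n)$, by Lemma~\ref{L: compact}; no equicontinuity is needed.
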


Note that \eqref{eq:loopbuildclusters} implies in particular that each non-trivial scaling limit of clusters of loops contains the non-trivial scaling limit of a loop. Moreover, one can prove using  \cite{zbMATH05120529,qian2026couplingbrownianloopsoups}  that the family ${\mathscr{L}}$ from Corollary~\ref{cor:loopscaling} has the same law as the family consisting of the traces of the loops in the Brownian loop soup  on $\R^d$ from \cite{MR2045953}. 
In particular, its law does not depend on the choice of the sequence $\sigma$ as in \eqref{eq:Hausdorffconvergenceloops}. These Brownian loops have Hausdorff dimension two, whereas each set $K_i$ in ${\mathscr{E}}$ has Hausdorff dimension $1+d/2$ in view of \eqref{eq:DIM}. Therefore by \eqref{eq:loopbuildclusters}, there are actually infinitely many continuous loops $K'_j$ included in $K_i$. In particular in dimensions $d\in{\{4,5\}}$, for which the Brownian loops never intersect, one readily deduces that $K_i$ is not a connected component of loops in the Brownian loop soup. This confirms conjectures from \cite[Section~4.2.1]{werner2020clusters}. In $d=3$, the analogous conclusion follows by combining \eqref{eq:DIM} with \cite{cai2025gapdimensions}, where it is shown that the upper box-counting dimension of the clusters of the Brownian loop soup on $\R^3$ is strictly smaller than $5/2$. This is also in line with \cite{cai2025separation}, which suggests that already in dimension three, microscopic loops play a crucial role in the construction of macroscopic loop clusters.

In contrast to \eqref{eq:loopbuildclusters}, when $d\geq7$, there are clusters in the scaling limit which do not contain any macroscopic loop. This is due to the fact that the number of macroscopic clusters diverges in the scaling limit, see \cite[Proposition~4]{werner2020clusters}, whereas the number of macroscopic loops remains tight.  
In fact even if one only considers macroscopic clusters which contain a cycle, only half of these clusters will contain a macroscopic loop when $d\geq7$, see \cite{lupu2025intensity}, and \cite{carpenter2025loopscriticalhighdimensionalpercolation} for related results in high-dimensional Bernoulli percolation.

\medskip

We now describe how this article is organized. In Section~\ref{sec:prepa}, we introduce the metric graph, including with killing on a set $K$ and collect preliminary facts from potential theory. Section~\ref{sec:boxtobox} is devoted to connectivity bounds in the presence of obstacles, and in particular Section~\ref{sec:volneg} gathers box-counting bounds for the largest cluster, which are adaptations of results from \cite{DrePreRod10}. In Section~\ref{sec:largevolumecapacity}, we control the volume and capacity of clusters in a box. Section~\ref{sec:absence} shows that macroscopic clusters do not stay close to the boundary of a ball, and Section~\ref{sec:equimacro} combines these ingredients to prove Theorem~\ref{the:equivmacro} and Corollary~\ref{cor:macrocontainsbigloops}. Sections~\ref{sec:largevolumecapacity}--\ref{sec:equimacro} form the technical core of this article. Finally, in Section~\ref{sec:proofhausdorffdim} we specialize to $\Z^d$, $d\in\{3,4,5\}$, and prove Theorem~\ref{the:Hausdorff} and Corollary~\ref{cor:loopscaling}; the proof of a two-point estimate used therein is deferred to Appendix~\ref{App:lowMink}, and certain key properties of the distance $\mathbf{d}$ are relegated to Appendix~\ref{App:dmetric}.

Our rules in dealing with constants are as follows: constants without indices such as $c,C, c'$ etc.\ are used to denote generic positive constants that can change from one expression to the next. They may implicitly depend on the parameters $\alpha$ and $\nu$ appearing in conditions \eqref{eq:intro_sizeball} and \eqref{eq:intro_Green}, on the various constants entering conditions \eqref{eq:ellipticity}, \eqref{eq:intro_sizeball}, \eqref{eq:intro_Green}, and on the value of $\sup q$ whenever it is assumed to be finite. Numbered constants are fixed where they first appear.

\section{Some preparation}
\label{sec:prepa}
In this section, we introduce the metric graph $\tilde{\G}$, and explain how to reinterpret the results from the introduction using objects defined on the metric graph directly. We also introduce the metric graph $\tilde{\G}_K$ with killing on a set $K \subset \tilde{\G}$ belonging to a suitably large class, and collect a few preliminary facts from potential theory for the diffusion $X$ on $\tilde{\G}_K$, including a generalization of the classical sweeping identity (Lemma~\ref{L:sweeping-extended}), which is of independent interest. We also introduce the free field on $\tilde{\G}_K$ with law $\P^K$ and state the corresponding version of the isomorphism with the loop soup.  Throughout, we tacitly assume that $\G$ is a weighted graph as introduced in Section~\ref{sec:mainresults}, on which the assumptions \eqref{eq:ellipticity}, \eqref{eq:intro_sizeball} and \eqref{eq:intro_Green} are satisfied. 

 \subsection{The metric graph $\tilde{\G}$}
 \label{subsec:metric}
 The above general setup extends to the metric (or cable) graph $\tilde{\G}$ associated to $\G$, in which each edge $\{x,y\}$ is replaced by a continuous line segment $I_{\{x,y\}}$ of (Euclidean) length $1/(2\lambda_{x,y})$ glued at its endpoints. Thus $\tilde{\G}$, endowed with the resulting (geodesic) distance, is viewed as a metric space.  We refer to \cite[Section 2]{DrePreRod3} for a comprehensive introduction. In this manner the vertex set $G$ is naturally viewed as a subset of $\tilde{\G}$. In particular the Markov chain $X$ introduced above \eqref{eq:green} extends to a continuous process, the Brownian motion on $\tilde{\G}$. With a slight abuse of notation, we continue to use $X$, $P_{x}$, $x \in \tilde\G$, and $g(x,y)=g_{\tilde{\G}}(x,y)$, $x,y \in \tilde{\G}$ for the extension of quantities defined around \eqref{eq:green} to $\tilde{\G}$. We denote by $\tilde{B}(x,r)\subset\tilde{\G}$ the union of $B(x,r)$ and the set of points in $\tilde{\G}$  that belong to a segment whose endpoints are both contained in $B(x,r)$, $\partial B$ the topological boundary of $B\subset \tilde{\G}$, and $\{U {\longleftrightarrow} V\text{ in }K\}$  for $U,V,K \subset \tilde\G$,  the event that $U$ and $V$ are connected by a (continuous) path in $K$.
 
For a compact set $K\subset \tilde{\mathcal{G}}$ with finitely many connected components, we write $e_{K}= e_{K,\tilde{\G}}$ for the equilibrium measure of $K$ relative to $\tilde{\G}$ (see for instance \cite[(2.16)]{DrePreRod3} for its definition in the present context). Its support is a finite set included in $\partial K$, %
and its total mass 
 \begin{equation}
 \label{eq:cap}
 \text{cap}(K)= \text{cap}_{\tilde{\G}}(K) \stackrel{\text{def.}}{=} \int {\rm d}e_{K,\tilde{\G}} \ (< \infty)
 \end{equation}
 is the capacity of $K$. If $K\subset G$, it simply corresponds to the capacity introduced in \eqref{eq:capintro}. We also abbreviate $\delta(K)=\delta(K\cap G)$, $|K|_r=|K\cap G|_r$, as well as $|K|\coloneqq |K|_0$ (see \eqref{eq:vol}) for all $K\subset\tilde{\G}$, which, if $K\subset G$, yields consistent notation.
 
 Under the probability measure $\P$, we extend the definition of the Gaussian free field to $\tilde{\G}$:  $(\varphi_x)_{x\in \tilde{\G}}$ is the centered Gaussian field with covariance $g(\cdot,\cdot)$  (which agrees with $(\varphi_x)_{x\in{G}}$ on $G$).
 
  Under the probability measure $\Q$, let us denote by $\mathcal{L}_{\tilde{\G}}$ the loop soup on $\tilde{\G}$ with intensity $1/2$, see \cite{MR3238780,MR3502602}, which is a Poisson point process of random continuous loops on $\tilde{\G}$. For $A \subset \widetilde \G$ we write $\mathcal{L}_A$ for the restriction of $\mathcal L_{\tilde{\G}}$ to those loops whose range is completely contained in $A$. Note that the restriction of $\mathcal{L}_{\tilde{\G}}$ to the loops which intersect at least two distinct vertices of $G$ corresponds to the loop soup $\mathcal{L}$ introduced above \eqref{eq:deftildeMRS}. The loop soup $\mathcal{L}_{\tilde{\G}}$ also contains loops that do not intersect $G$, as well as loops that only intersect a single vertex of $G$, and the total time spent by these last loops at a given vertex $x$ is $\Gamma(1/2,\lambda_x^{-1})$-distributed, see \cite[(4.12) and (4.17)]{MR2932978}. We denote by $(L_x)_{x\in{\tilde{\G}}}$ the associated field of local times, whose restriction to $G$ thus agrees with the process introduced above \eqref{eq:deftildeMRS}.

The Gaussian free field on $\tilde{\G}$ induces a percolation model  at level $a$: two points $x,y\in{\tilde{\G}}$ are connected if and only if there is a continuous path between $x$ and $y$ along which $\phi\geq a$. An equivalent definition for the restriction to $G$ of these connected components  is to declare each edge open at level $a$ if and only if the field along that edge stays above $a$. The resulting families of percolation configurations on edges have the same laws as those defined by \eqref{eq:edge-perco} for every $a$, as follows readily by applying the Markov property of the free field when conditioning on the discrete field $(\varphi_x)_{x \in G}$, see \cite[(1.6)]{DrePreRod5}.

One can also define a percolation model associated to the loop soup on $\tilde{\G}$: two points $x,y \in \tilde{\G}$ are in the same cluster of loops in $\mathcal{L}_A$ \emph{within $B$} if and only if there is a path from $x$ to $y$ which is entirely included in the intersection of $B$ and the union of the traces of a finite number of loops in $\mathcal{L}_A$. A cluster of loops in $\mathcal{L}_A$ is simply a cluster of loops in $\mathcal{L}_A$ within $\tilde{\mathcal G}$. Similarly as for the Gaussian free field, the restriction to $G$ of the clusters of loops can be obtained by declaring an edge $e$ open if and only if $\{L_x>0\text{ for all }x\in{I_e}\}$. The corresponding clusters of open edges then have the same law as those defined above \eqref{eq:deftildeMRS}, see \cite[Corollary~3.6]{MR3502602}.

In other words, we can replace in the definitions \eqref{eq:defMRs}, \eqref{eq:deftildeMRS} and \eqref{eq:defMRSL} the discrete level sets and loop soups therein by their metric graph version (note that in case of capacity, this changes the definition by a constant factor for $s$). With a slight abuse of notation, in the remainder of the article we will thus write 
\begin{equation}
\label{eq:newnotation}
\mathcal{L}=\mathcal{L}_{\tilde{\G}}\text{ and }E^{\geq a}=\{x\in{\tilde{\G}}:\,\phi_x\geq a\},
\end{equation}
and forget altogether about the discrete notation from the introduction when proving the results from Section~\ref{subsec:equivalenceintro}, namely Theorem~\ref{the:equivmacro} and Corollary~\ref{cor:macrocontainsbigloops}. In particular, from now on $\mathcal{C}_{x,R}$, resp.\ $\tilde{\mathcal{C}}_{x,R}$, will be defined as the set of connected components of $E^{\geq 0}\cap \tilde{B}(x,R)$, resp.\ of loops in $\mathcal{L}_{\tilde{\G}}$ within $\tilde{B}(x,R)$. We will similarly only need to prove the results of Section~\ref{subsec:scalinglimitintro} for the Gaussian free field and the loop soup on metric graphs, and we refer to the discussion around \eqref{eq:new-clusters-ok} for details.

 \subsection{The graph $\mathcal G_K$} \label{subsec:G_K} We start by introducing the weighted graph $\G_K$ that we will mostly work with in the sequel, along with its associated metric graph $\tilde{\G}_K$. To put things into context, $K$ will fulfil various duties, as ``obstacle'' set $\mathcal{O}$ in various contexts, which -- depending on the situation -- will comprise e.g.~the range of loops of a certain type, or also involve the exploration of (finite) boundary clusters. To account for all our needs, we assume that 
\begin{equation}\label{eq:K-ass}
\text{$K\subset\tilde{\G}$ is compact and has finitely many connected components}
\end{equation}
(recall from Section~\ref{subsec:metric} that $\tilde{\G}$ is viewed as a metric space). Note that \eqref{eq:K-ass} includes the possibility $K=\emptyset$.  
Informally, $\G_K$ is such that its associated metric graph $\tilde{\G}_K$ corresponds to the union of the connected components of $K^{\mathsf c}=\tilde{\G} \setminus K$ which intersect $G$, and
with infinite killing on $K$. Our definition differs slightly from the one in \cite[Section~3.1]{DrePreRod10}, where $\tilde{\G}_K$ was defined as the (unique) unbounded connected component of $K^{\mathsf{c}}$, but we will here sometimes need to consider the connected component of $0$ in the case where $K$ disconnects $0$ from infinity. Formally, if $K\subset G$, $\G_K$ is the graph $(\bar{G}^M,\bar{\lambda}^M,\bar{\kappa}^M)$ introduced in \cite[(2.12)]{DrePreRod3}, when setting therein $\bar{G}=G$, $\bar{\lambda}=\lambda$, $\bar{\kappa}_x=\infty$ if $x\in{K}$ and $\bar{\kappa}_x=0$ otherwise, which is obtained from $G\cap K^{\mathsf{c}}$ by adding midpoints to any edge between $K$ and $K^{\mathsf{c}}$ with a killing measure adapted so that the total length of that edge does not change. If $K\subset\tilde{\G}$, we define $\G_K=(G_K,\lambda^K,\kappa^K):=(\G^{\partial K})_{\partial K}$, where $\G^{\partial K}$ is the graph from \cite[Lemma~2.1]{DrePreRod3}, which is equivalent to $G$ but has additionally the points in $\partial K$ as vertices. The metric graph $\tilde{\G}_K$ is then obtained similarly as $\tilde{\G}$ is from $\G$, except that for each $x\in{G_K}$ with $\kappa_x^K>0$, one adds a half-open interval $I_x$ of length $1/(2\kappa_x^K)$ identified with $x$ through its closed end.

By adapting the reasoning of \cite{DrePreRod3}, which can be extended to the setting of $K$ satisfying \eqref{eq:K-ass},
one can then identify $\tilde{\G}_K$ with the union of the connected components of $\widetilde{\G}\setminus K$ (an open set), and identify the diffusion on $\tilde{\G}_K$, whose law will be denoted by $\PPK_{\cdot}$ (so $P_{\cdot}^{\emptyset}= P_{\cdot}$), with the diffusion on $\widetilde{\G}\setminus K$ killed when entering $K$; that is, the law of $X_{\cdot \wedge H_{K}}$ under $P_{\cdot}$, where $H_{K} =  \inf\{t \geq 0 : X_t \in K \}$. Note that $\tilde\G_K$ may well comprise several (but finitely many since $\G$ is locally finite under our assumptions, cf.~the discussion following \eqref{eq:intro_sizeball}) connected components.  Various useful results concerning $\G_K$ previously assumed that it is connected. One can safely transfer them from the case of connected graphs to the case of graphs with finitely many connected components by considering each finite connected component as its own graph, and this slight extension is implicit in various references to previous literature below.  

For $K$ as in \eqref{eq:K-ass}, the Gaussian free field on $\tilde{\G}_K$ is the continuous centered Gaussian process $(\varphi_x)_{x\in \tilde\G_K}$ with law $\P^K$ such that
\begin{equation}\label{eq:introGFF}
\E^K[\varphi_x \varphi_y] = g_{\tilde{\G}_K}(x,y), \quad \text{ for all }x,y \in \tilde\G_K,
\end{equation}
and we simply write $\PKK{\emptyset}=\P$ in the same vein as above, which is consistent with our notation from the previous subsection.  Note that $\varphi_x=0$ $\PK$-a.s.~for all $x\in K$ so $\P^K$ is the Gaussian free field with zero Dirichlet boundary condition on $K$. 

Recall from Section~\ref{subsec:metric} the loop soup $\mathcal{L}$ on $\tilde{\G}$ under $\Q$ and its restrictions $\mathcal{L}_A$. Up to extending the probability space $\Q$, we further attribute to each cluster of loops in $\mathcal{L}_A$ a sign $\pm$ with probability $1/2$, independently for each cluster. The positive clusters of loops are the clusters that have sign $+1$. By the isomorphism \cite[Proposition~2.1]{MR3502602} between loop soups and the Gaussian free field, applied on the graph $\G_K$, we have for all $K$ as in \eqref{eq:K-ass} (including $K=\emptyset$, in which case $\P^K=\P$ and $\mathcal{L}_{\tilde{\G}_K}=\mathcal{L}$):
\begin{equation}
\label{eq:isoloopK}
\begin{gathered}
\{x\in{\tilde{\G}}:\phi_x\geq0\}\text{ under }\P^K\text{ has the same law as the}
\\\text{closure of the positive clusters of loops in $\mathcal{L}_{\tilde{\G}_K}$ under }\Q.
\end{gathered}
\end{equation}

\subsection{Potential theory} \label{subsec:pot} We now collect a few aspects of potential theory, and start with a useful generalization of the classical sweeping identity. We will eventually apply what follows with the choice $\tilde{\G}=\tilde{\G}_K$, which is perfectly fine as $\tilde{\G}_K$ has been introduced as \emph{the} metric graph of some weighted graph (namely, $(G_K,\lambda^K,\kappa^K)$ above), but this is irrelevant for now. 

Let $Z=(Z_0, Z_1,\dots)$ under $P_{\cdot}$ be the successive vertices in $G$ visited by the diffusion $X$ on $\tilde{\G}$, disregarding successive visits to the same vertex, which in law is the discrete random walk on $G$ with %
transition probabilities equal to $\lambda_{x,y}/ \lambda_x$ and probability $\kappa_{x}/\lambda_x$ to be killed at $x$. For $B \subset G$ let $L_B=\sup\{n\geq0:\,Z_n\in{B}\}$ denote the time of the last visit of $Z$ to $B$, and $\tilde{H}_B =\inf\{ n \geq 1: Z_n \in B\}$ the hitting time of $B$, with $\sup\emptyset=-\infty$ and  $\inf\emptyset=+\infty$ by convention.  
The following result holds for any %
transient weighted graph $\G = (G, \lambda)$, without any further assumptions on $\G$. %

\begin{Lemme}[Generalized sweeping identity] \label{L:sweeping-extended} 
For all %
$A,B\subset{G}$ and $y\in A$,
\begin{equation}
\label{eq:sweeping-gen}
\begin{split}
P_{e_{B,\tilde{\G}}}(X_{H_A}=y,H_A<\infty)&=\lambda_y P_y ({L_{B}}\leq \tilde{H}_A,L_{B}\geq0).
\end{split}
\end{equation}
\end{Lemme}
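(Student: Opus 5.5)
The plan is to prove \eqref{eq:sweeping-gen} by a path decomposition followed by time reversal, working entirely with the discrete skeleton $Z$. First I will reduce the left-hand side to $Z$. For $B\subset G$, the equilibrium measure $e_{B,\tilde{\G}}$ is supported on $B$ and is given by $e_{B,\tilde\G}(x)=\lambda_x P_x(\tilde H_B=\infty)$, where $\tilde H_B=\infty$ also covers the event that $Z$ is killed before returning to $B$. Moreover, since $A\subset G$, the point $X_{H_A}$ coincides with $Z_{H_A}$, with $H_A$ read in the discrete time of $Z$. So I only need to compare two expressions for the discrete chain with transition weights $\lambda_{x,y}/\lambda_x$ and killing weights $\kappa_x/\lambda_x$. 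The key structural fact is reversibility: for any finite nearest-neighbour path $\gamma=(\gamma_0,\dots,\gamma_n)$,
\[
\lambda_{\gamma_0}\prod_{i=1}^n\frac{\lambda_{\gamma_{i-1},\gamma_i}}{\lambda_{\gamma_{i-1}}}=\lambda_{\gamma_n}\prod_{i=1}^n\frac{\lambda_{\gamma_{i},\gamma_{i-1}}}{\lambda_{\gamma_{i}}}.
\]
Killing plays no role here, because it only enters through the escape factor.

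Next I will expand the left-hand side as
\[
\sum_{x\in B}\lambda_x P_x(\tilde H_B=\infty)\,P_x(Z_{H_A}=y,H_A<\infty).
\]
I will split this sum into the term $n=0$ and the terms $n\geq1$, where $n$ is the length of the path.

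For $n=0$, the starting point is $x=y\in A\cap B$, and the contribution is $\lambda_y P_y(\tilde H_B=\infty)=\lambda_y P_y(L_B=0)$.

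For $n\geq1$, I will sum over paths $\gamma$ from $x\in B\setminus A$ to $y$ with $\gamma_k\notin A$ for $k<n$. Applying the reversibility identity converts $\lambda_x p(\gamma)$ into $\lambda_y p(\bar\gamma)$, where $\bar\gamma$ is the reversed path from $y$ to $x$. The reversed path avoids $A$ at all times $1,\dots,n$ and ends in $B$. Multiplying by the factor $P_x(\tilde H_B=\infty)$ and applying the Markov property at time $n$ identifies the sum over such $\bar\gamma$ ending at $x$ with
\[
\lambda_y P_y(L_B=n,\ Z_{n}=x,\ \tilde H_A>n).
\]
Summing over $x$ and $n$ then gives $\lambda_y P_y(0\le L_B<\tilde H_A)$, together with the $n=0$ term already accounted for.

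The main obstacle I expect is the boundary case $L_B=\tilde H_A\geq1$ in the right-hand side of \eqref{eq:sweeping-gen}. In this case the last visit to $B$ happens at a point of $A\cap B$ reached at time $\tilde H_A$. Under the decomposition above, this corresponds to a path starting at $x\in A\cap B$ and reaching $y$ with $n\ge1$. Such a path is \emph{not} counted on the left, because for such $x$ one has $H_A=0$ and $X_{H_A}=x$. I will therefore check the conventions carefully. This means checking whether this event must be excluded, namely whether the identity should read $\{L_B<\tilde H_A\}\cup\{L_B=0\}$, or whether it is absorbed by some convention on $\tilde H_A$ or on the metric-graph equilibrium measure. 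I will also verify separately the degenerate cases $A\cap B=\emptyset$ and $y\notin B$, and check that the metric-graph and discrete equilibrium measures agree for $B\subset G$, which is stated after \eqref{eq:cap}.
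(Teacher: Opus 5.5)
Your route is the paper's, run in the other direction. The paper decomposes the right-hand side over $(L_B,Z_{L_B})=(n,x)$, uses the Markov property at time $n$ to produce the factor $\lambda_x^{-1}e_{B,\tilde{\G}}(x)$, and then reverses paths using the symmetry of $\lambda_{\cdot,\cdot}$. You instead expand the left-hand side into paths and reverse them. Your computation is correct, and what it establishes is
\[
P_{e_{B,\tilde{\G}}}(X_{H_A}=y,H_A<\infty)=\lambda_y P_y(0\le L_B<\tilde{H}_A).
\]

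The obstacle you flag is genuine, and no convention removes it. Take the paper's own conventions: $\tilde{H}_A=\inf\{n\ge1:Z_n\in A\}$ and $e_{B,\tilde{\G}}(x)=\lambda_xP_x(\tilde{H}_B=\infty)$, which is what its proof uses. With these, \eqref{eq:sweeping-gen} with ``$\le$'' is false in general when $A\cap B\neq\emptyset$. For instance, let $A=B=\{y\}$ and $p=P_y(\tilde{H}_y<\infty)\in(0,1)$. The left side is $\lambda_y(1-p)$. The event $\{0\le L_y\le\tilde{H}_y\}$ is the event that the walk returns to $y$ at most once, so the right side is $\lambda_y(1-p)(1+p)$.

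You should therefore state and prove the strict version. Your $\{L_B<\tilde{H}_A\}\cup\{L_B=0\}$ is the same event, since $\tilde{H}_A\ge1$. The strict version coincides with the stated one whenever $A\cap B=\emptyset$, because then $Z_{L_B}\in B$ and $Z_{\tilde{H}_A}\in A$ cannot be equal.

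The paper's proof makes exactly the slip you isolated. Its time-reversal identity $\sum_{n\ge0}\lambda_yP_y(Z_k\notin A,\,1\le k<n,\,Z_n=x)=\lambda_xP_x(X_{H_A}=y,H_A<\infty)$ fails for $x\in A\cap B$. There the left side also counts paths from $y$ that first return to $A$ at $x$, and these extra terms add up to $\lambda_yP_y(L_B=\tilde{H}_A<\infty)$. For the same reason, its remark that $\{0\le L_B\le\tilde{H}_A\}=\{\tilde{H}_A=\infty\}$ when $A\subset B$ needs ``$<$''.

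Nothing downstream is affected. The lemma is only applied in the proof of Lemma~\ref{lem:mainboxtobox}, with $A=U$ and $B=\partial U'$ disjoint, and the paper itself observes there that the inequality is strict.
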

In particular, when $A\subset B$, one has under $P_y$ for any $y \in A$ that $\{0 \leq L_{B}\leq \tilde{H}_A\} = \{\tilde{H}_A=\infty\}$, and one recovers the usual sweeping identity, see for instance \cite[(1.59)]{MR2932978}.
\begin{proof}
Decomposing over the (integer) value $n \geq 0$ of $L_B$ and the corresponding location $x (
\in B)$ of the discrete skeleton $Z$, one obtains for any $y \in A$ that
\begin{multline}\label{eq:sweep-pf}
P_y(0 \leq {L_{B}}\leq \tilde{H}_A)=\sum_{n,x} P_y(Z_k \notin A, 1 \leq k < n, Z_n=x, Z_m \notin B, \, m >n)\\
=\sum_{n,x} P_y(Z_k \notin A, 1 \leq k < n, Z_n=x) \lambda_x^{-1} e_{B,\tilde{\G}}(x),
\end{multline}
where the second line follows by applying the Markov property at time $n$, and by convention $P_y(Z_k \notin A, 1 \leq k \leq n, Z_n=x) = \delta_{y,x}$ when $n=0$. By time-reversal, one readily sees, with $\gamma= (\gamma_i)_{0 \leq i \leq \ell(\gamma)}$ below ranging over all finite length (i.e.~$0 \leq \ell(\gamma) < \infty$) nearest-neighbor paths in $G$ such that $\gamma_0=y$, $\gamma_{\ell(\gamma)}=x$ and $\gamma_i \notin A$ for $1\leq i < \ell(\gamma)$, that
\begin{multline*}
\sum_{n \geq 0} \lambda_y P_y(Z_k \notin A, 1 \leq k < n, Z_n=x) =
\sum_{\gamma} \Big( \lambda_{\gamma_0, \gamma_1}\prod_{1 \leq i < \ell(\gamma)} \tfrac{\lambda_{\gamma_i, \gamma_{i+1}}}{\lambda_{\gamma_i}} \Big)
\\
= \sum_{\gamma} \Big( \lambda_{\gamma_{\ell(\gamma)}, \gamma_{\ell(\gamma)-1}}\prod_{1 \leq i < \ell(\gamma)} \tfrac{\lambda_{\gamma_{i}, \gamma_{i-1}}}{\lambda_{\gamma_i}} \Big)=
\lambda_x P_x(X_{H_A}=y,H_A<\infty),
\end{multline*}
using symmetry of $\lambda_{\cdot,\cdot}$ in the second line. Feeding this into \eqref{eq:sweep-pf} completes the proof.
\end{proof}

We now gather a few random walk estimates tailored to our needs that will be used repeatedly. Unlike in Lemma~\ref{L:sweeping-extended}, for what follows it is crucial that \eqref{eq:ellipticity}, \eqref{eq:intro_sizeball} and \eqref{eq:intro_Green} be in force. We consider $\tilde{\G}_K$ as introduced in Section \ref{subsec:G_K} with $K$ satisfying \eqref{eq:K-ass}; in particular, $K=\emptyset$ possibly. The Green function $g_{\tilde{\G}_K}$ on $\tilde{\G}_K$ is defined in the same manner as the Green function $g_{\tilde{\G}} (= g_{\tilde{\G}_\emptyset})$ on $\tilde{\G}$ (cf.~above \eqref{eq:cap}), and by \cite[Lemma~3.1]{DrePreRod2} it satisfies
\begin{equation}
\label{eq:GreenGK}
cd(x,y)^{-\nu}\leq g_{\tilde{\G}_K}(x,y)\leq Cd(x,y)^{-\nu},\text{ for all }x\neq y\in{\tilde{\G}}\text{ with }d(x,K)\geq Cd(x,y)
\end{equation}  %
(note that for $K= \emptyset$ this is just  \eqref{eq:intro_Green}, from which \eqref{eq:GreenGK} derives). For a compact set $B\subset \tilde{\mathcal{G}}$ with finitely many connected components, the total mass of the equilibrium measure $ e_{B,\tilde{\G}_K}$ relative  to $\tilde{\G}_K$ is the capacity ${\rm cap}_{\tilde{\G}_K} (B)$, cf.~\eqref{eq:cap}. In view of \cite[(5.7)]{DrePreRod5}, the capacity of a ball satisfies
\begin{equation}
\label{eq:capBallBd}
\begin{split} 
cr^\nu \leq {\rm cap}_{\tilde{\G}_K} (B(x,r)) \leq Cr^\nu, \ \text{ for all $K\subset \tilde{B}(0,r)$, $x\in{G}$ s.t.~$d(x,0)\geq Cr1\{K\neq\emptyset\}$}.
\end{split}
\end{equation}
 Further, combining a reasoning similar to \cite[(3.6)]{DrePreRod10} for the lower bound, and \cite[(1.57)]{MR2932978} and \eqref{eq:intro_Green} for the upper bound, there exists $\Cl{C:exit}\in{(1,\infty)}$ such that for all $A, K \subset\tilde{\G}$ each satisfying \eqref{eq:K-ass}, $r\geq1$, $x,y\in{{G}_K}$, $A\subset \tilde{B}(x,r)$, $y\in{B(x,r)}$ with $d(x,K)\geq 2\Cr{C:exit}r$, one has
 \begin{equation}
\label{eq:hittingvscap}
\begin{split} 
 cr^{-\nu}\mathrm{cap}_{\tilde{\G}_K}(A)\leq P_y^K\big(H_{A}<H_{\tilde{B}(x,\Cr{C:exit}r)^{\mathsf c}}\big)\leq P_y^K\big(H_{A}<\infty)\leq Cd(y,A)^{-\nu}\mathrm{cap}_{\tilde{\G}_K}(A).
\end{split}
\end{equation}
Note that \eqref{eq:capBallBd}, \eqref{eq:hittingvscap} remain true if $K=\emptyset$, in which case $\G_{\emptyset}=\G$ and $\PPKK{\emptyset}_{\cdot}=P_{\cdot}$ as introduced above \eqref{eq:defLambdaL}. 

We also recall the following elliptic Harnack inequality. For $B \subset G$ let $\overline{B}= B \cup \partial (B^{\mathsf{c}})$. Below a function $v: \overline{B} \to \R$ is called \emph{harmonic in $B$} if $v(y)=E_y[v(Z_1)]$ for all $y \in B$. %
By \cite[(3.3) and Lemma~6.5]{DrePreRod2}, there exists $\Cr{c:harnack}>0$ and $C<\infty$ such that for any $x\in{G}$ and $r\geq C$, if $v$ is non-negative and  %
harmonic in $B(x,Cr)\setminus B(x,\Cl[c]{c:harnack}r)$, resp.\ $B(x,r)$, writing $A_r=B(x,9r)\setminus B(x,r/3)$, resp.~$A_r=B(x,\Cr{c:harnack}r)$, one has 
\begin{equation}
\label{eq:Harnack}
    \sup_{y\in{A_r}}v(y)\leq C\inf_{y\in{A_r}}v(y)
\end{equation}
(in case $A_r=B(x,9r)\setminus B(x,r/3)$, \eqref{eq:Harnack} is actually stated in \cite[Lemma~6.5]{DrePreRod2} for the sphere $\partial B(x,r)$ instead of the annulus $B(x,9r)\setminus B(x,r/3)$, but the proof therein can easily be adapted).

\section{Connectivity bounds with obstacles \texorpdfstring{$\mathcal{O}$}{O}}
\label{sec:boxtobox}

The remainder of this article is concerned with proving the results of Section~\ref{sec:mainresults}. Throughout, we tacitly assume that $\G$ is a weighted graph as introduced in Section~\ref{sec:mainresults}, on which the assumptions \eqref{eq:ellipticity}, \eqref{eq:intro_sizeball} and \eqref{eq:intro_Green} are satisfied. As in Section~\ref{sec:mainresults}, we further assume for simplicity that $d(\cdot,\cdot)$ is the graph distance on $\G$. The assumption on $d(\cdot,\cdot)$ can be relaxed, see Remark~\ref{rk:ballsconnected}.

The main result of this section, Proposition~\ref{pro:boxtobox} below, is an upper bound on the probability to connect two sets $A$ and $A'$ using positive sign clusters of loops while avoiding an obstacle set $\mathcal{O}$, which by \eqref{eq:isoloopK} is equivalent to connecting $A$ and $A'$ via the positive sign clusters of the GFF on the graph $\tilde \G_{\mathcal{O}}$ with killing on $\mathcal{O}$, introduced in Section~\ref{sec:prepa}.
The proof is inspired by that of \cite[Proposition~3.1]{cai2024onearm}, 
which deals with $\mathcal{O}=\emptyset$  in the case where $A, A'$ are concentric annuli and when the base graph is $\Z^d$. The generalization to $\G$ satisfying the above conditions (still with  $\mathcal{O}=\emptyset$) is relatively straightforward, and we seize this opportunity to streamline some of the arguments from \cite{cai2024onearm}. In addition, Proposition~\ref{pro:boxtobox} generalizes this bound on the connection probability between $A$ and $A'$ by allowing for the presence of the obstacle $\mathcal{O} \neq \emptyset$, which produces an extra factor, roughly corresponding to the probability to hit $A'$ when starting a random walk from $A$ with killing on $\mathcal{O}$, see \eqref{eq:crossingAB} below; see also \cite[Lemma 3.2]{DrePreRod9} for the emergence of a similar factor in a different context.
 
 The upper bound derived in Proposition~\ref{pro:boxtobox} will be essential both, when $\mathcal{O}=\emptyset$ to obtain box-to-box estimates (see Proposition~\ref{cor:crossingboxseccrit}), and when $\mathcal{O}$ consists of large clusters of loops to prove that it is unlikely for a cluster with large diameter to have small capacity or (renormalized) volume, see Section~\ref{sec:volneg}, which gathers box-counting bounds for the largest cluster, as well as
 Proposition~\ref{pro:radimpliesOavoided} and its ramifications in Section~\ref{sec:equimacro}.

Recall from \eqref{eq:diam} that $\delta(\cdot)$ denotes diameter and the notations $q(\cdot)$ from \eqref{eq:qPsiBd} and $\rr$ from \eqref{eq:r1}. 
The set $\tilde{\G}_{\mathcal{O}}$ refers to the metric graph with killing on $\mathcal{O}$ as introduced in Section~\ref{sec:prepa}. Following is the main result of this section. It yields an upper bound on the probability to connect two sets $A$ and $A'$ outside $\mathcal{O}$, subject to certain geometric conditions on these sets that involve the scales $r,R$ and $R'$. A guiding example to keep in mind is to pick $A=\tilde{B}(x,r)$, $A'$ a set (e.g.$\tilde{B}(y,R)$ with $d(x,y)=R'\geq C\rr$ and $R\leq R'/8$) whose $R$-neighborhood $B(A',R)=\bigcup_{z\in A' \cap G} B(z,R)$ of $A'$ lies in the ``annulus'' $B(A,2R')\setminus B(A,R'/2)$ at distance of order $R'$ from $A$, and an obstacle $\mathcal{O} \subset B(x,cR') \setminus B(x,C\rr)$, i.e.,~in between $A$ and that annulus, but sufficiently far away from both.%
Recall the notation 
$\{A\leftrightarrow A' \, \text{(in $K$)}\}$ from Section~\ref{subsec:metric}. It will be convenient to give a name to the ``local neighborhoods'' of $A$, $A'$, 
$$
U = B(A,r), \quad U' = B(A', R),
$$
so that $A \subset U$, $A' \subset U'$.

\begin{Prop}
\label{pro:boxtobox}
Let $\mathcal{O}, A , A'$ be compact sets with finitely many connected components each and such that $\mathcal{O}\subset\tilde{\G}$, $\emptyset\neq A,A'\subset\tilde{\G}_{\mathcal{O}}$.
Then, if $r,R\geq0$ and $R'\geq C\rr$ are such that  $\delta(A)\leq r$, $U'\subset B(A,{ 2}R')\setminus B(A,{R'}/2)$, and $\mathcal{O}\subset B(A,cR')\setminus B(A,C\rr)$, we have
\begin{equation}
\label{eq:crossingAB}
\begin{split} 
\P^{\mathcal{O}}(A\leftrightarrow A')\leq Cq(\rr)q(\RR)\left(\frac{\rr}{\RR}\right)^{\frac\nu2}\inf_{z\in{U}}P_z\big(H_{U'}<H_{\mathcal{O}}\big).
\end{split}
\end{equation}
\end{Prop}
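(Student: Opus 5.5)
The plan follows the strategy of \cite[Proposition~3.1]{cai2024onearm}: decompose a crossing from $A$ to $A'$ according to how it passes through the intermediate region, and add the obstacle factor via a random walk/harmonic function argument. Throughout, we work under $\P^{\mathcal{O}}$, i.e.~the free field on $\tilde{\G}_{\mathcal{O}}$, and use the loop soup representation \eqref{eq:isoloopK}.

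\emph{Step 1: explore the local clusters.} Explore the cluster of $A$ inside $U=B(A,r)$ from $A$ outward, and similarly the cluster of $A'$ inside $U'$. For the event $\{A\leftrightarrow A'\}$ to occur, both explorations must reach $\partial U$ and $\partial U'$ respectively. By the definition \eqref{eq:qPsiBd} of $q$, together with $\delta(A)\le r$ and a union bound over $\Lambda_r$-boxes covering $A$ (at most $C$ of them), these two events cost $Cq(\rr)\rr^{-\nu/2}$ and, at scale $R$ around $A'$, of order $q(\RR)\RR^{-\nu/2}$ per box. Since $\mathcal{O}$ lies at distance at least $C\rr$ from $A$, and at distance of order $R'$ from $U'$, the killing does not affect these local estimates. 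One can compare $\P^{\mathcal{O}}$ with $\P$ locally by monotonicity: killing only shrinks clusters, via the loop soup inclusion $\mathcal{L}_{\tilde{\G}_{\mathcal{O}}}\subset\mathcal{L}$.

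\emph{Step 2: condition and apply the Markov property.} Condition on the cluster $\mathcal{K}$ of $A$ explored up to its exit from $U$ (equivalently, the boundary values of $\varphi$ on $\partial\mathcal{K}$). By the strong Markov property of the metric graph GFF, the connection from $\mathcal{K}$ to $A'$ is controlled by the harmonic extension
\[
h(y)=E^{\mathcal{O}}_y\big[\varphi_{X_{H_{\mathcal{K}}}}\,1\{H_{\mathcal{K}}<\infty\}\big].
\]
The key input is the first moment/harmonic bound from \cite{cai2024onearm} (or \cite[Lemma~3.2]{DrePreRod9}): the conditional probability that $\mathcal{K}$ connects to $A'$ is bounded by $C\,\E[h \text{ on } U'\mid\mathcal{K}]$ times the local arm cost at $A'$, namely $q(\RR)\RR^{-\nu/2}$ up to normalization by $\RR^{\nu}$ from \eqref{eq:hittingvscap}. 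We then bound $h$ on $U'$. By time reversal, i.e.~the sweeping identity Lemma~\ref{L:sweeping-extended} applied on $\tilde{\G}_{\mathcal{O}}$, together with the Harnack inequality \eqref{eq:Harnack}, $h(y)$ for $y\in U'$ is at most $C\,\E[\text{boundary mass of }\mathcal{K}]$ times $\sup_{z\in U}P^{\mathcal{O}}_y(H_U<\infty)$. Reversibility then converts this into $R'^{-\nu}$ times $\inf_{z\in U}P_z(H_{U'}<H_{\mathcal{O}})$ times $\mathrm{cap}(U')\asymp \RR^\nu$. Here Harnack is used to pass from sup to inf over $z\in U$; this is legitimate because $z\mapsto P_z(H_{U'}<H_{\mathcal{O}})$ is harmonic in $B(A,C\rr)$, away from $\mathcal{O}$.

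\emph{Step 3: bound the boundary mass.} The expected boundary mass of $\mathcal{K}$ on $\partial U$, weighted by the field values, is of order $\rr^{\nu/2}q(\rr)\cdot \rr^{-\nu}\cdot(\text{mass})$. Using $\E[\varphi\text{-flux}]$ equal to the $\varphi$-value on $A$ (a martingale/optional stopping identity) and the one-arm bound, the whole expression should collapse to $q(\rr)q(\RR)(\rr/\RR)^{\nu/2}$ multiplied by the obstacle hitting probability. In particular, the $R'$ factors cancel between $g\asymp R'^{-\nu}$ and the normalization.

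\emph{Main obstacle.} The hard part is Step 2: rigorously replacing the unkilled Green function factor $R'^{-\nu}$ of \cite{cai2024onearm} by the killed hitting probability $P_z(H_{U'}<H_{\mathcal{O}})$, uniformly over the random explored set $\mathcal{K}$. My first attempt would be to express the conditional connection bound as a sum over last exit points of $\mathcal{K}$ and entrance points of $U'$, weighted by the killed Green function $g_{\tilde{\G}_{\mathcal{O}}}$. I would then apply the generalized sweeping identity with $A=U'$, $B=\partial\mathcal{K}$, and finally Harnack to decouple $\mathcal{K}$. Controlling the Harnack constants near a rough $\mathcal{O}$ relies on the separation $\mathcal{O}\subset B(A,cR')\setminus B(A,C\rr)$.
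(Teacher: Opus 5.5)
Your outline has the right overall shape: an $A$-side conditional bound expressed through a harmonic average, followed by a sweeping/Harnack argument that produces $\inf_{z\in U}P_z(H_{U'}<H_{\mathcal{O}})$ and $\mathrm{cap}(U)\le C\rr^{\nu}$. That second half is essentially the paper's Lemma~\ref{lem:HbarvsH}. The gap is that the decisive estimate, your Step~2 ``key input'', is asserted rather than derived. It is exactly the content of Lemma~\ref{lem:mainboxtobox}, and the works you cite contain it only in special cases (annuli in $\Z^d$, no obstacle) or in a different context.

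What is missing is a formula for the conditional connection probability. The paper explores the negative clusters from \emph{both} sets, $\mathcal{C}^-=\mathcal{C}^-_{A\cup A'}$. Given $\mathcal{A}^+_{\mathcal{C}^-}$, the remaining field is a free field whose boundary values $\varphi\ge 0$ on $\partial\mathcal{C}^-$ vanish off $A\cup A'$. The Lupu--Werner effective-conductance formula \cite[(18)]{LuWe}, extended to infinite graphs, then gives $1-\exp\{-2\sum_{x,y}h_{\mathcal{C}^-}(x,y)\varphi_x\varphi_y\}$. Only this bilinear form gives your planned last-exit decomposition something to act on. Splitting $h_{\mathcal{C}^-}(x,y)$ at the last exit from $U$ and the first entrance into $U'$ produces $\eta^{\mathcal{O}}_{\mathcal{C}^-_A}(z)$, $z\in\partial U$, paired with an $A'$-side harmonic average at $z'\in\partial U'$. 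Conditioning on the explored cluster of $A$ alone yields no such expression.

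Two further ingredients are absent.

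\emph{Where the one-arm factors come from.} Your Step~1 accounting is off. In the guiding case $A=\tilde B(x,r)$, the cluster of $A$ reaches $\partial U$ with probability of order one, so ``exploration costs'' cannot produce the increasing factor $\rr^{\nu/2}$ in \eqref{eq:crossingAB}. A union bound over boxes covering $A'$ would also lose a factor growing with $\delta(A')/\RR$, whereas the size of $A'$ may only enter through $P_z(H_{U'}<H_{\mathcal{O}})$. The factors $q(\rr)\rr^{-\nu/2}$ and $q(\RR)\RR^{-\nu/2}$ are actually $\P(z\leftrightarrow A)$ and $\P(z'\leftrightarrow A')$ for $z\in\partial U$ and $z'\in\partial U'$. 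They enter through Lemma~\ref{lem:Cn-onearm}, $\E^K[\eta^K_{\mathcal{K}}(z)1\{z\notin\mathcal{K}\}]\le C\P(z\leftrightarrow K')$. Beyond the martingale identity you allude to, this needs the nontrivial bound $\E[\varphi_z1\{z\leftrightarrow K'\}]\le C\P(z\leftrightarrow K')$ of \cite[Lemma~3.3]{cai2024onearm}; Cauchy--Schwarz would only give $\P(z\leftrightarrow K')^{1/2}$ and ruin the exponents. The $\rr^{\nu/2}$ then comes from $\mathrm{cap}(U)\le C\rr^{\nu}$.

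\emph{The two sides are not independent.} Given $\mathcal{C}^-_A$, the $A'$-side exploration takes place in the field $\varphi'+f$ with $f=\eta^{\mathcal{O}}_{\mathcal{C}^-_A}\ge 0$. Its harmonic average at $z'$ therefore carries an extra term bounded by $f(z')$. The paper controls this by $CR^{-\nu}\mathcal{H}^{\mathcal{O}}_{A,A'}(r,R)$, using Harnack on the annulus containing $\partial U'$ and Lemma~\ref{L:sweeping-extended} with $A=U$, $B=\partial U'$. The resulting term, quadratic in $\mathcal{H}^{\mathcal{O}}_{A,A'}(r,R)$, is harmless only because of the truncation $1-e^{-t}\le 1\wedge t$, through $1\wedge Cx(1+x)\le Cx$. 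Your sketch has no place for this term.
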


 The remainder of this section is organized as follows. Proposition~\ref{pro:boxtobox} is proved in Section~\ref{subsec:pf-boxtobox}. Prior to that, we discuss in Section~\ref{subsec:consequ-boxtobox} some consequences of Proposition~\ref{pro:boxtobox} in the case $\mathcal{O}=\emptyset$.

\subsection{Consequences of Proposition~\ref{pro:boxtobox} for $\mathcal{O}=\emptyset$} \label{subsec:consequ-boxtobox}
 Proposition~\ref{pro:boxtobox} implies the upper bounds in the following estimates for box-to-box and annulus crossings. In the sequel we denote by $x_0 \in G$ an arbitrary reference point and abbreviate $B(N)=B(x_0, N)$, $\tilde{B}(N)=\tilde{B}(x_0,N)$ and $\P^N= \P^{\partial B(N)}$  the law of the field killed on $\partial B(N)$, see \eqref{eq:introGFF}. We stress that all results (in particular, the constants $c,C$ appearing below) hold uniformly in the choice of $x_0$ without further mention.

\begin{Prop}
\label{cor:crossingboxseccrit} Assume that $\sup q < \infty$, and let $\eps\in{(0,1)}$. For all $N\geq1$,  $x,y\in{G}$ and $R,r\geq0$ such that $d(x,y)\geq 2(\RR+\rr)$ and $B(x,r),B(y,R)\subset B((1-\eps)N)$,  we have 
\begin{equation}
\label{eq:crossingboxseccrit}
\begin{split} 
c(\varepsilon)d(x,y)^{-\nu}(\RR\rr)^{\frac\nu2}\leq \P^N({B}(x,r)\leftrightarrow {B}(y,R))\leq Cd(x,y)^{-\nu}(\RR\rr)^{\frac\nu2}.
\end{split}
\end{equation}
\end{Prop}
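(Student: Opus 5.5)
The upper bound will come from Proposition~\ref{pro:boxtobox}, and the lower bound from a second moment argument built on the isomorphism \eqref{eq:isoloopK}.

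\emph{Upper bound.} By the isomorphism \eqref{eq:isoloopK}, $\P^N$ corresponds to the loop soup restricted to $\tilde{\G}_{\partial B(N)}$. Loops staying in $\tilde{B}(N)$ are a subset of those in $\tilde{\G}$, so the connection event is monotone in the loop soup. Hence $\P^N(\cdot)\le 2\P(\cdot)$: a connection under $\P^N$ holds for some positive cluster of the restricted soup, which is contained in a cluster of the full soup, and the factor $2$ accounts for the sign. Recall that the sign of a cluster is independent of the loops. It therefore suffices to treat $\P$ with $\mathcal{O}=\emptyset$.

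Assume w.l.o.g.\ that $r\le R$, using the symmetry of the bound. First consider the case $R\le d(x,y)/8$. Apply Proposition~\ref{pro:boxtobox} with $A=\tilde{B}(x,r)$, $A'=\tilde{B}(y,R)$ and $R'=d(x,y)$. The hypothesis $U'\subset B(A,2R')\setminus B(A,R'/2)$ holds up to adjusting constants; if not, cover $A'$ by boundedly many pieces, or replace $R'$ by a comparable value.

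Since $\sup q<\infty$, the bound \eqref{eq:crossingAB} becomes $C(\rr/\RR)^{\nu/2}\inf_{z\in U}P_z(H_{U'}<\infty)$. By \eqref{eq:hittingvscap} and \eqref{eq:capBallBd}, this hitting probability is at most $C d(x,y)^{-\nu}\RR^{\nu}$. This yields $Cd(x,y)^{-\nu}(\rr\RR)^{\nu/2}$.

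If instead $R> d(x,y)/8$, then $d(x,y)\asymp \RR$. The bound then reduces to the one-arm estimate $\P(B(x,r)\leftrightarrow \partial B(x,d(x,y)/2))\le C(\rr/d(x,y))^{\nu/2}$. This follows from $\sup q<\infty$ by a union bound over $\asymp \rr^\alpha$ points combined with Proposition~\ref{pro:boxtobox}, or directly from Proposition~\ref{pro:boxtobox} with $A'$ a sphere, where the hitting probability is $\asymp1$.

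\emph{Lower bound.} I would use a second moment method on the metric graph with killing on $\partial B(N)$. Let $\mathcal{N}=\sum_{u\in \partial B(x,r),\,v\in\partial B(y,R)}\lambda_u\lambda_v 1\{u\leftrightarrow v\}$, weighted for instance by equilibrium measures so that the result is comparable to capacities. The first moment uses the exact two-point function: $\P^N(u\leftrightarrow v)\asymp g_N(u,v)$, which follows from the isomorphism / Lupu's formula $\P(u\leftrightarrow v)=\frac{2}{\pi}\arcsin\big(g(u,v)/\sqrt{g(u,u)g(v,v)}\big)$. By \eqref{eq:GreenGK}, since the boxes are at distance $\ge\eps N$ from $\partial B(N)$, we get $g_N(u,v)\ge c(\eps)d(x,y)^{-\nu}$. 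Thus $\E^N[\mathcal{N}]\gtrsim c(\eps)d(x,y)^{-\nu}\rr^\nu\RR^\nu$.

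For the second moment I need $\E^N[\mathcal{N}^2]\lesssim d(x,y)^{-\nu}(\rr\RR)^{\nu}\cdot(\rr\RR)^{\nu/2}$. This requires four-point connection bounds for events of the type $\{u\leftrightarrow v,\,u'\leftrightarrow v'\}$, which I would obtain by tree-graph type or one-arm arguments again from $\sup q<\infty$ and the upper bound just proven. Indeed, within the ball around $x$, the expected number of pairs $u,u'$ connected to each other is of order $\rr^{\nu}\cdot\rr^{\nu/2}$-like. Paley--Zygmund then gives the claim.

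This four-point estimate is the main obstacle. If it is too cumbersome, the alternative I would try first is instead a gluing argument: connect $B(x,r)$ to $\partial B(x,d/4)$ and $B(y,R)$ to $\partial B(y,d/4)$, each with probability $\gtrsim$ the corresponding one-arm bound (the lower bound $q\ge c$ plus the killing). I would then join the two macroscopic clusters with probability $\ge c$, using that macroscopic clusters have large capacity and so are hit by a connecting loop. This is essentially the FKG-free, loop-soup version of the quasi-multiplicativity argument, and the constant $c(\eps)$ comes from keeping the construction inside $B((1-\eps/2)N)$.
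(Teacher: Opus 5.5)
Your upper bound is essentially the paper's argument. It combines Proposition~\ref{pro:boxtobox} with $\mathcal O=\emptyset$ and the hitting estimate from \eqref{eq:hittingvscap}--\eqref{eq:capBallBd} when $d(x,y)\ge C(\RR\vee\rr)$, the sphere version of Proposition~\ref{pro:boxtobox} when $d(x,y)\asymp\RR$, and $\P^N\le C\P$ by monotonicity of the loop soup in \eqref{eq:isoloopK}. Discard the other variant you mention: a union bound over the $\asymp\rr^{\alpha}$ points of $B(x,r)$ loses a factor $\rr^{\alpha-\nu/2}$.

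The lower bound, however, is not proved. Your second-moment target $\E^N[\mathcal N^2]\lesssim d(x,y)^{-\nu}(\rr\RR)^{3\nu/2}$ is the right order. But it rests on a sharp bound for $\P^N(u,u',v,v'\text{ in one cluster})$ that decouples the local two-arm events near $B(x,r)$ and $B(y,R)$ from the long-range connection. Nothing established so far gives this. Your upper bound concerns a single connection event, and tree-graph bounds are not sharp below the upper critical dimension. On $\Z^5$, for instance, even granting a BK-type inequality they only give $\E^N[\mathcal N^2]\lesssim d(x,y)^{-3}(\rr\RR)^{5}$. Paley--Zygmund then yields $d(x,y)^{-3}\rr\RR$ instead of $d(x,y)^{-3}(\rr\RR)^{3/2}$. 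What you would need is a two-ended analogue of Lemma~\ref{lem:lowMink}; its one-ended proof in Appendix~\ref{App:lowMink} already requires interlacements and H\"older-type control of harmonic averages.

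The gluing route does not close either, for two reasons.
First, $q\ge c$ is a point-to-sphere statement. It only gives arm probabilities $\gtrsim d(x,y)^{-\nu/2}$ and loses the factor $(\rr\RR)^{\nu/2}$. The ball-to-sphere lower bound in \eqref{eq:crossingannuli} is itself deduced from the statement you are proving.
Second, you cannot condition on two distinct clusters and then ask for a loop hitting both, since by definition there is none. You would have to localize the arms to loops inside $\tilde B(x,d(x,y)/4)$ and $\tilde B(y,d(x,y)/4)$ and use the independent loops contained in neither ball. You would also need a capacity lower bound conditional on the arm, which via \eqref{eq:bigradiusimpliessmallcap2} again requires the arm lower bound.
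Separately, \eqref{eq:GreenGK} does not give $g_N(u,v)\ge c(\eps)d(x,y)^{-\nu}$ when $d(x,y)\asymp N$, because it requires $d(u,\partial B(N))\ge C d(u,v)$. A Harnack chain as in \eqref{eq:hittingballsrec} is needed.

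The idea you are missing is the exploration martingale of \cite{cai2024onearm}, which needs only Green-function input.
Explore the clusters of $\partial B(y,R)$ of both signs, $(\mathcal I_t)_{t\ge0}$, and set $\mathcal M_t=\eta^{\partial B(N)}_{\mathcal I_t}(x)$.
By Hunt's switching identity, $\mathrm{Var}(\mathcal M_0)=\sum_{z}P_x^N(X_{H_{B(y,R)}}=z)\,g_N(x,z)\ge c(\eps)\,d(x,y)^{-2\nu}\RR^{\nu}$.
On $\{\mathcal I_\infty\cap B(x,r)=\emptyset\}$ one has $\mathcal M_\infty=0$, because $\phi$ vanishes where the walk from $x$ first hits $\mathcal I_\infty$. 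One also has $\langle\mathcal M\rangle_\infty\le C\rr^{-\nu}$, since $d(x,\mathcal I_\infty)\ge r$.
Non-connection therefore forces a Brownian motion independent of $\mathcal M_0$ to cover the distance $|\mathcal M_0|$ within time $C\rr^{-\nu}$.
Hence $\P^N(\mathcal I_\infty\cap B(x,r)=\emptyset)\le\P\big(|X/Y|\ge c(\rr\RR)^{\nu/2}d(x,y)^{-\nu}\big)$ for independent standard Gaussians $X,Y$. The complement of this event has probability at least $c(\rr\RR)^{\nu/2}d(x,y)^{-\nu}$.
Sign symmetry then passes from clusters of either sign to $E^{\ge0}$, and no multi-point estimate is needed.
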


\begin{Rk}[Further consequences]
\label{rk:othercrossingbounds}
\begin{enumerate}[1)] 
\item Without the assumption $\sup q < \infty$, the upper bound in \eqref{eq:crossingboxseccrit} remains true with an additional factor $q(\rr)\sup_{s\in{[c\RR,C\RR]}}q(s)$(cf.~\eqref{eq:crossingAB}), while the lower bound does not require this assumption. This follows directly from the proof.
One immediately deduces from this the crossing estimates
\begin{equation}
\label{eq:crossingannuli}
c(\varepsilon)\left(\frac{\rr}{R}\right)^{\frac\nu2}\leq \P^{N}({B}(x,r)\leftrightarrow \partial {B}(x,R))\leq Cq(\rr)q(cR)\left(\frac{\rr}{R}\right)^{\frac\nu2},
\end{equation}
valid for $N \geq 1$, $\varepsilon \in (0,1)$, $B(x,R)\subset B((1-\eps)N)$ and $R \geq \rr$. It suffices to argue when $R\geq C'\rr$ for a large constant $C'$: if $\rr\leq R\leq C'\rr$, the upper bound is trivial, and the lower bound follows from the one for $C'\rr$ instead of $R$, since the probability in \eqref{eq:crossingannuli} is non-increasing in $R$. For the lower bound in \eqref{eq:crossingannuli}, one places a ball $B(y, c\varepsilon R)$ inside $B((1-\tfrac\eps2)N)\setminus B(x,R)$ in such a way that $(R< )d(x,y) \leq CR $, so that $d(x,y)^{-\nu} \geq cR^{-\nu}$. This can be achieved by  resorting to the approximate lattice $\Lambda_{c\varepsilon R}$, see \eqref{eq:defLambdaL}, and picking $y$ therein. Similarly, using a bounded number of balls with centers in $\Lambda_{c R}$ and radius $cR$, all contained in $B(x,R)$, one covers $\partial B(x,R/2)$ and the upper bound in \eqref{eq:crossingannuli} follows via a union bound.
\item \label{eq:crossing-infinitevol} Letting $N\rightarrow\infty$ (with $\varepsilon=\tfrac12$ say), analogues of both \eqref{eq:crossingboxseccrit} and \eqref{eq:crossingannuli} remain true with $\P$ in place of $\P^N$, which hold with a uniform constant $c=c(\frac12)$ in the lower bounds and removing the restriction involving $B((1-\eps)N)$.
\item %
 (Nearby balls in \eqref{eq:crossingboxseccrit} and conformal invariance). The restriction $d(x,y)\geq 2(\RR+\rr)$ in \eqref{eq:crossingboxseccrit} is not essential for the upper bound. Assume that $\sup q<\infty$, $r\leq R$ and $d(x,y)\geq R+C\rr$. On the event $\{B(x,r)\leftrightarrow B(y,R)\}$, one has $B(x,r)\leftrightarrow\partial B(x,d(x,y)-R-1)$, and thus by \eqref{eq:crossingannuli} (with $N\rightarrow\infty$, see item~\ref{eq:crossing-infinitevol} above) and monotonicity,
\begin{equation}
\label{eq:crossingnearby}
\P^N\big(B(x,r)\leftrightarrow B(y,R)\big)\leq C\Big(\frac{\rr}{d(x,y)-R}\Big)^{\frac\nu2}\leq C'\Big(\frac{\rr\RR}{d(x,y)^2-(\RR-\rr)^2}\Big)^{\frac\nu2},
\end{equation}
where the last inequality follows from $d(x,y)^2-(\RR-\rr)^2=(d(x,y)-\RR+\rr)(d(x,y)+\RR-\rr)$ when $d(x,y)\leq 2(\RR+\rr)$, and is a consequence of \eqref{eq:crossingboxseccrit} otherwise. The right-hand side of \eqref{eq:crossingnearby} is thus of the same order as the bounds in \eqref{eq:crossingboxseccrit} when $d(x,y)\geq2(\RR+\rr)$, but also captures the behavior of the connection probability when $B(x,r)$ and $B(y,R)$ are close to each other relative to $R$, in which case it is of order $(\rr/(d(x,y)-R))^{\nu/2}$. We expect a matching lower bound to hold in this regime as well. It could presumably be obtained by adapting the proof of the lower bound in \eqref{eq:crossingboxseccrit} below, exploring only the clusters of the part of $\partial B(y,R)$ within distance $C(d(x,y)-R)$ of $x$, for which the variance of the martingale at time $0$ is of order $(d(x,y)-R)^{-\nu}$, while the bound on its quadratic variation is unchanged; we do not pursue this here. Finally, when $d$ is the Euclidean distance (for instance for balls in $\R^d$ in the scaling limit), one has
\begin{equation*}
\frac{\rr\RR}{d(x,y)^2-(\RR-\rr)^2}=\frac1{2(1+\mathrm{I})},\quad\text{where }\mathrm{I}\stackrel{\text{def.}}{=}\frac{d(x,y)^2-\rr^2-\RR^2}{2\rr\RR}
\end{equation*}
is the inversive product of the spheres $\partial B(x,\rr)$ and $\partial B(y,\RR)$, see \cite[Section~3.2]{beardon1983geometry}, which is invariant under Möbius transformations of $\R^d\cup\{\infty\}$. It thus seems natural to ask whether the connection probabilities in the scaling limit, see Theorem~\ref{the:Hausdorff}, are functions of $\mathrm{I}$ only. 
\end{enumerate}

\end{Rk}

\subsection{Box counting for the largest cluster}
\label{sec:volneg}
In this subsection, we gather bounds on the renormalized volume of the largest cluster in a ball. They extend results from \cite{DrePreRod10} to the renormalized volume $|\cdot|_r$ from \eqref{eq:vol}, and to clusters having simultaneously large volume and large capacity. Their proofs follow closely those of \cite[Sections~2, 4 and~5]{DrePreRod10}; we give the short proof of Lemma~\ref{lem:critvol-ubren}, and only sketch the proof of Proposition~\ref{thm:macroscopiclocalcluster}, indicating the necessary changes. %

For  $r \in [0,\infty)$, $A\subset \tilde{\G}$ and $x \in \tilde{\mathcal{G}}$ we define the sets
\begin{align} \label{eq:clustersDef}
\begin{split}
\mathcal{K}^0_A(x, r) &\stackrel{\text{def.}}{=} \{ y \in \tilde{\mathcal{G}} : B(x, r) {\leftrightarrow} y\text{ in }E^{\geq 0}\cap A \}.
\end{split}
\end{align}
We further write $\K^0_A(x)=\K^0_A(x,0)$ for the cluster of $x$ in $E^{\geq 0}\cap A$, and in case $A=\tilde{\G}$ we abbreviate $\K^0(x,r)=\K^0_{\tilde{\G}}(x,r)$ and $\K^0(x)=\K^0_{\tilde{\G}}(x)$. For $R\geq1$ and $r\in{[0,R)}$, let us introduce the random variable
\begin{equation}
\label{eq:defVra}
\begin{split} 
\LV_{R,r}^0\stackrel{\text{def.}}{=}\sup\big\{|K|_r:\,K\text{ is a connected component of }E^{\geq 0}\cap\tilde{B}(R)\big\}.
\end{split}
\end{equation}
In words, $\LV_{R,r}^0$ measures the maximal (renormalized) cluster volume at scale $r$ inside $\tilde{B}(R)$, that is, the maximal number of balls of radius $r$ centered in $\Lambda_{r}$ (see \eqref{eq:defLambdaL}) that any cluster of $\{ \varphi \geq  0\} \cap \tilde{B}(R)$ intersects.

We start with a simple but useful bound on the (renormalised) cardinality of $\mathcal{K}^0(x, r)$, and on $\LV^0_{R,r}$. They are versions of the bound \cite[(2.18)]{DrePreRod10} and of \cite[Proposition~2.4]{DrePreRod10} on the renormalized lattice $\Lambda_{r}$, and follow from Proposition~\ref{cor:crossingboxseccrit} below. Recall $|\cdot|_r$ from \eqref{eq:vol}.

\begin{Lemme}
\label{lem:critvol-ubren}
Assume $\sup q<\infty$. For all $x\in{G}$, $R\geq1$, $r\in{[0,R)}$ and $r'\in{[0,r]}$,%
\begin{align}
&\E\big[|\K^{0}(x,r')\cap B(x,R)|_r\big]\leq C R^{\alpha-\nu} \rr^{\tfrac\nu2-\alpha}(\rr')^{\tfrac\nu2}, \label{eq:expectedvolumeinballren} \\
&\E\big[\LV^{0}_{R,r}\big]  \le C(R/\rr)^{\alpha-\frac{\nu}2}. \label{eq:upperMR0}
\end{align}

\end{Lemme}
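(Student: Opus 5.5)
For \eqref{eq:expectedvolumeinballren} I would write the renormalized volume as a sum of indicators and apply the upper bound of Proposition~\ref{cor:crossingboxseccrit} (infinite-volume version, Remark~\ref{rk:othercrossingbounds},\ref{eq:crossing-infinitevol}) to each term. Concretely,
\begin{equation*}
\E\big[|\K^{0}(x,r')\cap B(x,R)|_r\big]\leq \sum_{y\in\Lambda_r\cap B(x,R+r)} \P\big(B(x,r')\leftrightarrow B(y,r)\big).
\end{equation*}
For the at most $C$ points $y$ with $d(x,y)< 2(\rr+\rr')+C\rr$, i.e.\ $d(x,y)\le C\rr$, I would use the annulus bound \eqref{eq:crossingannuli} with $\sup q<\infty$: the probability is at most $C(\rr'/\rr)^{\nu/2}$. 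For the remaining $y$, \eqref{eq:crossingboxseccrit} gives the bound $Cd(x,y)^{-\nu}(\rr\rr')^{\nu/2}$. I would then sum dyadically. By \eqref{eq:defLambdaL}, the number of $y\in\Lambda_r$ with $d(x,y)\in[2^k\rr,2^{k+1}\rr)$ is at most $C2^{k\alpha}$. Hence the sum is at most
\begin{equation*}
C(\rr\rr')^{\frac\nu2}\rr^{-\nu}\sum_{k\le \log_2(CR/\rr)}2^{k(\alpha-\nu)}\le C(\rr\rr')^{\frac\nu2}\rr^{-\nu}(R/\rr)^{\alpha-\nu},
\end{equation*}
since $\alpha-\nu\geq2>0$. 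This equals $CR^{\alpha-\nu}\rr^{\nu/2-\alpha}(\rr')^{\nu/2}$. The near-diagonal terms contribute $C(\rr'/\rr)^{\nu/2}$, which is also dominated by this quantity because $R\ge\rr$. The only care needed is the boundary case $r<1$, where $\Lambda_r=G$ and one uses $\rr=1$; the same argument works there.

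For \eqref{eq:upperMR0} I would follow \cite[Proposition~2.4]{DrePreRod10}, working at the renormalized scale. Fix the threshold $a\stackrel{\text{def.}}{=}(R/\rr)^{\alpha-\nu/2}$; it suffices to show $\E[\LV^0_{R,r}1\{\LV^0_{R,r}\ge a\}]\le Ca$. The key observation is that if a cluster $K$ of $E^{\geq0}\cap\tilde B(R)$ has $|K|_r=m$, then it meets $m$ of the balls $B(y,r)$, $y\in\Lambda_r\cap B(R+r)$. Therefore
\begin{equation*}
\LV^0_{R,r}\cdot\LV^0_{R,r}\le \sum_{y\in\Lambda_r\cap B(R+r)}|\K^0(y,r)\cap B(R)|_r .
\end{equation*}
This follows by counting the pairs $(y,y')$ with both $B(y,r)$ and $B(y',r)$ meeting the largest cluster, noting that clusters in $\tilde B(R)$ are contained in clusters in $\tilde{\G}$; one can apply this with the ball $B(R)$ recentred, via the triangle inequality. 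Taking expectations and applying \eqref{eq:expectedvolumeinballren} with $r'=r$ and $R$ replaced by $2R$ gives
\begin{equation*}
\E\big[(\LV^0_{R,r})^2\big]\le C(R/\rr)^{\alpha}\cdot R^{\alpha-\nu}\rr^{\nu-\alpha}=C(R/\rr)^{2\alpha-\nu},
\end{equation*}
using $|\Lambda_r\cap B(2R)|\le C(R/\rr)^\alpha$. Cauchy--Schwarz then yields $\E[\LV^0_{R,r}]\le C(R/\rr)^{\alpha-\nu/2}$.

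The main obstacle is the pair-counting inequality. One must check that the largest cluster in $\tilde B(R)$ is contained in $\K^0(y,r)$ for each relevant $y$. This is clear, since $B(y,r)$ intersects the cluster, which is connected in $E^{\ge0}$. One must also check that this cluster lies inside $B(y,2R)$, which is needed to invoke \eqref{eq:expectedvolumeinballren} centred at $y$; this holds because $d(y,x_0)\le R+r\le 2R$. The remaining steps are routine consequences of the crossing bounds and the lattice counting in \eqref{eq:defLambdaL}.
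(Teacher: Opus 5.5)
Your bound on the near-diagonal terms in \eqref{eq:expectedvolumeinballren} is wrong. You bound those terms by $C(\rr'/\rr)^{\nu/2}$, but the annulus estimate \eqref{eq:crossingannuli} controls $\P(B(x,r')\leftrightarrow B(y,r))$ only when $B(y,r)$ lies outside $B(x,\rho)$ for some $\rho\geq c\rr$, i.e.\ when $d(x,y)\geq r+c\rr$. By \eqref{eq:defLambdaL} there is always some $y\in\Lambda_r$ with $x\in B(y,r)$. For this $y$ the event $\{B(y,r)\cap \K^0(x,r')\cap B(x,R)\neq\emptyset\}$ contains $\{\phi_x\geq 0\}$, so its probability is at least $1/2$. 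The near-diagonal contribution is therefore genuinely of order one, and what your argument actually gives is $C+CR^{\alpha-\nu}\rr^{\frac\nu2-\alpha}(\rr')^{\frac\nu2}$. The paper's proof gives the same: it bounds these terms by an additive $C$, writing $\leq C+C(\rr\rr')^{\frac\nu2}\sum_y d(x,y)^{-\nu}$.

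The additive constant can be absorbed only when $R^{\alpha-\nu}\rr^{\frac\nu2-\alpha}(\rr')^{\frac\nu2}\geq c$. This holds for $r'=r$, where the main term is $(R/\rr)^{\alpha-\nu}\geq 1$, but fails in general. For $r'=0$, $r=R/2$ and $R\to\infty$, the left-hand side of \eqref{eq:expectedvolumeinballren} is at least $1/2$ while the right-hand side is $O(R^{-\nu/2})$. So no argument can establish the inequality for all $r'\in[0,r]$ as stated. What you can prove is the version with $C+\dots$ on the right, or equivalently the stated bound when its right-hand side is bounded below. This is enough for the paper: the later uses, in \eqref{eq:upperMR0} and in Lemmas~\ref{lem:bigvolumeimpliesbigradius} and~\ref{lem:volumeMrsV}, all take $r'=r$. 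In that case your near-diagonal bound reduces to the trivial bound by $C$ and your dyadic summation is correct.

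Your argument for \eqref{eq:upperMR0} is the paper's: the pair-counting bound $(\LV^0_{R,r})^2\leq\sum_{y\in\Lambda_r\cap B(R+r)}|\K^0(y,r)\cap B(R)|_r$, then \eqref{eq:expectedvolumeinballren} with $r'=r$, then Cauchy--Schwarz. The threshold $a$ you introduce is never used and can be dropped. One correction is needed. From $d(y,x_0)\leq R+r$ you only get $B(x_0,R)\subset B(y,2R+r)\subset B(y,3R)$, not $B(y,2R)$. So apply \eqref{eq:expectedvolumeinballren} at scale $3R$, as the paper does; this affects only constants.
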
    
\begin{proof}
By \eqref{eq:crossingboxseccrit} and Remark~\ref{rk:othercrossingbounds},\ref{eq:crossing-infinitevol}, the left side of \eqref{eq:expectedvolumeinballren} is bounded by
\begin{equation*}
\begin{split} 
& \sum_{y\in{B(x,R+r)\cap \Lambda_{r}}}\P\big({B}(x,r')\leftrightarrow {B}(y,r)\big)
\leq C+C(\rr\rr')^{\tfrac{\nu}2}\sum_{y\in{\Lambda_{r}\cap B(x,R+r)\setminus B(x,4\rr)}} d(x,y)^{-\nu}.
\end{split}
\end{equation*}
Using \eqref{eq:defLambdaL} one readily finds that the last sum is bounded by $C' \rr^{-\alpha}R^{\alpha-\nu}$, which proves \eqref{eq:expectedvolumeinballren}. Turning to \eqref{eq:upperMR0}, following ideas from \cite[Proposition~2.4]{DrePreRod10}, we have by \eqref{eq:clustersDef}, \eqref{eq:defVra}, \eqref{eq:defLambdaL},\begin{equation*}
\begin{split} 
\E\big[(\LV_{R,r}^{0})^2\big]\leq \E\Big[\sum_{x\in{B(2R)\cap\Lambda_{r}}}|\K_{\tilde{B}(R)}^{0}(x,r)|_r\Big]\leq C(R/\rr)^{\alpha}\sup_{x\in{G}}\E\big[|\K^{0}_{\tilde{B}(x,3R)}(x,r)|_r\big].
\end{split}
\end{equation*}
Bounding the last expectation by means of \eqref{eq:expectedvolumeinballren} (with $3R$ instead of $R$ and $r'=r$), we obtain $\E[(\LV_{R,r}^{0})^2]\leq C(R/\rr)^{2\alpha-\nu}$, and \eqref{eq:upperMR0} follows by the Cauchy--Schwarz inequality.
\end{proof}

The lower bound on $\LV^0_{R,r}$, and more generally Proposition~\ref{thm:macroscopiclocalcluster} below, rely on the following condition, which ensures that clusters inside a ball have large capacity with sufficiently large probability, see also \eqref{eq:LBL2} in the proof of Proposition~\ref{thm:macroscopiclocalcluster}. Define for all $R\geq 1$, $\eta>0$ and $r\in{[0,R)}$ the quantity 
\begin{align}
\label{eq:defprR}
\begin{split} 
& p(r,R,\eta) \\
&\quad \stackrel{\text{def.}}{=} \inf_{x,y} \sup\Big\{v\geq0:\,\P\big(\exists z \in B(y,\rr) \, : \, \mathrm{cap}\big(\K^0_{\tilde{B}(x,R)}(z)\big)\geq (vR)^{\nu}\big)\geq \eta \left({R}/{\rr}\right)^{-\frac\nu2}\Big\},
\end{split}
\end{align}
with the sup ranging over $x \in G, y \in B(x,R/2)$. Note that due to $\K^0_{\tilde{B}(x,R)}(x,r) \subset \tilde{B}(x,R)$,  the monotonicity of the capacity in combination with \eqref{eq:capBallBd} directly supplies us with the finiteness of  $p(r,R,\eta)$ for all admissible choices of parameters. What is more relevant to us, however, is that $p$ is bounded away from $0$, i.e.
\begin{equation} \label{eq:pPos}
\underline{p}\stackrel{\text{def.}}{=} \inf_{R'\geq1, r \in [1,cR']}p(r,R',\eta)>0 \quad \text{ for some $c,\eta > 0$.}
\end{equation}
As we will use \eqref{eq:pPos} frequently in the following, we note here that the validity of \eqref{eq:pPos} will be established in Corollary \ref{cor:supqimpliessupp} under the sole assumption $\sup q<\infty$. 
 Nevertheless, as we have not established that corollary yet, we will  keep the dependence on $\underline{p}$ explicitly for the remainder of this subsection.

Recall the definition of $\LV^{0}_{R,r}$ from \eqref{eq:defVra}, the notation $\mathcal{L}_{A}$, $A \subset \tilde{\G}$, from Section~\ref{subsec:metric}, and that we attribute to each cluster of loops of $\mathcal{L}_{\tilde{B}(x,R)}$ a sign $\pm$ with probability $1/2$, independently for each cluster. The following result generalizes \cite[Proposition~2.4 and Theorem~2.5]{DrePreRod10}, which correspond to the special case $r=0$, and asserts in particular that the largest cluster in $\tilde{B}(x,R)$ has renormalized volume of order $(R/\rr)^{\alpha-\frac\nu2}$.

\begin{Prop}
    \label{thm:macroscopiclocalcluster}
 Assume that $\sup q<\infty$ and that \eqref{eq:pPos}
 holds. Then there exist constants $c,C \in (0,\infty)$ such that  for all $x\in{G}$, $t \in (0, c]$, $r>0$ and $R\geq C\rr/t$, 
\begin{equation}
\label{eq:tailEstlargestVolgeninB(r)}
\begin{split} 
\Q \Big(\exists\text{ a cluster }K\text{ in }\mathcal L_{\tilde{B}(x,R)}\text{ such that } |K|_{\rr}\geq (tR/\rr)^{\alpha-\nu/2}&\text{ and }\mathrm{cap}(K)\geq (tR)^{\nu}  \Big)
\\&\geq 1-\exp\{-ct^{-\alpha}\},
\end{split}
\end{equation}
and for all $t\geq1$,  $r\geq0$ and $R\geq C\rr$,
\begin{equation}\label{eq:MR0tight}
\P\Big( t^{-1}(R/\rr)^{\alpha-\frac{\nu}2}\leq \LV_{R,r}^{0}\leq t(R/\rr)^{\alpha-\frac{\nu}2}\Big)\geq 1-Ct^{-1}.
\end{equation}
Moreover, under the sole assumption $\sup q<\infty$, i.e.~without assuming \eqref{eq:pPos}, the upper bound on $\LV_{R,r}^{0}$ in \eqref{eq:MR0tight} remains true, and so does \eqref{eq:tailEstlargestVolgeninB(r)} upon removing the condition on $|K|_{\rr}$, that is, for all $x\in{G}$, $t \in (0, c]$, $r>0$ and $R\geq C\rr/t$,
\begin{equation}\label{eq:tailEstlargestCapinB(r)}
\Q \Big(\exists\text{ a cluster }K\text{ in }\mathcal L_{\tilde{B}(x,R)}\text{ such that } \mathrm{cap}(K)\geq (tR)^{\nu}  \Big)
\geq 1-\exp\{-ct^{-\alpha}\}.
\end{equation}
\end{Prop}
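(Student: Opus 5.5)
The plan is to follow \cite[Sections~2, 4 and~5]{DrePreRod10}, where the case $r=0$ is treated, and to replace single vertices by balls $B(y,\rr)$ with $y\in\Lambda_{r}$ wherever volume is counted. I would prove the statements in the following order: first \eqref{eq:tailEstlargestCapinB(r)}, then \eqref{eq:tailEstlargestVolgeninB(r)}, and finally \eqref{eq:MR0tight}. The upper bound in \eqref{eq:MR0tight} is immediate from \eqref{eq:upperMR0} and Markov's inequality, and it uses only $\sup q<\infty$.

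\textbf{Capacity bound \eqref{eq:tailEstlargestCapinB(r)}.} Partition $\tilde B(x,R/2)$ into order $t^{-\alpha}$ disjoint balls $\tilde B(y_i,tR)$, with $y_i$ taken in $\Lambda_{C tR}$ so that the balls $\tilde B(y_i,2tR)$ are pairwise disjoint. Restricted to loops contained in each $\tilde B(y_i,2tR)$, the loop soups $\mathcal L_{\tilde B(y_i,2tR)}$ are independent Poisson processes. A cluster of $\mathcal L_{\tilde B(y_i,2tR)}$ is contained in a cluster of $\mathcal L_{\tilde B(x,R)}$, and capacity is monotone, so it suffices to show that each small ball independently contains a cluster of capacity at least $c(tR)^\nu$ with probability at least some $c'>0$. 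For a single ball of radius $\rho=2tR$, I would combine a second-moment or energy argument. The sign-cluster isomorphism \eqref{eq:isoloopK} together with $\sup q<\infty$ gives one-arm probabilities of order $\rho^{-\nu/2}$ and two-point bounds from \eqref{eq:crossingboxseccrit}. From these, the expected capacity of the union of clusters crossing from $B(y,\rho/4)$ to $\partial B(y,\rho/2)$ is of order $\rho^\nu$, and the second moment is bounded by $C\rho^{2\nu}$; Paley--Zygmund then gives the required constant probability. An alternative, simpler route for this step is that a single Brownian loop of diameter of order $\rho$ lies in $\tilde B(y,2\rho)$ with probability bounded below (by \cite[Lemma~3.1]{DrePreRod8}), and its range has capacity at least $c\rho^\nu$ with positive probability by \eqref{eq:hittingvscap}. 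Independence over the $t^{-\alpha}$ balls then yields the failure bound $\exp\{-ct^{-\alpha}\}$.

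\textbf{Volume and capacity bound \eqref{eq:tailEstlargestVolgeninB(r)}.} Here one needs, in addition, a cluster with $|K|_{\rr}\ge (tR/\rr)^{\alpha-\nu/2}$. The idea is as in \cite[Theorem~2.5]{DrePreRod10}. Condition \eqref{eq:pPos} gives, for each of the order $(tR/\rr)^{\alpha}$ lattice points $z\in\Lambda_{\rr}\cap B(y_i,tR)$, a probability at least $\eta(tR/\rr)^{-\nu/2}$ that $B(z,\rr)$ is connected to a cluster of capacity at least $(\underline p\,tR)^\nu$. Such large-capacity clusters are unique up to constants: two of them that are disjoint but both of capacity of order $(tR)^\nu$ inside a ball of radius $tR$ are unlikely, by the obstacle bound \eqref{eq:crossingAB}, or alternatively they merge with positive probability. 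The count $N_i$ of such $z$ therefore has mean at least $c(tR/\rr)^{\alpha-\nu/2}$. Its second moment is bounded by $C(tR/\rr)^{2\alpha-\nu}$ using \eqref{eq:expectedvolumeinballren}, exactly as in the proof of \eqref{eq:upperMR0}. Paley--Zygmund then gives, with constant probability, a single cluster meeting at least $c(tR/\rr)^{\alpha-\nu/2}$ of these balls. Independence across the $y_i$ and a rescaling of $t$ then yield \eqref{eq:tailEstlargestVolgeninB(r)}. Finally, the lower bound in \eqref{eq:MR0tight} follows from \eqref{eq:tailEstlargestVolgeninB(r)} applied with $t$ replaced by a small constant multiple of $t^{-1/(\alpha-\nu/2)}$, transferred to $E^{\ge0}$ via \eqref{eq:isoloopK}. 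Positive clusters carry a fair coin, so with probability at least $1/2$ the relevant cluster is positive. Using several independent subballs boosts this probability to $1-Ct^{-1}$.

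\textbf{Main obstacle.} I expect the hardest step to be ensuring that the balls counted in $N_i$ belong to \emph{one} cluster rather than to many disjoint large-capacity clusters. I would handle it as in \cite{DrePreRod10}: if a second such cluster exists, explore the first cluster as an obstacle $\mathcal O$ and use \eqref{eq:crossingAB}. Since $\mathcal O$ has capacity of order $(tR)^\nu$, the factor $P_z(H_{U'}<H_{\mathcal O})$ is small, which caps the contribution of a second cluster.
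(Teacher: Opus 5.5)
Your outer structure matches the paper: the upper bound in \eqref{eq:MR0tight} comes from \eqref{eq:upperMR0} and Markov's inequality, and the boost to $1-\exp\{-ct^{-\alpha}\}$ comes from independence of the soups in $\asymp t^{-\alpha}$ disjoint subballs. The gap is the single-ball input for \eqref{eq:tailEstlargestCapinB(r)}: with probability bounded below, some cluster in a ball of radius $\rho$ has capacity at least $c\rho^\nu$. Your ``simpler route'' via one loop fails as soon as $\alpha\le 2\nu$, e.g.\ on $\Z^4$ and $\Z^5$. The range of a loop of diameter $\rho$ has capacity of order $\rho^{\nu\wedge(\alpha-\nu)}$, with a logarithmic loss when $\alpha=2\nu$ (see the discussion after \eqref{eq:loopinclusion-easy}); on $\Z^5$ this is $\rho^2\ll\rho^3$. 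The loop-capacity bound \cite[(3.7)]{DrePreRod8} is exactly where $\nu\le\alpha/2$ is needed, and \eqref{eq:hittingvscap} cannot replace it. Your second-moment route does not close either. One-arm and two-point bounds such as \eqref{eq:crossingboxseccrit} only control diameter and volume, and capacity is not a linear functional of the set. Since $\mathrm{cap}\le C\rho^\nu$, the second-moment bound you state is trivial. What is missing is a first-moment lower bound, i.e.\ that large diameter forces large capacity. That implication is Corollary~\ref{cor:supqimpliessupp}, which the paper deduces \emph{from} \eqref{eq:tailEstlargestCapinB(r)}, so you cannot use it here. An energy argument would also need a three-point bound like Lemma~\ref{lem:lowMink}, which the paper has only on $\Z^d$. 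Even then it would give large capacity only for a union of crossing clusters, not for one cluster. The paper instead imports this input from \cite[(4.6)--(4.7)]{DrePreRod10}, with $\sup q<\infty$. It then uses \eqref{eq:crossingannuli} to ensure the cluster found in $\tilde B(x,\eps R)$ does not reach $\tilde B(x,R)^{\mathsf c}$, so that by \eqref{eq:isoloopK} it is a genuine cluster of $\mathcal L_{\tilde B(x,R)}$. Your plan needs the same localization, because \eqref{eq:pPos} concerns components of $E^{\ge0}\cap\tilde B$ under $\P$, not clusters of a restricted soup.

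For \eqref{eq:tailEstlargestVolgeninB(r)} you correctly locate the main difficulty, but the proposed resolution does not work. Paley--Zygmund for $N_i$ only shows that many balls $B(z,\rr)$ touch \emph{some} cluster of capacity at least $(\underline p\,tR)^\nu$. Such clusters are not unique; with positive probability there are several. Bounding their number in a ball is essentially the tightness statement \eqref{eq:volumeMrs}, which the paper proves downstream of this proposition. Likewise, your second-moment bound via \eqref{eq:expectedvolumeinballren} controls only pairs $z,z'$ in the \emph{same} cluster, as in the proof of \eqref{eq:upperMR0}, not pairs attached to distinct large clusters. Exploring one large cluster as an obstacle does not help. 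A set $\mathcal O$ of capacity of order $(tR)^\nu$ lowers $P_z(H_{U'}<H_{\mathcal O})$ only by a constant factor. Smallness requires a good obstacle in the sense of Lemmas~\ref{lem:goodobstacle} and~\ref{lem:fpploops}, with large-capacity clusters in many mesoscopic boxes along every path. In the paper, \eqref{eq:condOL} for such obstacles is verified using the present proposition.

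The missing idea is the gluing mechanism of \cite[Section~4]{DrePreRod10}. Under the isomorphism with random interlacements at level $u=a^2/2$, $a$ of order $(\underline pR)^{-\nu/2}$ (cf.\ \eqref{eqcouplingintergffszn} and below), $E^{\ge -a}$ is in law the closure of $E^{\ge0}$ together with all sign clusters met by $\I^u$. Each trajectory is connected, and only $O(1)$ trajectories hit the ball. So the large-capacity critical clusters they meet are merged into boundedly many clusters of $E^{\ge -a}$, one of which carries both the volume and the capacity; this is \eqref{eq:LBL2}. One applies this at scale $\eps R$ with $a=(c\eps R)^{-\nu/2}$. The entropy bound \cite[(3.7)]{DrePreRod10} returns to level $0$ at multiplicative cost, since $a^2\,\mathrm{cap}(\tilde B(x,\eps R))\le C$. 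Then one localizes inside $\tilde B(x,R)$ with \eqref{eq:crossingannuli}, and only then boosts by independence.
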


\begin{proof}[Proof (sketch)]
We first explain how \eqref{eq:MR0tight} follows from \eqref{eq:tailEstlargestVolgeninB(r)}. The upper bound on $\LV_{R,r}^{0}$ follows from \eqref{eq:upperMR0} and Markov's inequality, which only requires $\sup q<\infty$. The lower bound is a direct consequence of \eqref{eq:tailEstlargestVolgeninB(r)} and the isomorphism \eqref{eq:isoloopK}.

We now sketch the proof of \eqref{eq:tailEstlargestVolgeninB(r)}, which is obtained by adapting the proofs of \cite[Proposition~4.1 and Theorem~2.5]{DrePreRod10}; we only indicate the necessary changes. %
\begin{itemize}
\item \emph{Renormalized volume.} Cardinalities are replaced throughout by the renormalized volume $|\cdot|_{\rr}$ from \eqref{eq:vol}, and sums over the vertices of $B(x,R)$ by sums over $B(x,R)\cap \Lambda_{\rr}$, see \eqref{eq:defLambdaL}. Accordingly, the event from \cite[(4.2)]{DrePreRod10} is replaced, for $y\in{\Lambda_{\rr}}$, by the event that there exists $z\in{B(y,\rr)}$ with $\mathrm{cap}(\K^0_{\tilde{B}(x,R)}(z))\geq s$; the bound \cite[(2.18)]{DrePreRod10} is replaced by \eqref{eq:expectedvolumeinballren}, and \cite[Proposition~2.4]{DrePreRod10} by \eqref{eq:upperMR0}.
\item \emph{Capacity of local clusters.} The input \cite[Lemma~3.1]{DrePreRod10}, which bounds from below the probability that a cluster inside a ball has large capacity in terms of the function $q$, is replaced by the assumption \eqref{eq:pPos}, which directly supplies the corresponding lower bound for the events in the first bullet point. This is why the constants below depend on $\underline{p}$; recall that \eqref{eq:pPos} is verified under $\sup q<\infty$ in Corollary~\ref{cor:supqimpliessupp}.
\item \emph{Second moment.} In the upper bound on the second moment, cf.~\cite[(4.10)--(4.12)]{DrePreRod10}, the probability that $B(y,2\rr)$ and $B(z,2\rr)$ are connected, $y\neq z\in{\Lambda_{\rr}}$, is bounded by $C(\rr/d(y,z))^{\nu}$ using Proposition~\ref{pro:boxtobox} and \eqref{eq:hittingvscap}, and each of the two capacity events has probability at most $C\rr^{\frac\nu2}s^{-\frac12}$ by \eqref{eq:capBallBd} and \eqref{eq:crossingannuli} (with $N\to\infty$). This requires $s\geq (\underline{p}\rr)^{\nu}$ and $cs^{1/\nu}\geq \rr$. %
Actually, only the case $K=\emptyset$ (no killing) of \cite[Section~4]{DrePreRod10} is needed, i.e.~the free field on $\tilde{\G}$ rather than on $\tilde{\G}_K$.
\end{itemize}
With these changes, the argument of \cite[Section~4]{DrePreRod10}, which relies on the isomorphism between the free field and random interlacements \cite[(3.20)]{DrePreRod10} at level $u=a^2/2$, yields the following analogue of \cite[Proposition~4.1]{DrePreRod10}: assuming $\sup q<\infty$ and \eqref{eq:pPos}, for all choices of parameters $a\in{(0,1]}$, $R\geq  C $, and $r\in{[0,cR]}$  with
\begin{equation} \label{eq:Rcond}
a \ge \sqrt{2} (\underline pR/4)^{-\frac{\nu}{2}}
\end{equation}
we have for all $x\in G$ that
\begin{align} \label{eq:LBL2} 
\begin{split}
        &\P \Big(\exists\, y\in B(x,R):\, \big| {\K}^{-a}_{\tilde B(x,R)}(y,\rr)\big|_{\rr} \ge  c \underline{p} a\rr^\frac{\nu}{2} \big( {R}/{\rr}\big)^\alpha,\,
  \mathrm{cap}\big({\K}^{-a}_{\tilde B(x,R)}(y,\rr)\big)\geq cR^{\nu}\Big)  
        \ge 
   c \underline{p}^{2},
\end{split}
\end{align}
where $\K_{\tilde{B}(x,R)}^{-a}(y,\bar{r})$ is defined similarly as in \eqref{eq:clustersDef} but for $E^{\geq -a}$ instead of $E^{\geq 0}$.
One then deduces \eqref{eq:tailEstlargestVolgeninB(r)} as follows. Applying \eqref{eq:LBL2} at scale $\eps R$ with $a=(c\eps R)^{-\nu/2}$, the entropy bound \cite[(3.7)]{DrePreRod10} with $b=a$ and $A=\tilde{B}(x,\eps R)$ transfers the resulting cluster to level $0$, at the cost of a multiplicative constant, see \eqref{eq:capBallBd}. On the event that no cluster of $\{\varphi\geq0\}$ intersects both $\tilde{B}(x,\eps R)$ and $\tilde{B}(x,R)^{\mathsf c}$, whose complement has probability at most $C\eps^{\frac\nu2}$ by \eqref{eq:crossingannuli} (with $N\to\infty$) and $\sup q<\infty$, this cluster is also a cluster of $\mathcal{L}_{\tilde{B}(x,R)}$ by the isomorphism \eqref{eq:isoloopK}. Choosing $\eps$ a small enough constant, this proves \eqref{eq:tailEstlargestVolgeninB(r)} with the right-hand side replaced by a positive constant. Finally, the exponential bound follows by considering the balls $\tilde{B}(y,tR)$, $y\in{B(x,R/2)\cap \Lambda_{3tR}}$, which are disjoint by \eqref{eq:defLambdaL}, so that the corresponding events for the loop soup are independent, similarly as below \cite[(5.2)]{DrePreRod10}.

Finally, \eqref{eq:tailEstlargestCapinB(r)} is proved in the same way, but without the need for \eqref{eq:pPos}: with the right-hand side replaced by a positive constant and at level $0$, the existence of a cluster of $\{\varphi\geq0\}\cap\tilde{B}(x,\eps R)$ with capacity at least $c(\eps R)^{\nu}$ with probability bounded away from $0$ follows from \cite[(4.6)--(4.7)]{DrePreRod10} and $\sup q<\infty$, and one then concludes as above by passing to the clusters of $\mathcal{L}_{\tilde{B}(x,R)}$ and boosting the probability via independence, as below \cite[(5.2)]{DrePreRod10}. 
\end{proof}

\subsection{Proof of Proposition~\ref{pro:boxtobox}} \label{subsec:pf-boxtobox}We broadly follow the general strategy from \cite[Section~3]{cai2024onearm} involving the formula \cite[(18)]{LuWe}, suitably adapted (see Lemma~\ref{lem:mainboxtobox} below) to yield the more general statement \eqref{eq:crossingAB}, and to work on general graphs with a non-empty obstacle set $\mathcal{O}$.

We first recall the strong Markov property for $\varphi$, see e.g.~\cite[(1.19)]{MR3492939} for details. We fix some set $K\subset\tilde{\G}$ as in \eqref{eq:K-ass}. For  $O\subset\tilde{\G}_K$ let $\mathcal{A}_O$ denote the $\sigma$-algebra $\sigma({\phi}_x,\,x\in{O})$ under $\P^K$. A (random) %
set $\K \subset \tilde{\G}_K$ is said to be \textit{compatible} if it is almost surely compact  and has finitely many connected components, and if $\{\K\subset O\}\in{\A_O}$ for any open set $O\subset\tilde{\G}.$ We also introduce the augmented filtration $\mathcal{A}_{K'}^+=\bigcap_{\eps>0}\A_{K'_\eps}$ for any compact $K'\subset\tilde{\G}_K$, where $K'_\eps$ is the set of point at distance at most $\eps$ from $K'$, for the geodesic distance on $\tilde{\G}_K$ which assigns unit length to each edge.  For any compatible set $\K\subset\tilde{\G}_K$, the $\sigma$-algebra associated to $\K$ is then
\begin{equation}
\label{eq:Markov1}
    \begin{split}
    \mathcal{A}_{\K}^+\stackrel{\text{def.}}{=}\big\{A\in{\A_{\tilde{\G}_K}}:\,A\cap\{\K\subset K'\}\in{\A_{K'}^+}\ &\text{for all compact }K'\subset\tilde{\G}_K  \text{ with } \mathring{K}' \neq \emptyset\big\}.
        \end{split}
\end{equation}
Note that all the above notation depends implicitly on the choice of the set $K$ satisfying \eqref{eq:K-ass} (possibly with $K=\emptyset$). Its choice will always be clear from context. 
The strong Markov property on the graph $\G_K$ then asserts that for any such set $K$ and any random compatible (not necessarily connected) set $\K \subset \tilde{\G}_K,$ 
\begin{equation}
\label{eq:Markov2}
(\phi_x-\eta_{\K}^{K}(x))_{x\in{\tilde{\G}_{\K\cup K}}}\text{ under }\PK(\, \cdot\,|\,\mathcal{A}_{\K}^+)\text{ has the same law as $(\phi_x)_{x\in{\tilde{\G}_{\K\cup K}}}$ under } \PKK{K\cup\K};
\end{equation}
where, for a compact set $K'\subset\tilde{\G}_K$, we write (recall the notation $P_x^K$ from Section~\ref{subsec:G_K})
\begin{equation}
\label{eq:defeta}
    \eta_{K'}^{K}(x)=\sum_{y\in{{\partial K'}}}\PPK_x(X_{H_{K'}}=y,H_{K'}<\infty)\phi_y=\sum_{y\in{{\partial K'}}}P_x(X_{H_{K'}}=y,H_{K'}<H_K)\phi_y
\end{equation}
for the harmonic extension of $\phi$ on $K'$ for the graph $\G_K$. We sometimes abbreviate $\eta_{K'}=\eta_{K'}^{\emptyset}$ (e.g.~in Appendix~\ref{App:lowMink}). Note that $\eta_{K'}^K=\phi$ on $K'$, and that the sum in \eqref{eq:defeta} is finite since $K'$ has finitely many connected components. 

For the purpose of proving Proposition~\ref{pro:boxtobox},  $\mathcal{K}$ will be a subset of $\mathcal{C}_{K}^-$, for some (compact) set $K\subset\tilde{\G}$ satisfying \eqref{eq:K-ass}, where
\begin{equation}
\label{eq:Cn-}
\begin{split} 
\mathcal{C}_{K}^-\stackrel{{\rm def.}}{=} K \cup \big\{y\in{\tilde{\G}}:\,y\leftrightarrow \partial K\text{ in }\{z\in{\tilde{\G}}:\phi_z\leq 0\}\big\},
\end{split}
\end{equation}
consisting of $K$ and the closure of negative sign clusters of $\phi$ intersecting it. The key to the proof of Proposition~\ref{pro:boxtobox} is the following:

\begin{Lemme}
\label{lem:mainboxtobox}
For all sets $\mathcal{O}\subset\tilde{\G}$ and $\emptyset\neq A,A'\subset\tilde{\G}_{\mathcal{O}}$ as in \eqref{eq:K-ass},  and for all $R,r\geq C$ and $R'\geq Cr$ such that $U'\subset B(A,4R')\setminus B(A,R'/3)$, $\delta(A)\leq 3r$, and $\mathcal{O}\subset B(A,cR')$, abbreviating
\begin{equation}
\label{eq:defcalH}
\begin{split} 
\mathcal{H}_{A,A'}^{\mathcal{O}}(r,R)\stackrel{\text{def.}}{=} \sum_{x\in{ U\setminus \mathcal{C}_{A}^-}} \lambda_x^{\mathcal{O}}P_x^{\mathcal{O}}\big(\tilde{H}_{U'}< 
\tilde{H}_{U}\big)\eta_{\mathcal{C}_A^-}^{{ \mathcal{O}}}(x)
\end{split}
\end{equation}
(see above~\eqref{eq:sweeping-gen} for the notation $\tilde{H}_{\cdot}$ and  below \eqref{eq:defeta} for $\eta$), we have
\begin{equation}
\label{eq:mainboxtobox}
\begin{split} 
\text{$\P^{\mathcal{O}}$-a.s., } \quad \P^{\mathcal{O}}\big(A\leftrightarrow A'\,|\,\mathcal{A}_{\mathcal{C}_A^-}^+\big)\leq CR^{-\frac\nu2}q(R)\mathcal{H}_{A,A'}^{\mathcal{O}}(r,R). %
\end{split}
\end{equation}
\end{Lemme}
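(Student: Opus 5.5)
We plan to follow the Lupu--Werner martingale strategy of \cite[Section~3]{cai2024onearm}. First we condition on $\mathcal{A}_{\mathcal{C}_A^-}^+$. Since $\mathcal{C}_A^-$ is compatible, the strong Markov property \eqref{eq:Markov2} on $\G_{\mathcal{O}}$ applies: given this $\sigma$-algebra, the field on $\tilde{\G}_{\mathcal{O}\cup\mathcal{C}_A^-}$ equals $\eta:=\eta^{\mathcal{O}}_{\mathcal{C}_A^-}$ plus an independent free field $\psi$ with law $\P^{\mathcal{O}\cup \mathcal{C}_A^-}$. The field vanishes on $\partial\mathcal{C}_A^-\setminus\partial A$, so $\eta\ge0$. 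Every path from $A$ to $A'$ in $\{\phi\ge0\}$ must exit $\mathcal{C}_A^-$ through a point of $\partial \mathcal{C}_A^-$. Hence $\{A\leftrightarrow A'\}$ reduces to a connection, in $\{\psi+\eta\ge 0\}$, from $\partial\mathcal{C}_A^-$ to $A'$. The relevant structure is the sign cluster of the conditioned field touching $\mathcal{C}_A^-$.

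Next we control this event through harmonic averages. Let $\K$ be the cluster of $A'$ in $\{\phi>0\}$, or rather its exploration from $U'$ inwards. Consider the quantity $M=\sum_{x\in U\setminus\mathcal{C}_A^-}\lambda^{\mathcal{O}}_x P_x^{\mathcal{O}}(\tilde H_{U'}<\tilde H_U)\,\phi_x$, which is a flux of the field across the annulus between $U$ and $U'$. By \eqref{eq:defcalH}, its conditional expectation given $\mathcal{A}^+_{\mathcal{C}_A^-}$ is exactly $\mathcal{H}^{\mathcal{O}}_{A,A'}(r,R)$. Using the generalized sweeping identity, Lemma~\ref{L:sweeping-extended}, we rewrite $M$ as $\sum_y e(y)\,\eta_{\cdot}(y)$ for an equilibrium-type measure $e$ supported near $U'$.

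We then explore clusters from $U'$ (in the spirit of \cite[(18)]{LuWe}) and obtain a nonnegative martingale $M_t$ of harmonic averages. It starts at $M_0=\mathcal{H}$ conditionally. On the event that $A'$ connects to $\partial\mathcal{C}_A^-$, the terminal value is at least of order $R^{\nu/2}$. This lower bound comes from local behaviour near $U'$: the cluster connecting through $B(A',R)$ contributes a harmonic average of size at least $cR^{\nu/2}$ with good probability. To justify this, we condition on the local crossing from $A'$ to $\partial B(A',R)$, whose probability is at most $q(R)R^{-\nu/2}$ by \eqref{eq:qPsiBd}. Harnack's inequality \eqref{eq:Harnack} and the hitting estimates \eqref{eq:hittingvscap}, valid since $\mathcal O\subset B(A,cR')$ is far from $U'$, control the hitting-kernel weights. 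Optional stopping then gives $\P(A\leftrightarrow A'\mid\cdot)\le C R^{-\nu/2}q(R)\,\mathcal{H}\cdot(\text{normalisation})$, which we plan to arrange as $\lesssim R^{-\nu/2}q(R)\mathcal H$.

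The main obstacle is the decoupling step. We must show that, conditionally on $\mathcal{C}_A^-$, the probability of reaching $A'$ factorises into the flux $\mathcal H$ times the one-arm cost $q(R)R^{-\nu/2}$ near $A'$, rather than a product of expectations. Our first attempt is the Lupu--Werner identity: for the cluster $\K'$ of $U'$ explored from outside inwards, $\E[\text{flux into }\K'\mid\cdot]$ is computed exactly via \eqref{eq:sweeping-gen}. On the crossing event, the one-arm cost $q(R)R^{-\nu/2}$ is paid by a first-moment estimate over the points of $\partial\mathcal{C}_A^-$ adjacent to $U$, using the separation $R'\ge Cr$ to apply \eqref{eq:GreenGK} uniformly.
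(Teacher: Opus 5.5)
\textbf{There is a genuine gap: the outline never produces the product $R^{-\nu/2}q(R)\cdot\mathcal{H}$, and the step meant to produce it fails.} Your martingale $M_t$ is not defined: it is unclear which exploration you use and why the harmonic averages stay nonnegative. Its key property, that on $\{A\leftrightarrow A'\}$ the terminal value is at least $cR^{\nu/2}$, is not justified, and in the generality of the lemma it cannot hold. If it did, optional stopping would give $\P^{\mathcal{O}}(A\leftrightarrow A'\mid\mathcal{A}^+_{\mathcal{C}^-_A})\le CR^{-\nu/2}\mathcal{H}$, with no factor $q(R)$. Take $\mathcal{O}=\emptyset$, $A=\tilde{B}(x,C)$, $A'=\partial B(x,8R)$ and combine with Lemma~\ref{lem:HbarvsH}: you would get $\P(x\leftrightarrow\partial B(x,8R))\le CR^{-\nu/2}$, i.e.\ $\sup q<\infty$ on every graph satisfying the standing assumptions. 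This is false on $\Z^d$, $d\ge7$, where $q(R)\asymp R^{(d-6)/2}$. The lemma makes no low-dimensional assumption, which is exactly why $q(R)$ appears.

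Weakening the lower bound to ``with good probability'' and then separately multiplying by the local crossing cost $q(R)R^{-\nu/2}$ does not repair this, for two reasons. First, it double counts, so the scaling would become $R^{-\nu}$. Second, under the conditional law the crossing near $A'$ is not controlled by \eqref{eq:qPsiBd}. Given $\mathcal{A}^+_{\mathcal{C}^-_A}$, the field off $\mathcal{C}^-_A$ is a free field on $\tilde{\G}_{\mathcal{O}\cup\mathcal{C}^-_A}$ plus the nonnegative shift $f=\eta^{\mathcal{O}}_{\mathcal{C}^-_A}$, which favours connections near $A'$. Your last paragraph also charges $q(R)R^{-\nu/2}$ at points of $\partial\mathcal{C}^-_A$ next to $U$, which is the wrong end. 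The arm at $A$ is already encoded in $\mathcal{H}$ through $\eta^{\mathcal{O}}_{\mathcal{C}^-_A}$, while $q(R)R^{-\nu/2}$ is the cost of the arm from $\partial U'$ to $A'$.

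The decoupling you flag as the main obstacle comes, in the paper, from using \cite[(18)]{LuWe} as an identity, conditionally on the negative clusters of \emph{both} sets, $\mathcal{C}^-=\mathcal{C}^-_{A\cup A'}$. By \eqref{eq:firstPOAA'},
\[
\P^{\mathcal{O}}(A\leftrightarrow A'\mid\mathcal{A}^+_{\mathcal{C}^-})=1-\exp\Big\{-2\sum_{x,y}h_{\mathcal{C}^-}(x,y)\phi_x\phi_y\Big\}\le 1\wedge 2\sum_{x,y}h_{\mathcal{C}^-}(x,y)\phi_x\phi_y,
\]
which is bilinear in the $A$-side and $A'$-side boundary values. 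The argument then runs as follows.
\begin{enumerate}
\item A last-exit decomposition at $\partial U$, followed by the strong Markov property at $\tilde{H}_{U'}$, bounds $h_{\mathcal{C}^-}(x,y)$ by
$\sum_{z\in\partial U,\,z'\in\partial U'}h_{\mathcal{C}^-}(z,x)\,P^-_z(Z_{\tilde{H}_{U'}}=z',\tilde{H}_{U'}<\tilde{H}_U)\,(\lambda^-_{z'})^{-1}h_{\mathcal{C}^-}(z',y)$.
\item Summing over $x$ turns $\phi_x$ into $\eta^{\mathcal{O}}_{\mathcal{C}^-_A}(z)$, which produces $\mathcal{H}$.
\item Taking the conditional expectation given $\mathcal{C}^-_A$ (tower property, and Jensen for the concave map $t\mapsto1\wedge t$) turns the $y$-sum into the expected harmonic average at $z'$ of $\phi$ on the $A'$-side boundary $\partial(\mathcal{C}^-_{A'}\setminus\mathcal{C}^-_A)$.
\item By \eqref{eq:etabar2terms}, this average splits into two pieces. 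The centred-field part is at most $C\P(z'\leftrightarrow A')\le Cq(R)R^{-\nu/2}$ by Lemma~\ref{lem:Cn-onearm}. The part due to the shift $f$ is at most $f(z')\le CR^{-\nu}\mathcal{H}$, by the annulus Harnack inequality \eqref{eq:Harnack} and the sweeping identity \eqref{eq:sweeping-gen}.
\item The resulting quadratic term is absorbed through $1\wedge Cx(1+x)\le Cx$ with $x=q(R)R^{-\nu/2}\mathcal{H}$.
\end{enumerate}
A bilinear representation of this kind is what your outline lacks; a single harmonic-average martingale does not see the product structure.
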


We postpone the proof of Lemma~\ref{lem:mainboxtobox} for a few lines. In view of \eqref{eq:mainboxtobox}, we need a bound on the expectation of $\mathcal{H}_{A,A'}^{\mathcal{O}}(r,R)$.

\begin{Lemme}
\label{lem:HbarvsH}
For all sets $\mathcal{O}\subset\tilde{\G}$ and $\emptyset\neq A,A'\subset\tilde{\G}_{\mathcal{O}}$ as in \eqref{eq:K-ass},  and for all $R,R',r\geq C$ such that $(U'\cup\mathcal{O})\subset B(A,Cr)^{\mathsf{c}}$, and $\delta(A)\leq r$, we have
\begin{equation}
\label{eq:expH}
\begin{split}
    \E^{ \mathcal{O}}\big[\mathcal{H}_{A,A'}^{\mathcal{O}}(r,R)\big]\leq Cr^{\frac\nu2}q(r)\inf_{y\in{U}}P_y\big(H_{U'}<H_{\mathcal{O}}\big).
\end{split}
\end{equation}
\end{Lemme}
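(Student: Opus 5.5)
The plan is to first make $\mathcal{H}^{\mathcal{O}}_{A,A'}(r,R)$ tractable pointwise, then integrate. Only the points $x\in U\setminus\mathcal{C}_A^-$ with $P_x^{\mathcal{O}}(\tilde H_{U'}<\tilde H_U)>0$ contribute, since after one step the walk must leave $U$ before reaching $U'$. These points lie in (a bounded neighbourhood of) the outer boundary $\partial U$ of $U=B(A,r)$, hence at distance of order $r$ from $A$. Writing
\[
\E^{\mathcal{O}}[\mathcal{H}^{\mathcal{O}}_{A,A'}(r,R)]=\sum_{x\in U}\lambda^{\mathcal{O}}_xP^{\mathcal{O}}_x(\tilde H_{U'}<\tilde H_U)\,\E^{\mathcal{O}}\big[\eta^{\mathcal{O}}_{\mathcal{C}_A^-}(x)1\{x\notin\mathcal{C}_A^-\}\big],
\]
the proof splits into a purely random-walk factor and a single-point field expectation.

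\emph{Random-walk factor.} I will show
\[
\sum_{x\in \partial U}\lambda_x^{\mathcal{O}}P_x^{\mathcal{O}}(\tilde H_{U'}<\tilde H_U)\le Cr^{\nu}\inf_{y\in U}P_y(H_{U'}<H_{\mathcal{O}}).
\]
First, $\lambda^{\mathcal{O}}_xP_x^{\mathcal{O}}(\tilde H_{U'}<\tilde H_U)$ is bounded by $\lambda_x P_x(\tilde H_U=\infty\text{ or }\tilde H_{U'}<\tilde H_U)\times\sup_{z\in \partial B(A,2r)}P_z(H_{U'}<H_{\mathcal{O}})$, using the strong Markov property at the exit time of $B(A,2r)$. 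Here I use that $U'\cup\mathcal{O}\subset B(A,Cr)^{\mathsf c}$, so both are far from $U$. The first factor summed over $x$ is at most $C\,\mathrm{cap}(U)\le Cr^\nu$ by \eqref{eq:capBallBd}, since $U\subset B(z_0,2r)$ for any $z_0\in A$ as $\delta(A)\le r$. Alternatively, one can apply Lemma~\ref{L:sweeping-extended} with $B=U$ to rewrite the sum exactly as a hitting quantity started from $e_{U}$. The sup over $z$ is then replaced by an inf over $y\in U$ via the elliptic Harnack inequality \eqref{eq:Harnack}, applied to the function $y\mapsto P_y(H_{U'}<H_{\mathcal{O}})$, which is harmonic in $B(A,Cr)\setminus(U'\cup\mathcal O)\supset B(A,3r)$.

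\emph{Field factor.} I need, uniformly in $x$ with $d(x,A)\ge r$,
\[
\E^{\mathcal{O}}\big[\eta^{\mathcal{O}}_{\mathcal{C}_A^-}(x)1\{x\notin\mathcal{C}_A^-\}\big]\le Cr^{-\nu/2}q(r).
\]
By \eqref{eq:Markov2} with $\mathcal{K}=\mathcal{C}_A^-$, and since $\{x\notin\mathcal{C}_A^-\}\in\mathcal{A}^+_{\mathcal{C}_A^-}$, this equals $\E^{\mathcal{O}}[\varphi_x1\{x\notin\mathcal{C}_A^-\}]$. On $\{x\notin \mathcal{C}_A^-\}$ the value $\eta(x)$ is a harmonic average of $\varphi$ over $\partial\mathcal{C}_A^-$. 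The only nonzero contributions come from points of $\partial A$ with $\varphi>0$, because $\varphi=0$ on the remaining boundary of negative clusters. Hence $\eta(x)>0$ forces the walk from $x$ to reach $A$ without crossing a nonpositive path from $A$. This localizes the expectation to configurations where a positive cluster connects a point of $\partial A$ to distance of order $r$, or, symmetrically, where $x$ connects to $\partial B(x,r/2)$.

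To get the sharp $r^{-\nu/2}q(r)$ without logarithmic loss, I will explore from the outside. Let $\mathcal{C}^-_{\partial B(x,r/2)}$ be the nonpositive clusters of the sphere around $x$. Apply \eqref{eq:Markov2} again, together with the tower property, to rewrite $\E[\varphi_x 1\{x\notin\cdot\}]$ as an expectation of the harmonic average of $\varphi^+$ on that sphere's explored boundary. The latter is comparable, via the standard identity $\E[\varphi_x1\{x\leftrightarrow \partial B(x,s)\}]\asymp \P(x\leftrightarrow\partial B(x,s))$ from \cite{DrePreRod5}, to the one-arm probability, which is $\le q(r)r^{-\nu/2}$ by \eqref{eq:qPsiBd}. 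Killing on $\mathcal{O}$ only helps here, by monotonicity of the killed Green function and \eqref{eq:GreenGK}.

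Multiplying the two factors gives $Cr^{\nu}\cdot r^{-\nu/2}q(r)\inf_{y\in U}P_y(H_{U'}<H_{\mathcal{O}})$, which is \eqref{eq:expH}. The main obstacle is the field factor: replacing $\E[\varphi_x;\,\text{connection}]$ by the connection probability itself without the $\sqrt{\log}$ loss that a crude Gaussian tail bound would produce. I would try the outward exploration and martingale identity first, as in \cite{cai2024onearm}.
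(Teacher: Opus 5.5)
Your architecture is the paper's. It multiplies the deterministic walk bound \eqref{eq:expH2} (strong Markov property at the exit of a larger neighbourhood of $A$, Harnack to turn the supremum into an infimum, $\mathrm{cap}(U)\le Cr^{\nu}$) by the single-point field bound of Lemma~\ref{lem:Cn-onearm}. Two steps, however, are not right as written.

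In the walk factor, the strong Markov property produces $P_x(\tau\le\tilde H_U)$ with $\tau=\tilde H_{\partial B(A,tr)}$, not $P_x(\tilde H_U=\infty\text{ or }\tilde H_{U'}<\tilde H_U)$. The only nontrivial point is $\sum_{x\in\partial U}\lambda_xP_x(\tau\le\tilde H_U)\le C\,\mathrm{cap}(U)$, and you assert it without argument. It follows from $P_x(\tau\le\tilde H_U)\le e_U(x)/\lambda_x+P_x(\tau\le\tilde H_U)\sup_{y\in\partial B(A,tr)}P_y(\tilde H_U<\infty)$, provided that supremum is at most $\frac12$. With $t=2$ it need not be, so take $t$ a large constant; the hypothesis $U'\cup\mathcal O\subset B(A,Cr)^{\mathsf c}$ permits this.

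In the field factor, your localization heuristic is false. The event $\eta^{\mathcal O}_{\mathcal C_A^-}(x)>0$ only requires that the walk from $x$ can reach an exposed point of $A$ where $\phi>0$, which is a typical event. The expectation is small because $\eta$ is small, not because it is rarely positive. The exploration of $\partial B(x,r/2)$ is a detour: $\{x\notin\mathcal C_A^-\}$ is not measurable for it, so you only get an inequality, by discarding $\E^{\mathcal O}[\phi_x1\{x\in\mathcal C^-_{\partial B(x,r/2)}\setminus\mathcal C_A^-\}]\le0$. You then need the same comparison anyway.

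The direct route is as follows.
\begin{itemize}
\item Since $\phi$ is centred and $\phi_x\le0$ on $\mathcal C_A^-\setminus A$, the symmetry $\phi\mapsto-\phi$ gives $\E^{\mathcal O}[\phi_x1\{x\notin\mathcal C_A^-\}]=\E^{\mathcal O}[|\phi_x|1\{x\in\mathcal C_A^-\}]=\E^{\mathcal O}[\phi_x1\{x\leftrightarrow A\}]$.
\item This is at most $\E[\phi_x1\{x\leftrightarrow A\}]$ by monotonicity of the loop soup and \eqref{eq:isoloopK}. Monotonicity of the killed Green function, which you invoke, does not give this.
\item The bound $\E[\phi_x1\{x\leftrightarrow A\}]\le C\P(x\leftrightarrow A)$ is \cite[Lemma~3.3]{cai2024onearm}. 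This is the input the paper uses, and it disposes of your $\sqrt{\log}$ concern.
\item Finally $\P(x\leftrightarrow A)\le\P(x\leftrightarrow\partial B(x,cr))\le q(cr)(cr)^{-\nu/2}$ by \eqref{eq:qPsiBd}.
\end{itemize}
With these repairs your proof coincides with the paper's.
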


With both Lemmas~\ref{lem:mainboxtobox} and~\ref{lem:HbarvsH} at hand, we can readily finish the: 
\begin{proof}[Proof of Proposition~\ref{pro:boxtobox}]
When $r,R\geq C$, \eqref{eq:crossingAB} is a direct consequence of \eqref{eq:mainboxtobox} and \eqref{eq:expH}, and actually \eqref{eq:crossingAB} holds under the (weaker) assumption that $U'=B(A',R) \subset B(A,3R')\setminus B(A,R'/3)$. The case $r\leq C$ is easily deduced from \eqref{eq:crossingAB} for $r=C$ (which also implies $\delta(A) \leq C$), using monotonicity of the connection event in $A$ and appropriate union bounds. The case $R\leq C$ is handled similarly, upon changing the condition $U'\subset B(A,3R')\setminus B(A,R'/3)$ to $U'\subset B(A,2R')\setminus B(A,R'/2)$ (recall that $R'\geq C$), and noting that $P_z(H_{B(A',C)}<H_{\mathcal{O}})\leq CP_z(H_{U'}<H_{\mathcal{O}})$ for all $z\in{U}$ by combining the strong Markov property, \eqref{eq:hittingvscap} and \eqref{eq:capBallBd}.
\end{proof}

The task is now reduced to Lemmas~\ref{lem:mainboxtobox} and~\ref{lem:HbarvsH}. The following will be used in both proofs.
\begin{Lemme}
For any $K\subset\tilde{\G}$, $K'\subset\tilde{\G}_{K}$ both compact with finitely many connected components, any random compatible set $\K$ with $ {K'} \subset  \K\subset\mathcal{C}_{K'}^-$, we have (see \eqref{eq:defeta} for notation)
\label{lem:Cn-onearm}
\begin{equation}
\label{eq:expectationphiCA-}
\begin{split} 
\E^{K}\big[\eta_{\K}^K(z)1\{z\notin{\K}\}\big]\leq C\P(z\leftrightarrow K'), \quad z\in{\G_K\setminus K'}.
\end{split}
\end{equation}
\end{Lemme}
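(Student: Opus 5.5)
The plan is to express $\eta_{\K}^K(z)$ through the values of $\phi$ on $\partial\K$, and to use that on $\partial\K\setminus K'$ the field vanishes. Indeed, $\K\subset\mathcal{C}_{K'}^-$ consists of $K'$ together with parts of closures of non-positive clusters attached to $\partial K'$, and such pieces are separated from their complement at points where $\phi=0$. This holds at least when $\K=\mathcal{C}_{K'}^-$ or when $\K$ is a union of $K'$ and full such clusters. For a general compatible $\K$ in between, I would first reduce to the extremal choice $\K=\mathcal{C}_{K'}^-$ by a supermartingale/optional stopping argument. The process $\eta^K_{\K}(z)$ is a (conditional) martingale in the enlargement of $\K$ by the Markov property \eqref{eq:Markov2}, so $\E^K[\eta^K_{\K}(z)\mid\cdot]$ is unchanged by exploring more. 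The positive part, which is what enters after multiplying by $1\{z\notin\K\}$, only decreases when we add non-positive regions. Thus it suffices to bound
\[
\E^K\Big[\sum_{y\in\partial K'} P^K_z\big(X_{H_{\mathcal{C}}}=y,\,H_{\mathcal{C}}<\infty\big)\phi_y\Big],\qquad \mathcal{C}=\mathcal{C}_{K'}^-,
\]
on $\{z\notin\mathcal{C}\}$. Here only points $y\in\partial K'$ with $\phi_y>0$ contribute, since points of $\partial\mathcal{C}\setminus K'$ carry $\phi=0$ and points $y\in\partial K'$ with $\phi_y\le 0$ give a non-positive contribution.

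Next I would identify the remaining quantity with a connection probability. Conditionally on $\A^+_{\mathcal{C}}$, by \eqref{eq:Markov2} the field on $\tilde\G_{K\cup\mathcal{C}}$ is $\eta^K_{\mathcal{C}}$ plus an independent free field on $\tilde\G_{K\cup\mathcal{C}}$. The point $z$ is connected to $K'$ in $\{\phi\ge0\}$ (with no killing) if and only if it connects, in the killed graph, to a point $y\in\partial K'$ with $\phi_y>0$. The standard one-point formula for the metric graph GFF with boundary values $b\ge0$ on a boundary gives
\[
\P(z\leftrightarrow \text{boundary in }\{\phi\ge0\})=\E\big[\tfrac{2}{\pi}\arcsin\big(\cdots\big)\big]\ \ge\ c\,\frac{h(z)}{\sqrt{g(z,z)}}\wedge c,
\]
where $h$ is the harmonic extension. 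This is the content behind \cite[(18)]{LuWe}. Since $g_{\tilde\G_{K\cup \mathcal{C}}}(z,z)\le g(z,z)\le C$ by \eqref{eq:intro_Green}, one gets
\[
\eta^K_{\mathcal{C}}(z)\,1\{z\notin\mathcal{C}\}\le C\,\P^K\big(z\leftrightarrow K'\mid\A^+_{\mathcal{C}}\big)\vee C\eta 1\{\eta\ge1\}.
\]
The large-$\eta$ part would be handled by Gaussian tails, bounding it by $C\P(\phi_z\ge c)$-type terms, which are themselves dominated by connection probabilities. Taking expectations yields $C\P^K(z\leftrightarrow K')$. Finally, $\P^K(z\leftrightarrow K')\le\P(z\leftrightarrow K')$, because killing on $K$ lowers positive clusters: by \eqref{eq:isoloopK}, loops in $\mathcal{L}_{\tilde\G_K}$ form a subset of those in $\mathcal{L}$.

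I expect the main obstacle to be the reduction from a general compatible $\K$ to the full $\mathcal{C}_{K'}^-$, and making the one-point identity rigorous when $\eta$ can be large. To handle this, I would first try to work directly with the exact formula $\P(z\leftrightarrow\partial\text{ in }\phi\ge0\mid\cdot)=\P(|N(0,g)+h|\ldots)$, which is comparable to $h/\sqrt g$ for $h\le\sqrt g$ and to $1$ otherwise. Then $h\le C(\text{prob})+h1\{h>\sqrt g\}$, and the last term is controlled by $\E[\phi_z^+]$-style bounds restricted to the event $z\leftrightarrow K'$.
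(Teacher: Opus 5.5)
Your reduction to $\mathcal{C}=\mathcal{C}_{K'}^-$ is sound. Most directly, \eqref{eq:Markov2} gives $\E^K[\eta^K_{\K}(z)1\{z\notin\K\}]=\E^K[\varphi_z1\{z\notin\K\}]$, and $\varphi_z\le 0$ on $\{z\in\mathcal{C}\setminus\K\}$. Your one-point bound $\eta^K_{\mathcal{C}}(z)\le C\,\P^K(z\leftrightarrow K'\,|\,\A^+_{\mathcal{C}})$ on $\{\eta^K_{\mathcal{C}}(z)\le 1,\ z\notin\mathcal{C}\}$ is also correct. Indeed, the conditional probability equals $\P(|N(0,1)|\le \eta/\sqrt{g})$ with $g=g_{\tilde{\G}_{K\cup\mathcal{C}}}(z,z)\le C$.

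The gap is the remaining term $\E^K[\eta^K_{\mathcal{C}}(z)1\{\eta^K_{\mathcal{C}}(z)\ge 1,\ z\notin\mathcal{C}\}]$. The conditional connection probability saturates at $1$, so the one-point formula gives no control there. What you need is a conditional tail estimate such as $\E^K[\eta1\{\eta\ge1,z\notin\mathcal{C}\}]\le C\,\P^K(\eta\ge1,z\notin\mathcal{C})$, i.e.\ that $\eta$ stays $O(1)$ on average once it exceeds $1$. Saturation would then bound the right-hand side by $C\P^K(z\leftrightarrow K')$.

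Neither of your suggested justifications supplies this estimate.
\begin{enumerate}
\item Terms like $\P(\varphi_z\ge c)$ are bounded below by a constant, since $g(z,z)\asymp 1$ by \eqref{eq:intro_Green}. In contrast, $\P(z\leftrightarrow K')$ decays as $K'$ moves away from $z$, so such terms are not dominated by connection probabilities.
\item The fallback, an ``$\E[\varphi_z^+]$-style bound restricted to $\{z\leftrightarrow K'\}$'', is circular. By the Markov property, centering and the symmetry $\varphi\mapsto-\varphi$,
\[
\E^K\big[\eta^K_{\mathcal{C}}(z)1\{z\notin\mathcal{C}\}\big]=\E^K\big[\varphi_z1\{z\notin\mathcal{C}\}\big]=-\E^K\big[\varphi_z1\{z\in\mathcal{C}\}\big]=\E^K\big[\varphi_z1\{z\leftrightarrow K'\}\big].
\]
So bounding $\E^K[\varphi_z1\{z\leftrightarrow K'\}]$ by $C\P(z\leftrightarrow K')$ is exactly the statement you are trying to prove.
\end{enumerate}

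This missing estimate is the only non-trivial input, and the paper does not prove it by hand. It runs the chain above with $\K$ in place of $\mathcal{C}$, using $\varphi_z\le0$ on $\mathcal{C}_{K'}^-\setminus K'$ to pass from $\K$ to $\mathcal{C}_{K'}^-$. It then uses loop-soup monotonicity to replace $\E^K$ by $\E$, and invokes \cite[Lemma~3.3]{cai2024onearm}: $\E[\varphi_z1\{z\leftrightarrow K'\}]\le C\P(z\leftrightarrow K')$, valid for bounded Green function and weights. With that input your one-point-formula machinery is unnecessary. Without it, you must actually prove that the field at $z$ (or $\eta$) is conditionally of order one on the connection event. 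One possible route is an exploration martingale whose quadratic variation is bounded by $g(z,z)\le C$, but the proposal contains no such argument.
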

\begin{proof}
    By the Markov property \eqref{eq:Markov2} of the Gaussian free field, we have 
\begin{equation*}
\begin{split} 
    \E^{K}\big[\eta_{\K}^K(z)1\{z\notin{\K}\}\big]=\E^{K}\big[\E^{K}\big[\eta_{\K}^K(z)\,|\,\mathcal{A}_{\K}^+\big]1\{z\notin{\K}\}\big]=\E^{K}[\phi_z1\{z\notin{\K}\}],
\end{split}
\end{equation*}
where the last equality uses that $\EKK{K}[(\eta_{\K}^K(z)-\varphi_z) \,|\,\mathcal{A}_{\K}^+]= \EKK{K\cup\K}[\varphi_z]=0$.
Furthermore, since $\phi$ is centered under $\P^{K}$, the last term in the previous equation is equal to
\begin{equation*}
 \E^{K}[\phi_z] -\E^{K}\big[\phi_z1\{z\in{\K}\}\big] =-\E^{K}\big[\phi_z1\{z\in{\K}\}\big]\leq -\E^K\big[\phi_z1\{z\in{\mathcal{C}_{K'}^-}\}\big]=\E^K\big[\phi_z1\{z\leftrightarrow K'\}\big],
\end{equation*}
where to obtain the inequality we used that $\phi_z\leq 0$ for  $z\in (\mathcal{C}_{K'}^- { \setminus K'})\supset ({\K}{ \setminus K'})$, %
and where we took advantage of the symmetry of $\phi$ in the last equality. This last term can be bounded from above by $\E[\phi_z1\{z\leftrightarrow K'\}]$ due to the monotonicity of the loop soup and \eqref{eq:isoloopK}, and we can therefore conclude using \cite[Lemma~3.3]{cai2024onearm}. Note that this last result is stated on $\Z^d$, but its proof actually works on any graph $\G$ with bounded Green function and bounded weights, which is the case in view of \eqref{eq:intro_Green} and \eqref{eq:lambdabounded}.
\end{proof}

We now prove Lemmas~\ref{lem:mainboxtobox} and~\ref{lem:HbarvsH}, starting with the latter, which is simpler.
\begin{proof}[Proof of Lemma~\ref{lem:HbarvsH}] We will first argue under the assumptions of the lemma that
\begin{equation}
\label{eq:expH2}
\begin{split} 
    P_x^{\mathcal{O}}(\tilde{H}_{U'}< \tilde{H}_{U})\leq Ce_{U}(x)\inf_{y\in{U}}P_y\big(H_{U'}<H_{\mathcal{O}}\big)\text{ for all }x\in{\partial U},
\end{split}
\end{equation}
with $e_U= e_{U,\tilde{\G}}$, see above \eqref{eq:cap}. For $t\geq1$ to be chosen, let $U_t= B(A,tr)$, so that $U=U_1 \subset U_t$. For any $x\in{\partial U}$, by the strong Markov property at time $H_{\partial U_t}$ we have if $(U'\cup\mathcal{O})\subset (U_t)^{\mathsf{c}}$ 
\begin{equation}
\label{eq:fifthPOAA'}
\begin{split} 
P_x^{\mathcal{O}}\big(\tilde{H}_{U'}< \tilde{H}_{U}\big)\leq P_x\big(\tilde{H}_{\partial U_t}\leq \tilde{H}_{U}\big)\sup_{y\in{U_t}}P_{y}(H_{U'}<H_{\mathcal{O}}),
\end{split}
\end{equation}
where we used that $X_{\cdot}$ under $P_y^{\mathcal{O}}$ has the same law as $X_{\cdot \wedge H_{\mathcal{O}}}$ under $P_y$. By the Harnack inequality \eqref{eq:Harnack}, one can replace the supremum on the right-hand side of \eqref{eq:fifthPOAA'} by an infimum as long as $(U'\cup\mathcal{O})\subset B(A,Ctr)^{\mathsf{c}}$. Moreover, by the strong Markov property at time $\tau= \tilde{H}_{\partial U_t}$,\begin{multline*}
P_x(\tau\leq \tilde{H}_{U}) \leq P_x(\tilde{H}_{U}=\infty)+\sum_{y\in{\partial {U_t}}}P_x\big(Z_{\tau}=y,\tau\leq \tilde{H}_{U}\big)P_y(\tilde{H}_{U}<\infty)
\\\leq \frac{e_{U}(x)}{\lambda_x}+\frac12P_x(\tau\leq \tilde{H}_{U}),
\end{multline*}
where we used \cite[(1.57)]{MR2932978}, \cite[(2.8)]{DrePreRod2} and \eqref{eq:capBallBd} in the last inequality to bound $P_y(\tilde{H}_{U}<\infty)\leq Ct^{-\nu}\leq 1/2$ for all $y\in{\partial{U_t}}$ upon fixing $t=C$ for a large enough constant $C$. Rearranging terms in the last display, noting that $\lambda_x^{\mathcal{O}}=\lambda_x$ for all $x\in{U}$ if $\mathcal{O}\subset B(A,Cr)^{\mathsf{c}}$, and combining it with \eqref{eq:fifthPOAA'} gives \eqref{eq:expH2}. To deduce \eqref{eq:expH}, one feeds \eqref{eq:expH2} into \eqref{eq:defcalH} (note that only $x\in{\partial U}$ contribute to the sum in \eqref{eq:defcalH}, since $P_x^{\mathcal{O}}(\tilde{H}_{U'}<\tilde{H}_U)=0$ otherwise), uses \eqref{eq:expectationphiCA-} with $\mathcal{K}=\mathcal{C}_A^-$, $K'=A$, $K=\mathcal{O}$ to bound the resulting expectation of harmonic average in terms of a connection probability, and combines the resulting estimate with \eqref{eq:qPsiBd} (recall here that $d(x,A)\geq cr$ for $x\in{\partial U}$) and \eqref{eq:capBallBd} to bound $\sum_{x\in{\partial U}}e_U(x)=\mathrm{cap}(U)\leq Cr^{\nu}$, thus concluding the proof.
\end{proof}

\begin{proof}[Proof of Lemma~\ref{lem:mainboxtobox}]
Let us abbreviate $\mathcal{C}^- = \mathcal{C}_{A\cup A'}^-$, see \eqref{eq:Cn-} for notation. Conditionally on $\mathcal{A}_{\mathcal{C}^-}^+$, the event $\{A\leftrightarrow A'\}$ only depends on the field in the complement of $\mathcal{C}^-$, which by \eqref{eq:Markov2} is a free field with boundary values $\phi$ on $\partial\mathcal{C}^-$. The set $\mathcal{C}^-$ is $\P^{\mathcal{O}}$-a.s.~bounded, because $A \cup A'$ is compact, so the number of clusters of $\{\varphi \leq 0 \}$ intersecting $\partial(A\cup A')$ is finite, and each of these clusters is bounded, as implied by our standing assumptions on $\G$ (combining \cite[Corollary 3.3(1) and Lemma 3.4(2)]{DrePreRod3}, which together imply that $\G^{\mathcal{O}}$ satisfies the relevant condition (Sign)). In particular, $\partial \mathcal{C}^-$ is a finite set.

We first extend a formula from \cite{LuWe} valid on finite graphs to our (infinite volume) setup. Let ${\G}^-=(G^-,\lambda^-)$ denote the random graph under $\P^{\mathcal{O}}$ formally defined as $(\G_{\mathcal{O}})^{\partial \mathcal{C}^-}$ in \cite[Lemma~2.1]{DrePreRod3}. In words, $\G^{-}$ is a graph with vertex set $G^-=\partial \mathcal{C}^-\cup G_{\mathcal{O}}$ (see Section~\ref{subsec:G_K} regarding $G_{\mathcal{O}}$) and with weights $\lambda^-$ on $G^-$ chosen so that the canonical discrete random walk $(Z_n)_{n\in{\N}}$ on $\G^-$, the law of which we denote by $P_x^-$ when started in $x \in G^-$, has the same law as the successive visits to $G^-$ by the skeleton of the diffusion $X$ under $P_x^{\mathcal{O}}$. 

For $K \subset \tilde{\G}$ compact (and possibly random) with $\partial K \subset G^-$, let
$$
h_K(x,y)= \lambda_x^- P_x^- (Z_{\tilde{H}_K} =y, \,\tilde{H}_K< \infty), \quad x,y \in G^-,
$$
where recall that $ \tilde{H}_K=\inf\{n\geq1:\,Z_n\in{K}\}$,  cf.~above Lemma~\ref{L:sweeping-extended}. By \cite[Proposition~1.11]{MR2932978}, $h_K(x,y)=h_K(y,x)$ for $x \neq y$, $x,y \in K$.
By network equivalence, we infer that the effective conductance (as defined in \cite[Section~2.2]{LuWe}) between  $x,y \in \partial\mathcal{C}^-$ with $x \ne y$, for the graph $\G^-$ with boundary set $\partial \mathcal{C}^-$ is precisely equal to $h_{\mathcal{C}^-}(x,y)$. Applying \cite[(18)]{LuWe} on that graph with $\hat{A}=\partial\mathcal{C}_A^-$ and $\widecheck{A}=\partial\mathcal{C}_{A'}^-\setminus \mathcal{C}_A^-$ therein, we thus have in view of \eqref{eq:Cn-} that 
\begin{equation}
\label{eq:firstPOAA'}
\begin{split} 
\P^{\mathcal{O}}\big(A\leftrightarrow A'\,|\,\mathcal{A}_{\mathcal{C}^-}^+\big)&=\P^{\mathcal{O}}\big(\partial\mathcal{C}^-_A\leftrightarrow (\partial\mathcal{C}_{A'}^-\setminus \mathcal{C}_A^-)\,|\,\mathcal{A}_{\mathcal{C}^-}^+\big)
= 1-\exp\Big\{-2\sum_{x,y}h_{\mathcal{C}^-}(x,y) \phi_x\phi_y\Big\},
\end{split}
\end{equation}
with the sums ranging over $x \in{\partial\mathcal{C}_A^-}$, $y\in{\partial\mathcal{C}_{A'}^-\setminus \mathcal{C}_A^-}$;
actually, \cite[(18)]{LuWe}, and hence the deduction of \eqref{eq:firstPOAA'}, is only proved therein for finite graphs, but \eqref{eq:firstPOAA'} can easily be extended to any infinite graph for which $\mathcal{C}^-$ is bounded, as is the case here (see the beginning of this proof), by defining  $\G_n^-$ as the  finite graph which is the intersection of $\G^{-}$ with a ball of size $n$, with infinite killing on the boundary of that ball, and noting that the event $A\leftrightarrow A'$ for the graph $\G_n^-$ is increasing in $n$ toward the event $A\leftrightarrow A'$ for the graph $\G^-$ in view of \eqref{eq:isoloopK}. Note that $\phi_x=0$ for all $x\in{\partial\mathcal{C}_A^-\setminus A}$ and $\phi_y=0$ for all $y\in{\partial\mathcal{C}_{A'}^-\setminus A'}$, and that $\phi\geq0$ on $\partial\mathcal{C}^-$. 
Combining this with \eqref{eq:firstPOAA'}, the inequality $1-e^{-t}\leq 1\wedge t$  and the tower property, we thus obtain, with $\P^{\mathcal{O}}(\cdot\,|\,{\mathcal{C}_{A}^-})=\P^{\mathcal{O}}(\cdot \,|\,\mathcal{A}_{\mathcal{C}_{A}^-}^+)$, that
\begin{equation}
\label{eq:secondPOAA'}
\begin{split} 
\P^{\mathcal{O}}\big(A\leftrightarrow A'\,|\, \mathcal{C}_{A}^-\big)\leq \E^{\mathcal{O}}\Big[1\wedge\Big(2 \sum_{x,y}h_{\mathcal{C}^-}(x,y)\phi_x\phi_y\Big)\,\Big|\, \mathcal{C}_{A}^-\Big],
\end{split}
\end{equation}
with the sums now ranging over $x\in{\partial\mathcal{C}_A^-\cap A}$ and $y\in{(\partial\mathcal{C}_{A'}^-\cap A'})\setminus\mathcal{C}_A^-$.

Let us briefly outline the remainder of the proof. In order to make the quantity $\mathcal{H}_{A,A'}^{\mathcal{O}}(r,R)$ from \eqref{eq:defcalH} appear out of the expectation in \eqref{eq:secondPOAA'}, we ``push out'' the sums over $x,y$ to sums over $z \in U(=B(A,r))$ and $ z' \in U' (=B(A'R))$ using last-exit calculations. This will transform $\phi_x$, $\phi_y$ into harmonic averages at $z,z'$, one of which appears plainly in \eqref{eq:defcalH}. The other one is hiding behind $q(R)$ in \eqref{eq:mainboxtobox} via an application of Lemma~\ref{lem:Cn-onearm}.

We now proceed to implement this. We assume that $r,R\geq C$ so that $\partial U\cap A=\emptyset$ and $\partial U'\cap A'=\emptyset$. Abbreviate $\tilde{H}= \tilde{H}_{\mathcal{C}^-}$, and let $L=\sup\{k\in{[0,\tilde{H}]}:\,Z_k\in U\}$. %
For $x,y$ as in \eqref{eq:secondPOAA'}, notice that $L<\infty$ under $P_x^-$ on the event $\{Z_{\tilde{H}}=y, \,  \tilde{H}< \infty\}$. Thus, for such $x,y$ and all $z\in \partial U$, $k \geq 0$, applying the simple Markov property at time $k$, we get
\begin{equation*}
 P_x^-(L=k,Z_k=z,Z_{\tilde{H}}=y,  \tilde{H} < \infty )=  P_x^-(Z_k=z,\tilde{H}> k) \, P_z^-(\tilde{H}_{U} >  \tilde{H},Z_{\tilde{H}}=y)
\end{equation*}
(note that $1\leq L<\tilde{H}$ on the event on the left-hand side, since $x\in{A}$, $\partial U\cap A=\emptyset$ and $y\notin{U}$; in particular the terms with $z\in{\mathcal{C}^-}$ vanish).
By reversibility, the first term on the right-hand side equals $\frac{\lambda_z^-}{\lambda_x^-}P_z^-(Z_k^-=x,\tilde{H}= k)$. Using this, multiplying by $\lambda_x^-$ on both sides, and noting that the sum over $k \geq 0$ and $z \in \partial U$ (even only over $z \in U\setminus (\mathcal{C}_A^-\cup\mathcal{O})$) on the left yields $h_{\mathcal{C}^-}(x,y)$, it follows that
\begin{equation*}
\begin{split} 
h_{\mathcal{C}^-}(x,y) &=\sum_{z\in{\partial U\setminus (\mathcal{C}_A^-\cup\mathcal{O})}} h_{\mathcal{C}^-}(z,x) P_z^-(\tilde{H}_{U}> \tilde{H},Z_{\tilde{H}}=y)
\\&\leq\sum_{z\in{\partial U\setminus (\mathcal{C}_A^-\cup\mathcal{O})}} \sum_{z'\in{\partial U' \setminus \mathcal{C}^-
}} h_{\mathcal{C}^-}(z,x) P_z^-(Z_{\tilde{H}_{U'}}=z',\tilde{H}_{U'}< \tilde{H}_{U})(\lambda_{z'}^-)^{-1} h_{\mathcal{C}^-}(z',y),
\end{split}
\end{equation*}
where we used the strong Markov property at time $\tilde{H}_{U'}$. Next, observe for $z,z'$ as above that
$$ \lambda_z^-P_z^-(Z_{\tilde{H}_{U'}}=z',\tilde{H}_{U'} <  \tilde{H}_{U})= \lambda_z^{\mathcal{O}}P_z^{\mathcal{O}}(Z_{\tilde{H}_{U'}}=z',\tilde{H}_{U'}< \tilde{H}_{U});
$$
to see this, one recalls that $Z$ under $P_z^-$ has the same law as the sequence of vertices in $G^-= \partial\mathcal{C}^-\cup \partial\mathcal{O}\cup G$ visited by the diffusion $X$ under $P_z$, stopped on hitting $\mathcal{O}$, and applies a similar reasoning as above separately for the walk $Z$ under $P_z^-$ and under $P_z^{\mathcal{O}}$ to infer that either side equals $P_z^{\mathcal{O}}(X_{L'}=z,X_{H_{U'}}=z')/g_{\tilde{\G}_{\mathcal{O}}}(z,z)$ where $L'=\sup\{t\in{[0,H_{U'}]}:\,X_t\in{U}\}$. Moreover, for all $x\in{A}$ and $y\in{A'\setminus \mathcal{C}_A^-}$, by monotonicity one bounds, abbreviating $\mathcal{O}'=\mathcal{O}\cup\mathcal{C}_A^-$,
\begin{align*}
&{ h_{\mathcal{C}^-}(z,x)}/{\lambda_{z}^-} \leq P_z^{ \mathcal{O}}(X_{H_{\mathcal{C}_A^-}}=x, H_{\mathcal{C}_A^-} < \infty),\\
&  {h_{\mathcal{C}^-}(z',y)}/{\lambda_{z'}^-} \leq P_{z'}^{\mathcal{O}'}\big(X_{H_{\mathcal{C}_{A'}^-\setminus \mathcal{C}_A^-}}=y, H_{\mathcal{C}_{A'}^-\setminus \mathcal{C}_A^-} < \infty\big).
\end{align*}
Putting things together, plugging the resulting estimate for $h_{\mathcal{C}^-}(x,y)$ into \eqref{eq:secondPOAA'}, summing first over $x\in{\partial \mathcal{C}_A^-\cap A}$ and $y\in{(\partial \mathcal{C}_{A'}^-\cap A')\setminus \mathcal{C}_A^-}$ (or equivalently over $x\in{\partial \mathcal{C}_A^-}$ and $y\in{\partial \mathcal{C}_{A'}^-\setminus \mathcal{C}_A^-}$), using Jensen's inequality %
, and recalling \eqref{eq:defeta}, one further bounds $\P^{\mathcal{O}}\big(A\leftrightarrow A'\,|\, \mathcal{C}_{A}^-\big)$ by 
\begin{equation}
\label{eq:thirdPOAA'}
\begin{split} 
1\wedge 
2\sum_{{z\in{\partial U\setminus(\mathcal{C}_A^-\cup\mathcal{O})}}}  \sum_{{z'\in{\partial U'}}}   \lambda_z^{\mathcal{O}}\eta_{\mathcal{C}_A^-}^{ \mathcal{O} }(z)P_z^{\mathcal{O}}(Z_{\tilde{H}_{U'}}=z',\tilde{H}_{U'}< \tilde{H}_{U}) \bar{\eta}(z'), 
\end{split}
\end{equation}
where
$$
 \bar{\eta}(z') \stackrel{\text{def.}}{=} \E^{\mathcal{O}}\big[\eta_{\bar{\mathcal{C}}}^{\mathcal{O}'}(z')1\{z'\notin{\mathcal{C}^-}\}\,\big|\, \mathcal{C}_{A}^-\big], \quad \bar{\mathcal C} \stackrel{\text{def.}}{=} \mathcal{C}_{A'}^-\setminus \mathcal{C}_A^-.
$$
We now bound $\bar{\eta}$. For any $f:\tilde{\G}\rightarrow\R$, let $\K_f$ be the union of $A'\setminus \mathcal{C}_A^-$ and of the clusters of $\{z\in{\tilde{\G}_{\mathcal{O}'}}:\,\phi_z\leq -f(z)\}$ which intersect its boundary, viewed under $\P^{\mathcal{O}'}$, and abbreviate $\K=\K_{f}%
$ where $f= \eta_{\mathcal{C}_A^-}^{ \mathcal{O} }$;  here both $\mathcal{C}_A^-$ and $f$ are viewed as frozen under the probability $\P^{\mathcal{O}}( \cdot \, |\, \mathcal{C}_{A}^-)$. 
By \eqref{eq:Markov2} the set $\bar{\mathcal{C}}$ appearing in $\bar{\eta}$ has the same law under $\P^{\mathcal{O}}(\cdot\,|\, \mathcal{C}_{A}^-)$ as $\K$ under $\P^{\mathcal{O}'}$, and thus still using \eqref{eq:Markov2} we have for all $z'\in{\partial U'}$, $\P^{\mathcal{O}}(\cdot\,|\, \mathcal{C}_{A}^-)$-a.s., writing $\phi=\phi'+f$ on $\tilde{\G}_{\mathcal{O}'}$, where $\phi'$ has law $\P^{\mathcal{O}'}$, using linearity of the harmonic extension and that $1\{z'\notin{\mathcal{C}^-}\}=1\{z'\notin{\mathcal{C}_A^-}\}1\{z'\notin{\bar{\mathcal{C}}}\}$, $\bar{\eta}(z')$ is equal to %
\begin{equation}\label{eq:etabar2terms}
\begin{split}
&\Big(\E^{\mathcal{O}'}\big[\eta_{\K}^{\mathcal{O}'}(z')1\{z'\notin{\K}\}\big]\\
&\qquad +\E^{\mathcal{O}'}\big[1\{z'\notin{\K}\}E_{z'}^{\mathcal{O}'}[f(X_{H_{\K}})1\{ H_{\K} < \infty\}] \big]\Big)1\{z'\notin{\mathcal{C}_{A}^-}\}.
\end{split}
\end{equation}
We consider each term individually. The first term in \eqref{eq:etabar2terms} is bounded  by
 $C\P^{\mathcal{O}'}(z'\leftrightarrow A')$ using \eqref{eq:expectationphiCA-}, applied with $K=\mathcal{O}'$ and $K'=A'\setminus \mathcal{C}_A^- \subset \mathcal{K}$ (noting that $ \K\subset\mathcal{C}_{K'}^-$  under $\P^{\mathcal{O}'}$ since $f\geq 0$ on $\tilde{\G}_{\mathcal{O}'}$). Moreover, $\P^{\mathcal{O}'}(z'\leftrightarrow A')\leq \P(z'\leftrightarrow A')\leq q(R)R^{-\frac\nu2}$ for all $z'\in{\partial U'}$ by \eqref{eq:isoloopK} and \eqref{eq:qPsiBd}, so altogether the first term is bounded by $Cq(R)R^{-\frac\nu2}$.

The second term in \eqref{eq:etabar2terms} is bounded by $f(z')$: indeed, for every realization of $\K$ with $z'\notin{\K}$, applying the strong Markov property at time $H_{\K}$ to $f(z')=E_{z'}^{\mathcal{O}}[\phi_{X_{H_{\mathcal{C}_A^-}}}1\{H_{\mathcal{C}_A^-}<\infty\}]$, the sum inside the expectation is the contribution of the event $\{H_{\K}<H_{\mathcal{C}_A^-}\}$ to $f(z')$, and the remaining contribution is non-negative since $\phi\geq0$ on $\partial\mathcal{C}_A^-$. Now, abbreviating
 $\bar{e}_{U'}(\cdot)=\text{cap}_{\tilde{\G}_{\mathcal{O}}}(U')^{-1}{e}_{U',\tilde{\G}_{\mathcal{O}}}(\cdot)$, as we now explain, for all $z'\in{\partial U'}$,
\begin{equation*}
\begin{split} 
f(z')&=\sum_{y\in{A}}P_{z'}^{\mathcal{O}}(X_{H_{\mathcal{C}_A^-}}=y, H_{\mathcal{C}_A^-}< \infty)\phi_y\\
& \leq \sum_{z\in{ U}\setminus \mathcal{C}_A^-}P_{z'}^{\mathcal{O}}(X_{H_{U}}=z, H_U < \infty)f(z) \leq C\sum_{z\in{ U}\setminus \mathcal{C}_A^-}P_{\bar{e}_{U'}}^{\mathcal{O}}(X_{H_{U}}=z, H_U < \infty)f(z)
\\&\leq CR^{-\nu}\sum_{z\in{ U\setminus\mathcal{C}_A^-}}\lambda_z^{\mathcal{O}}P_z^{\mathcal{O}}(L_{U'}< 
 \tilde{H}_{U},L_{ U'}\geq0)f(z)
 \leq CR^{-\nu}\mathcal{H}_{A,A'}^{\mathcal{O}}(r,R).
\end{split}
\end{equation*}
Here, the first inequality follows from the strong Markov property at time $H_U$, noting that $\phi_y=0$ for all $y\in{\partial\mathcal{C}_A^-\setminus A}$, that $X_{H_{\mathcal{C}_A^-}}\in{A}$ implies $H_U\leq H_{\mathcal{C}_A^-}$ and, since $\partial U\cap A=\emptyset$, that $H_U<H_{\mathcal{C}_A^-}$ in this case unless $X_{H_U}\in{\partial\mathcal{C}_A^-\setminus A}$, and that $f\geq0$. For the second inequality, we apply the Harnack inequality \eqref{eq:Harnack} (in its annulus version) to the non-negative function $w\mapsto P_{w}^{\mathcal{O}}(X_{H_{U}}=z, H_U < \infty)$, for each fixed $z$, which is deterministic and harmonic outside of $U\cup\mathcal{O}$ (contrary to $f$, which is only harmonic outside of the random set $\mathcal{C}_A^-\cup\mathcal{O}$): fixing $a\in{A}$, since $\delta(A)\leq 3r$, $R'\geq Cr$ and $\mathcal{O}\subset B(A,cR')$, we have $U\cup\mathcal{O}\subset B(a,\Cr{c:harnack}R')$, whereas $\partial U'\subset B(a,9R')\setminus B(a,R'/3)$ since $U'\subset B(A,4R')\setminus B(A,R'/3)$, and $\bar{e}_{U'}$ is a probability measure supported on $\partial U'$. Moreover, in the third inequality we used \eqref{eq:capBallBd} to bound the capacity of $U'=B(A',R)$ from below, \eqref{eq:defeta}, and the sweeping identity \eqref{eq:sweeping-gen}, applied to $\G_\mathcal{O}$ with $A=U$ and $B=\partial U'$ (noting that $L_{U'} \leq 
 \tilde{H}_{U}$ implies the strict inequality); the last inequality follows from the fact that $\tilde{H}_{U'}\leq L_{U'}$ holds on the event $\{L_{U'}\geq0\}=\{\tilde{H}_{U'}<\infty\}$ under $P_z^{\mathcal{O}}$, $z\in{U}$, and the definition of $\mathcal{H}_{A,A'}^{\mathcal{O}}(r,R)$ in \eqref{eq:defcalH}. Returning to \eqref{eq:etabar2terms} and putting things together, using $q\geq1$, we deduce that for all $z'\in{\partial U'}$,
\begin{equation}
\label{eq:fourthPOAA'}
\begin{split} 
\bar{\eta}(z')\leq Cq(R)R^{-\frac\nu2}\big(1+q(R)R^{-\frac\nu2}{\mathcal{H}}_{A,A'}^{\mathcal{O}}(r,R)\big).
\end{split}
\end{equation}
Substituting \eqref{eq:fourthPOAA'} into \eqref{eq:thirdPOAA'}, summing over $z'$, recalling \eqref{eq:defcalH} and using the inequality $1\wedge (Cx(1+x))\leq Cx$, the bound \eqref{eq:mainboxtobox} follows.
\end{proof}

\subsection{Proof of Proposition~\ref{cor:crossingboxseccrit}} \label{subsec:pfcor4.2}The estimates for box-to-box and annulus crossings comprised in Proposition~\ref{cor:crossingboxseccrit} follow by combining Proposition~\ref{pro:boxtobox} with arguments similar to those used in the proof of the lower bound in \cite[Theorem~1.2]{cai2024onearm}.

\begin{proof}[Proof of Proposition~\ref{cor:crossingboxseccrit}]
Assuming w.l.o.g.\ that $R\geq C\rr$ for a large enough constant $C<\infty$, the upper bound in \eqref{eq:crossingannuli} for $\P$ instead of $\P^{N}$ follows from \eqref{eq:crossingAB} for $A=\tilde{B}(x,r)$, $A'=\partial\tilde{B}(x,R)$ and $\mathcal{O}=\emptyset$, which satisfy the hypotheses of Proposition~\ref{pro:boxtobox} for $R'=3R/4$ and replacing $R$ by $R/8$ and $r$ by $3\rr$ therein (since $\delta(\tilde{B}(x,r))\leq 2r+1\leq 3\rr$). If $d(x,y)\geq \Cl{C:boxtobox}(\RR\vee \rr)$ for a large enough constant $\Cr{C:boxtobox}\in{(1,\infty)}$, the upper bound in \eqref{eq:crossingboxseccrit} for $\P$ instead of $\P^{N}$ follows from \eqref{eq:crossingAB} for $A=\tilde{B}(x,r)$, $A'=\tilde{B}(y,R)$ and $\mathcal{O}=\emptyset$, which satisfy the hypotheses of Proposition~\ref{pro:boxtobox} for $R'=3d(x,y)/4$ and $3\rr$ instead of $r$, combined with the inequality $P_x(H_{B(y,2R)}<\infty)\leq CR^{\nu}d(x,y)^{-\nu}$, see \eqref{eq:capBallBd} and \eqref{eq:hittingvscap}. If $2(\RR+\rr)\leq d(x,y)\leq \Cr{C:boxtobox}(\RR\vee \rr)$, assuming w.l.o.g.\ that $R\geq r$ by symmetry, so that $\RR\geq d(x,y)/\Cr{C:boxtobox}$, we have $d(x,B(y,R))\geq d(x,y)-\RR\geq d(x,y)/2$, hence $B(x,r)\leftrightarrow B(y,R)$ implies $B(x,r)\leftrightarrow \partial B(x,d(x,y)/2)$. If $d(x,y)/2\geq C\rr$, the upper bound in \eqref{eq:crossingannuli} for $\P$, which we just proved, thus yields $\P(B(x,r)\leftrightarrow B(y,R))\leq C(\rr/d(x,y))^{\frac\nu2}\leq C\Cr{C:boxtobox}^{\frac\nu2}d(x,y)^{-\nu}(\rr\RR)^{\frac\nu2}$, and otherwise the upper bound in \eqref{eq:crossingboxseccrit} is trivial, since then $d(x,y)^{-\nu}(\rr\RR)^{\frac\nu2}\geq c$. Finally, since the connection events are increasing in the loop soup, $\P^N(B(x,r)\leftrightarrow B(y,R))\leq \P(B(x,r)\leftrightarrow B(y,R))$ by the isomorphism \eqref{eq:isoloopK}, which concludes the proof of the upper bounds.

It remains to prove the lower bound in \eqref{eq:crossingboxseccrit}.  The argument follows the same ideas as in \cite[Section~6.1]{cai2024onearm}, and we only detail the key differences. Under the probability measure $\P^{N}$, recall the exploration process $(\mathcal{I}_t^{\pm})_{t\geq0}=(\mathcal{I}_t^{\partial B(y,R),\pm})_{t\geq0}$ of the clusters of $\partial B(y,R)$ of both signs, as well as the martingale $(\mathcal{M}_t^{\pm})_{t\geq0}=(\mathcal{M}_{x,t}^{\partial B(y,R),\pm})_{t\geq0}$ introduced in \cite[Section~2.7]{cai2024onearm}, which is simply equal to $(\eta_{\mathcal{I}_t^{\pm}}^{\partial B(N)}(x))_{t\geq0}$ in view of \eqref{eq:Markov2}. We also abbreviate $P_x^N=P_x^{\partial B(N)}$ and $g_N=g_{\tilde{\G}_{\partial B(N)}}$. 
Since $\mathcal{I}_0^{\pm}=\partial B(y,R)$, we have by \eqref{eq:defeta} that
\begin{equation*}
\begin{split}
     \text{Var}_{\P^{N}}\big(\mathcal{M}^{\pm}_0\big)&=\sum_{z,z'\in{\partial B(y,R)}}P_x^N(X_{H_{B(y,R)}}=z)P_x^N(X_{H_{B(y,R)}}=z')g_N(z,z')
    \\& =\sum_{z\in{\partial B(y,R)}}P_x^N(X_{H_{B(y,R)}}=z)g_N(x,z)
    \geq cd(x,y)^{-\nu}P_x(H_{B(y,R)}<H_{\partial B(N)}),
\end{split}
\end{equation*}
where in the second equality we used Hunt's switching identity, see \cite[(1.50)]{MR2932978}, and in the last inequality we used \eqref{eq:GreenGK} and the fact that $d(x,z)\leq 2d(x,y)$. The last probability in the previous display is bounded below using \eqref{eq:capBallBd} and \eqref{eq:hittingvscap}, which are all in force if $B(x,r),B(y,R)\subset B(cN)$ for some small enough constant $c>0$, and one obtains that
\begin{equation}
\label{eq:toimproveforalleps}
P_x(H_{B(y,R)}<H_{\partial B(N)})\geq {c}\RR^{\nu}d(x,y)^{-\nu}.
\end{equation}
 Furthermore in view of \cite[(2.26)]{cai2024onearm}, on the event $\{\mathcal{I}_{\infty}^{\pm}\cap B(x,r)=\emptyset\}$
\begin{equation*}
\begin{split}
\langle \mathcal{M}^{\pm}\rangle_{\infty}=\sum_{z\in{\mathcal{I}_{\infty}^{\pm}}}P_x^N(X_{H_{\mathcal{I}_{\infty}^{\pm}}}=z)g_N(x,z)\leq C\rr^{-\nu}
\end{split}
\end{equation*}
by \eqref{eq:intro_Green} and since $g_N(x,z)\leq g(x,z)$ and $d(x,z)\geq r$. Proceeding similarly as in \cite[(6.6) and (6.7)]{cai2024onearm} one deduces from the last three equation that 
\begin{equation*}
\begin{split}
\P^{N}(\mathcal{I}_{\infty}^{\pm}\cap B(x,r)=\emptyset)&\leq \P^{N}(|X|\geq c\rr^{\frac\nu2}{|}\mathcal{M}^{\pm}_0{|})
\\&\leq \P^{N}(|XY^{-1}|\geq c(\rr \RR)^{\frac\nu2}d(x,y)^{-\nu}),
\end{split}
\end{equation*}
where $X,Y$ are independent centered Gaussian variables with variance $1$. The complement of this last probability can be lower bounded by $c(\rr \RR)^{\frac\nu2}d(x,y)^{-\nu}$ similarly as in \cite[(6.8)]{cai2024onearm} and since $(\rr \RR)^{\frac\nu2}\leq (\rr+\RR)^{\nu}\leq d(x,y)^{\nu}$ (recall that $d(x,y)\geq2(\RR+\rr)$). Since $\mathcal{I}_{\infty}^{\pm}$ is the set of points connected to $B(y,R)$ either in $E^{\geq0}$ or its complement, see \cite[Section~2.7]{cai2024onearm},  the probability that $B(x,r)\leftrightarrow B(y,R)$ is at least half the probability that $\mathcal{I}_{\infty}^{\pm}\cap B(x,r)\neq \emptyset$ by symmetry, which implies the lower bound in \eqref{eq:crossingboxseccrit} if $B(x,r),B(y,R)\subset B(cN)$ for some small enough constant $c>0$, as assume above \eqref{eq:toimproveforalleps}. %

We now fix some $\eps\in{(0,1)}$, and consider the general case where $B(x,r),B(y,R)\subset B((1-\eps)N)$. Inspecting the above argument, the assumption $B(x,r),B(y,R)\subset B(cN)$ was only used to ensure that $g_N(x,z)\geq cd(x,y)^{-\nu}$ for all $z\in{\partial B(y,R)}$, see \eqref{eq:GreenGK}, and to obtain \eqref{eq:toimproveforalleps}. It is therefore enough to show that both bounds remain true under the weaker assumption, with constants $c=c(\eps)$. Both follow from \eqref{eq:GreenGK}, \eqref{eq:capBallBd} and \eqref{eq:hittingvscap} if $d(x,y)\leq c\eps N$ for small enough $c$, and we now assume that $d(x,y)\geq c\eps N$. We first show that for a suitable constant $t\in{(0,\frac18]}$,
\begin{equation}
\label{eq:hittingballsrec}
P_{w}(H_{B(w',2t\eps N)}<H_{\partial B(N)})\geq c(\eps)\text{ for all }w,w'\in{B((1-\eps)N)}.
\end{equation}
If $t\eps N<1$, then $B(N)$ contains at most $C(\eps)$ vertices by \eqref{eq:intro_sizeball}, and \eqref{eq:hittingballsrec} follows from \eqref{eq:ellipticity} by forcing the walk to follow a path from $w$ to $w'$ in $B((1-\eps)N)$, which exists since $B((1-\eps)N)$ is connected when $d$ is the graph distance, or $\ell^{\infty}$ on $\Z^d$. Assume now that $t\eps N\geq1$, and let $\gamma=(x_0,\dots,x_n)$ be a path from $w$ to $w'$ included in $B((1-\eps)N)$. In view of \eqref{eq:defLambdaL}, there exist $y_0,\dots,y_n\in{\Lambda_{t\eps N}\cap B((1-\eps/2)N)}$ such that $x_i\in{B(y_i,t\eps N)}$ for all $0\leq i\leq n$, and thus $d(y_i,y_{i+1})\leq 2t\eps N+1\leq 3t\eps N$ for all $i<n$. Consider the (finite) graph with vertex set $\Lambda_{t\eps N}\cap B((1-\eps/2)N)$, in which two vertices are neighbors if they are at distance at most $3t\eps N$. Its number of vertices is at most $C(\eps)$ by \eqref{eq:defLambdaL}, and $(y_0,\dots,y_n)$ is a path in this graph, which can be loop-erased to obtain a self-avoiding path $v_0=y_0,v_1,\dots,v_p=y_n$ with $p\leq C(\eps)$. Fixing $t=c$ for a small enough constant $c$, so that $B(v_k,C't\eps N)\subset B(N)$ for all $k$ and a suitably large constant $C'$, it follows from \eqref{eq:capBallBd} and \eqref{eq:hittingvscap} that
$P_{z}(H_{B(v_{k+1},t\eps N)}<H_{\partial B(N)})\geq c$  for all $z\in{B(v_k,t\eps N)}, k\in\{0,\dots,p-1\}$. Applying this bound recursively together with the strong Markov property at the times $H_{B(v_{k},t\eps N)}$, $1\leq k\leq p-1$, and noting that $w\in{B(v_0,t\eps N)}$ and $B(v_p,t\eps N)\subset B(w',2t\eps N)$, we obtain \eqref{eq:hittingballsrec}.

We now deduce the two desired bounds from \eqref{eq:hittingballsrec}. For $z\in{\partial B(y,R)}$, by the strong Markov property at time $H_{B(z,2t\eps N)}$, \eqref{eq:hittingballsrec} and \eqref{eq:GreenGK} (noting that $d(w,\partial B(N))\geq \eps N/2\geq Cd(w,z)$ for all $w\in{B(z,2t\eps N)}$, upon decreasing $t$ if necessary),
\begin{equation*}
g_N(x,z)\geq P_x(H_{B(z,2t\eps N)}<H_{\partial B(N)})\inf_{w\in{\partial B(z,2t\eps N)}}g_N(w,z)\geq c(\eps)N^{-\nu}\geq c(\eps)d(x,y)^{-\nu},
\end{equation*}
since $d(x,y)\leq 2N$. Similarly, by the strong Markov property at time $H_{B(y,2t\eps N)}$, \eqref{eq:hittingballsrec}, \eqref{eq:capBallBd} and \eqref{eq:hittingvscap} (the latter being only needed if $R<2t\eps N$), $P_x(H_{B(y,R)}<H_{\partial B(N)})\geq P_x(H_{B(y,2t\eps N)}<H_{\partial B(N)})\inf_{w}P_w(H_{B(y,R)}<H_{\partial B(N)})$,
with $w$ ranging in $\in{B(y,2t\eps N)}$, and this is bounded from below by $c(\eps)(\RR/N)^{\nu}\geq c(\eps)\RR^{\nu}d(x,y)^{-\nu}$. 
That is, \eqref{eq:toimproveforalleps} holds with a constant $c(\eps)$.
\end{proof}

\begin{Rk}
\label{rk:ballsconnected}
Throughout this section, the only time we use the assumption that $d$ is the graph distance, or $\ell^{\infty}$-distance on $\Z^d$, is when we say that $B((1-\eps)N)$ is connected in the proof of \eqref{eq:hittingballsrec}. In particular, all the results in this section are still satisfied if we replace this assumption by the assumption that $B(R)$ is connected for all $R\geq1$. Furthermore,  for a general distance $d$ with balls which are not necessarily connected, all these results are still true except for the lower bound in \eqref{eq:crossingboxseccrit}  when $B(x,r)$ or $B(y,R)$ are in $B((1-\eps)N)\setminus B(cN)$. Moreover, for a general distance $d$, below \eqref{eq:secondPOAA'} the fact that one can choose $r,R\geq C$ such that $\partial U\cap A=\emptyset$ and $\partial U'\cap A'=\emptyset$ follows from \cite[(2.8)]{DrePreRod2}, which controls $d$ in terms of the graph distance. 
Note that in the rest of the article, this bound is only used once and on $\Z^d$, where it is in force, to obtain \eqref{eq:CROS}.
\end{Rk}

\section{Clusters with large volume and capacity}\label{sec:largevolumecapacity}

In this section, as a first step toward Theorem~\ref{the:equivmacro}, we deduce that if the cluster of any fixed ball has large diameter, then it also has large capacity and volume with high probability, see Corollary~\ref{cor:supqimpliessupp}. We actually first prove a slight generalization of this result: for any random set $\mathcal{O}$ with local capacity comparable to that of a ball, if the cluster of a ball has large diameter, then it intersects $\mathcal{O}$ with high probability.

 We call a random set $\mathcal{O}=\mathcal{O}(\psi)\subset G$ {\em increasing} if $\psi\in{[0,\infty)^{\widetilde \G}}\mapsto \mathcal{O}(\psi)$ is measurable and $\psi\leq \psi'$ implies $\mathcal{O}(\psi)\subset \mathcal{O}(\psi')$, and we say that $\mathcal{O}$  is a {\em random cluster set} if 
\begin{equation}
\label{eq:clusterset}
\begin{gathered}
\mathcal{O}=\bigcup_{K\in{\mathcal{P}\cap\mathfrak{C}^{|>0|}}}K, \quad \text{ for some }\mathcal{P}\subset2^{{G}}\text{ such that }
\\\text{ for all }K\subset{G},\{K\in{\mathcal{P}}\}\text{ is a function of }|\phi|_{|K},
\end{gathered}
\end{equation}
where $\mathfrak{C}^{|>0|}$ is the set of  intersections with $G$ of each of the (maximal) connected components of $\{x\in{\tilde{\G}}:|\phi_x|>0\}$ hitting $G$, i.e., 
\[
\mathfrak{C}^{|>0|} = \Big \{ K \cap G \, : \, K \text{ a maximal connected component of } \{x\in{\tilde{\G}}:|\phi_x|>0\} \Big\}.
\] 
Furthermore, $|\phi|_{|K}$ denotes the restriction of $|\phi|$ to the set $K$. Note that if $\{K\in{\mathcal{P}\}}$ only depends on $K$ %
 (which implies that $\mathcal{P}$ does not depend on $\phi$), then $\mathcal{O}$ as in \eqref{eq:clusterset} is a random cluster set. Recall the definition of $q$ from \eqref{eq:qPsiBd} as well as $\K^0(x,r)$ from \eqref{eq:clustersDef}. %

\begin{Prop}
\label{pro:radimpliesOavoided}
Assume that $\sup q<\infty$, and let $M\geq1$, $\eps\in{(0,1)}$. There exist constants $c,\Cl[c]{c:condcap}\in{(0,1)}$ and $C\in{(1,\infty)}$ depending on $M$ and $\eps$, such that for all  $s\in{(0,1]}$, $x\in{G}$, $r'\geq0$, $R\geq C(\rr'\vee s^{-1})$, sets $\emptyset\neq A,B\subset\tilde{\G}$ as in \eqref{eq:K-ass} satisfying $\tilde{B}(x,R)\subset B$, and all  increasing random cluster sets $\mathcal{O}=\mathcal{O}(|\phi|)\subset G$ satisfying 
\begin{equation}
\label{eq:condO}
\P^{\partial B(y,sMR)}\big(\mathrm{cap}(\mathcal{O})\geq (sR)^{\nu}\big)\geq \eps \quad \text{ for all }y\in{B(x,R/2)\setminus B(x,\Cr{c:condcap}R)},
\end{equation}
 we have, with $R':=d(x,A)\geq (R/2)\vee(\dd(A)/4)$, $\dd(A)=\delta(A)\vee1$,
\begin{equation}
\label{eq:bigradiusimpliesOavoided}
\begin{split} 
    \P\big(\K^0_B(x,r')\cap\mathcal{O}=\emptyset,B(x,r')\leftrightarrow A\big)
   \leq C(\dd(A)\rr')^{\frac\nu2}(R')^{-\nu}\exp(-cs^{-1}).
\end{split}
\end{equation}
\end{Prop}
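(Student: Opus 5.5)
Our plan is a multi-scale exploration argument. The cluster of $B(x,r')$ must travel from $B(x,\Cr{c:condcap}R)$ out to $\partial B(x,R/2)$ in order to reach $A$. Along the way it passes through of order $s^{-1}$ ``shells'' of width $CsMR$. In each shell, given the past, we use \eqref{eq:condO} to show that the cluster meets $\mathcal{O}$ with conditional probability at least $c(\eps)$. Chaining these estimates gives the factor $(1-c)^{cs^{-1}}=\exp(-cs^{-1})$. The prefactor $C(\dd(A)\rr')^{\frac\nu2}(R')^{-\nu}$ is the unconditioned box-to-set connection cost. It comes from Proposition~\ref{pro:boxtobox} with $\mathcal{O}=\emptyset$, together with \eqref{eq:hittingvscap} and \eqref{eq:capBallBd}: covering $A$ by a ball of radius $\dd(A)$, one gets $P_z(H_{B(A,\dd(A))}<\infty)\le C(\dd(A)/R')^{\nu}$, and then the bound reads $C q(\rr')q(\dd(A))(\rr'/\dd(A))^{\nu/2}(\dd(A)/R')^{\nu}$.

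\textbf{Splitting the crossing.} I would first reduce to a statement in which the connection cost factorizes. Choose radii $\rho_k=\Cr{c:condcap}R+k\,C sMR$ for $k\le K\asymp s^{-1}$, all inside $B(x,R/2)$. On the event in \eqref{eq:bigradiusimpliesOavoided}, the cluster crosses every annulus $B(x,\rho_{k+1})\setminus B(x,\rho_k)$ and avoids $\mathcal{O}$. I would express the probability of this event via the loop-soup representation \eqref{eq:isoloopK}, exploring the sign clusters from outside inwards or from inside outwards. Since $\mathcal{O}$ is an increasing random cluster set of $|\phi|$, the event ``$\K^0_B(x,r')\cap\mathcal{O}=\emptyset$'' can be checked by exploring $\mathcal{O}$ locally: whether a cluster $K$ belongs to $\mathcal{P}$ depends only on $|\phi|_{|K}$. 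At step $k$ we condition on the field explored in $B(x,\rho_k)$, including the sign clusters touching $\partial B(x,\rho_k)$ and the portion of $\mathcal{O}$ found so far. We then pick a point $y_k$ on $\partial B(x,\rho_k+CsMR/2)$ through which the cluster must pass. The killed field $\P^{\partial B(y_k,sMR)}$ inside $B(y_k,sMR)$ is stochastically dominated by the conditional $|\phi|$ there, by monotonicity of the loop soup. Increasingness of $\mathcal{O}$ then shows that, with probability at least $\eps$, $\mathcal{O}$ contains a piece of capacity $\ge (sR)^\nu$ inside $B(y_k,sMR)$. By \eqref{eq:hittingvscap} with $r\asymp sMR$, any crossing cluster then hits that piece with conditional probability at least $c$, again via Proposition~\ref{pro:boxtobox}-type arguments. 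Since $y_k$ is random, we sum over $y_k\in\Lambda_{sR}$ on the sphere.

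\textbf{Using the obstacle factor.} Rather than combining the shells by hand, a cleaner route is to apply Proposition~\ref{pro:boxtobox} directly with obstacle $\mathcal{O}':=\mathcal{O}\cap (B(x,R/2)\setminus B(x,\Cr{c:condcap}R))$. This is possible because the event $\{\K^0_B(x,r')\cap\mathcal{O}=\emptyset, B(x,r')\leftrightarrow A\}$ is contained in the event that $B(x,r')$ is connected to $A$ in $\tilde\G\setminus\mathcal{O}$. After conditioning on $\mathcal{O}$, this is a connection under $\P^{\mathcal{O}}$ by the Markov property \eqref{eq:Markov2} and domination. The factor $\inf_{z\in U}P_z(H_{U'}<H_{\mathcal{O}})$ in \eqref{eq:crossingAB} is then bounded by $(\dd(A)/R')^{\nu}$ times the probability that a random walk crossing from radius $\Cr{c:condcap}R$ to $R/2$ avoids $\mathcal{O}$. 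The walk crosses the $\asymp s^{-1}$ shells one after another. In each shell, on the event that $\mathcal{O}$ has capacity $(sR)^\nu$ near the walk's entry point, the walk hits it with probability at least $c$ by \eqref{eq:hittingvscap}. What remains is to control the expectation over $\mathcal{O}$ of $\prod_k(1-c\,1_{E_k})$, where $E_k$ is the event that $\mathcal{O}$ is dense in the $k$-th shell.

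\textbf{Main obstacle.} The hard step is decorrelating these events $E_k$, and the statement is set up to handle it. Condition \eqref{eq:condO} is phrased under the field killed on $\partial B(y,sMR)$. By monotonicity of the loop soup and increasingness of $\mathcal{O}$, the restrictions of $\mathcal{O}$ to disjoint balls $B(y,sMR)$ dominate independent copies: the loops contained in disjoint balls are independent, and a random cluster set built from loop clusters of $\mathcal{L}_{B(y,sMR)}$ sits inside $\mathcal{O}(|\phi|)$ by \eqref{eq:clusterset}. This does require care, since clusters of the local soup may merge globally; I would handle it by letting $\mathcal{P}$ be evaluated on the global $|\phi|$ and using the increasing property. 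So I would work with these independent local copies and show that a typical walk path meets $\gtrsim s^{-1}$ balls, chosen from $\Lambda_{sMR}$ along its trajectory, in each of which the copy has capacity $\ge(sR)^\nu$ with probability $\ge\eps$. A large-deviation bound then gives $\exp(-cs^{-1})$ for the annealed avoidance probability. I expect the main difficulty to be making this compatible with the quenched infimum in \eqref{eq:crossingAB} and the conditioning on $\mathcal{O}$ (Jensen/Fubini), while also keeping the $q$ factors bounded via $\sup q<\infty$.
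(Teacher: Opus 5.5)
Your proposal has two genuine gaps.

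\textbf{The decoupling step.} The decisive step of your ``cleaner route'' is not valid as stated. After conditioning on $\mathcal{O}$ you claim the $\mathcal{O}$-avoiding connection becomes a connection under $\P^{\mathcal{O}}$ ``by the Markov property \eqref{eq:Markov2} and domination''. But $\mathcal{O}$ is not a compatible set. The event $\{\mathcal{O}=S\}$ also requires that no cluster of $\{|\phi|>0\}$ off $S$ belongs to $\mathcal{P}$, so it depends on the field away from $S$. Hence the conditional law of $\phi$ off $S$ is not $\P^{S}$, and no Jensen/Fubini manipulation repairs this.

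What is true is only the one-sided bound $\Q(\mathcal{C}_{\mathcal{O}}(x,r')\cap A\neq\emptyset\,|\,\mathcal{O})\leq 2\,\P^{\mathcal{O}}(B(x,r')\leftrightarrow A)$ of Lemma~\ref{lem:avoidmacroloops}, and its proof is the idea you are missing. In the loop soup one writes $\{\overline{\mathcal{O}}=S\}=F^1_S\cap F^2_S$. Here $F^2_S$ depends only on the loops hitting $S$. The event $F^1_S$ says that $\mathcal{L}_{S^{\mathsf c}}$ produces no $\mathcal{P}$-cluster and no cluster attached to loops through $\partial S$; it is a decreasing event of $\mathcal{L}_{S^{\mathsf c}}$, conditionally independent of $F^2_S$ given $\mathcal{L}^{\partial S}$. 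The FKG inequality for $\mathcal{L}_{S^{\mathsf c}}$ then shows this conditioning can only lower the increasing connection event. The isomorphism \eqref{eq:isoloopK} brings you back to $\P^{S}$, at the cost of a factor $2$ for the sign. This is exactly where increasingness and the cluster-set structure \eqref{eq:clusterset} of $\mathcal{O}$ are used.

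Your shell-by-shell route has the same gap in a worse form. The estimate \eqref{eq:hittingvscap} controls hitting by a random walk, not by a cluster, and passing from one to the other is precisely what the obstacle factor in \eqref{eq:crossingAB}, combined with this decoupling, achieves.

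Granted the decoupling, your annealed treatment of the walk factor is a legitimate alternative to Lemmas~\ref{lem:goodobstacle} and~\ref{lem:fpploops}. Bound $\inf_{z\in U}$ by a fixed starting point and use Fubini. Then run a sequential strong Markov argument over adaptively chosen disjoint balls carrying independent local copies of $\mathcal{O}$. You must first shrink the obstacle annulus so that it lies in $B(x,cR')$, as Proposition~\ref{pro:boxtobox} requires when $R'\asymp R$.

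\textbf{The cluster inside $B$.} The proposal never addresses what distinguishes this proposition from its full-cluster version. You assert that $\{\K^0_B(x,r')\cap\mathcal{O}=\emptyset,\,B(x,r')\leftrightarrow A\}$ is contained in $\{B(x,r')\leftrightarrow A\text{ in }\tilde{\G}\setminus\mathcal{O}\}$. However, only the cluster of $B(x,r')$ \emph{inside $B$} is assumed to avoid $\mathcal{O}$. A connecting path is known to avoid $\mathcal{O}$ only until it first leaves $B$; afterwards it may re-enter $\tilde{B}(x,R)$ through $\mathcal{O}$. Even the loops building its initial portion may leave $\tilde{B}(x,R)$ and touch $\mathcal{O}$, so $\mathcal{C}_{\mathcal{O}}(x,r')$ need not reach $A$. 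For obstacles built from loops local to $\tilde{B}(x,R)$, which is how the proof handles them, the inclusion is false.

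The paper states after Lemma~\ref{lem:avoidmacroloops2} that only the version with $\K^0(x,r')$ follows easily, and devotes Section~\ref{sec:clusterinside} to the $\K^0_B$ version. It introduces the bad loops $\mathcal{L}^{\mathcal{B}}$: loops containing a subpath from $B(x,cR)$ to $B(x,R)^{\mathsf c}$ that avoids $\mathcal{O}_2=\mathcal{O}(L_{\tilde{B}(x,cR)^{\mathsf c}})$, cf.~\eqref{eq:defcalB}.
\begin{itemize}
\item On $\{\mathcal{L}^{\mathcal{B}}=\emptyset\}$, the crossing from $B(x,r')$ to $\partial B(x,cR)$ uses only loops contained in $\tilde{B}(x,R)$ that avoid $\mathcal{O}_1=\mathcal{O}(L_{\tilde{B}(x,R)})$. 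This is controlled by Lemma~\ref{lem:avoidmacroloops2}. It is independent of the connection of $A$ to $B(x,R)$ by the remaining loops, which yields the product bound \eqref{eq:LBempty3}.
\item On $\{\mathcal{L}^{\mathcal{B}}\neq\emptyset\}$, Lemma~\ref{lem:nomacroloop} bounds the $\mu$-mass of bad loops by a walk-avoidance factor, and one sums over dyadic scales as in \eqref{eq:LBnonempty3}--\eqref{eq:LBnonempty4}.
\end{itemize}
Nothing in your proposal plays this role, so as it stands it reaches at most the statement with $\K^0(x,r')$ in place of $\K^0_B(x,r')$.
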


Combining Proposition~\ref{thm:macroscopiclocalcluster} and Proposition~\ref{pro:radimpliesOavoided} for first the set $\mathcal{O}$ of clusters with large capacity, and then the set $\mathcal{O}$ of clusters with both large capacity and volume, we obtain under the assumption $\sup q<\infty$ that if the cluster of a ball is connected to a far away set, then it also has large volume and capacity. Recall the definition of $p$ from \eqref{eq:defprR}.

\begin{Cor}
\label{cor:supqimpliessupp}
Assume that $\sup q<\infty$. Then there exists $c>0$ such that
\begin{equation}
\label{eq:supqimpliessupp}
\begin{split} 
\inf_{R\geq 1,r\in{[0,cR]}}p(r,R,c)>0,
\end{split}
\end{equation}
i.e., \eqref{eq:pPos} holds true. Moreover, for all  $s\in{(0,1]}$, $x\in{G}$, $r,r'\geq0$, $R\geq \Cl{C:radimpliesvol3}(\rr'\vee \rr/s)$, sets $\emptyset\neq A,B\subset\tilde{\G}$ as in \eqref{eq:K-ass} satisfying $\tilde{B}(x,R)\subset B$, and $R':=d(x,A)\geq (R/2)\vee(\dd(A)/4)$,
\begin{equation}
\label{eq:bigradiusimpliessmallcapandvol}
\begin{split} 
    \P\Big(&\big\{\mathrm{cap}(\K^0_B(x,r'))\leq ({s R})^{\nu}\text{ or }|\K_B^0(x,r')|_r\leq (sR/\rr)^{\alpha-\frac\nu2}\big\},B(x,r')\leftrightarrow A\Big)
   \\&\leq C(\dd(A)\rr')^{\frac\nu2}(R')^{-\nu}\exp(-cs^{-1}).
\end{split}
\end{equation}
\end{Cor}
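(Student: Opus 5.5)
The plan is to apply Proposition~\ref{pro:radimpliesOavoided} twice, each time with a suitable increasing random cluster set $\mathcal{O}$ whose hypothesis \eqref{eq:condO} is supplied by Proposition~\ref{thm:macroscopiclocalcluster}. First I use only \eqref{eq:tailEstlargestCapinB(r)}, which needs nothing beyond $\sup q<\infty$, to obtain \eqref{eq:supqimpliessupp}. Then, with \eqref{eq:pPos} in hand, I use the full statement \eqref{eq:tailEstlargestVolgeninB(r)} to obtain \eqref{eq:bigradiusimpliessmallcapandvol}.

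\emph{Step 1: capacity.} Fix $s$ and let $\mathcal{O}_{\Capa}$ be the union of the clusters $K\in\mathfrak{C}^{|>0|}$ with $\mathrm{cap}(K)\geq (c's R)^{\nu}$. Membership in $\mathcal{P}$ depends only on $K$, and enlarging $|\phi|$ only merges clusters and increases capacity, so $\mathcal{O}_{\Capa}$ is an increasing random cluster set.

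To verify \eqref{eq:condO}, take $y\in B(x,R/2)\setminus B(x,\Cr{c:condcap}R)$ and apply \eqref{eq:tailEstlargestCapinB(r)} in the ball $\tilde{B}(y,sR)$ with $t$ a small constant. This gives, with probability at least $c$, a loop cluster in $\mathcal{L}_{\tilde B(y,sR)}$ of capacity at least $c(sR)^\nu$. Under $\P^{\partial B(y,sMR)}$ (take $M=2$, say), this cluster is contained in a cluster of $|\phi|>0$, by the isomorphism \eqref{eq:isoloopK} applied on $\G_{\partial B(y,sMR)}$ together with monotonicity of loop soups under restriction. Hence \eqref{eq:condO} holds with $\eps=c$, after adjusting $c'$ and using monotonicity of capacity.

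Proposition~\ref{pro:radimpliesOavoided} then bounds the probability that $\K^0_B(x,r')$ reaches $A$ while avoiding $\mathcal{O}_{\Capa}$. Note the mismatch of notions: $\K^0$ is a cluster of $\{\phi\ge0\}$, whereas $\mathcal{O}$ consists of clusters of $|\phi|>0$. If $\K^0_B(x,r')$ intersects $\mathcal{O}_{\Capa}$, then, since $\K^0_B(x,r')$ is contained in a cluster of $|\phi|>0$ (modulo zero-level points), it is contained in a cluster of $\mathcal{O}_{\Capa}$. For \eqref{eq:supqimpliessupp} this is enough, since only the sign-free capacity of a cluster hitting $B(y,\rr)$ is needed; by symmetry one gets the positive cluster with probability at least half.

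Concretely, for \eqref{eq:supqimpliessupp} I take $A=\partial B(y,R/4)$-type sets inside $B(x,R)$ with $B=\tilde B(x,R)$ and $r'=r$. The one-arm lower bound \eqref{eq:crossingannuli} gives $\P(B(y,r)\leftrightarrow A \text{ in }\tilde B(x,R))\geq c(r/R)^{\nu/2}$. Subtracting the bad event, whose probability is at most $C(r/R)^{\nu/2}e^{-c/s}$ with $s$ a small constant, leaves a cluster of capacity at least $(c R)^{\nu}$ with probability at least $c(R/r)^{-\nu/2}$. This is exactly $p(r,R,c)\ge c$.

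\emph{Step 2: capacity and volume.} With \eqref{eq:pPos} established, the estimate \eqref{eq:tailEstlargestVolgeninB(r)} applies, and I repeat Step 1 with $\mathcal{O}_{\Capa,\Vol}$, the union of clusters $K$ with both $\mathrm{cap}(K)\geq (c'sR)^\nu$ and $|K|_r\ge (c'sR/\rr)^{\alpha-\nu/2}$. This set is again increasing, and \eqref{eq:condO} follows in $\tilde B(y,sR)$ provided $sR\ge C\rr$, which is the condition $R\ge C\rr/s$.

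Proposition~\ref{pro:radimpliesOavoided} then bounds the complement event, up to the $\{\phi\ge0\}$ versus $|\phi|>0$ issue noted above. The \emph{main obstacle} is exactly this point: $\K^0_B(x,r')$ meeting a large $|\phi|$-cluster does not by itself force $\K^0_B(x,r')$ to be large.

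I would handle it by choosing $\mathcal{O}$ to be the union of the large positive clusters of $\tilde B(y,sR)$-type regions, encoded through signs that are functions of $|\phi|_{|K}$. This is permitted by \eqref{eq:clusterset}, since the sign of a $|\phi|>0$ cluster is determined by its loop-soup sign, which is independent given $|\phi|$. With this choice, intersecting a positive cluster means $\K^0$ contains it, which gives a lower bound on its capacity and volume by monotonicity. One must still check the increasing property, or else rerun the proof of Proposition~\ref{pro:radimpliesOavoided} with half the intensity. Finally, I rescale $s$ by constants to match the statement.
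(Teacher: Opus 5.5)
Your architecture is the paper's. You apply Proposition~\ref{pro:radimpliesOavoided} twice: first with a set of large-capacity clusters, where \eqref{eq:condO} comes from \eqref{eq:tailEstlargestCapinB(r)} and the isomorphism, and you subtract the resulting bound from the one-arm lower bound to get \eqref{eq:supqimpliessupp}; then with clusters of large capacity and volume, via \eqref{eq:tailEstlargestVolgeninB(r)}.

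The gap is the link from ``$\K^0_B(x,r')$ meets $\mathcal{O}$'' to ``$\K^0_B(x,r')$ is large'', which you never establish. In Step~1 your inclusion runs the wrong way. Knowing that $\K^0_B(x,r')$ sits inside a large $|\phi|$-cluster gives no lower bound on $\mathrm{cap}(\K^0_B(x,r'))$. Moreover, $p$ in \eqref{eq:defprR} concerns the positive cluster $\K^0_{\tilde B(x,R)}(z)$, not a sign-free one, so the appeal to symmetry does not close the argument. In Step~2 you call this link the main obstacle and propose to build cluster signs into $\mathcal{P}$. That is not admissible. The signs are independent of $|\phi|$, so $\{K\in\mathcal{P}\}$ is no longer a function of $|\phi|_{|K}$, and $\mathcal{O}$ is not a function of $|\phi|$ at all, let alone an increasing one. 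Both properties are required by \eqref{eq:clusterset} and used in the proof of Proposition~\ref{pro:radimpliesOavoided}, e.g.\ in the conditioning and FKG argument of Lemma~\ref{lem:avoidmacroloops}. You leave this unresolved, so as written neither \eqref{eq:supqimpliessupp} nor \eqref{eq:bigradiusimpliessmallcapandvol} follows.

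The missing observation is that the sign mismatch is not an obstacle at all. Since $\mathcal{O}\subset G$, a point $z\in\K^0_B(x,r')\cap\mathcal{O}$ satisfies $\phi_z\geq0$ and $|\phi_z|>0$, hence $\phi_z>0$. By continuity, the whole $|\phi|>0$ cluster $K$ of $z$ is then positive. If admissible clusters are required to lie in $\tilde B(x,R)\subset B$, then $K$ is joined to $B(x,r')$ inside $E^{\geq0}\cap B$, so $K\subset\K^0_B(x,r')$. Monotonicity of $\mathrm{cap}$ and $|\cdot|_r$ then gives $\mathrm{cap}(\K^0_B(x,r'))\geq\mathrm{cap}(K)$ and $|\K^0_B(x,r')|_r\geq|K|_r$. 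Hence the event in \eqref{eq:bigradiusimpliessmallcapandvol} is contained in $\{\K^0_B(x,r')\cap\mathcal{O}=\emptyset,\ B(x,r')\leftrightarrow A\}$, and the proposition applies directly. The same reasoning shows that a single cluster $\K^0_{\tilde B(x,R)}(z)$, $z\in B(y,\rr)$, is large, which is what $p$ requires.

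This is why the paper puts the constraint $K\subset\tilde B(x,R)$ into its admissible families $\mathcal{P}_1,\mathcal{P}_2$. Your $\mathcal{O}_{\Capa}$ omits it. Without it, the absorbed positive cluster may leave $B$, and only a small piece of it need lie in $\K^0_B(x,r')$. Once the constraint is in place, your one-line proof that $\mathcal{O}$ is increasing no longer applies verbatim: raising $|\phi|$ can merge an admissible cluster with one exiting $\tilde B(x,R)$. The paper asserts this property without further comment.
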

\begin{proof}
Write $\mathcal{P}_1=\mathcal{P}_1(R,s)=\{K\subset \tilde{B}(x,R):\,\mathrm{cap}(K)\geq (sR)^{\nu}\}$, and let $\mathcal{O}_1=\mathcal{O}_1(|\phi|)$ be the union of the sets $K\in{\mathcal{P}_1\cap \mathfrak{C}^{|>0|}}$. Then $\mathcal{O}_1$ is an increasing random cluster set (as a function of $|\phi|$), see \eqref{eq:clusterset} and above. Moreover, $\mathcal{O}_1$ satisfies \eqref{eq:condO} for  $\eps=1/2$ and $M$ a large enough constant if $s\leq 1/(2M)$ (which implies $\tilde{B}(y,sMR)\subset \tilde{B}(x,R)$ for all $y\in{B(x,R/2)}$), by \eqref{eq:tailEstlargestCapinB(r)} for $MsR$ instead of $R$ and $1/M$ instead of $t$, the isomorphism \eqref{eq:isoloopK}, and the assumption $\sup q<\infty$. Therefore, applying Proposition~\ref{pro:radimpliesOavoided} for this choice of $M$, $\eps=1/2$,  $A=\partial B(x,R)$, $B=\tilde{B}(x,R)$, (note that $R'=d(x,A)\geq R/2$ if $R\geq C$  in view of \cite[(2.8)]{DrePreRod2}), by monotonicity of the capacity we deduce that for all $x\in{G}$, $r'\geq0$, $s\in{(0,c]}$ and $R\geq C(\rr'\vee s^{-1})$
\begin{equation}
\label{eq:bigradiusimpliessmallcap2}
\begin{split} 
    \P&\big(\mathrm{cap}(\K^0_{\tilde{B}(x,R)}(x,r'))\leq ({s R})^{\nu},B(x,r')\leftrightarrow \partial B(x,R)\big)
    \\&\leq \P\big(\K^0_{\tilde{B}(x,R)}(x,r')\cap\mathcal{O}_1=\emptyset,B(x,r')\leftrightarrow \partial B(x,R)\big)
   \leq C(\rr'/R)^{\frac\nu2}\exp(-cs^{-1}).
\end{split}
\end{equation}
For any $R\geq C$, $r'\in{[0,cR]}$ and $s\in{(cR^{-1},c']}$ we have by \eqref{eq:crossingannuli} and \eqref{eq:bigradiusimpliessmallcap2}
\begin{equation*}
\begin{split} 
\P\big(\mathrm{cap}(\K^0_{\tilde{B}(x,R)}(x,r'))\geq sR^{\nu}\big)&\geq \P\big({B}(x,r')\leftrightarrow\partial \tilde{B}(x,R)\big)
\\&\quad -\P\Big(\mathrm{cap}(\K^0_{\tilde{B}(x,R)}(x,r'))\leq sR^{\nu},{B}(x,r')\leftrightarrow\partial {B}(x,R)\Big)
\\&\geq c \left({\rr'}/{R}\right)^{\frac\nu2}-C\big({\rr'}/{R}\big)^{\frac\nu2}\exp(-cs^{-1})\geq c \left({\rr'}/{R}\right)^{\frac\nu2},
\end{split}
\end{equation*}
where the last inequality holds upon taking $s=c$ for a small enough constant $c>0$, which is always possible for $R\geq C$. One readily deduces \eqref{eq:supqimpliessupp} in view of \eqref{eq:defprR}.

We now turn to the proof of \eqref{eq:bigradiusimpliessmallcapandvol}. Let 
\[
\mathcal{P}_2=\mathcal{P}_2(R,r,s)=\{K\subset \tilde{B}(x,R):\,\mathrm{cap}(K)\geq (sR)^{\nu}\text{ and }|K|_r\geq (sR/\rr)^{\alpha-\frac\nu2}\}
\]
as well as
\(
\mathcal{O}_2 = \bigcup_{K\in{\mathcal{P}_2\cap\mathfrak{C}^{|>0|}}} K.
\) 
Then $\mathcal{O}_2$ is an increasing random cluster set in the sense of \eqref{eq:clusterset}. Moreover, it satisfies 
\eqref{eq:condO} for  $\eps=1/2$ and $M$ a large enough constant if $s\leq 1/(2M)$ (which can be assumed w.l.o.g) and $R\geq C\rr/s$, by Proposition~\ref{thm:macroscopiclocalcluster} for $MsR$ instead of $R$ and $1/M$ instead of $t$, the isomorphism \eqref{eq:isoloopK}, the assumption $\sup q<\infty$, and \eqref{eq:supqimpliessupp}. By definition of $\mathcal{O}_2$, the event in the probability on the first line of \eqref{eq:bigradiusimpliessmallcapandvol} implies $\K_B^0(x,r')\cap\mathcal{O}_2=\emptyset$, and we can conclude using Proposition~\ref{pro:radimpliesOavoided} for $\mathcal{O}_2$. 
\end{proof}

\subsection{Proof for the intersection of the cluster with a box}
\label{sec:interbox}
Before proving Proposition~\ref{pro:radimpliesOavoided}, we first prove a version of this result when replacing $\K^0_B(x,r')$ therein by $\K^0(x,r')$, that is when considering the entire cluster of $B(x,r')$ instead of the cluster of $B(x,r')$ inside $B$. Let us first introduce some notation. %
Recall from above \eqref{eq:isoloopK} that $\mathcal{L}_A$ denotes the restriction of $\mathcal{L}$ to those loops entirely included in $A$, and for any $S\subset\tilde{\G}$, let us denote by $\mathcal{C}_S(x,r')$ the set which is equal to the cluster of $B(x,r')$ for the loops in  $\mathcal{L}_{S^{\mathsf c}}$; abbreviate $\mathcal{C}(x,r)=\mathcal{C}_{\emptyset}(x,r')$. Similarly as before, we call a 
measurable  function  $\mathcal{O}$ mapping from the set of (admissible) point configurations on continuous loops in $\widetilde \G$ (cf.\ before display \eqref{eq:isoloopK} for terminology) to $2^G$  increasing, if for two such point configurations $P$ and $P'$ with $P \le P'$ one has $\mathcal O(P)\subset \mathcal O(P')$. Somewhat  similar to the Gaussian free field setup in \eqref{eq:clusterset}, we also say for any loop soup $\mathcal L'$ on a graph $\widetilde \G' \subset \widetilde \G$, that  $\mathcal{O}=\mathcal{O}(\mathcal{L}')$ is a random cluster set if
\begin{equation}
\label{eq:clustersetL}
\begin{gathered}
\mathcal{O}=\bigcup_{K\in{\mathcal{P}\cap\mathfrak{C}^{\mathcal{L}}}}K\text{ for some }\mathcal{P}\subset2^{{G'}}\text{ such that }
\\\text{ for all }K\subset{G'},\{K\in{\mathcal{P}}\}\text{ is a function of }\mathcal{L}'_{K},
\end{gathered}
\end{equation}
where $\mathfrak{C}^{\mathcal{L'}}$ now denotes the set consisting of all intersections with $G'$ of each loop cluster for $\mathcal{L}'$. We now recall the notation $\rr$ and $\dd(A)$ from \eqref{eq:r1} and below, as well as $\Q$ from above \eqref{eq:isoloopK} in order to  state the main result of this subsection.

\begin{Lemme}
\label{lem:avoidmacroloops2}
Fix some $M\geq1$ and $\eps\in{(0,1)}$. There exist constants $c\in{(0,1)}$ and $C,\Cr{C:radimpliesvol2}\in{(1,\infty)}$ depending on $M$ and $\eps$, such that for all  $s\in{(0,c]}$, $x,y\in{G}$, $r'\geq0$, $R\geq C(\rr'\vee s^{-1})$, $R'\geq \Cl{C:radimpliesvol2}R$,  for all sets $\emptyset\neq A\subset\tilde{\G}$ satisfying the condition in \eqref{eq:K-ass} as well as the condition 
 $A\subset B(x,2R')\setminus B(x,R')$, and for all increasing random cluster sets $\mathcal{O}=\mathcal{O}(\mathcal{L})$ satisfying 
\begin{equation}
\label{eq:condOL}
\tilde{\P}\big(\mathrm{cap}(\mathcal{O}(\mathcal{L}_{\tilde{B}(y,sMR)}))\geq (sR)^{\nu}\big)\geq \eps \text{ for all }y\in{\Lambda_{2sMR}\cap B(x,R)\setminus B(x,R/2)},
\end{equation}
we have 
\begin{equation}
\label{eq:avoidmacroloops2}
\begin{split}
\Q\big(\mathcal{C}_{\mathcal{O}}(x,r')\cap A\neq\emptyset\big)\leq Cq(\rr')q(\dd(A))(\dd(A)\rr')^{\frac\nu2}(R')^{-\nu}\exp(-cs^{-1}),
\end{split}
\end{equation}
with $\dd(A)=\delta(A)\vee1$.
\end{Lemme}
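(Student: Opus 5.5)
The plan is to reduce the statement to Proposition~\ref{pro:boxtobox} applied with a suitable obstacle. The extra factor $\exp(-cs^{-1})$ will come from the term $\inf_{z\in U}P_z(H_{U'}<H_{\mathcal{O}})$ in \eqref{eq:crossingAB}, after showing that a random walk crossing the annulus $B(x,R)\setminus B(x,R/2)$ typically has to avoid about $s^{-1}$ independent ``traps''.

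\emph{Step 1: a simplified obstacle.} For $y\in\Lambda_{2sMR}\cap B(x,R)\setminus B(x,R/2)$, the balls $\tilde{B}(y,sMR)$ are pairwise disjoint by \eqref{eq:defLambdaL}. Set $\mathcal{O}_y=\mathcal{O}(\mathcal{L}_{\tilde{B}(y,sMR)})$ and $\mathcal{O}'=\bigcup_y\mathcal{O}_y$.
\begin{itemize}
\item Since $\mathcal{O}$ is increasing and $\mathcal{L}_{\tilde{B}(y,sMR)}\le\mathcal{L}$, we have $\mathcal{O}'\subset\mathcal{O}$, and hence $\mathcal{C}_{\mathcal{O}}(x,r')\subset\mathcal{C}_{\mathcal{O}'}(x,r')$.
\item The $\mathcal{O}_y$ are independent, and each has capacity at least $(sR)^\nu$ with probability at least $\eps$ by \eqref{eq:condOL}.
\item Because $\mathcal{O}$ is a random cluster set, every cluster in $\mathcal{O}_y$ is a full loop cluster of $\mathcal{L}_{\tilde{B}(y,sMR)}$.
\end{itemize}
I would condition on $\mathcal{O}'$ by exploring only these clusters. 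Given this exploration, the loops not touching $\mathcal{O}'$ form the loop soup on $\tilde{\G}_{\mathcal{O}'}$, possibly under an additional conditioning that certain clusters are \emph{not} selected by $\mathcal{P}$. I intend to remove this conditioning by monotonicity: dropping a ``not selected'' constraint can only enlarge $\mathcal{C}_{\mathcal{O}'}$ stochastically. If that argument is awkward, the fallback is to keep only the event $\{\mathrm{cap}(\mathcal{O}_y)\ge(sR)^\nu\}$ as the obstacle criterion, which is measurable with respect to loops inside the ball. Either way, after using \eqref{eq:isoloopK} on $\G_{\mathcal{O}'}$, I expect to arrive at
\[
\Q\big(\mathcal{C}_{\mathcal{O}}(x,r')\cap A\ne\emptyset\big)\le 2\,\E\big[\P^{\mathcal{O}'}(B(x,r')\leftrightarrow A)\big].
\]
This decoupling step is the one I expect to be the main obstacle.

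\emph{Step 2: box-to-box bound with obstacle.} Conditionally on $\mathcal{O}'$, apply \eqref{eq:crossingAB} with $B(x,r')$ in place of $A$, with $A$ in place of $A'$, and with scale $\dd(A)$ in place of $R$. The hypotheses hold because:
\begin{itemize}
\item $A\subset B(x,2R')\setminus B(x,R')$;
\item $R'\ge\Cr{C:radimpliesvol2}R$;
\item $\mathcal{O}'\subset B(x,2R)\setminus B(x,R/3)$, which lies between $C\rr'$ and $cR'$ once $R\ge C\rr'$ and $\Cr{C:radimpliesvol2}$ is large.
\end{itemize}
With $U=B(x,2r')$ and $U'=B(A,\dd(A))$, this gives the bound $Cq(\rr')q(\dd(A))(\rr'/\dd(A))^{\nu/2}\inf_{z\in U}P_z(H_{U'}<H_{\mathcal{O}'})$.

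\emph{Step 3: walk estimate.} Write the walk's journey to $U'$ as crossing roughly $c/(sM)$ successive shells of width $4sMR$ inside $B(x,R)\setminus B(x,R/2)$. In each shell, on its way through, the walk passes within distance $CsMR$ of some $\Lambda_{2sMR}$-point $y$. If $\mathrm{cap}(\mathcal{O}_y)\ge(sR)^\nu$, then by \eqref{eq:hittingvscap} and \eqref{eq:capBallBd} the walk hits $\mathcal{O}_y$ with probability at least $c(M)$ before leaving $B(y,CsMR)$.

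Randomness is handled by averaging over $\mathcal{O}'$ directly. Take expectations and use the strong Markov property shell by shell; in each shell use the Harnack inequality \eqref{eq:Harnack} to make the hitting probabilities uniform, and use independence of the $\mathcal{O}_y$ across disjoint balls. Each shell then contributes a factor at most $1-c\eps$, so
\[
\E\big[P_z(H_{U'}<H_{\mathcal{O}'})\big]\le P_z(H_{U'}<\infty)\,e^{-c/s}\le C(\dd(A)/R')^{\nu}e^{-c/s}.
\]
The infimum over $z$ is bounded by the value at a fixed $z$, and the last inequality uses \eqref{eq:capBallBd} and \eqref{eq:hittingvscap}.

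\emph{Assembling.} Combining Steps 2 and 3 gives $(\rr'/\dd(A))^{\nu/2}\dd(A)^{\nu}=(\dd(A)\rr')^{\nu/2}$, which is exactly \eqref{eq:avoidmacroloops2}. Step 3 requires some care: the walk's path and the obstacle are dependent only through which balls the path visits, and the shell-by-shell strong Markov argument is designed to handle precisely this.
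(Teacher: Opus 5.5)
Your proposal is correct in outline, and Steps~1 and~2 match the paper. The monotonicity you invoke to remove the conditioning is exactly the FKG argument of Lemma~\ref{lem:avoidmacroloops}. After conditioning on the loops touching the boundary of the explored set, the ``not selected'' and non-merging constraints on the remaining loops are decreasing events, while the connection event is increasing. You rerun this for your product-structured obstacle $\bigcup_y\mathcal{O}(\mathcal{L}_{\tilde{B}(y,sMR)})$, which the paper only introduces inside the proof of Lemma~\ref{lem:fpploops}, cf.~\eqref{eq:calO}. Step~2 is \eqref{eq:crossingAB}.

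One constant-level fix is needed in Step~2. $U'=B(A,\dd(A))$ is too large: $\delta(A)$ can be of order $R'$, and then this neighbourhood contains $x$. That violates the hypothesis on $U'$ in Proposition~\ref{pro:boxtobox} and would lose the factor $e^{-c/s}$. Take $B(A,\delta(A)/9)$ and $r=2r'$ (so $U=B(x,3r')$), as the paper does. The resulting $q(\delta(A)/9)$ is then handled via $\sup q<\infty$.

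The genuine difference is Step~3. The paper argues \emph{quenched}. Lemma~\ref{lem:goodobstacle} bounds $P_y(H_K<H_O)$ for every deterministic good obstacle $O$, uniformly in $y\in B(x,R/2)$. Lemma~\ref{lem:fpploops} then shows, by a coarse-grained path-counting argument, that $\mathcal{O}$ is good with probability $1-e^{-c/s}$. You argue \emph{annealed}, averaging over the obstacle along the walk. This works, but the independence should come from the freshness of the probed ball, not from ``which balls the path visits''. Concretely:
\begin{itemize}
\item Use shells of width $L=CsMR$ inside $B(x,R)\setminus B(x,R/2)$, so that \eqref{eq:condOL} applies.
\item Let $\sigma_k$ be the first time the walk reaches distance $\rho_k+L/2$ from $x$, and let $y_k\in\Lambda_{2sMR}$ be a point closest to $X_{\sigma_k}$.
\item Then $\tilde{B}(y_k,sMR)$ misses $B(x,\rho_k)$. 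Hence $\mathcal{O}_{y_k}$ is independent of the walk up to $\sigma_k$ and of the avoidance event up to the first time the walk reaches distance $\rho_k$.
\item Discard the avoidance constraint in shell $k$ before $\sigma_k$, and apply \eqref{eq:hittingvscap} at radius of order $sMR$ (no Harnack needed). Once $L$ exceeds the corresponding exit radius, this gives a factor $1-c(M)\eps$ per shell.
\item Finally, the strong Markov property at the exit of $B(x,R)$ supplies $\sup_{w\in\partial B(x,R)}P_w(H_{U'}<\infty)\leq C(\dd(A)/R')^{\nu}$.
\end{itemize}

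Your route is more elementary, since it avoids path counting. However, it only bounds the avoidance probability at a fixed starting point. That suffices here, because only $\inf_{z\in U}$ enters \eqref{eq:crossingAB}. The paper's quenched version is uniform in the starting point on a high-probability event, and the paper needs this later. For example, the proof of Proposition~\ref{pro:radimpliesOavoided} controls $\tilde{\E}[\sup_z P_z(H_{B(x,R)^{\mathsf c}}<H_{\mathcal{O}_2})]$. Also, Lemma~\ref{lem:goodobstacle} is reused with a deterministic obstacle in Proposition~\ref{pro:noclusterinboundary}.
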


Note that Lemma~\ref{lem:avoidmacroloops2} easily implies Proposition~\ref{pro:radimpliesOavoided} when replacing $\K_B^0(x,r')$ by $\K^0(x,r')$ therein. Indeed, if $\mathcal{C}(x,r')\cap \mathcal{O}=\emptyset$, then $\mathcal{C}(x,r')=\mathcal{C}_{\mathcal{O}}(x,r')$, and we can easily conclude since the positive clusters of $\mathcal{C}(x,r')$ have the same law as $\K^0(x,r')$ by the isomorphism \eqref{eq:isoloopK}. In fact to prove Theorem~\ref{the:equivmacro}, we will really need to consider $\K^0_B(x,r')$ (see \eqref{eq:clustersDef}) as in Proposition~\ref{pro:radimpliesOavoided} instead of $\K^0(x,r')$ when $B=\tilde{B}(CR)$, that is to consider connected components inside $B$, instead of connected components inside $\tilde{\G}$ (which in fact can be disconnected when intersected with $B$). We will come back to this in the next subsection, and first focus on the proof of Lemma~\ref{lem:avoidmacroloops2}. It is roughly inspired by the proof of  \cite[Theorem~1.1]{DrePreRod8} or \cite[Theorem~1.2]{DrePreRod9}, but uses Proposition~\ref{pro:boxtobox} to simplify the proof, see Remark~\ref{rk:othermethod6.1} for details. However in \cite{DrePreRod8,DrePreRod9} it is additionally assumed that $\nu<\alpha/2$, and only the set $\mathcal{O}$ of loops with large capacity is considered. Let us now explain how to adapt the proof to consider all values of $\nu$ and $\alpha$ from below \eqref{eq:intro_Green}, as well as how to extend the proof to any set $\mathcal{O}$ satisfying \eqref{eq:condOL}. For this purpose, the key new result is the following lemma.

\begin{Lemme}
\label{lem:avoidmacroloops}
Fix some $M\geq1$. For all $s\in{(0,1]}$, $r'\geq0$, $x,y\in{G}$, for all sets $A\subset\tilde{\G}$ as in \eqref{eq:K-ass} with $d(x,A\cap G)> r'$, and for all increasing random cluster sets $\mathcal{O}=\mathcal{O}(\mathcal{L})$, 
we have %
\begin{equation}
\label{eq:avoidmacroloops}
\begin{split} 
    \Q\big(\mathcal{C}_{\mathcal{O}}(x,r')\cap A\neq\emptyset\,|\,\mathcal{O}\big)\leq 2\P^{\mathcal{O}}\big(B(x,r')\leftrightarrow A\big).
\end{split}
\end{equation}
\end{Lemme}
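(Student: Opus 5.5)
Conditionally on $\mathcal{O}$, we will identify the law of the loops not touching $\mathcal{O}$ with a loop soup on $\tilde{\G}_{\mathcal{O}}$, and then use the isomorphism \eqref{eq:isoloopK} on that killed graph. The difficulty is that $\mathcal{O}=\mathcal{O}(\mathcal{L})$ is random and depends on the loops. We therefore first need a spatial Markov-type property: under $\Q(\cdot\,|\,\mathcal{O})$, the restriction $\mathcal{L}_{\mathcal{O}^{\mathsf c}}$ (more precisely, the loops avoiding the set $\mathcal{O}$ as in the definition of $\mathcal{C}_{\mathcal{O}}$) should be stochastically dominated by an independent loop soup $\mathcal{L}_{\tilde{\G}_{\mathcal{O}}}$ with $\mathcal{O}$ frozen. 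Since connection events are increasing in the loops, domination suffices to give the inequality.

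\textbf{Step 1 (conditioning on the cluster set).} We decompose over deterministic values $O$ of $\mathcal{O}$. By \eqref{eq:clustersetL}, the event $\{\mathcal{O}=O\}$ is determined by three things: which loop clusters $K\subset O$ belong to $\mathcal{P}$ (a function of $\mathcal{L}_K$), the fact that these $K$ are genuine clusters, and the requirement that every other cluster is not in $\mathcal{P}$. The first two conditions depend on $\mathcal{L}_O$ together with the requirement that no loop of $\mathcal{L}_{O^{\mathsf c}}$ touches $O$ (the boundary condition making $K$ maximal). The third condition is a decreasing-type constraint on $\mathcal{L}_{O^{\mathsf c}}$: adding loops in $O^{\mathsf c}$ may merge clusters into new clusters which could belong to $\mathcal{P}$ and hence enlarge $\mathcal{O}$. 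Here we use that $\mathcal{O}$ is increasing. Adding loops avoiding $O$ can only increase $\mathcal{O}$, so the conditioned event restricted to $\mathcal{L}$ on loops avoiding $O$ is a decreasing event. By Poisson independence, $\mathcal{L}$ restricted to loops avoiding $O$ is independent of $\mathcal{L}_O$ and distributed as $\mathcal{L}_{\tilde{\G}_O}$. Harris–FKG for Poisson point processes then gives that, given $\mathcal{O}=O$, the law of $\mathcal{C}_O(x,r')$ is stochastically dominated by the cluster of $B(x,r')$ in $\mathcal{L}_{\tilde{\G}_O}$ under $\Q$. Hence
$$\Q\big(\mathcal{C}_{\mathcal{O}}(x,r')\cap A\neq\emptyset\,|\,\mathcal{O}=O\big)\le\Q\big(B(x,r')\leftrightarrow A\text{ in }\mathcal{L}_{\tilde{\G}_O}\big).$$

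\textbf{Step 2 (isomorphism).} By \eqref{eq:isoloopK} applied with $K=O$, the clusters of $\mathcal{L}_{\tilde{\G}_O}$, each with an independent fair sign, give $\{\varphi\ge0\}$ under $\P^O$. The hypothesis $d(x,A\cap G)>r'$ ensures $B(x,r')$ and $A$ are distinct targets. On a connection event through loop clusters, the clusters hitting $B(x,r')$ form a chain to $A$; each such cluster is positive with probability $1/2$. The cleanest route is to consider a single cluster: since $\mathcal{C}$ is the cluster of the whole ball $B(x,r')$, a connection means some loop cluster intersects both $B(x,r')$ and $A$. Conditionally on the unsigned clusters, that cluster is positive with probability $1/2$, in which case $B(x,r')\leftrightarrow A$ in $\{\varphi\ge0\}$. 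This gives the factor $2$.

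The main obstacle is Step 1. We must carefully verify the product/decreasing structure of $\{\mathcal{O}=O\}$ given the random cluster set definition, including the subtlety that clusters partially touching $O$ are excluded. Should the FKG argument prove delicate, we would instead explore clusters one at a time and use the Poisson Markov property.
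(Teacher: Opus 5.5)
Your architecture is the paper's. You decompose over the value of the obstacle, split the event that the obstacle takes this value into a part measurable with respect to the loops meeting it and a part decreasing in the loops avoiding it, apply Harris--FKG, and conclude with \eqref{eq:isoloopK} on the killed graph plus the sign factor $2$. Your Step~2 is fine.

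\textbf{The gap is in Step~1.} Splitting the soup relative to the vertex set $O$ does not give the structure you claim. Your justification (adding loops avoiding $O$ can only enlarge $\mathcal{O}$) only shows that $\{\mathcal{O}\subseteq O\}$ is decreasing in $\mathcal{L}_{O^{\mathsf c}}$; the complementary condition $\{\mathcal{O}\supseteq O\}$ is not. Indeed, $\mathcal{L}_{O^{\mathsf c}}$ contains the loops lying inside the open cable $I_{\{v,v'\}}$ between neighbours $v,v'\in O$. Such a loop may be the only link between the loops emanating from $v$ and those emanating from $v'$. Removing it can then split a cluster in $\mathcal{P}$ (say one of large capacity) into two clusters not in $\mathcal{P}$, so $\mathcal{O}$ strictly shrinks. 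Hence $\{\mathcal{O}=O\}$ is not decreasing in $\mathcal{L}_{O^{\mathsf c}}$, even given the loops hitting $O$, and FKG cannot be invoked. Two related problems: your maximality condition (no loop of $\mathcal{L}_{O^{\mathsf c}}$ touches $O$) is vacuous as written, and $\mathcal{L}_O$ (loops contained in a set of vertices) is not the relevant family. The genuine constraint couples the avoiding loops with the hitting ones, so you must condition on the latter, which your sketch leaves implicit.

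\textbf{How the paper fixes this.} It passes to $\overline{\mathcal{O}}$, the union of $\mathcal{O}$ with all cables joining two of its vertices, and splits relative to $S=\overline{\mathcal{O}}$ (so $\partial S\subset G$). Let $\mathcal{L}^S$ denote the loops hitting $S$. Then $\{\overline{\mathcal{O}}=S\}=F^1_S\cap F^2_S$, where:
\begin{itemize}
\item $F^2_S=\{\mathcal{O}(\mathcal{L}^S)=S\cap G\}$ depends only on $\mathcal{L}^S$;
\item $F^1_S$ asks that $\mathcal{O}(\mathcal{L}_{S^{\mathsf c}})=\emptyset$ and that no component of $\mathcal{L}_{S^{\mathsf c}}$ meets both $G$ and a loop of $\mathcal{L}^{\partial S}$. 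Components trapped in a cable leaving $\partial S$ may touch the clusters forming $S$, since they add no vertex.
\end{itemize}
Given $\mathcal{L}^{\partial S}$, $F^1_S$ is decreasing in $\mathcal{L}_{S^{\mathsf c}}$ and conditionally independent of $F^2_S$. Meanwhile $\{\mathcal{C}_S(x,r')\cap A\neq\emptyset\}$ is increasing and independent of $\mathcal{L}^{\partial S}$. Conditional FKG then gives $\Q(\mathcal{C}_S(x,r')\cap A\neq\emptyset\,|\,\overline{\mathcal{O}}=S)\leq\Q(\mathcal{C}_S(x,r')\cap A\neq\emptyset)$; conditioning on all of $\mathcal{L}^S$ would work equally well. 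After the isomorphism on $\tilde{\G}_S$, one returns from $\P^{\overline{\mathcal{O}}}$ to $\P^{\mathcal{O}}$. This is where the paper uses $d(x,A\cap G)>r'$, together with the $\mathcal{O}$-measurability of $\overline{\mathcal{O}}$.
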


We postpone the proof of Lemma~\ref{lem:avoidmacroloops} by a few lines, and first explain how to finish the proof of Lemma~\ref{lem:avoidmacroloops2}. Indeed, the proof is now very similar to the theory already developed in \cite{DrePreRod8,DrePreRod9} and we establish the auxiliary results Lemmas \ref{lem:goodobstacle} and \ref{lem:fpploops} now, which are needed in its proof. We say that $\pi=(\pi_0,\dots,\pi_n)$ is a path in $\Lambda_{L}$ from $K$ to $K'$ if $\pi_i\in{\Lambda_{L}}$ for all $0\leq i\leq n$, $\pi_0\in{K}$, $\pi_n\in{K'}$ and for all $0\leq i\leq n-1$ there exists $x\in{B(\pi_i,L)}$ and $y\in{B(\pi_{i+1},L)}$ such that $x$ and $y$ are neighbors in $\G$. We call a set $O\subset G$ an {\em $(x,R,N,\ell,s)$-good obstacle} if for any path $\pi=(\pi_0,\dots,\pi_n)$ in $\Lambda_{sNR}$ from $B(x,R/2)$ to $B(x,R)^{\mathsf{c}}$, there exists $A\subset\text{range}(\pi\cap B(x,R))$ with $|A|\geq \ell$ and $\mathrm{cap}(O\cap B(y,sNR))\geq (sR)^{\nu}$ for all $y\in{A}$. Let us start with the following simple adaptation of \cite[Lemma~2.1]{DrePreRod8}.

\begin{Lemme}
\label{lem:goodobstacle}
For all $N\geq 1$, $s\in{(0,c/N]}$, $R\geq Cs^{-1}$, $\ell\geq 1$, $x\in{G}$, $K\subset B(x,2R)^{\mathsf c}$, $y\in{B(x,R/2)}$ and $(x,R,N,\ell,s)$-good obstacle $O\subset G$, we have
\begin{equation}
\label{eq:hittingonG}
    P_y(H_{K}<H_{{O}})\leq  \sup_{z\in{B(x,2R)}}P_z(H_K<\infty)\exp(-c\ell N^{-\nu}).
\end{equation}
\end{Lemme}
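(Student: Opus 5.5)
The plan is to follow the argument of \cite[Lemma~2.1]{DrePreRod8}: extract from the good obstacle a chain of "checkpoints" and run a Markov chain argument through them. For $y\in B(x,R/2)$ and $K\subset B(x,2R)^{\mathsf c}$, the walk started at $y$ has to cross the annulus $B(x,R)\setminus B(x,R/2)$ before it can reach $K$. We follow its trajectory at the coarse scale $L=sNR$. Each time the walk sits in a ball $B(\pi_i,L)$, $\pi_i\in\Lambda_L$, we record $\pi_i$. The resulting sequence is a path in $\Lambda_{L}$ from $B(x,R/2)$ to $B(x,R)^{\mathsf c}$. By the good obstacle property, it visits at least $\ell$ distinct points $z\in A$ such that $\mathrm{cap}(O\cap B(z,L))\geq (sR)^\nu$.

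The key one-step estimate is this. For $z\in\Lambda_L\cap B(x,R)$ with $\mathrm{cap}(O\cap B(z,L))\geq (sR)^\nu$, and any $w\in B(z,L)$, \eqref{eq:hittingvscap} applied with $r=L$ (with $K=\emptyset$; note $d(z,K)\geq R\geq 2\Cr{C:exit}L$ because $s\leq c/N$) gives
\[
P_w\big(H_{O}<H_{\tilde B(z,\Cr{C:exit}L)^{\mathsf c}}\big)\geq cL^{-\nu}(sR)^\nu=cN^{-\nu}.
\]
So whenever the walk enters such a ball, it hits $O$ before leaving the $\Cr{C:exit}L$-enlargement with probability at least $cN^{-\nu}$.

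To iterate this, we define stopping times $\tau_1<\tau_2<\dots$. The time $\tau_{k+1}$ is the first time after the walk has exited $\tilde B(X_{\tau_k},\Cr{C:exit}L+L)$ at which it enters a ball $B(z,L)$ with $z$ a good point, i.e.\ $z\in\Lambda_L\cap B(x,R)$, $\mathrm{cap}(O\cap B(z,L))\geq(sR)^\nu$, and $z$ not within distance $C'L$ of previously used centres. On $\{H_K<H_O\}$ the walk must cross the annulus, so the coarse path argument above applies. We then need a pigeonhole/packing statement. Among the $\ge\ell$ good points, each ball of radius $C'L$ contains only $C$ points of $\Lambda_L$ by \eqref{eq:defLambdaL}, so at least $c\ell$ of them are pairwise $C'L$-separated. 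Moreover, the walk visits these separated good balls in succession, so at least $c\ell$ of the stopping times occur before $H_{B(x,R)^{\mathsf c}}$.

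Applying the strong Markov property at each $\tau_k$ together with the one-step estimate, the probability of avoiding $O$ at all $c\ell$ checkpoints is at most $(1-cN^{-\nu})^{c\ell}\leq \exp(-c\ell N^{-\nu})$. After the last checkpoint, or when first exiting $B(x,2R)$ at a point $z\in B(x,2R)$ next to the boundary, we apply the strong Markov property once more and bound the remaining factor by $\sup_{z\in B(x,2R)}P_z(H_K<\infty)$. This gives \eqref{eq:hittingonG}.

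The main obstacle is making the "at least $c\ell$ successive separated checkpoints" rigorous. The good set $A$ depends on the realized coarse path, so it is random, and the strong Markov decomposition must be organised so that each checkpoint is a stopping time. The plan is to avoid fixing $A$ in advance. At each step we simply take the next new good ball entered, and we use a deterministic counting bound, namely that any coarse crossing path meets at least $\ell$ good points, to ensure that the process does not terminate before $c\ell$ steps, up to the bounded multiplicity coming from \eqref{eq:defLambdaL}.
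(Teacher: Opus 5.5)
Your proposal is correct and follows the route the paper indicates: the paper defers to the proof of \cite[Lemma~2.1]{DrePreRod8} at scale $L=sNR$ with $\kappa=(sR)^{\nu}$, i.e.\ checkpoints at good balls, the one-step bound $cL^{-\nu}(sR)^{\nu}=cN^{-\nu}$ from \eqref{eq:hittingvscap}, iterated strong Markov, and a final strong Markov step producing $\sup_{z\in B(x,2R)}P_z(H_K<\infty)$. Two details should be stated as you indicate in your last paragraph: count via covering, since every good centre met before the walk leaves $B(x,R+L)$ lies within $C'L$ of a recorded centre, which gives at least $\ell/\sup_w|\Lambda_L\cap B(w,C'L)|$ checkpoints and hence the bound with no prefactor; and use the exit time of $B(x,R+L)$ rather than $B(x,R)$, so that the coarse path genuinely ends outside $B(x,R)$.
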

The proof of Lemma~\ref{lem:goodobstacle} is similar to the one of \cite[Lemma~2.1]{DrePreRod8}, with $L=sNR$, $n=\ell$ and $\kappa=(sR)^{\nu}$ therein, and we omit it here. We only note that one needs to replace therein $0$ by $y\in{B(x,R/2)}$ and the event $\{H_x<\infty\}$ in \cite[(2.8)]{DrePreRod8},  which was giving rise to the term $Cd(x, B_{R+C_5L})^{-\nu}$ therein, by the event $\{H_K<\infty\}$, which gives rise to the term $\sup_{z\in{B(x,2R)}}P_z(H_K<\infty)$ here, noting that $B(x,R+CL)\subset B(x,2R)$ for $s\leq c/N$.

We will use Lemma~\ref{lem:goodobstacle} with $\mathcal O$ the increasing random cluster set from Lemma~\ref{lem:avoidmacroloops2}, and we thus need to prove that this set is a good obstacle, which follows ideas from \cite[Lemma~3.4]{DrePreRod9} . 

\begin{Lemme}
\label{lem:fpploops}
Fix some $M\geq1$ and $\eps>0$. Then there exist constants $c,\Cl[c]{c:N},\Cl{C:N}\in (0,\infty)$ such that for all $s\in{(0,c]}$, $R\geq C/s$, $x\in{G}$, and for all increasing random cluster sets $\mathcal{O}=\mathcal{O}(\mathcal{L})$ satisfying \eqref{eq:condOL}, writing 
\begin{equation}
\label{eq:defG}
\mathbf{G}=\big\{O\subset G:\,O\text{ is a }(x,R,\Cr{C:N},\Cr{c:N}s^{-1},s)\text{-good obstacle}\big\},
\end{equation}
we have 
\begin{equation}
\label{eq:probaG}
\begin{split} 
\tilde{\P}\big(\mathcal{O}\in{\mathbf{G}})\geq 1-\exp(-cs^{-1}).
\end{split}
\end{equation}
\end{Lemme}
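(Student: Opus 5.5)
The plan is a renormalization/percolation argument on the coarse lattice $\Lambda_{2sMR}$, in the spirit of \cite[Lemma~3.4]{DrePreRod9}. Set $L=2sMR$ and call a site $y\in\Lambda_{L}\cap B(x,R)$ \emph{good} if $\mathrm{cap}\big(\mathcal{O}(\mathcal{L}_{\tilde{B}(y,sMR)})\big)\geq (sR)^{\nu}$. Three properties drive the argument.

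\begin{enumerate}[1)]
\item The balls $\tilde{B}(y,sMR)$, $y\in\Lambda_{2sMR}$, are pairwise disjoint by \eqref{eq:defLambdaL}. Since loops in disjoint sets are independent under the Poissonian structure of $\mathcal{L}$, the events $\{y \text{ good}\}$ are independent.
\item Each such event has probability at least $\eps$ by \eqref{eq:condOL}.
\item Since $\mathcal{O}$ is an increasing random cluster set, $\mathcal{O}(\mathcal{L}_{\tilde{B}(y,sMR)})\subset\mathcal{O}(\mathcal{L})$. Hence on $\{y\text{ good}\}$ one has $\mathrm{cap}\big(\mathcal{O}\cap B(y',sNR)\big)\geq(sR)^{\nu}$ for any $y'\in\Lambda_{sNR}$ with $B(y,sMR)\subset B(y',sNR)$. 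This holds provided $N=\Cr{C:N}$ is chosen as a large multiple of $M$, using monotonicity of capacity.
\end{enumerate}

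One subtlety is checking that monotonicity holds for cluster sets. Enlarging the soup can merge clusters, but the increasing property is assumed precisely for this purpose, so I will use it directly.

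Next, I translate paths. Any path $\pi$ in $\Lambda_{sNR}$ from $B(x,R/2)$ to $B(x,R)^{\mathsf c}$ crosses the annulus, so it must pass within distance $CsNR$ of at least $c/(sN)$ sites of $\Lambda_{2sMR}\cap B(x,R)\setminus B(x,R/2)$. Each site of $\pi$ is close to at most $C$ of these sites, and conversely. The aim is to show that, except on an event of probability at most $\exp(-cs^{-1})$, every such path meets at least $\Cr{c:N}s^{-1}$ sites $\pi_i$ which lie close to a good $y$. Then $A$ can be taken to be those $\pi_i$, which gives the good obstacle property.

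The core estimate is a Peierls-type union bound over coarse-grained paths. I would reduce to self-avoiding paths on an auxiliary graph whose vertex set is $\Lambda_{2sMR}$, with adjacency meaning distance at most $CsNR$. This graph has bounded degree $D=D(M,N)$ by \eqref{eq:defLambdaL}. Any crossing contains a self-avoiding crossing of length $n\geq c/s$, and there are at most $|\Lambda\cap B(x,R)|\,D^{n}\leq Cs^{-\alpha}D^{n}$ such paths. By independence, a fixed path of length $n$ has fewer than $\delta n$ good sites with probability at most $\binom{n}{\delta n}(1-\eps)^{(1-\delta)n}$.

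This is the step I expect to be the main obstacle: $D$ may exceed $(1-\eps)^{-1}$, so the union bound does not close directly. I would first try a further coarse-graining, in which a block of $k$ coarse sites is declared good if at least one of its sites is good. Under independence this has probability $1-(1-\eps)^{k^{\alpha}}$. For $k$ large, depending on $\eps$ and $M$, this probability dominates the block-level degree, and the Peierls estimate closes. This yields \eqref{eq:probaG} with $\Cr{c:N}$ small depending on $M,\eps$, and with the polynomial prefactor $s^{-\alpha}$ absorbed into $\exp(-cs^{-1})$ for $s\leq c$.
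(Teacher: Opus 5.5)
Your strategy is the paper's strategy: group many independent $sMR$-balls, each good with probability at least $\eps$, into coarse boxes that are good with probability close to one, then run a Peierls bound over coarse crossings. The parameter bookkeeping in your last step, however, leaves a genuine gap. You fix $\Cr{C:N}$ as a large multiple of $M$ (step 3) and let only $\Cr{c:N}$ depend on $\eps$.

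The good-obstacle property is tested at the scale of the path itself. For every crossing $\pi$ in $\Lambda_{s\Cr{C:N}R}$ you need $\Cr{c:N}s^{-1}$ sites $\pi_i$ with $\mathrm{cap}(\mathcal{O}\cap B(\pi_i,s\Cr{C:N}R))\geq (sR)^{\nu}$. A good block of side $\asymp ksMR\gg s\Cr{C:N}R$ only guarantees one good $sMR$-ball somewhere inside it. A path at the finer scale $s\Cr{C:N}R$ can cross the block without any ball $B(\pi_i,s\Cr{C:N}R)$ containing that sub-ball, so your block-level Peierls bound does not give $\mathcal{O}\in\mathbf{G}$.

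This cannot be avoided by a cleverer count. If $\Cr{C:N}$ does not depend on $\eps$, then for small $\eps$ the balls of radius $s\Cr{C:N}R$ that see a good sub-ball have density of order $\eps$. One cannot then expect every annulus crossing at that scale to meet $\Cr{c:N}s^{-1}$ of them with probability $1-\exp(-cs^{-1})$, since the bad region percolates at that scale.

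The fix is to let the paths live at the block scale: take $\Cr{C:N}=C_0kM$ with $C_0$ a large absolute constant, so that $\Cr{C:N}$ depends on $\eps$ through $k$. Then:
\begin{enumerate}[1)]
\item every path site $\pi_i$ attached to a good block has the good sub-ball inside $B(\pi_i,s\Cr{C:N}R)$;
\item the graph on blocks onto which consecutive path sites project has degree bounded independently of $k$;
\item a block is bad with probability at most $(1-\eps)^{ck^{\alpha}}$.
\end{enumerate}
Hence the Peierls sum closes for $k=k(M,\eps)$ large, with $\Cr{c:N}\asymp 1/(kM)$ and $s\leq c(M,\eps)$.

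This is exactly what the paper does. It sets $L=s\delta^{-1/\nu}R$, i.e.\ $\Cr{C:N}=\delta^{-1/\nu}$, and shows $\tilde{\P}(\mathrm{cap}(\tilde{\mathcal{O}}\cap B(y,L/2))\geq(sR)^{\nu})\geq p(\delta,M,\eps)$ with $p\to1$ as $\delta\to0$. It then invokes the Peierls argument of \cite[(3.17)]{DrePreRod9} with $\delta=c(M,\eps)$.

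A smaller point: \eqref{eq:defLambdaL} only makes the balls of radius $2sMR/3$ around points of $\Lambda_{2sMR}$ disjoint. The independence you claim for the balls $\tilde{B}(y,sMR)$, $y\in\Lambda_{2sMR}$, therefore requires passing to a sparser subfamily; the paper works with $\Lambda_{3sMR}$ for this reason.
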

\begin{proof}
For a parameter $\delta\in{(0,1)}$ to be fixed below, similarly as in \cite[(3.15)]{DrePreRod9}, introduce 
\begin{equation}
\label{eq:defLell}
\begin{split} 
L\stackrel{\text{def.}}{=}s\delta^{-\frac1\nu}R\text{ and }\ell\stackrel{\text{def.}}{=}\left\lceil {3R}/{40L}\right\rceil-3=\big\lceil{3\delta^{\frac1\nu}}/{40s}\big\rceil -3.
\end{split}
\end{equation}
Let $\tilde{\mathbf{G}}\subset 2^{G}$%
be the set of all $(x,R,\delta^{-\frac1\nu},\ell/2,s)$-good obstacle sets, and let us further write 
\begin{equation}
\label{eq:calO}
\tilde{\mathcal{O}}\stackrel{\text{def.}}{=}\bigcup_{y\in{\Lambda_{3sMR}}}\mathcal{O}(\mathcal{L}_{B(y,sMR)}).
\end{equation}
Note that the sets $\mathcal{O}(\mathcal{L}_{B(y,sMR)})$, $y\in{\Lambda_{3sMR}}$, are independent by \eqref{eq:defLambdaL}, the complete independence of Poisson point processes, and the definition of random cluster sets. In particular, it follows from \eqref{eq:defLell} and \eqref{eq:condOL} that 
for some function $p(\delta,M,\eps)$ satisfying $p(\delta,M,\eps)\rightarrow1$ as $\delta\rightarrow 0$, and
for all $y\in{\Lambda_{L}\cap B(x,R-L)\setminus B(x,R/2+L)}$, we have
\begin{equation*}
\tilde{\P}\big(\mathrm{cap}(\tilde{\mathcal{O}}\cap {B(y,L/2)})\geq (sR)^{\nu}\big)\geq p(\delta,M,\eps).
\end{equation*}
 We refer to the end of the proof of Proposition~\ref{thm:macroscopiclocalcluster} and to below \cite[(5.2)]{DrePreRod10} for a similar argument with more details. Note that the event $\{\mathrm{cap}(\tilde{\mathcal{O}}\cap {B(y,L/2)})\geq (sR)^{\nu}\}$ only depends on the loops entirely included in $B(y,L)$  by \eqref{eq:calO} upon assuming $N\geq C=C(M)$. Proceeding similarly as in the proof of \cite[(3.17)]{DrePreRod9} (with $b=1/2$ and $a=1/8$, and replacing the events $\{\exists\, \gamma_i\in{\mathcal{L}^{\text{big}}},\text{range}(\gamma_i)\subset B(x_i,L)\}$ by the events $\{\mathrm{cap}(\tilde{\mathcal{O}}\cap {B(y,L/2)})\geq (sR)^{\nu}\}$ for $y$ as above), one can easily deduce that $\tilde{\P}(\tilde{\mathcal{O}}\notin{\tilde{\mathbf{G}}})\leq \exp(-c\ell)$ upon fixing $\delta=c$ for a small enough constant $c=c(M,\eps)$. Since $\ell\geq cs^{-1}$, $\tilde{\mathbf{G}}\subset\mathbf{G}$ for suitable constants $\Cr{c:N}$ and $\Cr{C:N}$ in \eqref{eq:defG}, and $\tilde{\mathcal{O}}\subset \mathcal{O}$ by monotonicity, we can conclude.
\end{proof}

Combining Proposition~\ref{pro:boxtobox} with Lemmas~\ref{lem:avoidmacroloops}, \ref{lem:goodobstacle} and~\ref{lem:fpploops}, one can readily finish the

\begin{proof}[Proof of Lemma~\ref{lem:avoidmacroloops2}]
Let $\mathcal{O}'=\mathcal{O}\cap B(x,2R)\setminus B(x,R/4)$, which still satisfies \eqref{eq:condOL} (with $\mathcal O$ replaced by $\mathcal O'$) upon assuming $s\leq cM^{-1}$. By \eqref{eq:avoidmacroloops} for $\mathcal{O}'$, the assumption $\sup q<\infty$, %
and \eqref{eq:crossingAB} with $A,A',r$ and $R$ replaced respectively by $B(x,r')$, $A$, $2r'$, $\delta(A)/9$, noting that the assumptions $R'\geq C\rr'$, $B(A,\delta(A)/9)\subset B(x,r'+3R')\setminus B(x,r'+R'/2)$ and $\mathcal{O}'\subset B(x,r'+cR')\setminus B(x,r'+C\rr')$ therein are satisfied under the assumptions of Lemma~\ref{lem:avoidmacroloops2} (which imply $\delta(A)\leq 4R'$) if $R'\geq CR$ and $R\geq C\rr'$, we have
\begin{equation}
\label{eq:boundEonG}
\begin{split} 
\Q&\big(\mathcal{C}_{\mathcal{O}'}(x,r')\cap A=\emptyset\big)
\\&\leq Cq(\rr)q(\dd(A))\big(\frac{\rr'}{\dd(A)}\big)^{\frac\nu2}\tilde{\E}\Big[\inf_{z\in{B(x,3r')}}P_z(H_{B(A,\delta(A)/9)}<H_{\mathcal{O}'})\Big] 
\\&\leq Cq(\rr)q(\dd(A))\big(\frac{\rr'}{\dd(A)}\big)^{\frac\nu2}\sup_{z\in{B(x,2R)}}P_z(H_{B(A,\delta(A)/9)}<\infty)\exp(-cs^{-1})
\\&\leq  Cq(\rr)q(\dd(A))(\dd(A)\rr')^{\frac\nu2}(R')^{-\nu}\exp(-cs^{-1}),
\end{split}
\end{equation}
where the second inequality follows on the event $\mathcal{O}'\in{\mathbf{G}}$ (recall $\mathbf{G}$ from \eqref{eq:defG})  from  \eqref{eq:hittingonG} for $N=\Cr{C:N}$, $O=\mathcal{O}'$ and $\ell=\Cr{c:N}s^{-1}$, as well as on the event $\mathcal{O}'\notin{\mathbf{G}}$ from \eqref{eq:probaG} for $K=B(A,\delta(A)/9)$ and $\mathcal{O}'$ instead of $\mathcal{O}$, whose assumptions are satisfied if $R\geq Cs^{-1}$. Finally,  the last inequality is a consequence of \eqref{eq:capBallBd} and \eqref{eq:hittingvscap} since $d(z,B(A,\delta(A)/9))\geq R'-2R-\delta(A)/9\geq R'/4$ for all $z\in{B(x,2R)}$ if $R'\geq CR$.
\end{proof}

To conclude the proof of Lemma~\ref{lem:avoidmacroloops2}, it remains to give the
\begin{proof}[Proof of Lemma~\ref{lem:avoidmacroloops}]
In a slight abuse of standard notation, let us denote by $\overline{\mathcal{O}}$ the union of $\mathcal{O}=\mathcal{O}(\mathcal{L})$ with all line segments $I_{\{z,z'\}}\subset\tilde{\G}$ between any $z\sim z'$ such that both $z$ and $z'$ are in $\mathcal{O}$. Let $S\subset \tilde{\G}$ be a closed set such that $\partial S\subset G$. 
 We write $F_S^1$ for the event that  $\mathcal{O}(\mathcal{L}_{S^{\mathsf c}})=\emptyset$, and that  none of the connected components of $\mathcal{L}_{S^{\mathsf c}}$ intersect both $G$ and a loop in $\mathcal{L}^{\partial S}$ (recall the loop soup notation as well as its connected components from above \eqref{eq:isoloopK}). Write also $F_S^2$ for the event that $\mathcal{O}(\mathcal{L}^{S})=S\cap G$. Note that if none of the connected components of  $\mathcal{L}_{S^{\mathsf c}}$ intersects both $G$ and a loop in $\mathcal{L}^{\partial S}$, 
 then the set consisting of the intersections  of the connected components of $\mathcal{L}$ with $G$
 is the union of the set consisting of the intersections with $G$ of the connected components of $\mathcal{L}_{S^{\mathsf c}}$ 
 with the set consisting of the intersections with $G$ of the connected components of $\mathcal{L}^{S}$.
 As a consequence, $\mathcal{O}(\mathcal{L})=\mathcal{O}(\mathcal{L}_{S^{\mathsf c}})\cup \mathcal{O}(\mathcal{L}^{S})$ (where the graph $G'$ underlying the loop soup $\mathcal{L}^{S}$ in the context of the definition in \eqref{eq:clustersetL} is $S$) in view of \eqref{eq:clustersetL}. As a consequence of the above, we infer $\{\overline{\mathcal{O}}=S\} = F_S^1\cap F_S^2$.%
Moreover, $F_S^2$ only depends on $\mathcal{L}^{S}$, and is thus independent of $\mathcal{L}_{S^{\mathsf c}}$ conditionally on $\mathcal{L}^{\partial S}$ by properties of Poisson point processes. Therefore, 
since both $\mathcal{C}_S$ and $F_S^1$ are $\sigma(\mathcal{L}_{S^{\mathsf c}},\mathcal{L}^{\partial S})$-measurable,
 we have
\begin{equation*}
\begin{split} 
\Q\big(\mathcal{C}_S(x,r')\cap A\neq\emptyset,\overline{\mathcal{O}}=S\big)
&=\E_{\Q}\Big[\Q\big(\mathcal{C}_S(x,r')\cap A\neq\emptyset,F_S^1,F_S^2\,|\,\mathcal{L}^{\partial S}\big)\Big]
\\&=\E_{\Q}\Big[\Q\big(\mathcal{C}_S(x,r')\cap A\neq\emptyset,F_S^1\,|\,\mathcal{L}^{\partial S}\big)\Q\big(F_S^2\,|\,\mathcal{L}^{\partial S}\big)\Big].
\end{split}
\end{equation*}
As the event $F_S^1$ is decreasing and the event $\mathcal{C}_S(x,r')\cap A\neq\emptyset$ is increasing, by the FKG inequality (note that $F_S^1$ and $\mathcal{C}_S$ conditionally on $\mathcal{L}^{\partial S}$ only depend on $\mathcal{L}_{S^{\mathsf{c}}}$, which is positively associated) we can further upper bound 
\begin{equation*}
\begin{split} 
\Q\big(\mathcal{C}_S(x,r')\cap A\neq\emptyset,F_S^1\,|\,\mathcal{L}^{\partial S}\big)\leq \Q\big(\mathcal{C}_S(x,r')\cap A\neq\emptyset\big)\Q\big(F_S^1\,|\,\mathcal{L}^{\partial S}\big),
\end{split}
\end{equation*}
where we also used that $\mathcal{C}_S(x,r')$ is independent of $\mathcal{L}^{\partial S}$ (recall that $S$ is closed). Plugging this into the previous display, and recalling that, conditionally on $\mathcal{L}^{\partial S}$,  $F_S^1$ and $F_S^2$ are independent events  whose intersection is equal to $\{\overline{\mathcal{O}}=S\}$, we deduce that  
\begin{equation*}
\begin{split} 
  \Q\big(\mathcal{C}_S(x,r')\cap A\neq\emptyset&\,|\,\overline{\mathcal{O}}=S\big)\leq \Q\big(\mathcal{C}_S(x,r')\cap A\neq\emptyset\big).
\end{split}
\end{equation*}
If $B(x,r')\leftrightarrow A$ in $\mathcal{L}_{S^{\mathsf c}}$, then this connection occurs via a cluster with either positive or negative sign, and in both cases this event under $\Q$ has the same probability as the event $B(x,r')\leftrightarrow A$ under $\P^{S}$ by the isomorphism theorem \eqref{eq:isoloopK}. Since $d(x,A\cap G)>r'$, the event $\K^0(x,r')\cap A\neq\emptyset$ has the same law under $\P^{\overline{\mathcal{O}}}$ and $\P^{\mathcal{O}}$, see Section~\ref{subsec:G_K}, and we can conclude since $\overline{\mathcal{O}}$ is $\mathcal{O}$-measurable.
\end{proof}

\begin{Rk}
\label{rk:othermethod6.1}
Given Proposition~\ref{thm:macroscopiclocalcluster}, one could also prove Lemma~\ref{lem:avoidmacroloops2} using the methods developed in \cite{DrePreRod8,DrePreRod9}. Indeed, when $\alpha<2\nu$, \cite[Theorem~1.2]{DrePreRod9} is essentially equivalent to \eqref{eq:avoidmacroloops2} for $r=0$, $A=\partial B(x,R)^{\mathsf{c}}$  and $\mathcal{O}(\mathcal{L})$ the union of all clusters of $\mathcal{L}$ with capacity at least $(sR)^{\nu}$. In fact the condition $\alpha<2\nu$ in \cite[Theorem~1.2]{DrePreRod9} could easily be replaced therein by the condition $\sup q<\infty$ (which was the main goal of \cite{DrePreRod9}) by using a result similar to Lemma~\ref{lem:avoidmacroloops} instead of \cite[(3.14)]{DrePreRod9}. Our current proof of Proposition~\ref{pro:radimpliesOavoided} relying on Proposition~\ref{pro:boxtobox} is however simpler than the approach from \cite{DrePreRod9} relying on an extended Lupu's formula \cite[Proposition~2.2]{DrePreRod9}, since this last approach required an additional recurrence scheme, see \cite[Proposition~3.1]{DrePreRod9}. At least when $r'=0$ and $A=\{y\}$, instead of relying on Proposition~\ref{pro:boxtobox}, another possible approach would be to use the recent article \cite{werner2025switching} describing the law of $\mathcal{C}_S(x)$ conditionally on the event $x\leftrightarrow y$. 
\end{Rk}

\subsection{Proof for the cluster inside a box}
\label{sec:clusterinside}

We now explain how to deduce Proposition~\ref{pro:radimpliesOavoided} from Lemma~\ref{lem:avoidmacroloops2}, which, as explained below Lemma~\ref{lem:avoidmacroloops2}, essentially amounts to showing that not only the cluster of $B(x,r')$ typically intersects $\mathcal{O}$ when macroscopic, but in fact the same is true for the cluster of $B(x,r)$ within a large enough set $B$.

Let us define for all  $U,V$ and $O\subset G$ the following subset of all unrooted loop $\gamma$ on $\tilde{\G}$
\begin{equation}
\label{eq:defcalB}
    \mathcal{B}_{U,V}(O)=\big\{\gamma:\,\exists\, \gamma'\,\subset \text{trace}(\gamma),  U\leftrightarrow V\text{ in }\gamma', \gamma'\cap O=\emptyset\big\}.
\end{equation}
In words, $\mathcal{B}_{U,V}(O)$ corresponds to the (bad) loops $\gamma$ that contains a subpath connecting $U$ to $V$ and avoiding $O$. Recall that $\mu$ denotes the intensity measure of the loop soup $\mathcal{L}$.

\begin{Lemme}
\label{lem:nomacroloop}
There exist  $\Cl[c]{c:fpploop}\in{(0,1/8]}$, such that for all, $R\geq C$,  $x\in{G}$, $O\subset G$, $U_2\subset U_1\subset B(x,\Cr{c:fpploop}R)$ and $V_1\subset V_2\subset B(x,R)^{\mathsf{c}}$ 
\begin{equation}
\label{eq:nomacroloop}
\begin{split}
\mu\big(&\{\gamma\in{\mathcal{B}_{U_1,V_1}(O)}:\,U_2\leftrightarrow V_2\text{ in }\text{range}(\gamma)\}\big)
\\&\leq C\left(\frac{\mathrm{cap}(U_2)}{d(\partial U_1, U_2)^{\nu}}\wedge 1\right)\left(\frac{\mathrm{cap}(V_2)}{d(\partial V_1, V_2)^{\nu}}\wedge 1\right)\frac{\mathrm{cap}(U_1)}{d(x,V_2)^{\nu}}\sup_{z\in{U_1}}P_z(H_{V_1}<H_O).
\end{split}
\end{equation}
\end{Lemme}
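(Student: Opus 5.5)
We plan to write the loop measure $\mu$ in terms of random walk excursions via the standard decomposition of loops hitting a set. A loop $\gamma$ in the event on the left of \eqref{eq:nomacroloop} visits $U_2$ and $V_2$. Moreover, it contains a subpath from $U_1$ to $V_1$ avoiding $O$. We root such loops at their visits to $U_1$, which is the natural choice since the sup over $z\in U_1$ appears on the right-hand side. Under $\mu$, the unrooted loops visiting $U_1$ can be described as follows: start from $e_{U_1}$ (the equilibrium measure, of total mass $\mathrm{cap}(U_1)$), and decompose the loop into its successive excursions from $U_1$ to $V_2$ and back. The relevant part of $\mu$ is then bounded by a sum over $k\geq1$ of $k$-excursion terms, where each round trip $U_1\to V_2\to U_1$ costs a factor at most $C\,\mathrm{cap}(U_1)\,\mathrm{cap}(V_2)\,d(x,V_2)^{-2\nu}\leq 1/2$. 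This bound follows from \eqref{eq:hittingvscap} and \eqref{eq:capBallBd}, upon choosing $\Cr{c:fpploop}$ small, since $U_1\subset B(x,\Cr{c:fpploop}R)$ and $V_2\subset B(x,R)^{\mathsf c}$. The geometric sum is therefore dominated by the one-round-trip term, up to a constant. This is the approach taken in the proof of \cite[Lemma~3.1]{DrePreRod8}.

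In the one-round-trip term we identify four factors. (i) The loop exits $U_1$ and reaches $V_1$ before $O$; by the strong Markov property this is bounded by $\sup_{z\in U_1}P_z(H_{V_1}<H_O)$. (ii) Near $V$, the walk must hit $V_2$ after having entered $V_1$ (or before, on the return leg). By \eqref{eq:hittingvscap}, starting from $\partial V_1$ this costs $C(\mathrm{cap}(V_2)d(\partial V_1,V_2)^{-\nu}\wedge1)$. (iii) The return from $V_2$ to $U_1$ costs $C\,\mathrm{cap}(U_1)d(x,V_2)^{-\nu}$, again by \eqref{eq:hittingvscap}. (iv) The visit to $U_2$ inside $U_1$ costs $C(\mathrm{cap}(U_2)d(\partial U_1,U_2)^{-\nu}\wedge1)$, for walks entering $U_1$ through $\partial U_1$; here we use the equilibrium-measure normalization together with the sweeping identity (Lemma~\ref{L:sweeping-extended}).

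The order of the visits matters: $U_2$ and $V_2$ can be visited either before or after the $O$-avoiding crossing. We therefore split according to which of finitely many orderings of the hitting times of $U_2$, $V_1$ (avoiding $O$), $V_2$ and the return to $U_1$ occurs. Each ordering is handled by the strong Markov property at successive hitting times, using the Harnack inequality \eqref{eq:Harnack} to replace start points on $\partial U_1$ (respectively $\partial V_1$) by an infimum or supremum.

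The main obstacle is the factor $P(H_{V_1}<H_O)$. This factor must be extracted from a segment of the loop that contains an $O$-avoiding $U_1$–$V_1$ crossing. However, the loop is rooted at a point which need not be the start of that crossing, and the crossing may occur on the return leg (from $V_1$ back to $U_1$). To handle the return-leg case we plan to use time reversal (reversibility of $\mu$): the reversed loop has the same measure and turns a $V_1\to U_1$ crossing avoiding $O$ into a $U_1\to V_1$ one. We then root at the last visit to $U_1$ before that crossing, using a last-exit decomposition in the spirit of the proof of Lemma~\ref{lem:mainboxtobox}, so that the $O$-avoiding piece starts from a point of $U_1$.
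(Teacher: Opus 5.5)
The decisive gap is the step you flag yourself, and rooting at the last visit to $U_1$ before the crossing does not close it. From such a root $w\in\partial U_1$, the $O$-avoiding piece is the event $\{H_{V_1}<\tilde H_{U_1}\wedge H_O\}$ under $P_w$, and the roots are summed with counting measure. The no-return constraint is what produces the capacity weight: once $\Cr{c:fpploop}$ is small, $\sum_w\lambda_wP_w(H_{\partial B(x,R/4)}<\tilde H_{U_1})\le 2\,\mathrm{cap}(U_1)$. So you cannot also discard that constraint to obtain $P_w(H_{V_1}<H_O)$; doing so leaves $|\partial U_1|$ where $\mathrm{cap}(U_1)$ should be.

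In fact, as far as I can see, no argument can give \eqref{eq:nomacroloop} with $\sup_{z\in U_1}$ for arbitrary $O$. Here is a counterexample sketch on $\Z^3$, where $\nu=1$.
\begin{itemize}
\item Take $U_1=U_2=B(x,\rho)$ with $\rho=\Cr{c:fpploop}R$, and $V_1=V_2=\{v\}$ with $d(x,v)=2R$.
\item Let $O$ be the points of even coordinate sum in $B(x,\rho+1)\setminus B(x,\rho)$; the odd points of this shell are ``holes''.
\item Lower bound on the left side: a positive fraction of the loops visiting both $U_1$ and $v$ leave $U_1$ for the last time before $v$ through a hole, and then avoid $O$. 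Hence the left-hand side is at least $c\rho R^{-2}$.
\item Upper bound on the right side: every visit to a hole is followed by a step into $O$ with probability bounded below. From a hole, the walk reaches distance $2\rho$ before returning to $B(x,\rho+1)$ with probability $O(\rho^{-1})$. Decomposing at the last visit to the shell gives $\sup_{z\in U_1}P_z(H_v<H_O)\le C\rho^{-1}R^{-1}$, so the right-hand side is $O(R^{-2})$.
\end{itemize}
The paper's proof has the same defect. Its last step identifies $\{Z^{(0)}\cap O=\emptyset\}$, an excursion beginning at the last-exit time $T_0$, with $\{H_{V_1}<H_O\}$.

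What the proof of Proposition~\ref{pro:radimpliesOavoided} actually needs appears to be recoverable. There, Lemmas~\ref{lem:goodobstacle} and~\ref{lem:fpploops} control every starting point in $B(x,R/4)$. Replace $\sup_{z\in U_1}$ by $\sup_{s\in S}P_s(H_{V_1}<H_O)$ with $S=\partial B(x,R/4)$, after decreasing $\Cr{c:fpploop}$ so that $\sup_{s\in S}P_s(H_{U_1}<\infty)\le\frac12$. The key is to cut the $O$-avoiding excursion at its first visit to $S$, not at its start. That time is one of the successive hitting times $\sigma_1<\sigma_2<\cdots$ of $S$ separated by returns to $U_1$. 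The strong Markov property at $\sigma_i$ and a union bound then give
$P_z(Z^{(0)}\cap O=\emptyset)\le\sum_{i\ge1}P_z(\sigma_i<\infty)\sup_{s\in S}P_s(H_{V_1}<H_O)\le 2\sup_{s\in S}P_s(H_{V_1}<H_O)$.
With this first-entrance extraction, the rest of the excursion argument goes through.

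Your bookkeeping also needs repair.
\begin{itemize}
\item The decomposition to use is the paper's, following \cite[(3.8)]{DrePreRod8}: $\mu(\cdot)\le 2\sum_{k\ge1}\frac1k\sum_{y\in\partial U_1}P_y(Z_{R_k}=y,\dots)$, where $R_k$ is the $k$-th return to $U_1$ after a visit to $V_1$. The root carries counting measure, and the only factor $\mathrm{cap}(U_1)$ comes from the return leg, i.e.\ your (iii). Starting from $e_{U_1}$ with mass $\mathrm{cap}(U_1)$ and also paying (iii) gives $\mathrm{cap}(U_1)^2$.
\item Your per-round-trip factor $\mathrm{cap}(U_1)\mathrm{cap}(V_2)d(x,V_2)^{-2\nu}$ need not be small: in the application, $V_2=B(x,2^{k-1}R)^{\mathsf c}$ has infinite capacity. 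The factor $\frac12$ must come from the return leg alone, which is how $\Cr{c:fpploop}$ is fixed in \eqref{eq:defcfpploop}.
\item The inclusion actually used in the proof and its applications is $V_2\subset V_1$, not $V_1\subset V_2$. That is what places every visit to $V_2$ in some interval $[D_m,R_m)$, and what makes $d(\partial V_1,V_2)$ meaningful. Round trips should therefore be taken between $U_1$ and $V_1$.
\item The visits to $U_2$ and $V_2$ and the $O$-avoiding crossing may lie in different round trips. This gives $O(k^3)$ configurations rather than finitely many orderings; the factor $2^{-k}$ absorbs them.
\end{itemize}
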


In \eqref{eq:nomacroloop}, we use the convention $1/0=\infty$. In particular, in case $U_1=U_2$, $V_1=V_2=B(x,\zeta R)^{\mathsf{c}}$ for some $\zeta>1$ and $O=\emptyset$, Lemma~\ref{lem:nomacroloop} is equivalent to \cite[(3.6)]{DrePreRod8}, and is in fact proved using similar ideas as therein, see also \cite[Lemma~2.7]{zbMATH06566372}. The details are given at the end of the section, and we first explain how to combine Lemma~\ref{lem:nomacroloop} with Lemmas~\ref{lem:avoidmacroloops2}, \ref{lem:goodobstacle} and~\ref{lem:fpploops} to finish the 

\begin{proof}[Proof of Proposition~\ref{pro:radimpliesOavoided}]
Throughout this proof, we can assume w.l.o.g.\ that $R'\geq CR$ and $s\in{[0,c]}$ for some constants $C<\infty$ and $c>0$, depending on $M$ and $\eps$. Indeed the case $R\in{[R'/C,2R']}$ follows directly from the case $R=R'/C$ (with $2C'M$ instead of $M$), and the case $s\in{[c,1]}$ follows directly from the case $s=c$ (with $M/c$ instead of $M$), up to changing the constants appearing in  Proposition~\ref{pro:radimpliesOavoided}.

Write $\mathcal{O}_1=\mathcal{O}({L}_{\tilde{B}(x,R)})$ and $\mathcal{O}_2=\mathcal{O}({L}_{\tilde{B}(x,\Cr{c:fpploop}R)^{\mathsf{c}}})$, where $L_A$ denotes the local time process associated to $\mathcal{L}_A$. Let us denote by $\mathcal{L}^{\mathcal{B}}$ for the set of (bad) loops $\gamma$ in $\mathcal{L}\cap\mathcal{B}_{B(x,\Cr{c:fpploop}R),B(x,R)^{\mathsf{c}}}(\mathcal{O}_2)$, and for any $C\subset\tilde{\G}$, write $C(x,r')$ for the connected component of $B(x,r')$ in $C$. Let us further define the event
\begin{equation}
\label{eq:defEloop}
E=\left\{
\begin{gathered}
\text{there exists a connected set }C\text{ which is a union of traces of }
\\\text{loops in }\mathcal{L},\text{ such that }\emptyset\neq (C\cap B)(x,r')\subset  \mathcal{O}^{\mathsf{c}}\text{ and }C\cap A\neq \emptyset
\end{gathered}
\right\},
\end{equation}
which depends implicitly on the choice of $x,r',A$, $B$ and $\mathcal{O}$. For $R\geq Cr'$ and $R'\geq CR$, since $A\subset\tilde{B}(x,R)^{\mathsf{c}}$ and $\tilde{B}(x,R)\subset B$ and in view of  \eqref{eq:defcalB}, on the event $E$, we have that any loop included in the set $C$ from \eqref{eq:defEloop} and hitting both $(C\cap B)(x,r')\cap B(x,\Cr{c:fpploop}R)$ and $B(x,R)^{\mathsf c}$ must be in $\mathcal{B}_{B(x,\Cr{c:fpploop}R),B(x,R)^{\mathsf{c}}}(\mathcal{O})\subset \mathcal{B}_{B(x,\Cr{c:fpploop}R),B(x,R)^{\mathsf{c}}}(\mathcal{O}_2)$. In particular, if additionally $\mathcal{L}^{\mathcal{B}}=\emptyset$, then there is a path from $B(x,r')$ to $\partial B(x,\Cr{c:fpploop}R)$ which only consists of loops included in $\tilde{B}(x,R)$ and avoiding $\mathcal{O}$, and thus also $\mathcal{O}_1$, that is $\mathcal{C}_{\mathcal{O}_1\cup \tilde{B}(x,R)^{\mathsf{c}}}(x,r')\cap  \partial B(x,\Cr{c:fpploop}R)\neq\emptyset$.  Moreover, still on the previous events, we also have $A\leftrightarrow  B(x,R)$ using only loops in  $\mathcal{L}\setminus \mathcal{L}_{\mathrm{in}}$ by definition, where $\mathcal{L}_{\mathrm{in}}=\mathcal{L}_{\tilde{B}(x,R)}$ denotes the set of loops contained in $\tilde{B}(x,R)$. Since $\mathcal{C}_{\mathcal{O}_{1}\cup \tilde{B}(x,R)^{\mathsf c}}(x,r')$ is $\mathcal{L}_{\mathrm{in}}$ measurable, and thus independent of  $\mathcal{L}\setminus \mathcal{L}_{\mathrm{in}}$, we deduce that
\begin{equation}
\label{eq:LBempty}
\begin{split} 
\Q&\big(E,\mathcal{L}^{\mathcal{B}}=\emptyset\big)
\leq \Q\big(\mathcal{C}_{\mathcal{O}_{1}}(x,r')\cap \partial B(x,\Cr{c:fpploop}R)\neq\emptyset\big)\tilde{\P}\big(A\leftrightarrow  B(x,R)\big).
\end{split}
\end{equation}
Note that the clusters of loops correspond exactly to the clusters where the local time of the loop soup is positive, and so $\mathcal{O}_1$ satisfies \eqref{eq:clustersetL} since $\mathcal{O}$ satisfies \eqref{eq:clusterset}. Hence $\mathcal{O}_1$ is an increasing cluster set, and \eqref{eq:condOL} is satisfied for $\mathcal{O}_1$ instead of $\mathcal{O}$, $\Cr{c:fpploop}R/(2\Cr{C:radimpliesvol2})$ instead of $R$, and $2\Cr{C:radimpliesvol2}s/\Cr{c:fpploop}$ instead of $s$. Indeed, defining $\Cr{c:condcap}=\Cr{c:fpploop}/(4\Cr{C:radimpliesvol2})$, this follows easily from \eqref{eq:condO} and the isomorphism \eqref{eq:isoloopK}.
In particular,  \eqref{eq:avoidmacroloops2} holds for $A=\partial B(x,\Cr{c:fpploop}R)$, $\mathcal{O}_1$ instead of $\mathcal{O}$, $\Cr{c:fpploop}R/(2\Cr{C:radimpliesvol2})$ instead of $R$, $2\Cr{C:radimpliesvol2}s/\Cr{c:fpploop}$ instead of $s$, and $\Cr{c:fpploop}R/2$ instead of $R'$, as long as $R\geq C(\rr'\vee s^{-1})$, which implies that
\begin{equation}
\label{eq:LBempty2}
\begin{split} 
\Q\big(\mathcal{C}_{\mathcal{O}_{1}}(x,r')\cap \partial B(x,\Cr{c:fpploop}R)\neq\emptyset\big)\leq C(\rr'/R)^{\frac\nu2}\exp(-cs^{-1}),
\end{split}
\end{equation}
where we used the assumption $\sup q<\infty$. Combining \eqref{eq:LBempty}, \eqref{eq:LBempty2} and \eqref{eq:crossingboxseccrit}, which is in force for $x,r,y,R$ therein replaced by $z,2\delta(A),x,R$ for some $z\in{A}$ if $R'\geq CR\vee C\dd(A)$, we deduce that
\begin{equation}
\label{eq:LBempty3}
\begin{split} 
\Q&\big(E,\mathcal{L}^{\mathcal{B}}=\emptyset\big)\leq C(\dd(A)\rr')^{\frac\nu2}(R')^{-\nu}\exp(-cs^{-1}).
\end{split}
\end{equation}
Note that in case $\dd(A)/4\leq R'\leq C\dd(A)$, the inequality \eqref{eq:LBempty3} is still satisfied by combining \eqref{eq:LBempty}, \eqref{eq:LBempty2} and \eqref{eq:crossingannuli} with $x,r,R$ therein replaced by $x,R,R'$.

We now focus on the event $\mathcal{L}^{\mathcal{B}}\neq\emptyset$, and first consider the case where all the loops of $\mathcal{L}^{\mathcal{B}}$ are included in $B(x,R'/8)$. Let $K$ be the largest $k\in{[1,\log_2(R'/(8R))]\cap \N}$ such that $\text{trace}(\mathcal{L}^{\mathcal{B}})\cap B(x,2^{k-1}R)^{\mathsf c}\neq\emptyset$, and $P$ be the largest $p\in{[0,\log_2(R/\rr')]\cap \N}$ such that $B(x,r')\leftrightarrow \partial B(x,2^p\rr')$ in $\mathcal{L}_{\mathrm{in}}$, with $P=0$ if there is no such $p$. Then on the intersection of the events $E$, $\emptyset\neq\text{trace}(\mathcal{L}^{\mathcal{B}})\subset B(x,R'/8)$, $K=k$ and $P=p$, there is a path from $B(x,\rr')$ to $\partial B(x,2^p\rr')$ via loops in $\mathcal{L}_{\mathrm{in}}$,  from $A$ to $B(x,2^{k+1}R)$ in $\mathcal{L}\setminus (\mathcal{L}^{\mathcal{B}}\cup \mathcal{L}_{\mathrm{in}})$, and $\mathcal{L}^{\mathcal{B}}$ intersects both $B(x,2^{p+1}\rr')$ and $B(x,2^{k-1}R)^{\mathsf c}$. Here, we used that if a loop in $\mathcal{L}\setminus \mathcal{L}_{\mathrm{in}}$ intersects $(C\cap B)(x,r')$, with $C$ as in \eqref{eq:defEloop}, in $B(x,2^{p+1}\rr')$, then it must be in $\mathcal{L}^{\mathcal{B}}$ in view of \eqref{eq:defcalB} if $2^{p+1}\rr'\leq \Cr{c:fpploop}R$, and otherwise it is trivial that $\mathcal{L}^{\mathcal{B}}$ intersects $B(x,2^{p+1}\rr')$ since $\mathcal{L}^{\mathcal{B}}\neq\emptyset$.  Since $\mathcal{L}_{\mathrm{in}}$ and $\mathcal{L}^{\mathcal{B}}=\emptyset$ are supported on disjoint set of loops by definition,  $\mathcal{L}_{\mathrm{in}}$, $\mathcal{L}\setminus (\mathcal{L}^{\mathcal{B}}\cup \mathcal{L}_{\mathrm{in}})$ and $\mathcal{L}^{\mathcal{B}}$ are independent, and so for all $k$ and $p$ as above
\begin{equation}
\label{eq:LBnonempty1}
\begin{split} 
\Q\big(&E,\emptyset\neq\text{trace}(\mathcal{L}^{\mathcal{B}})\subset B(y,R'/8),K=k,P=p\big)
\\ & \quad \leq \Q\big(B(x,\rr')\leftrightarrow \partial B(x,2^p\rr'\big)\Q\big(A\leftrightarrow B(x,2^{k+1}R)\big)
\\& \qquad \times \Q\big(\text{trace}(\mathcal{L}^{\mathcal{B}})\cap B(x,2^{p+1}\rr')\neq\emptyset,\text{trace}(\mathcal{L}^{\mathcal{B}})\cap B(x,2^{k-1}R)^{\mathsf c}\neq\emptyset \big).
\end{split}
\end{equation}
Moreover, if $\mathcal{L}^{\mathcal{B}}$ intersects both $B(x,2^{p+1}\rr')$ and $B(x,2^{k-1}R)^{\mathsf c}$, then either there is a loop in $\mathcal{L}^{\mathcal{B}}$ intersecting both $B(x,2^{p+1}\rr')$ and $B(x,2^{k-1}R)^{\mathsf c}$, or there is a loop in $\mathcal{L}^{\mathcal{B}}$ intersecting $B(x,2^{p+1}\rr')$ but not $B(x,2^{k-1}R)^{\mathsf c}$, and another one intersecting $B(x,2^{k-1}R)^{\mathsf c}$. Conditionally on $\mathcal{O}_2$, choosing $U_1=B(x,\Cr{c:fpploop}R)$ and $V_1=B(x,R)^{\mathsf{c}}$, the probability of the first event can be bounded using \eqref{eq:nomacroloop} for $U_2=B(x,(2^{p+1}\rr')\wedge(\Cr{c:fpploop}R))$ and $V_2=B(x,2^{k-1}R)^{\mathsf c}$, and by independence the probability of the second event can be bounded using \eqref{eq:nomacroloop} first for $U_2=B(x,(2^{p+1}\rr')\wedge(\Cr{c:fpploop}R))$ and $V_2=B(x,R)^{\mathsf c}$, and then for $U_2=B(x,\Cr{c:fpploop}R)$ and $V_2=B(x,2^{k-1}R)^{\mathsf c}$, always with $O=\mathcal{O}_2$, which is independent of $\mathcal{L}^{\mathcal{B}}$ since any loop of $\mathcal{L}^{\mathcal{B}}$ intersects $B(x,\Cr{c:fpploop}R)$. Hence one can bound the last line of \eqref{eq:LBnonempty1} by
\begin{equation}
\label{eq:LBnonempty2}
\begin{split} 
C&\left(\frac{2^{p\nu}(\rr')^{\nu}2^{k\nu}R^{2\nu}}{R^{\nu}(2^{k\nu}R^{\nu})^2}+\frac{2^{p\nu}(\rr')^{\nu}R^{2\nu}}{R^{3\nu}}\times \frac{R^{\nu}2^{k\nu}R^{2\nu}}{R^{\nu}(2^{k\nu}R^{\nu})^2}\right)\tilde{\E}\Big[\sup_{z\in{B(x,\Cr{c:fpploop}R)}}P_z(H_{B(x,R)^{\mathsf{c}}}<H_{\mathcal{O}_2})\Big]
\\&\leq C2^{p\nu-k\nu}(\rr')^{\nu}R^{-\nu}\exp(-cs^{-1}),
\end{split}
\end{equation}
where in the last inequality we used Lemmas~\ref{lem:goodobstacle} and~\ref{lem:fpploops} for $R/2$ instead of $R$ and $2s$ instead of $s$. To apply these lemmas, one notes that $\mathcal{O}_2$ is a random cluster set satisfying \eqref{eq:condOL} for $R/2$ instead of $R$ and $2s$ instead of $s$, by \eqref{eq:condO} and the isomorphism \eqref{eq:isoloopK}. 
Combining this with \eqref{eq:crossingboxseccrit}, \eqref{eq:crossingannuli} (splitting the cases $R'\geq C\dd(A)$ and $\dd(A)/4\leq R'\leq C\dd(A)$ similarly as in the proof of \eqref{eq:LBempty3}) and summing \eqref{eq:LBnonempty1} over $k$ and $p$  yields
\begin{equation}
\label{eq:LBnonempty3}
\begin{split} 
\Q&\big(E,\emptyset\neq\text{trace}(\mathcal{L}^{\mathcal{B}})\subset B(y,R'/8)\big)
\\&\leq \sum_{k=1}^{\lfloor \log_2(R'/(8R)) \rfloor}\sum_{p=0}^{\lfloor \log_2(R/\rr') \rfloor }C2^{-\frac{p\nu}{2}}\dd(A)^{\frac\nu2}2^{\frac{k\nu}2}R^{\frac\nu2}(R')^{-\nu}2^{p\nu-k\nu}(\rr')^{\nu}R^{-\nu}\exp(-cs^{-1})
\\&\leq C(\dd(A)\rr')^{\frac\nu2}(R')^{-\nu}\exp(-cs^{-1}).
\end{split}
\end{equation}
It remains to consider the case where a loop in $\mathcal{L}^{\mathcal{B}}$ exits $B(x,R'/8)$. Fix some $z\in{A}$, and let $N$ be the largest $n\in{[1,\log_2(R'/(2\dd(A)))]\cap\mathbb{N}}$ such that $B(z,2\dd(A))\leftrightarrow \partial B(z,2^n\dd(A))$ in $\mathcal{L}\setminus\mathcal{L}^{\mathcal{B}}$, with $N=0$ in case $R'\leq 2\dd(A)$. Then on the intersection of the events $E$, $\text{trace}(\mathcal{L}^{\mathcal{B}})\cap B(x,R'/8)^{\mathsf c}\neq\emptyset$, $N=n$ and $P=p$,  there is a path from $B(x,r')$ to $\partial B(x,2^p\rr')$ via loops in $\mathcal{L}_{\mathrm{in}}$,  from $B(z,2\dd(A))$ to $\partial B(z,2^{n}\dd(A))$ in $\mathcal{L}\setminus (\mathcal{L}^{\mathcal{B}}\cup \mathcal{L}_{\mathrm{in}})$, and $\mathcal{L}^{\mathcal{B}}$ intersects both $B(x,2^{p+1}\rr')$ and $B(z,2^{n+1}\dd(A))$ for $n<\lfloor \log_2(R'/(2\dd(A)))\rfloor$, and both $B(x,2^{p+1}\rr')$ and $B(x,R'/8)^{\mathsf c}$ for $n=\lfloor \log_2(R'/(2\dd(A)))\rfloor$. Similarly as before, the previous events are independent. Moreover, using \eqref{eq:nomacroloop} three times similarly as in \eqref{eq:LBnonempty2}, the probability that $\mathcal{L}^{\mathcal{B}}$ intersects both $B(x,2^{p+1}\rr')$ and $B(z,2^{n+1}\dd(A))$ if $n<\lfloor \log_2(R'/(2\dd(A)))\rfloor$, and both $B(x,2^{p+1}\rr')$ and $B(x,R'/8)^{\mathsf c}$ if $n=\lfloor \log_2(R'/(2\dd(A)))\rfloor$, can be upper bounded by
\begin{equation*}
\begin{split} 
C&\left(\frac{2^{p\nu}(\rr')^{\nu}2^{n\nu}\dd(A)^{\nu}R^{\nu}}{R^{\nu}(R')^{2\nu}}+\frac{2^{p\nu}(\rr')^{\nu}R^{2\nu}}{R^{3\nu}}\times \frac{R^{\nu}2^{n\nu}\dd(A)^{\nu}R^{\nu}}{R^{\nu}(R')^{2\nu}}\right)
\\&\quad\times\tilde{\E}\Big[\sup_{z\in{B(x,\Cr{c:fpploop}R)}}P_z(H_{B(x,R)^{\mathsf{c}}}<H_{\mathcal{O}_2})\Big]
\\&\leq C2^{p\nu+n\nu}\dd(A)^{\nu}(\rr')^{\nu}(R')^{-2\nu}\exp(-cs^{-1}),
\end{split}
\end{equation*}
where we used Lemma~\ref{lem:fpploops} again in the last inequality. Thus using \eqref{eq:crossingannuli} similarly as in \eqref{eq:LBnonempty3} and above, we deduce that 
\begin{equation}
\label{eq:LBnonempty4}
\begin{split} 
\Q&\big(E,\text{trace}(\mathcal{L}^{\mathcal{B}})\cap B(y,R'/8)^{\mathsf c}\neq\emptyset\big)
\\&\leq \sum_{n=0}^{\lfloor \log_2(R'/(2\dd(A))) \rfloor}\sum_{p=0}^{\lfloor \log_2(R/\rr') \rfloor }C2^{-\frac{p\nu}{2}}2^{-\frac{n\nu}2}2^{p\nu+n\nu}\dd(A)^{\nu}(\rr')^{\nu}(R')^{-2\nu}\exp(-cs^{-1})
\\&\leq C(\dd(A)\rr')^{\frac\nu2}R^{\frac\nu2}(R')^{-\frac{3\nu}{2}}\exp(-cs^{-1}).
\end{split}
\end{equation}
Combining \eqref{eq:LBempty}, \eqref{eq:LBnonempty3} and \eqref{eq:LBnonempty4}, recalling that $R\leq R'$, one can thus bound $\tilde{\P}(E)$ by the right-hand side of \eqref{eq:bigradiusimpliesOavoided}. Let $E^+$ be the same event as $E$ in \eqref{eq:defEloop}, but with the additional condition that the loops in the cluster $C$ therein have positive sign. Then $\tilde{\P}(E^+)=\tilde{\P}(E)/2$ since all the loops in the cluster $C$ from \eqref{eq:defEloop} must have the same sign. Noting that if $B(x,r')$ is connected to $A$ by positive loops in $\mathcal{L}$, and the corresponding cluster of $B(x,r')$ within $B$ avoids $\mathcal{O}$, then $E^+$ occurs, we can conclude by the isomorphism \eqref{eq:isoloopK}. 
\end{proof}

It only remains to give the
\begin{proof}[Proof of Lemma~\ref{lem:nomacroloop}]
Recall the constant $\Cr{c:harnack}$ introduce above \eqref{eq:Harnack}, we will prove this result when choosing the constant $\Cr{c:fpploop}\leq \Cr{c:harnack}\wedge(1/8)$ such that 
\begin{equation}
\label{eq:defcfpploop}
    P_y(H_{B(x,\Cr{c:fpploop}R)}<\infty)\leq \frac12\text{ for all }R\geq 1/\Cr{c:fpploop},x\in{G}\text{ and }y\in{B(x,R)}^{\mathsf c},
\end{equation}
which exists in view of \eqref{eq:capBallBd} and \eqref{eq:hittingvscap}.

Let $\Gamma=(\Gamma_k)_{0\leq k< n}$ be either an infinite path if $n=\infty$, or if $n<\infty$ such that defining $\Gamma_n:=\Gamma_0$, $(\Gamma_k)_{0\leq k\leq  n}$ is a finite connected loop. Write $D_0=D_0(\Gamma)=0$, and define recursively on $k\geq0$ the successive return times to $U_1$ and $V_1$ by 
\begin{equation*}
\begin{split}
R_k=R_k(\Gamma)&=\inf\big\{k\geq D_k:\,\Gamma_k\in{U_1}\big\}\text{ and }
\\D_{k+1}=D_{k+1}(\Gamma)&=\inf\big\{k\geq R_k:\,\Gamma_k\in{V_1}\big\}.
\end{split}
\end{equation*} 
We take the convention $\inf\emptyset=n+1$($=\infty$ if $n=\infty$), and let $\tau=\sup\{k\geq0:R_k<n\}$. For any $k\leq \tau$, we further write $\Gamma^{(k)}=\{\Gamma_{i+T_k},0< i< D_{k+1}-T_k\}$ for the $k$-th excursion from $U_1$ to $V_1$ for $\Gamma$, where $T_k=\sup\{i\in{[R_k,D_{k+1}]}:\,\Gamma_i\in{U_1}\}$, with $\sup\emptyset =n+1$($=\infty$ if $n=\infty$). Let us define 
\begin{equation}
\label{eq:deftildeB}
    \tilde{\mathcal{B}}_{U_1,V_1}(O)=
    \big\{\text{finite loops }\Gamma:\,\exists\, k\leq \tau:\,\Gamma^{(k)}\cap O=\emptyset\big\}.
\end{equation}
Note that if $\Gamma$ is a loop and we reroot $\Gamma$ at any of the time $R_k$, $k\leq\tau$, which is equivalent to asking that $R_{\tau+1}=n$, the event $\Gamma\in{\tilde{\mathcal{B}}_{r,R}^{x}}$ does not depend on the choice of $k$. Moreover, if $\gamma$ is an unrooted loop on the metric graph $\tilde{\G}$ hitting $B(x,\Cr{c:fpploop}R)$, the event $\gamma\in{\mathcal{B}_{U_1,V_1}(O)}$ implies that either $\Gamma\in{\tilde{\mathcal{B}}_{U_1,V_1}(O)}$ or $\Gamma'\in{\tilde{\mathcal{B}}_{U_1,V_1}(O)}$, where $\Gamma$ corresponds to the discrete skeleton of $\gamma$ rooted at an arbitrary time so that $R_{\tau+1}(\Gamma)=n$, with $n=\text{length}(\Gamma)$, and $\Gamma'$ is the time-reversal of $\Gamma$, that is $\Gamma'_{k}=\Gamma'_{n-k}$ for all $0\leq k\leq n$. Recalling that $Z$ denotes the discrete time random walk on $\G$, a similar reasoning as the one leading to \cite[(3.8)]{DrePreRod8} then yields 
\begin{equation}
\label{eq:decompositionloops}
\begin{split} 
\mu&\big(\{\gamma\in{\mathcal{B}_{r,R}^{x,s}}:\,U_2\leftrightarrow V_2\text{ in }\text{range}(\gamma)\}\big)
\\&\leq 2\sum_{k\geq1}\frac1k\sum_{y\in{\partial U_1}}P_y\big(Z_{R_k}=y,H_{U_2}\leq R_k,H_{V_2}<R_k,(Z_i)_{0\leq i\leq R_k}\in{\tilde{B}_{U_1,V_1}(O)}\big),
\\&\leq 2\sum_{k\geq1}\frac1k\sum_{y\in{\partial U_1}}\sum_{p=1}^{k-1}P_y\big(Z_{R_k}=y,H_{U_2}\leq R_k,H_{V_2}<R_k,Z^{(p)}\cap O=\emptyset\big),
\end{split}
\end{equation}
where $R_k=R_k(Z),D_k=D_k(Z)$ and $T_k=T_k(Z)$, and we used \eqref{eq:deftildeB} and the fact that $\tau((Z_i)_{0\leq i\leq R_k})=k-1$ in the last inequality. Moreover in \eqref{eq:decompositionloops} to simplify notation we define $Z_{R_k}=\Delta$ for some cemetery point $\Delta$ if $R_k=\infty$, so that $Z_{R_k}=y$ for $y\in{G}$ implies in particular that $R_k<\infty$. Noting that if $H_{U_2}\leq R_k$ and $H_{V_2}< R_k$, then $H_{U_2}$ must happen between $R_n$ and $T_{n}$ for some $0\leq n<k$ since $U_2\subset U_1$, and $H_{V_2}$ must happen between time $D_m$ and $R_m$ for some $1\leq m\leq k$ since $V_1\subset V_2$, we further have
\begin{equation}
\label{eq:sumnm}
\begin{split} 
&P_y\big(Z_{R_k}=y,H_{U_2}\leq R_k,H_{V_2}<R_k,,Z^{(p)}\cap O=\emptyset\big)
\\&\leq \sum_{n=0}^{k-1}\sum_{m=1}^{k}P_y\big(D_m\leq H_{V_2}<R_{m}, R_n\leq H_{U_2}\leq T_{n},Z^{(p)}\cap O=\emptyset,Z_{R_{k}}=y\big).
\end{split}
\end{equation}
To bound the probability that $R_k<\infty$ applying the strong Markov property a time $R_0$, we have for all $j\geq0$ and  $z\in{G}$ 
\begin{equation}
\label{eq:boundRk}
\begin{split} 
P_z(R_j<\infty)\leq P_z(R_0<\infty)\sup_{z'\in{U_1}}P_{z'}(R_j<\infty)&\leq CP_z(R_0<\infty)2^{-j},
\end{split}
\end{equation}
where the last inequality follows from applying the strong Markov property at time $D_1$ and \eqref{eq:defcfpploop} similarly as in \cite[(3.9)]{DrePreRod8}, using that $U_1\subset B(x, \Cr{c:fpploop}R)$ and $V_1\subset B(x,R)^{\mathsf{c}}$. Hence if $m\leq   n\leq p<k$, applying \eqref{eq:boundRk} and the strong Markov property successively  at times $D_m$, $H_{V_2}$, $R_{n-m}$, $H_{U_2}$, $R_{p-n}$, $D_1$, and $D_{k-p-1}$, we have
\begin{equation}
\label{eq:splitviastrongMarkov}
\begin{split} 
&P_y\big(D_m\leq H_{V_2}<R_{m}, R_n\leq H_{U_2}\leq T_{n},Z^{(p)}\cap O=\emptyset,Z_{R_{k}}=y\big)
\\&\leq C2^{-m}\sup_{z_1,z_2}P_{z_1}\big(H_{V_2}<R_0)P_{z_2}(R_{n-m}\leq H_{U_2}\leq T_{n-m},Z^{(p-m)}\cap O=\emptyset,Z_{R_{k-m}}=y)
\\&\leq C2^{-n}\sup_{z_1,z_2,z_3,z_4}\!P_{z_1}\big(H_{V_2}<R_0\big)P_{z_2}(R_0<\infty)P_{z_3}\big(H_{U_2}<D_1)P_{z_4}(Z^{(p-n)}\cap O=\emptyset,Z_{R_{k-n}}=y)
\\&\leq C2^{-p}\sup_{z_1,z_2,z_3,z_4}P_{z_1}\big(H_{V_2}<R_0\big)P_{z_2}(R_0<\infty)P_{z_3}\big(H_{U_2}<D_1)P_{z_4}\big(Z^{(0)}\cap O=\emptyset,Z_{R_{k-p}}=y\big)
\\&\leq C2^{-k}\!\!\sup_{z_1,z'_1,z_2,z_3,z_4}\!\!\!P_{z_1}\big(H_{V_2}<R_0\big)P_{z_2}(R_0<\infty)P_{z_3}\big(H_{U_2}<D_1)P_{z_4}(Z^{(0)}\cap O=\emptyset)P_{z'_1}(Z_{R_0}=y),
\end{split}
\end{equation}
where the above supremums range for $z_1$ and $z'_1$ in ${\partial V_1}$, for $z_2$ in ${V_2}$, for $z_3\in{\partial U_1}$, and for $z_4$ in ${U_1}$. If $n<  m\leq p<k$, one can prove the same bound as in \eqref{eq:splitviastrongMarkov} by now applying the strong Markov property at time $R_n$, $H_{U_2}$, $D_{m-n}$, $H_{V_2}$, $R_{p-m}$, $D_1$ and $D_{k-p-1}$. One can proceed similarly in the cases $n\leq p<m\leq k$, $m\leq p<n< k$, $p<m\leq n<k$ or $p<n<m\leq k$, and we leave the details to the reader. Hence, combining this with \eqref{eq:sumnm}, we obtain
\begin{equation}
\label{eq:aftersummingnm}
\begin{split} 
&\sum_{y\in{\partial U_1}}P_y\big(Z_{R_k}=y,H_{U_2}<\infty,H_{V_2}<\infty,Z^{(p)}\in{E_{r,R}^s}\big)
\\&\leq  Ck^22^{-k}\sup_{z_1,z_2,z_3,z_4}P_{z_1}\big(H_{V_2}<\infty\big)P_{z_2}(R_0<\infty)P_{z_3}\big(H_{U_2}<\infty)P_{z_4}(Z^{(0)}\cap O=\emptyset),
\end{split}
\end{equation}
with $z_1,z_2,z_3$ and $z_4$ as before. Moreover, recalling that $R_0=H_{U_1}$, using three times \eqref{eq:hittingvscap}, we have for all $z_1\in{\partial V_1}$, $z_2\in{V_2}$ and $z_3\in{\partial U_1}$
\begin{equation}
\begin{split}
\label{eq:PyHVHU}
P_{z_1}\big(H_{V_2}<\infty\big) &P_{z_3}\big(H_{U_2}<\infty)P_{z_2}(R_0<\infty) \\
&\leq C\left(\frac{\mathrm{cap}(V_2)}{d(z_1,V_2)}\wedge1\right)\left(\frac{\mathrm{cap}(U_2)}{d(z_3,U_2)^{\nu}}\wedge 1\right) \frac{\mathrm{cap}(U_1)}{d(z_2,U_1)^{\nu}}
\\ &\leq C\left(\frac{\mathrm{cap}(V_2)}{d(\partial V_1, V_2)^{\nu}}\wedge 1\right)\left(\frac{\mathrm{cap}(U_2)}{d(\partial U_1,U_2)^{\nu}}\wedge 1\right)\frac{\mathrm{cap}(U_1)}{d(x,V_2)^{\nu}},
\end{split}
\end{equation}
where we used the inequality $d(z_2,U_1)\geq cd(x,V_2)$  for $R\geq C$ since $U_1\subset B(x,\Cr{c:fpploop}R)$ and $V_2\subset B(x,R)^{\mathsf c}$. Noting that $Z^{(0)}\cap O=\emptyset$ is equivalent to $H_{V_1}<H_O$ and combining \eqref{eq:decompositionloops}, \eqref{eq:aftersummingnm} and \eqref{eq:PyHVHU}, we can conclude.
\end{proof}

\section{Absence of clusters in the boundary of a ball}
\label{sec:absence}

In this section we prove the following:

\begin{Prop}
\label{pro:noclusterinboundary}
Assume that $\sup q<\infty$. For all $x \in G,$ $t\in{(0,1]}$, $s\in{(0,ct)}$ and $R\geq C/s$, 
\begin{equation}
\label{eq:noclusterinboundary}
\begin{split}
\P\big(\exists K\in{\Mac_{x,R}^t(\Dia)}, K\cap B(x,(1-s)R)= \emptyset\big)\leq Ct^{-C}s^c.
\end{split}
\end{equation}
\end{Prop}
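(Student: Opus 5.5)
The plan is a union bound over small boxes covering the boundary shell, combined with the obstacle estimate of Proposition~\ref{pro:radimpliesOavoided}. The obstacle will be built from clusters that reach into the bulk $B(x,(1-s)R)$.

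\textbf{Step 1 (reduction to a box).} Let $K\in\Mac^t_{x,R}(\Dia)$ with $K\cap B(x,(1-s)R)=\emptyset$. Then $K$ lies in the shell $S=B(x,R)\setminus B(x,(1-s)R)$ and has $\delta(K)\geq tR$. By \eqref{eq:defLambdaL}, $S$ is covered by the balls $B(y,sR)$, $y\in\Lambda_{sR}\cap B(x,R+sR)$, and there are at most $Cs^{-\alpha}$ such $y$. So for some such $y$, the cluster $\K^0_{\tilde B(x,R)}(y,sR)$ contains $K$, reaches $A_y:=\partial B(y,tR/3)$, and does not meet $B(x,(1-s)R)$. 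It therefore suffices to show, uniformly in $y$, that
\[
\P\big(\K^0_{\tilde B(x,R)}(y,sR)\cap B(x,(1-s)R)=\emptyset,\ B(y,sR)\leftrightarrow A_y \text{ in } \tilde B(x,R)\big)\leq C t^{-C}s^{c+\alpha}.
\]

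\textbf{Step 2 (the obstacle).} Set $\rho=tR/C_0$ and $s'=C_1 s/t$. Define $\mathcal O$ as the union of the clusters $K'\in\mathfrak C^{|>0|}$ with $K'\subset\tilde B(x,R)$, $\mathrm{cap}(K')\geq (s'\rho)^\nu$, and $K'\cap B(x,(1-s)R)\neq\emptyset$. This is an increasing random cluster set in the sense of \eqref{eq:clusterset}. The key point is that if $\K^0_{\tilde B(x,R)}(y,sR)$ meets $\mathcal O$, then it is contained in the same $|\varphi|$-cluster as some $K'$. That cluster has constant sign and lies inside $\tilde B(x,R)$, so the cluster of $B(y,sR)$ meets $B(x,(1-s)R)$. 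Hence the event in Step~1 implies $\K^0_{\tilde B(x,R)}(y,sR)\cap\mathcal O=\emptyset$.

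Applying Proposition~\ref{pro:radimpliesOavoided} with center $y$, radius $\rho$, $r'=sR$, $B=\tilde B(x,R)$, $A=A_y$ and parameter $s'$ would give a bound of order $(s/t)^{\nu/2}\exp(-ct/s)$. Multiplying by the $Cs^{-\alpha}$ boxes yields $Ct^{-C}s^{c}$, as $\exp(-ct/s)$ absorbs any power of $s$ when $s<ct$.

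\textbf{Step 3 (main obstacle).} The difficulty is verifying \eqref{eq:condO} for $\mathcal O$. Proposition~\ref{pro:radimpliesOavoided} needs \eqref{eq:condO} at all points $z$ of the annulus around $y$ at scale $\rho$. Near $\partial B(x,R)$, many such points lie outside $B(x,R)$. There $\mathcal O$ is empty, and the requirement that the clusters reach $B(x,(1-s)R)$ is non-local.

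I would first try to avoid using Proposition~\ref{pro:radimpliesOavoided} as a black box and instead redo its proof (via Lemmas~\ref{lem:avoidmacroloops}, \ref{lem:goodobstacle}, \ref{lem:fpploops}), noting that the good-obstacle property is only needed along paths that stay inside $\tilde B(x,R)$. Concretely:
\begin{itemize}
\item Call a box $B(z,s'\rho)$ with $z\in B(x,R)$ good if the loops in $\mathcal L_{\tilde B(z,Ms'\rho)\cap\tilde B(x,R)}$ contain a cluster of capacity at least $(s'\rho)^\nu$ that connects to $B(x,(1-s)R)$.
\item Near the boundary, the probability that a box is good should be bounded below uniformly, by \eqref{eq:tailEstlargestCapinB(r)} and Proposition~\ref{cor:crossingboxseccrit}. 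This uses that the shell has width $sR\ll s'\rho$, so a cluster of size $s'\rho$ started at a point $w$ at distance about $s'\rho/2$ inside reaches the bulk with positive probability. To get this, I would apply the lower bound in \eqref{eq:crossingboxseccrit} together with \eqref{eq:supqimpliessupp}.
\item Independence across well-separated boxes then gives the exponential bound of Lemma~\ref{lem:fpploops}.
\item Finally, Lemma~\ref{lem:goodobstacle} should be rerun for the walk killed on $\partial\tilde B(x,R)$, which only helps, since \eqref{eq:crossingAB} holds with obstacle $\mathcal O\cup\tilde B(x,R)^{\mathsf c}$.
\end{itemize}
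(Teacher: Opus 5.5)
\textbf{There is a genuine gap: the proposal never controls the part of a boundary-layer cluster that comes from outside $\tilde B(x,R)$.}

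Via \eqref{eq:isoloopK}, a cluster of $E^{\geq 0}\cap\tilde B(x,R)$ is a component of the loop traces intersected with $\tilde B(x,R)$. It may contain pieces of loops that leave $\tilde B(x,R)$, and within distance $sR$ of $\partial B(x,R)$ such loops are abundant; equivalently, the field in the shell feels the boundary values outside. The clusters of the soup killed outside $\tilde B(x,R)$ are contained in these clusters. So killing on $\tilde B(x,R)^{\mathsf c}$ can only \emph{decrease} connection probabilities. Your remark that it ``only helps'' has the monotonicity backwards, and \eqref{eq:crossingAB} with obstacle $\mathcal O\cup\tilde B(x,R)^{\mathsf c}$ yields no upper bound on your event.

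Without this killing, an obstacle $\mathcal O\subset B(x,R)$ cannot produce a factor $\exp(-ct/s)$: the walk in \eqref{eq:crossingAB}, and the loops, can bypass it through $B(x,R)^{\mathsf c}$. Nor does the argument of Section~\ref{sec:clusterinside} transfer. It passes from whole-space clusters to clusters inside $B$ by showing that loops crossing from a small ball around the centre to $B^{\mathsf c}$ while avoiding the obstacle are rare. That requires the centre to be deep inside $B$, and fails for $y$ within $sR$ of $\partial B(x,R)$. This is precisely why Proposition~\ref{pro:noclusterinboundary} is needed, so redoing Proposition~\ref{pro:radimpliesOavoided} near the boundary does not get around it.

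Step~1 is also inaccurate as written. $\K^0_{\tilde B(x,R)}(y,sR)$ is the union of \emph{all} clusters through $B(y,sR)$, and balls covering the inner part of the shell meet $B(x,(1-s)R)$. So one cluster $K$ avoiding the bulk does not give $\K^0_{\tilde B(x,R)}(y,sR)\cap B(x,(1-s)R)=\emptyset$; you would at least need a single-cluster version of the avoidance estimate.

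The missing ingredient is a confinement estimate for the loops outside, Lemma~\ref{lem:boundmukilled}. The $\mu$-mass of loops that meet $B(x,R)$, avoid $B(x,(1-s)R)$ and reach $\partial B(x,(1+u)R)$ is at most $Cu^{-C}s^{c}$, because a walk started in $B(x,R)$ rarely travels distance $uR$ without hitting the bulk. On the complement of this event, the paper replaces the boundary-layer cluster by a cluster of the soup killed on the \emph{deterministic} set $B(x,(1-s)R)\cup\partial B(x,(1+u)R)$, and passes to the corresponding killed field via \eqref{eq:isoloopK}. The bulk itself, which $K$ avoids by assumption, thus serves as the obstacle. No random set $\mathcal O$, no condition \eqref{eq:condO} and no Lemma~\ref{lem:fpploops} are needed.

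The paper then takes a union bound over the $Ct^{-2\alpha}$ pairs of boxes of radius of order $tR$ at mutual distance at least $tR/2$. For each pair it applies \eqref{eq:crossingAB}, with obstacle the complement of the tube inside an annulus around the first box. It then uses Lemma~\ref{lem:goodobstacle}, which gives a factor $\exp(-ct/u)$ for a walk crossing a tube of width $(s+u)R$ over a distance of order $tR$. Choosing $u=t/\log(1/s)$ balances this against $Cu^{-C}s^{c}$. Once the loops are confined to the tube, your random obstacle becomes superfluous and the argument reduces to this one.
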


The role of Proposition~\ref{pro:noclusterinboundary} is as follows: Corollary~\ref{cor:supqimpliessupp}, and more specifically \eqref{eq:bigradiusimpliessmallcapandvol}, shows that a cluster with large diameter typically also has large capacity and renormalized volume, provided that its diameter can be witnessed sufficiently far from the boundary of the ambient ball. Indeed, that result requires enough room to place a ball of macroscopic radius around one endpoint of the connection. It therefore does not directly apply to a component which has macroscopic diameter but lives tangentially inside a thin boundary layer of $B(R)$.

This is precisely the missing case in the implication from large diameter to large capacity and volume in Theorem~\ref{the:equivmacro}. In its proof, we split the clusters in $\Mac_R^t(\Dia)$ according to whether or not they intersect a slightly smaller ball $B((1-t')R)$. Proposition~\ref{pro:caseX=D} below, using Corollary~\ref{cor:supqimpliessupp}, treats the clusters which intersect this smaller ball, whereas Proposition~\ref{pro:noclusterinboundary} shows that, with high probability, no macroscopic cluster is entirely contained in the remaining boundary layer.  The proof of Proposition~\ref{pro:noclusterinboundary} relies on the following consequence of Lemma~\ref{lem:nomacroloop} concerning the loop measure $\mu$.

\begin{Lemme}
\label{lem:boundmukilled}
For all  $x\in{G}$, $R\geq C$, $t\in{[C/R,1]}$ and $s\in{[C/R,ct]}$, we have
\begin{equation}
\label{eq:boundmukilled}
\begin{split}
\mu&\big(\{\gamma:\,B(x,R)\leftrightarrow \partial B(x,(1+t)R)\text{ in }\text{range}(\gamma)\text{ and }B(x,(1-s)R)\cap\text{range}(\gamma)=\emptyset\}\big)
\\&\leq Ct^{-C}s^{c}.
\end{split}
\end{equation}
\end{Lemme}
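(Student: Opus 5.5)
Since $\mathrm{range}(\gamma)$ avoids $B(x,(1-s)R)$ and must cross from $B(x,R)$ to $\partial B(x,(1+t)R)$, any such loop contains a subpath that goes from a point near $\partial B(x,R)$ out to distance $tR$ while staying outside $B(x,(1-s)R)$. We localize. Cover the sphere $\partial B(x,R)$ by balls $B(y,\rho)$, $y\in\Lambda_{\rho}\cap B(x,R+\rho)\setminus B(x,R-\rho)$, with a mesoscopic radius $\rho=\sqrt{s}\,tR$ (any intermediate scale $s R\ll\rho\ll tR$ will do). By \eqref{eq:defLambdaL} there are at most $C(R/\rho)^{\alpha-1}\le C(R/\rho)^{\alpha}$ of them; only polynomial dependence in $R/\rho$ is needed, since $R/\rho=t^{-1}s^{-1/2}$. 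Every loop in the event intersects some $B(y,\rho)$ and reaches $B(y,tR/2)^{\mathsf c}$, so by a union bound it suffices to bound, for each $y$,
\[
\mu\big(\{\gamma\in\mathcal{B}_{B(y,\rho),B(y,tR/2)^{\mathsf c}}(O):\ B(y,\rho)\leftrightarrow B(y,tR/2)^{\mathsf c}\text{ in }\mathrm{range}(\gamma)\}\big),\qquad O=B(x,(1-s)R).
\]
This is exactly the setting of Lemma~\ref{lem:nomacroloop}, with center $y$ and radius $tR/2$ in place of $x,R$. Take $U_1=U_2=B(y,\rho)$ (legitimate once $\rho\le \Cr{c:fpploop}tR/2$) and $V_1=V_2=B(y,tR/2)^{\mathsf c}$. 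With the convention $\wedge 1$, the first two factors in \eqref{eq:nomacroloop} are at most $1$. Using \eqref{eq:capBallBd}, the third factor is at most $C(\rho/tR)^{\nu}$. It remains to bound $\sup_{z\in B(y,\rho)}P_z(H_{B(y,tR/2)^{\mathsf c}}<H_{O})$.

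\textbf{Main step: the escape estimate.} We need to show that a walk started within distance $\rho$ of $\partial B(x,R)$ and hugging the outside of the ball $O=B(x,(1-s)R)$ reaches distance $tR/2$ before entering $O$ only with probability at most $C(s R/\rho)^{c}$, possibly times $(tR/\rho)^{c}$ factors. I would argue by iterating across dyadic scales $\rho_j=2^j\rho_0$ with $\rho_0=C sR$, up to $\rho_J\approx\rho$. Starting from $z$ with $d(z,O)\le 2\rho_j$, we use that $O$ contains a ball of radius $\rho_j$ whose center is at distance at most $C\rho_j$ from $z$; such a ball exists because $O$ is a ball of radius $\gg\rho_j$. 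Then \eqref{eq:hittingvscap} together with \eqref{eq:capBallBd} gives a probability at least $c$ of hitting $O$ before exiting $B(z,C\rho_j)$. Chaining these estimates with the strong Markov property across the roughly $\log_2(\rho/sR)$ scales between $\rho_0$ and $\rho$, and then across the further scales between $\rho$ and $tR/2$, yields
\[
P_z\big(H_{B(y,tR/2)^{\mathsf c}}<H_O\big)\le (1-c)^{\log_2(tR/(sR))}\le C(s/t)^{c}.
\]
Here one point needs care: the walk may drift slightly inside $B(x,R)$, but as long as it stays outside $O$ its distance to $O$ stays comparable to the current scale. This multiscale hitting argument, which is a Harnack/Beurling-type estimate using only \eqref{eq:hittingvscap} and \eqref{eq:capBallBd}, is where I expect the main difficulty; in particular one must check that the geometric assumptions $d(x,K)\ge 2\Cr{C:exit}r$ in \eqref{eq:hittingvscap} can be met, which holds because we may use the unkilled walk.

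\textbf{Combination and choice of $\rho$.} Putting the pieces together, the left side of \eqref{eq:boundmukilled} is bounded by
\[
C(R/\rho)^{\alpha}\,(\rho/tR)^{\nu}\,(s/t)^{c}.
\]
Simply taking $\rho=tR\cdot c$ leaves a factor $t^{-\alpha}$ times $(s/t)^c$, which is of the form $Ct^{-C}s^{c}$. In other words, the intermediate scale is unnecessary: choose $U_1=B(y,c\,tR)$ and the covering balls at scale $ctR$, and let the escape estimate supply all the decay in $s$. The condition $s\in[C/R,ct]$ ensures that $\rho_0\ge 1$ and that at least one dyadic scale lies between $\rho_0$ and $ctR$.
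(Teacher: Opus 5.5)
There is a genuine gap in how you apply the escape estimate. The supremum in \eqref{eq:nomacroloop} runs over \emph{all} $z\in U_1$, and you take $U_1=B(y,\rho)$ (finally $\rho=ctR$), with $y$ allowed up to distance $\rho$ outside $B(x,R)$. Such a ball contains points $z$ with $d(z,O)\asymp\rho$. Your dyadic chaining can only begin at the scale $d(z,O)$ of the starting point, so it yields at best $(d(z,O)/tR)^{c}\asymp(\rho/tR)^c$, not $(s/t)^c$.

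For $\rho=ctR$ this is of order one, and the true probability is genuinely not small in $s$. Since $O\subset B(x,R)$ for every $s$, one has $P_z(H_{B(y,tR/2)^{\mathsf c}}<H_O)\geq P_z(H_{B(y,tR/2)^{\mathsf c}}<H_{B(x,R)})$. The right-hand side does not depend on $s$. For $z$ at distance $\asymp ctR$ outside $B(x,R)$, it is bounded below by a positive constant depending only on $t$ (think of $\Z^d$). So the claimed bound $\sup_{z\in B(y,\rho)}P_z(H_{B(y,tR/2)^{\mathsf c}}<H_O)\le C(s/t)^c$ is false, and your final estimate has no decay in $s$.

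The same error underlies the intermediate-scale heuristic ``$C(sR/\rho)^c$'': you cannot collect the scales between $sR$ and $\rho$ when you start at distance $\rho$ from $O$. With the correct bound $(\rho/tR)^c$, the intermediate scale only worsens the union bound.

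The missing idea is to use that every loop in the event hits $B(x,R)$. Take $U_1=U_2=B(y,\Cr{c:fpploop}tR/2)\cap B(x,R)$ with $y\in\Lambda_{\Cr{c:fpploop}tR/2}$. Lemma~\ref{lem:nomacroloop} allows arbitrary $U_2\subset U_1\subset B(y,\Cr{c:fpploop}tR/2)$, and by monotonicity of capacity and \eqref{eq:capBallBd} the prefactor is still $Ct^{-\alpha}$. Every starting point now lies in $B(x,R)$, hence within distance $sR+1$ of $O$. Your chaining from scale $CsR$ up to $tR$ then does give $(s/t)^c$, and so the bound $Ct^{-\alpha}(s/t)^c$. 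Run it with balls around the current position, so your choice $V_1=B(y,tR/2)^{\mathsf c}$ is fine.

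This is precisely the paper's argument. The paper uses $U_1^y=B(y,\Cr{c:fpploop}tR/2)\cap B(x,R)$ and $V_1=\partial B(x,(1+t)R)$, and iterates over the neighbourhoods $B(B(x,R),2^k sR)$.

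One further point: the ball of radius $\rho_j$ inside $O$ near the walk exists because $d$ is a geodesic distance (follow a geodesic from $x$), not merely because $O$ is a large ball. This is exactly where the graph-distance assumption enters, cf.\ Remark~\ref{rk:generaldistance}.
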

\begin{proof}
For simplicity, we abbreviate $B_1=B(x,R)$ and $B_s=B(x,(1-s)R)$ throughout this proof. Recall the constant $\Cr{c:fpploop}\leq 1/8$ from Lemma~\ref{lem:nomacroloop}, and we assume that $s\leq \Cr{c:fpploop}t/2$. In view of \eqref{eq:defLambdaL}, if $B(x,R)\leftrightarrow \partial B(x,(1+t)R)$, then there exists $y\in{B(x,(1+\Cr{c:fpploop}t/2)R)\cap \Lambda_{\Cr{c:fpploop}tR/2}}$ such that $U_1^y\leftrightarrow V_1$, where $U_1^y=B(y,\Cr{c:fpploop}tR/2)\cap B_1$ and $V_1=\partial B(x,(1+t)R)$. Recalling \eqref{eq:defcalB}, using a union bound and \eqref{eq:defLambdaL} again, one can thus bound the first line of \eqref{eq:boundmukilled} by
\begin{equation}
\label{eq:boundmukilled2}
\begin{split}
Ct^{-\alpha}\sup_{y\in{B(x,(1+\Cr{c:fpploop}t/2)R)}}&\mu\big(\mathcal{B}_{U_1^y,V_1}^y(B_s)\big)
\\&\leq Ct^{\nu-\alpha}\sup_{y\in{B(x,(1+\Cr{c:fpploop}t/2)R)}}\sup_{z\in{U_1^y}}P_z\big(H_{V_1}<H_{B_s}\big),
\end{split}
\end{equation}
where the last inequality follows from \eqref{eq:capBallBd} and \eqref{eq:nomacroloop} for $U_2=U_1^y$, $V_2=V_1$ and $tR/2$ instead of $R$,  which is in force since $V_1=\partial B(x,(1+t)R)\subset B(y,tR/2)^{\mathsf{c}}$ for all $y\in{B(x,(1+\Cr{c:fpploop}t/2)R)}$ as $\Cr{c:fpploop}\leq 1/2$ and $tR\geq C$. We now bound the probability on the right-hand side of \eqref{eq:boundmukilled2}. By \eqref{eq:hittingvscap} for $r=2^ksR$ and $K=\emptyset$, recalling the constant $\Cr{C:exit}> 1$ from \eqref{eq:hittingvscap}, we have for all $k\in{s\times \{1,\dots,\lceil \log_2(t/s)\rceil\}}$, 
\begin{equation}
\begin{split}
\label{eq:largeobstaclebound2}
\inf_{y\in{B(B_1,2^{k-1}sR)}}&P_y\big(H_{B_s}<H_{\partial B(B_1,2^{k+1}\Cr{C:exit} sR)}\big)
\geq c(2^ksR)^{-\nu}\mathrm{cap}(B_s\cap B(y,2^ksR)),
\end{split}
\end{equation}
where we used that $\partial B(B_1,2^{k+1}\Cr{C:exit} sR)\subset B(y,\Cr{C:exit} 2^ksR)^{\mathsf{c}}$ for all $y\in{B(B_1,2^{k-1}sR)}$ if $sR\geq C$, see \cite[(2.8)]{DrePreRod2}. For each $k\in{s\times \{4,\dots,\lceil \log_2(t/s)\rceil\}}$ and $y\in{B(B_1,2^{k-1}sR)}$, if $z$ is the closest vertex in $B(x,(1-s-2^{k-3}s)R)$ to $y$, then since $d$ is the graph distance, or $\ell^{\infty}$ on $\Z^d$, we have  $d(z,y)\leq 2^{k-1}sR+2^{k-3}sR+sR+C\leq 3\times 2^{k-2}sR$ for $sR\geq C$. In particular, the set $B_s\cap B(y,2^ksR)$ contains the ball $B(z,2^{k-3}sR)$, and so $\mathrm{cap}(B_s\cap B(y,2^ksR))\geq c(2^ksR)^{\nu}$ in view of \eqref{eq:capBallBd}. This implies that one can bound the right-hand side of \eqref{eq:largeobstaclebound2} by a constant. Therefore, for all $p\geq 2$ with $(4\Cr{C:exit})^{p+1}\leq t/s$,  applying the strong Markov property at time $H_{\partial B(B_1,(4\Cr{C:exit})^{p+1}sR)}$ and taking $k=1+p\log_2(4\Cr{C:exit})$ in \eqref{eq:largeobstaclebound2}, we have
\begin{equation}
\label{eq:largeobstaclebound3}
\begin{split}
\sup_{y\in{B(B_1,(4\Cr{C:exit})^psR)}}&P_y\big(H_{\partial B(x,(1+t)R)}<H_{B_s}\big)
\\&\leq (1-c)\sup_{y\in{B(B_1,(4\Cr{C:exit})^{p+1}sR)}}P_y\big(H_{\partial B(x,(1+t)R)}<H_{B_s}\big).
\end{split}
\end{equation}
Iterating \eqref{eq:largeobstaclebound3} until $p=\log(t/s)/\log(4\Cr{C:exit})-1\geq c\log(t/s)+2$ if $s\leq ct$, using the inequality $1-x\leq \exp(-x)$, we deduce that
\begin{equation}
\label{eq:largeobstaclebound4}
\begin{split}
\sup_{y\in{B_1}}&P_y\big(H_{\partial B(x,(1+t)R)}<H_{B_s}\big)
\leq (s/t)^c.
\end{split}
\end{equation}
Noting that $U_1^y\subset B_1$ for all $y$ and recalling $V_1=\partial B(x,(1+t)R)$, we can conclude by combining \eqref{eq:boundmukilled2} and \eqref{eq:largeobstaclebound4}.
\end{proof}

Intuitively, Lemma~\ref{lem:boundmukilled} asserts that for $u$ fixed and $s$ small enough, with high probability all the loops hitting $B(x,R)$ and avoiding $B(x,(1-s)R)$ are included in $B(x,(1+u)R)$. In particular,  the event appearing in \eqref{eq:noclusterinboundary} (for the loop soup instead of the GFF) occurs with high probability for a connected component $K$ of loops killed not only on $B(x,(1-s)R)$, but also on $B(x,(1+u)R)^{\mathsf{c}}$. In other words, there is then a path of length $tR$ consisting of loops constrained to the \lq tube\rq\ $B(x,(1+u)R)\setminus B(x,(1-s)R)$ of side length $(u+s)R$. Taking $u\ll t$ and combining this observation with Proposition~\ref{pro:boxtobox} and Lemma~\ref{lem:goodobstacle}, we are  ready to do the:

\begin{proof}[Proof of Proposition~\ref{pro:noclusterinboundary}]
For each $u\geq s$, on the intersection of the events that there is a connected component $K$ of the intersection of loops in $\mathcal{L}$ with $B(R)$ such that ${\delta}(K)\geq tR$ and $K\cap B((1-s)R)=\emptyset$, and that no loop in $\mathcal{L}$ intersects both $B(R)$ and $\partial B((1+u)R)$ but not $B((1-s)R)$, we have that there is a connected component of loops in $\mathcal{L}_{B((1+u)R)\setminus (B((1-s)R)\cup \partial B((1+u)R))}$ such that $\delta(K)\geq tR$. In particular, if $u\in{[Cs,t]}$, using Lemma~\ref{lem:boundmukilled} and the isomorphism \eqref{eq:isoloopK} twice (first for $K=\emptyset$ and then for $K=B((1-s)R)\cup \partial B((1+u)R)$), we have
\begin{equation}
\label{eq:proof7.2.1}
\begin{split}
\P\big(&\exists K\in{\Mac_{R}^t(\Dia)}:K\cap B((1-s)R)=\emptyset\big)
\\&\leq Cu^{-C}s^{c}+\P^{B((1-s)R)\cup \partial B((1+u)R)}(|\Mac_{(1+u)R}^t(\Dia)|\geq1).
\end{split}
\end{equation}
Further note that if $|\Mac_{(1+u)R}^t(\Dia)|\geq1$, then by \eqref{eq:defLambdaL} there exist $y,y'\in{\Lambda_{\Cr{c:prop41}^3tR}\cap B((1+2t)R)}$ with $d(y,y')\geq tR/2$ such that $B(y,\Cr{c:prop41}^3tR)\leftrightarrow B(y',\Cr{c:prop41}^3tR)$ in $K$. Here, $\Cl[c]{c:prop41}\in{(0,1/2]}$ is a constant chosen small enough so that the assumptions of Proposition~\ref{pro:boxtobox} are satisfied for any $y,y'$ with $d(y,y')\geq tR/2$, when replacing $A,A',r,R,R'$ and $\mathcal{O}$ therein by $B(y,\Cr{c:prop41}^3tR)$, $B(y',\Cr{c:prop41}^3tR)$, $2\Cr{c:prop41}^3tR$, $\Cr{c:prop41}^3tR$, $d(y,y')$ and $O_y\stackrel{\text{def.}}{=}(B((1-s)R)\cup B((1+u)R)^{\mathsf{c}})\cap B(y,\Cr{c:prop41}tR)\setminus B(y,\Cr{c:prop41}^2tR)$. In particular, using a union bound, \eqref{eq:defLambdaL}, \eqref{eq:crossingAB}, the assumption $\sup q<\infty$, and the monotonicity of $K\mapsto \P^K$ following from \eqref{eq:isoloopK}, we deduce that
\begin{equation}
\label{eq:proof7.2.2}
\begin{split}
\P^{B((1-s)R)\cup \partial B((1+u)R)}(|\Mac_{(1+u)R}^t(\Dia)|\geq1)\leq Ct^{-2\alpha}\sup_{y,y':d(y,y')\geq tR/2}P_y(H_{B(y',2\Cr{c:prop41}^3tR)}<H_{O_y}).
\end{split}
\end{equation}
It remains to bound the probability on the right-hand side of \eqref{eq:proof7.2.2}. Assuming $Cs\leq u\leq t/4$, note that $\mathrm{cap}(B(z,4uR)\cap B((1-s)R))\geq c(uR)^{\nu}$ if $z\in{B((1+2u)R)}$ by a similar reasoning as below \eqref{eq:largeobstaclebound2} (replacing $2^ks$ by $4u$ therein). If $z\notin{B((1+2u)R)}$, then $B(z,uR)\subset B((1+u)R)^{\mathsf{c}}$ and  $\mathrm{cap}(B(z,uR))\geq c(uR)^{\nu}$ by \eqref{eq:capBallBd}. In particular, $\mathrm{cap}(B(z,4uR)\cap O_y)\geq c(uR)^{\nu}$ for all $z\in{B(y,\Cr{c:prop41}tR/2)\setminus B(y,\Cr{c:prop41}tR/4)}$, upon assuming $u\leq \Cr{c:prop41}t/16$ and choosing $\Cr{c:prop41}\leq 1/8$. Moreover, for each $y\in{G}$ and path $\pi=(\pi_0,\dots,\pi_n)$ in $\Lambda_{uR}$ (as defined above Lemma~\ref{lem:goodobstacle}) from $B(y,\Cr{c:prop41}tR/4)$ to $B(y,\Cr{c:prop41}tR/2)^{\mathsf{c}}$, we have $\Cr{c:prop41}tR/4\leq d(\pi_0,\pi_n)\leq CnuR$ by \cite[(2.8)]{DrePreRod2}, and so $n\geq ct/u$. Therefore, one directly deduces that for each $y\in{G}$, the set $O_y$ is a $(y,\Cr{c:prop41}tR/2,c,ct/u,c u/t)$-good obstacle set, as defined above Lemma~\ref{lem:goodobstacle}. In particular, for each $y,y'\in{G}$ with $d(y,y')\geq tR/2$, since $B(y',2\Cr{c:prop41}^3tR)\subset B(y,\Cr{c:prop41}tR)^{\mathsf{c}}$, if $R\geq C/s$ and $Cs\leq u\leq ct$ we have by Lemma~\ref{lem:goodobstacle}
\begin{equation}
\label{eq:proof7.2.3}
P_y(H_{B(y',2\Cr{c:prop41}^3tR)}<H_{O_y})\leq \exp(-ct/u).
\end{equation}
We now take $u=t/\log(1/s)$, so that the right-hand side of \eqref{eq:proof7.2.3} is smaller than $s^{c}$. Note that $Cs\leq u\leq ct$ as assumed above \eqref{eq:proof7.2.3} if $s\leq ct$. Using the inequality $\log(1/s)^{C}\leq s^{-c/2}$ if $s\leq c$, we can conclude by combining \eqref{eq:proof7.2.1}, \eqref{eq:proof7.2.2} and \eqref{eq:proof7.2.3}.
\end{proof}

\begin{Rk}
\label{rk:generaldistance}
\begin{enumerate}[1)]
\item \label{rk:generaldistance1}In the proof of Theorem~\ref{the:equivmacro}, the condition that $d$ is the graph distance,or $\ell^{\infty}$ on $\Z^d$, is only used in the proof of Lemma~\ref{lem:boundmukilled}. In fact, for any fixed $b\geq1$, Lemma~\ref{lem:boundmukilled} is still in force when replacing this condition by  
\begin{equation}
\label{eq:conditiondistance}
\begin{gathered}
\text{for all }s\in{(0,c]},R\geq C/s, x\in{G}\text{ and }y\in{B(B(x,R),4sR)},
\\\text{ there exists }z\in{G}\text{ with }B(z,sR)\subset B(y,8sR)\cap B(x,(1-s^b)R).
\end{gathered}
\end{equation}
Indeed, applying \eqref{eq:conditiondistance} for $2^{k-3}sR$ instead of $s$, we obtain for each $y\in{B(B(x,R),2^{k-1}sR)}$ that there exists some $z\in{G}$ with  $B(z,2^{k-3}sR)$ included in both $B(y,2^ksR)$ and $B(R(1-s^b2^{b(k-3)})\subset B(x,(1-s)R)$ if $2^{b(k-3)}\geq s^{1-b}$, as required below \eqref{eq:largeobstaclebound2}. In particular, \eqref{eq:largeobstaclebound3} is in force for all $p$ with $(b-1)\log(1/s)/b+3\log(4\Cr{C:exit})\leq \log(4\Cr{C:exit})(p+1)\leq \log(t/s)$, which is enough to deduce \eqref{eq:largeobstaclebound4} and in turn  Lemma~\ref{lem:boundmukilled}. Therefore, all the results in Sections~\ref{sec:mainresults} and \ref{sec:boxtobox}--\ref{sec:equimacro} remain true on any graph satisfying \eqref{eq:ellipticity}, \eqref{eq:intro_sizeball}, \eqref{eq:intro_Green} and \eqref{eq:conditiondistance} for a general distance $d$. The only exceptions are the lower bound in \eqref{eq:crossingboxseccrit} which additionally requires balls to be connected, see Remark~\ref{rk:ballsconnected}. Note however that there are graphs satisfying \eqref{eq:ellipticity}, \eqref{eq:intro_sizeball}  and  \eqref{eq:intro_Green}, but not satisfying \eqref{eq:conditiondistance} or that balls are connected. Typically one needs to choose a distance $d$ such that $\partial B(x,R)$ contains a point $z$ at distance at least $R/2$ from any other point in $B(x,R)$. 
\item \label{rk:generaldistance2} The classical examples $G_1,G_2,G_3$ and $G_4$ from \cite[(1.4)]{DrePreRod2} of graphs satisfying \eqref{eq:ellipticity}, \eqref{eq:intro_sizeball}  and  \eqref{eq:intro_Green},  actually all satisfy both \eqref{eq:conditiondistance} for some $b\geq1$ and that balls are connected, and so all our results apply to these examples. Indeed, see the proof of \cite[Corollary~3.9]{DrePreRod2}, this is clear for $G_1$, $G_3$ and $G_4$ since $d$ is the graph distance on those graphs. The graph $G_2$ is the product of two graphs, each endowed with the graph distance. We thus need to show that if $\G=\G'\times \G''$ for two graphs $\G'$ and $\G''$ endowed with respective distance $d'$ and $d''$ each satisfying \eqref{eq:conditiondistance} with respective parameters $b'$ and $b''$ and that balls are connected, then $\G$ also satisfies \eqref{eq:conditiondistance} for some $b>0$, and that balls are connected. In view of \cite[Proposition~3.5]{DrePreRod2}, up to exchanging the roles of $\G'$ and $\G''$, the distance $d$ on $\G$ that we need to consider is of the form $d=(d')^{1/a}\vee d''$ for some $a\geq1$. Therefore, denoting by $B'$ and $B''$ the balls for $d'$ and $d''$, we have $B((x',x''),R)=B'(x',R^{a})\times B(x'',R)$, which is clearly connected. Furthermore, for all $s\in{(0,c]}$ $R\geq C/s$, $x=(x',x'')\in{G}$ and $y=(y',y'')\in{B(B(x,R),4sR)=B'(B'(x',R^{a}),(4sR)^{a}))\times B''(B''(x'',R),4sR)}$, by \eqref{eq:conditiondistance} for $\G'$ with $4^{a-1}s^{a}$ instead of $s$ and $R^{a}$ instead of $R$, there exists $z'\in{G'}$ with $B'(z',4^{a-1}s^aR^{a})\subset B'(y',4^{a+1/2}s^aR^{a})\cap B'(x',(1-4^{b'(a-1)}s^{ab'})R^a)$. Furthermore by  \eqref{eq:conditiondistance} for $\G''$ there exists $z''\in{G}$ with $B''(z'',sR)\subset B''(y'',8sR)\cap B''(x'',(1-s^{b''})R)$, and then $B((z',z''),sR)=B'(z',s^aR^{a})\times B''(z'',sR)$ is included in $B(y,8sR)\cap B(x,(1-s^b)R)=\big(B'(y',8^as^aR^{a})\cap B'(x',(1-s^b)^aR^{a})\big)\times \big(B''(y'',8sR)\cap B''(x'',(1-s^b)R)\big)$ if $b=(ab')\vee b''$ since $a\geq 1$, which concludes the proof of \eqref{eq:conditiondistance} for $\G=\G'\times\G''$.
\end{enumerate}
\end{Rk}

\section{Equivalence of macroscopic components}
\label{sec:equimacro}
In this section, we prove Theorem~\ref{the:equivmacro} and Corollary~\ref{cor:macrocontainsbigloops}, that is we prove that with high probability, each cluster with either macroscopic diameter, volume, \emph{or} capacity, actually has macroscopic diameter, volume, \emph{and} capacity, and that from the point of view of the loop soup, it contains at least one macroscopic loop.

\subsection{Clusters with large diameter}

We start by proving \eqref{eq:equivmacro} in the case $\Xt=\Dia$. Actually, in view of the upper bound in \eqref{eq:MR0tight}, it will be enough to prove it when replacing $\Mac_{x,R}^s(\Vol_r)$ by 
\begin{equation}
\label{eq:defVR-}
\Mac_{x,R}^s(\Vol_r^{\geq})\stackrel{\text{def.}}{=}\{K\in{\mathcal{C}_{x,R}}:\, |K|_r\geq (sR/\rr)^{\alpha-\frac\nu2}\},
\end{equation}
and in view of Proposition~\ref{pro:noclusterinboundary}, we can focus on the clusters which do not always stay close to the boundary of $B(R)$. In other words, it will actually be enough to prove the following consequence of  Corollary~\ref{cor:supqimpliessupp}.

\begin{Prop}
\label{pro:caseX=D} 
Assume that $\sup q<\infty$. For all $t\in{(0,1]}$, $s\in{(0,t]}$, $x\in{G}$, $r\geq0$, $R\geq C\rr/s$, and all $\Yt\in{\{\Vol_r^{\geq},\Capa\}}$, abbreviating
\begin{equation*}
\begin{split} 
\Gamma_{x,R}^{t,s}(\Yt)=|\{K\in{\Mac_{x,R}^t(\Dia)\setminus \Mac_{x,R}^s(\Yt)}:\,K\cap {B}(x,(1-t)R)\neq\emptyset\}|,
\end{split}
\end{equation*}
we have
\begin{equation}
\label{eq:caseX=D}
\begin{split} 
\E\big[\Gamma_{x,R}^{t,s}(\Yt)\big]\leq Ct^{-2\alpha}\exp(-cts^{-1}).
\end{split}
\end{equation}
\end{Prop}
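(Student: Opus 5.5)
We reduce to Corollary~\ref{cor:supqimpliessupp}, applied to balls of scale $tR$ centred at points of an approximate lattice, through a union bound over pairs of lattice points. Set $L=\Cr{c:prop41}tR$ for a small constant. Any $K\in\Mac_{x,R}^t(\Dia)$ that intersects $B(x,(1-t)R)$ contains a point $z\in B(x,(1-t)R)$. Since $\delta(K)\geq tR$, the connected set $K$ also reaches $\partial B(z,tR/3)$: there is a point $w\in K$ with $d(z,w)\geq tR/2$, and then $K$, being connected, exits $B(z,tR/3)$.

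Next we discretise. Pick $y\in\Lambda_{L'}\cap B(x,(1-t)R)$, with $L'=ct\,r'$-type scale. The simplest choice is $r'=\rr$, i.e.\ $L'=\rr$, so that $z\in B(y,\rr)$. The ball $\tilde B(y,tR/4)$ lies inside $\tilde B(x,R)$, because $d(x,y)\leq(1-t)R+\rr$ and $R\geq C\rr/s\geq C\rr/t$. Then $K$ contains $\K^0_B(y,\rr)$ with $B=\tilde B(x,R)$, and this cluster is connected to $A=\partial B(y,tR/4)$. Conversely, the cluster $\K^0_B(y,\rr)$ is contained in a union of at most a bounded number of components of $\mathcal C_{x,R}$. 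To avoid that subtlety, we work at the level of the component containing $z$, i.e.\ we use $r'=0$ with $y=z$ ranging over the vertices of $B(x,(1-t)R)$. Each $K$ counted in $\Gamma^{t,s}_{x,R}(\Yt)$ then satisfies $\K^0_B(z)=K$.

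This gives
\[
\E\big[\Gamma^{t,s}_{x,R}(\Yt)\big]\leq \sum_{z}\frac{\P\big(\K^0_B(z)\notin \Mac^s_{x,R}(\Yt),\ z\leftrightarrow \partial B(z,tR/4)\text{ in }B\big)}{\ \cdot\ }.
\]
To avoid overcounting we would divide by $|K|$, which is awkward. Instead we count at scale $\rr\vee 1$ via $\Lambda_{\rr}$ and accept that one $K$ may be counted several times, which only helps the upper bound. We then bound each term using \eqref{eq:bigradiusimpliessmallcapandvol}. That corollary is applied with $x\to y$, $r'=\rr$, radius $tR/4$ in place of $R$, $A=\partial B(y,tR/4)$ (so $R'\asymp tR$ and $\dd(A)\asymp tR$), and $s/t$ in place of $s$. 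Its thresholds $(s/t\cdot tR)^\nu=(sR)^\nu$ and $(sR/\rr)^{\alpha-\nu/2}$ match the definition of $\Mac^s_{x,R}(\Yt)$, up to constants absorbed by changing $s$ by a constant factor. The condition $tR/4\geq C\rr t/s$ holds since $R\geq C\rr/s$. Each term is therefore at most $C(tR\,\rr)^{\nu/2}(tR)^{-\nu}e^{-ct/s}=C(\rr/tR)^{\nu/2}e^{-ct/s}$.

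A naive sum over $y\in\Lambda_{\rr}\cap B(x,R)$ would give a factor $(R/\rr)^{\alpha}$, too large compared with the claimed $t^{-2\alpha}$. So the main obstacle is to organise the union bound so that the number of terms is polynomial in $t^{-1}$ only. We plan to do this by covering at scale $L=ctR$. For $y\in\Lambda_{L}\cap B(x,(1-t/2)R)$, of which there are at most $Ct^{-\alpha}$, each relevant $K$ contains a connection from $B(y,L)$ to $\partial B(y,tR/4)$. We apply \eqref{eq:bigradiusimpliessmallcapandvol} with $r'=L$, which gives probability at most $Ce^{-ct/s}$ per ball, since $(L\cdot tR)^{\nu/2}(tR)^{-\nu}\leq C$. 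Note that $R\geq C(L\vee \rr t/s)$ requires $L\leq cR$, which is fine.

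It remains to handle the remaining issue that several clusters $K$ could meet the same $B(y,L)$, while the corollary controls only the union cluster $\K^0_B(y,L)$. If $\K^0_B(y,L)$ has large capacity and volume, individual components might still be small. We handle this by multiplying by the number $N_y$ of distinct components of $E^{\geq0}\cap B$ meeting $B(y,L)$ and reaching distance $tR/4$. We use Cauchy--Schwarz: $\E[N_y1_{F_y}]\leq \E[N_y^2]^{1/2}\P(F_y)^{1/2}$. Here $\E[N_y^2]\leq Ct^{-C}$ should follow from a BK/Poisson-type bound on disjoint crossings, obtained by covering $\partial B(y,L)$ by $\Lambda_{cL}$-balls, each crossing having probability $O(1)$ by \eqref{eq:crossingannuli}. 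Equivalently, we apply the corollary to each sub-ball $B(y',cL)$ instead, which yields the factor $t^{-2\alpha}$ with an exponent of halved constant $c$. This gives $\E[\Gamma]\leq Ct^{-2\alpha}e^{-ct/s}$.
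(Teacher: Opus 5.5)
Your argument breaks where you pass to the $Ct^{-\alpha}$ balls $B(y,L)$, $L=ctR$, and apply \eqref{eq:bigradiusimpliessmallcapandvol} with $r'=L$. That estimate controls the event $F_y$ that the \emph{union} $\K^0_B(y,L)$ of all components of $E^{\geq0}\cap B$ meeting $B(y,L)$ has small capacity or small volume.

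A cluster $K$ counted in $\Gamma^{t,s}_{x,R}(\Yt)$ can be small while another component meeting $B(y,L)$ is macroscopic. By monotonicity of $\mathrm{cap}$ and $|\cdot|_r$ the union is then large, so $F_y$ fails. Hence the domination $\Gamma^{t,s}_{x,R}(\Yt)\leq\sum_y N_y1_{F_y}$ behind your Cauchy--Schwarz step is false. The issue is not multiplicity: the bad event for $K$ is simply not contained in $F_y$, and no moment bound on $N_y$ repairs a failed inclusion.

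Passing to sub-balls $B(y',cL)$ changes nothing, since the same problem occurs at every positive radius. Only at radius $0$ does $\K^0_B(y)$ coincide with the component of $y$, and then there are of order $R^{\alpha}$ terms. Independently, $\E[N_y^2]\leq Ct^{-C}$ is not available at this stage. There is no BK inequality for these level sets, and the paper's bound on the number of macroscopic clusters, \eqref{eq:boundEMrsD}, is itself derived from \eqref{eq:caseX=D}.

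The missing idea is the one you set aside as awkward: count single vertices and divide by volume. Write $\K_{\mathrm{in}}(y)=\K^0_{\tilde B(x,R)}(y)$. The paper first treats $\Yt=\Vol_0^{\geq}$, splitting by dyadic class $|\K_{\mathrm{in}}(y)|\in[(2^{-k}sR)^{\alpha-\frac\nu2},(2^{-k+1}sR)^{\alpha-\frac\nu2}]$. It bounds the expected number $\E[\mathbf{X}_k]$ of vertices $y\in B(x,R)$ in class $k$ whose cluster is macroscopic and meets $B(x,(1-t)R)$ by $CR^{\alpha-\frac\nu2}t^{-\alpha-\frac\nu2}\exp(-c2^kts^{-1})$.

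For $y\in B(x,(1-t/4)R)$ this uses \eqref{eq:bigradiusimpliessmallcapandvol} centred at $y$ with $r'=0$ and $A=\partial B(y,tR/4)$. There the relevant cluster \emph{is} $\K_{\mathrm{in}}(y)$. For $y$ in the boundary layer it uses auxiliary centres $z\in\Lambda_{ctR}\cap B(x,(1-t/2)R)$ reached by the cluster of $y$, which costs the extra factor $t^{-\alpha}$.

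The $R^{\alpha}$ vertices times the one-arm cost $(tR)^{-\nu/2}$ give $R^{\alpha-\frac\nu2}$. This cancels because every bad cluster of class $k$ contains at least $(2^{-k}sR)^{\alpha-\frac\nu2}$ such vertices. The leftover factor $s^{-\alpha+\frac\nu2}2^{k(\alpha-\frac\nu2)}$ is absorbed by the exponential after summing over $k$.

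For $\Yt\in\{\Capa,\Vol_r^{\geq}\}$ with $r>0$, the paper repeats the vertex count, restricted to clusters already in $\Mac^s_{x,R}(\Vol_0^{\geq})$. Each of these has at least $(sR)^{\alpha-\frac\nu2}$ vertices. A count on $\Lambda_r$ would run into the same several-clusters-per-ball problem.
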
 
\begin{proof}
Let us first consider the case $\Yt=\Vol_0^{\geq}$, and abbreviate $\bar{R}=R(1-t)$. Recalling the notation $\K^0(y)$ from \eqref{eq:clustersDef} and abbreviating $\K_{\mathrm{in}}(y)=\K^0_{\tilde{B}(x,R)}(y)$, for any $k\in{\N}$ we let %
\begin{equation*}
\begin{split} 
\mathbf{X}_k=\sum_{y\in{B(x,{R})}\cap\Lambda_{r}}1\left\{
\begin{gathered}
    \delta(\K^0(y))\geq tR,\K^0(y)\cap{B}(x,\bar{R})\neq\emptyset,
    \\|\K_{\mathrm{in}}(y)|\in{\big[(2^{-k}s{R})^{\alpha-\frac\nu2},(2^{-k+1}s{R})^{\alpha-\frac\nu2}\big]}
\end{gathered}
\right\}.
\end{split}
\end{equation*}
Recall the constant $\Cr{C:radimpliesvol3}$ from Corollary~\ref{cor:supqimpliessupp}, which we assume w.l.o.g.\ to satisfy $\Cr{C:radimpliesvol3}\geq1$. Let us denote by $S$ the set of $(y,z)\in{G^2}$ such that $y\in{B(x,{R})\setminus B(x,R(1-t/4))}$ and $z\in{\Lambda_{Rt/(2\Cr{C:radimpliesvol3})}\cap B(x,R(1-t/2))}$. Then by \eqref{eq:defLambdaL}, for any $y\in{B(x,{R})\setminus B(x,R(1-t/4))}$ such that $\delta(\K^0(y))\geq tR$ and $\K^0(y)\cap {B}(x,\bar{R})\neq\emptyset$, one can find $z\in{G}$ such that $(y,z)\in{S}$ and $y\leftrightarrow \partial B(z,Rt/(2\Cr{C:radimpliesvol3}))$. In particular, we have
\begin{equation}
\label{eq:boundXk1}
\begin{split} 
\E[\mathbf{X}_k]&\leq \sum_{(y,z)\in{S}}\P\big(|\K_{\mathrm{in}}(y)|\leq (2^{-k+1}sR)^{\alpha-\frac\nu2},y\leftrightarrow{B}(z,Rt/(2\Cr{C:radimpliesvol3}))\big)
\\&\,+\sum_{y\in{B(x,R(1-t/4))}}\P\big(|\K_{\mathrm{in}}(y)|\leq (2^{-k+1}sR)^{\alpha-\frac\nu2},y\leftrightarrow\partial{B}(y,Rt/4)\big)
\end{split}
\end{equation}
Therefore, using \eqref{eq:bigradiusimpliessmallcapandvol}, whose conditions are satisfied if $R\geq 2^k/(\Cl[c]{c:rsmallsR}s)$ for a small enough constant $\Cr{c:rsmallsR}>0$, when replacing therein $x$, $A$, $B$, $r$, $r'$, $R$ and $s$ by $z$, $\{y\}$, $\tilde{B}(x,R)$, $0$, $Rt/(2\Cr{C:radimpliesvol3})$, $Rt/2$ and $2^{-k+2}s/t$ respectively to bound the first line of \eqref{eq:boundXk1}, and by $y$, $\partial B(y,Rt/4)$, $\tilde{B}(x,R)$, $0$, $0$, $Rt/4$ and $2^{-k+3}s/t$ to bound the second line of \eqref{eq:boundXk1}, we have for all $R\geq 2^k/(\Cr{c:rsmallsR}s)$
\begin{equation}
\label{eq:boundXk2}
\begin{split} 
\E[\mathbf{X}_k]&\leq (|S|+|B(x,R(1-t/4))|)(tR)^{-\frac\nu2}\exp(-c2^kts^{-1})
\\&\leq CR^{\alpha-\frac\nu2}t^{-\alpha-\frac\nu2}\exp(-c2^kts^{-1}),
\end{split}
\end{equation}
where the last inequality follows from \eqref{eq:defLambdaL} and \eqref{eq:intro_sizeball}.
Note that for any connected set $K$ such that $\delta(K)\geq tR$ we have $|K|\geq ctR\geq (2^{-\lfloor\log_2(\Cr{c:rsmallsR}sR)\rfloor+2}sR)^{\alpha-\nu/2}$ if $tR\geq C$. Therefore, for every $K\in{\Mac_{x,R}^t(\Dia)}$ such that $K\notin{\Mac_{x,R}^s(\Vol_0)}$ and $K\cap{B}(x,\bar{R})\neq\emptyset$, 
there exists $k\in{[1,\log_2(\Cr{c:rsmallsR}sR)]\cap\mathbb{N}}$ and at least $(2^{-k}sR)^{\alpha-\frac\nu2}$ different $y\in{B(x,{R})}$ such that $K= \K_{\mathrm{in}}(y)$, $\delta(\K^0(y))\geq tR$, $|\K_{\mathrm{in}}(y)|\in{\big[(2^{-k}sR)^{\alpha-\frac\nu2},(2^{-k+1}sR)^{\alpha-\frac\nu2}\big]}$ and $\K^0(y)\cap{B}(x,\bar{R})\neq\emptyset$. Note that each of the previous $k\in{\N}$ and $y\in{B(x,{R})}$ can only correspond to a unique possible set $K\in{\Mac_{x,R}^t(\Dia)\setminus \Mac_{x,R}^s(\Vol_0)}$ (this might not be the case on the renormalized lattice $\Lambda_{r}$ for $r>0$ since two disjoint macroscopic cluster can intersect $B(y,r)$), and hence
\begin{equation}
\label{eq:finalDV}
\begin{split} 
    \E\big[\Gamma_{x,R}^{t,s}(\Vol_0^{\geq})\big]&\leq\sum_{k=1}^{\lfloor\log_2(\Cr{c:rsmallsR}sR)\rfloor}\E[\mathbf{X}_k](2^{-k}sR)^{-\alpha+\frac\nu2}
    \\&\leq Ct^{-\alpha-\frac\nu2}s^{-\alpha+\frac\nu2}\sum_{k\in{\N}}2^{k(\alpha-\frac\nu2)}\exp(-c2^kts^{-1}).
\end{split}
\end{equation}
The last sum above can be upper bounded for $0<s\leq t$ by 
\begin{equation}
\label{eq:finalDV2}
\begin{split} 
2\int_{0}^{\infty}2^{u(\alpha-\frac\nu2)}\exp(-c2^uts^{-1})\mathrm{d}u\leq C\int_{ts^{-1}}^{\infty}v^{\alpha-\frac\nu2-1}\exp(-cv)\mathrm{d}v\leq C\exp(-cts^{-1}),
\end{split}
\end{equation}
where we used the change of variable $v=2^uts^{-1}(\geq 2^u)$, and the part of \eqref{eq:caseX=D} concerning $\Yt=\Vol_0$ readily follows.

We now turn to the case $\Yt\in{\{\Capa,\Vol_r^{\geq}\}}$ with $r>0$, and define
\begin{equation*}
\begin{split} 
\mathbf{Y}=\sum_{y\in{B(x,{R})}}1\left\{
\begin{gathered}
\delta(\K^0(y))\geq tr,\K^0(y)\cap{B}(x,\bar{R})\neq\emptyset\text{ and }
\\\big\{|\K_{\mathrm{in}}(y)|_r\leq (sR/\rr)^{\alpha-\frac\nu2}\text{ or }\mathrm{cap}(\K_{\mathrm{in}}(y))\leq (sR)^{\nu}\big\}
\end{gathered}
\right\}.
\end{split}
\end{equation*}
Then  using \eqref{eq:defLambdaL}, \eqref{eq:bigradiusimpliessmallcapandvol}, and proceeding exactly in the same way as in \eqref{eq:boundXk1} and \eqref{eq:boundXk2} for $k=0$, but now for $r>0$ instead of $0$,  yields if $R\geq C\rr/s$
\begin{equation*}
\begin{split} 
\E[\mathbf{Y}]&%
\leq CR^{\alpha-\frac\nu2}t^{-\alpha-\frac\nu2}\exp(-cts^{-1}).
\end{split}
\end{equation*}
Note that for $\Yt\in{\{\Capa,\Vol_r^{\geq}\}}$ and each $K\in{\Mac_{x,R}^t(\Dia)\cap \Mac_{x,R}^s(\Vol_0^{\geq})}$ such that $K\notin{\Mac_{x,R}^s(\Yt)}$ and $K\cap {B}(x,\bar{R})\neq\emptyset$, there are at least  $(sR)^{\alpha-\frac\nu2}$ different $y\in{B(x,{R})}$ such that $K=\K_{\mathrm{in}}(y)$, $\delta(\K^0(y))\geq tR$, $\K^0(y)\cap {B}(x,\bar{R})\neq\emptyset$, and either $|\K_{\mathrm{in}}(y)|_r\geq (sR/\rr)^{\alpha-\frac\nu2}$ or $\mathrm{cap}(\K_{\mathrm{in}}(y))\leq (sR)^{\nu}$. Therefore using \eqref{eq:caseX=D} for the previous case $\Yt=\Vol_0^{\geq}$,  we have for each $\Yt\in{\{\Capa,\Vol_r^{\geq}\}}$, $r>0$,
\begin{equation}
\label{eq:finalDC}
\begin{split} 
    &\E[\Gamma_{x,R}^{t,s}(\Yt)]
    \\&\quad \leq \E\big[|\{K\in{(\Mac_{x,R}^t(\Dia)\cap \Mac_{x,R}^s(\Vol_0^{\geq}))\setminus \Mac_{x,R}^s(\Yt)}:\,K\cap {B}(x,\bar{R})\neq\emptyset\}|\big]+\E[\Gamma_{x,R}^{t,s}(\Vol_0)]
    \\& \quad \leq (sR)^{-\alpha+\frac\nu2}\E[\mathbf{Y}]+\E[\Gamma_{x,R}^{t,s}(\Vol_0^{\geq})] \leq Cs^{-\alpha+\frac\nu2}t^{-\frac\nu2-\alpha}\exp(-cts^{-1})+Ct^{-2\alpha}\exp(-cts^{-1}),
\end{split}
\end{equation}
and we readily conclude. 
\end{proof}

\subsection{Clusters with large volume}

We now show \eqref{eq:equivmacro} for $\Xt=\Vol_r^{\geq}$ and $\Yt=\Dia$, that is we prove that any connected component with large (renormalized) volume also has large diameter.

\begin{Lemme}
\label{lem:bigvolumeimpliesbigradius}
Assume $\sup q<\infty$. For all $s<t\in{(0,1]}$, $r\geq0$, and $R\geq C\rr/s$
\begin{equation*}
\begin{split} 
\P\big(|\Mac_R^t(\Vol_r^{\geq})\setminus \Mac_R^s(\Dia)|\geq1\big)\leq Cs^{\alpha-\nu}t^{\nu-2\alpha}.
\end{split}
\end{equation*}
\end{Lemme}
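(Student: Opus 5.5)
Since $\Mac_R^t(\Vol_r^{\geq})\setminus\Mac_R^s(\Dia)$ only contains clusters $K$ with $\delta(K)<sR$ and $|K|_r\geq (tR/\rr)^{\alpha-\frac\nu2}$, the plan is a first-moment count over small boxes, using the expected volume bound \eqref{eq:expectedvolumeinballren}. Cover $B(R)$ by the balls $B(z,sR)$, $z\in\Lambda_{sR}\cap B(2R)$; by \eqref{eq:defLambdaL} there are at most $Cs^{-\alpha}$ of them. Any cluster $K$ with $\delta(K)<sR$ meeting $B(z,sR)$ is contained in $B(z,2sR)$. It then intersects a ball $B(y,r)$ with $y\in\Lambda_r\cap B(z,3sR)$, and it is contained in $\K^0(y,r)\cap B(z,3sR)$. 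Hence on the bad event there exist $z\in\Lambda_{sR}\cap B(2R)$ and $y\in\Lambda_r\cap B(z,3sR)$ such that
\[
|\K^0(y,r)\cap B(y,6sR)|_r\ \geq\ (tR/\rr)^{\alpha-\frac\nu2}.
\]

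For this step I will use Markov's inequality together with \eqref{eq:expectedvolumeinballren}, applied with $x=y$, radius $6sR$ and $r'=r$. This gives
\[
\E\big[|\K^0(y,r)\cap B(y,6sR)|_r\big]\leq C(sR)^{\alpha-\nu}\rr^{\nu-\alpha}.
\]
This requires $6sR>r$, which holds since $R\geq C\rr/s$.

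It remains to sum over the pairs $(z,y)$. A single sum would be wasteful, so I will apply a union bound directly over $y\in\Lambda_r\cap B(3R)$. Using \eqref{eq:defLambdaL}, there are at most $C(R/\rr)^\alpha$ such points. This yields
\[
C(R/\rr)^{\alpha}\,(sR/\rr)^{\alpha-\nu}\,(tR/\rr)^{-\alpha+\frac\nu2}
= C\,s^{\alpha-\nu}t^{\frac\nu2-\alpha}(R/\rr)^{\alpha-\frac\nu2},
\]
which is not bounded in $R$. Summing over $y$ is too costly because a cluster is counted once for each box it touches.

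The fix is to exploit this multiplicity: each bad cluster $K$ intersects at least $(tR/\rr)^{\alpha-\nu/2}$ of the balls $B(y,r)$. I will therefore count the pairs $(y,K)$, which amounts to bounding
\[
\E\Big[\sum_{y}1\{|\K^0(y,r)\cap B(y,6sR)|_r\geq (tR/\rr)^{\alpha-\frac\nu2}\}\Big],
\]
and divide by $(tR/\rr)^{\alpha-\nu/2}$, as in the proof of Proposition~\ref{pro:caseX=D}. To make this work I would apply a second-moment-type Markov step, $\P(X\geq m)\leq \E[X^2]/m^2$. The quantity $\E[|\K^0(y,r)\cap B(y,6sR)|_r^2]$ is controlled by the bound on $\E[(\LV^0_{6sR,r})^2]$ from the proof of \eqref{eq:upperMR0}, namely $C(sR/\rr)^{2\alpha-\nu}$. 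The count then becomes
\[
(R/\rr)^{\alpha}(sR/\rr)^{2\alpha-\nu}(tR/\rr)^{-2(\alpha-\frac\nu2)}(tR/\rr)^{-(\alpha-\frac\nu2)},
\]
and after the division by the multiplicity the $R$-powers still need to cancel. I expect the exact power bookkeeping to be the main obstacle. My plan is to choose between the first- and second-moment versions so that the $R$ exponents vanish and the claimed factor $s^{\alpha-\nu}t^{\nu-2\alpha}$ comes out. Concretely, I would take the expectation of $\sum_y |\K^0(y,r)\cap B(y,6sR)|_r$, bounded by $(R/\rr)^\alpha (sR/\rr)^{\alpha-\nu}\rr^{\ldots}$ via \eqref{eq:expectedvolumeinballren}. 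I would then divide by $m^2$, since each bad cluster contributes at least $m\cdot m$ to this sum, where $m=(tR/\rr)^{\alpha-\nu/2}$. This gives
\[
(R/\rr)^{\alpha}(sR/\rr)^{\alpha-\nu}(tR/\rr)^{\nu-2\alpha}=s^{\alpha-\nu}t^{\nu-2\alpha},
\]
which is exactly the claimed bound.
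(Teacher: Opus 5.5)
Your final argument is correct and is essentially the paper's proof. You bound $\E\big[\sum_{y\in\Lambda_r\cap B(3R)}|\K^0(y,r)\cap B(y,6sR)|_r\big]\leq C(R/\rr)^{\alpha}(sR/\rr)^{\alpha-\nu}$ via \eqref{eq:expectedvolumeinballren}, and note that a single bad cluster forces this sum to be at least $m^2$, with $m=(tR/\rr)^{\alpha-\frac\nu2}$, through the $m$ boxes it touches; the paper instead applies Markov's inequality at each site of $B(R)\cap\Lambda_r$ and then uses the same multiplicity $m$ to obtain the second factor $m^{-1}$. Discard the earlier union-bound and second-moment attempts: the second-moment step does not work as written, since $|\K^0(y,r)\cap B(y,6sR)|_r$ is not dominated by $\LV^0_{6sR,r}$ (this intersection need not be a single cluster of the ball).
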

\begin{proof}
Let
\begin{equation*}
\begin{split} 
\mathbf{Y}=\sum_{x\in{B(R)\cap\Lambda_{r}}}1\{|\K^0_{\tilde{B}(R)}(x,r)|_r\geq (tR/\rr)^{\alpha-\frac\nu2},\delta(\K^0_{\tilde{B}(R)}(x,r))\leq sR\},
\end{split}
\end{equation*}
then by Markov's inequality we have
\begin{equation*}
\begin{split} 
\E[\mathbf{Y}]&\leq \sum_{x\in{B(R)\cap\Lambda_{r}}}\P\big(|\K^0(x,r)\cap \tilde{B}(x,sR)|_r\geq (tR/\rr)^{\alpha-\frac\nu2}\big)
\\&\leq (tR/\rr)^{-\alpha+\frac\nu2}\sum_{x\in{B(R)\cap\Lambda_{r}}}\E\big[|\K^0(x,r)\cap \tilde{B}(x,sR)|]
\\&\leq C(tR/\rr)^{-\alpha+\frac\nu2}(R/\rr)^{\alpha}(sR/\rr)^{\alpha-\nu}\leq C(tR/\rr)^{\alpha-\frac\nu2}t^{\nu-2\alpha}s^{\alpha-\nu},
\end{split}
\end{equation*}
where in the third inequality we used \eqref{eq:defLambdaL} and \eqref{eq:expectedvolumeinballren}. Noting that for each $K\in{\Mac_R^t(\Vol_r^{\geq})}$ such that $K\notin{\Mac_R^s(\Dia)}$, there are at least $(tR/\rr)^{\alpha-\frac\nu2}$ different $x\in{B(R)\cap\Lambda_{r}}$ such that $|\K^0_{\tilde{B}(R)}(x,r)|\geq (tR)^{\alpha-\frac\nu2}$ and $\delta(\K^0_{\tilde{B}(R)}(x,r))\leq sR$, and thus $\mathbf{Y}\geq(tR/\rr)^{\alpha-\frac\nu2}$. %
\end{proof}

Let us now turn to the upper bound in \eqref{eq:volumeMrs} for $\Xt=\Vol_0^{\geq}$ on $|\Mac_R^s(\Xt)|$ . The corresponding upper bounds for $\Xt\in{\{\Dia,\Capa\}}$ will eventually follow from this and Proposition~\ref{pro:caseX=D}, whereas the lower bounds will follow from Proposition~\ref{thm:macroscopiclocalcluster}.

\begin{Lemme}
\label{lem:volumeMrsV}
Assume $\sup q<\infty$. For all $s\in{(0,1]}$ $R\geq1/s$, and $x\in{G}$,
\begin{equation*}
\begin{split} 
\E\big[|\Mac_{x,R}^s(\Vol_0^{\geq})|\big]\leq Cs^{-\alpha}.
\end{split}
\end{equation*}
\end{Lemme}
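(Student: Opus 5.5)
We will bound the expected number of clusters of $E^{\geq 0}\cap\tilde{B}(x,R)$ with volume at least $(sR)^{\alpha-\frac\nu2}$ by a weighted vertex count. Each such cluster $K$ contains at least $(sR)^{\alpha-\frac\nu2}$ vertices $y\in B(x,R)$, and for each of these $K=\K^0_{\tilde{B}(x,R)}(y)$. Hence
\begin{equation*}
|\Mac_{x,R}^s(\Vol_0^{\geq})|\leq (sR)^{-\alpha+\frac\nu2}\sum_{y\in B(x,R)}1\big\{|\K^0_{\tilde{B}(x,R)}(y)|\geq (sR)^{\alpha-\frac\nu2}\big\}.
\end{equation*}
It then suffices to prove that, uniformly in $y\in B(x,R)$,
\begin{equation*}
\P\big(|\K^0_{\tilde{B}(x,R)}(y)|\geq (sR)^{\alpha-\frac\nu2}\big)\leq C(sR)^{-\frac\nu2}.
\end{equation*}
Summing over the $CR^\alpha$ vertices then gives $Cs^{-\alpha+\frac\nu2}R^{\alpha}\cdot R^{-\alpha+\frac\nu2}\cdot (sR)^{-\frac\nu2}=Cs^{-\alpha}$, as required.

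For this one-point bound we split on the diameter. If $\K^0_{\tilde{B}(x,R)}(y)$ reaches $\partial B(y,sR)$, then by the annulus bound \eqref{eq:crossingannuli} (with $N\to\infty$, cf.\ Remark~\ref{rk:othercrossingbounds},\ref{eq:crossing-infinitevol}) and $\sup q<\infty$, the probability is at most $C(sR)^{-\frac\nu2}$. Otherwise the cluster lies inside $B(y,sR)$. Markov's inequality together with \eqref{eq:expectedvolumeinballren} (with $r=r'=0$ and radius $sR$) then gives
\begin{equation*}
\P\big(|\K^0(y)\cap B(y,sR)|\geq (sR)^{\alpha-\frac\nu2}\big)\leq (sR)^{-\alpha+\frac\nu2}\cdot C(sR)^{\alpha-\nu}=C(sR)^{-\frac\nu2}.
\end{equation*}
Here we use monotonicity $\K^0_{\tilde{B}(x,R)}(y)\subset\K^0(y)$. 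The condition $R\geq 1/s$ ensures $sR\geq1$, so both estimates apply.

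I expect no serious obstacle. The only point needing care is the counting step, namely that distinct macroscopic clusters correspond to disjoint vertex sets, which holds here because $r=0$. This is the situation already noted in the proof of Proposition~\ref{pro:caseX=D}, where the same step can fail on $\Lambda_r$ for $r>0$.
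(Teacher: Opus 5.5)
Your proposal is correct and follows essentially the paper's proof: the same vertex-counting bound $\sum_{y\in B(x,R)}1\{|\K^0_{\tilde{B}(x,R)}(y)|\geq (sR)^{\alpha-\frac\nu2}\}\geq (sR)^{\alpha-\frac\nu2}|\Mac_{x,R}^s(\Vol_0^{\geq})|$, combined with the one-point tail $\P(|\K^0(y)|\geq n)\leq Cn^{-\nu/(2\alpha-\nu)}$. The paper derives this tail exactly as you do, from \eqref{eq:expectedvolumeinballren}, Markov's inequality and the one-arm bound at radius $n^{2/(2\alpha-\nu)}$; you simply carry out that derivation inline at $n=(sR)^{\alpha-\frac\nu2}$, where this radius equals $sR$.
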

\begin{proof}
Let us define
\begin{equation*}
\begin{split} 
\mathbf{X}=\sum_{y\in{B(x,R)}}1\{|\K^0_{\tilde{B}(x,R)}(y)|\geq (sR)^{\alpha-\frac\nu2}\}.
\end{split}
\end{equation*}
Then by \eqref{eq:intro_sizeball} and %
the estimate $\P\big(|\mathcal{K}^0(x)| \geq n\big) \leq  C n^{-\frac{\nu}{2\alpha - \nu}}$, valid for all $x \in G$ and $n \geq 1$, which can be derived similarly as \cite[Corollary~1.6]{DrePreRod5} by combining \eqref{eq:expectedvolumeinballren} at scales $r=r'=0,$ and $R= n^{\frac{2}{2\alpha-\nu}}$, Markov's inequality and the one-arm estimate to distance $R$ implied by $\sup q < \infty$, it follows that
\begin{equation*}
\begin{split} 
\E[\mathbf{X}]\leq \sum_{y\in{B(x,R)}}\P(|\K^0(y)|\geq (sR)^{\frac{2\alpha-\nu}2})&\leq CR^{\alpha}(sR)^{-\frac{2\alpha-\nu}2\times\frac{\nu}{2\alpha-\nu}}
\leq C(sR)^{\alpha-\frac\nu2}s^{-\alpha}.
\end{split}
\end{equation*}
Since $\mathbf{X}\geq (sR)^{\alpha-\frac\nu2}|\Mac_{x,R}^s(\Vol_0^{\geq})|$ by definition, see \eqref{eq:defVR-}, the claim follows.
\end{proof}

The proof of Lemma~\ref{lem:volumeMrsV} only works for $r=0$. Indeed, if $\mathbf{X}_r$ denotes the renormalized version of $\mathbf{X}$ on $\Lambda_{r}$, the bound $\mathbf{X}_r\geq (sR/\rr)^{\alpha-\frac\nu2}|\Mac_{x,R}^s(\Vol_r^{\geq})|$ might fail for $r>0$ since two distinct clusters in $\Mac_{x,R}^s(\Vol_r^{\geq})$ can intersect $\K^0_{\tilde{B}(x,R)}(y,r)$ for some $y\in{B(x,R)}$.

\subsection{Proof of Theorem~\ref{the:equivmacro} and Corollary~\ref{cor:macrocontainsbigloops}}

Combining Propositions~\ref{pro:noclusterinboundary} and \ref{pro:caseX=D} with Lemmas~\ref{lem:bigvolumeimpliesbigradius} and~\ref{lem:volumeMrsV}, we are now ready to finish the 
\begin{proof}[Proof of Theorem~\ref{the:equivmacro}]
We start with the proof of \eqref{eq:equivmacro}, and first note that the case $\Xt=\Yt$ is trivial since $s\mapsto\Mac_R^s(\Xt)$ is decreasing. As we now explain, we have actually essentially already proved all the remaining cases.
\begin{itemize}
    \item Case $\Xt=\Dia$ and $\Yt\in{\{\Vol_r,\Capa\}}$: In view of the upper bound in \eqref{eq:MR0tight}, we have that $\P(\Mac_R^s(\Vol_r^{\geq})\neq \Mac_R^s(\Vol_r))\leq Cs^{\alpha-\frac\nu2}$ as $s\rightarrow0$ uniformly in $r,R$, see \eqref{eq:defMRs} and \eqref{eq:defVR-} for definitions, and so we can consider $\Zt\in{\{\Vol_r^{\geq},\Capa\}}$ instead of $\Yt\in{\{\Vol_r,\Capa\}}$.  By Proposition~\ref{pro:noclusterinboundary},  there exists $c>0$ and $C<\infty$, such that for all $t\in{(0,1]}$, $t'\in{(0,ct]}$, and $R\geq C/t$
    \begin{equation*}
\begin{split}
\P&\big(\Mac_R^t(\Dia)\not\subset\Mac_R^s(\Zt)\big)
\\&\leq Ct^{-C}(t')^c+\P(\exists K\in{\Mac_R^t(\Dia)\setminus \Mac_R^s(\Yt)}:\,K\cap B(x,(1-t')R)\neq\emptyset)
\\&\leq Ct^{-C}(t')^c+\P\big(\Gamma_{x,R}^{t',s}(\Yt)\geq1\big)\leq Ct^{-C}(t')^c+C(t')^{-2\alpha}\exp(-ct's^{-1}),
\end{split}
\end{equation*}
where in the last inequality we used Proposition~\ref{pro:caseX=D} and Markov's inequality for $t=t'$ and Markov's inequality, which is in force if  $R\geq C\rr/s$. Taking $t'=\sqrt{s}$ and assuming that $s\leq ct^{\Cr{C:condst}}$ for a large enough constant $\Cr{C:condst}<\infty$, we can conclude.
\item Case $\Xt=\Vol_r$ and $\Yt=\Dia$: This follows from Lemma~\ref{lem:bigvolumeimpliesbigradius} upon additionally choosing $\Cr{C:condst}\geq 2(2\alpha-\nu)/(\alpha-\nu)$ .
\item Case $\Xt=\Vol_r$ and $\Yt=\Capa$: This follows by combining the previous case $\Xt=\Vol_r$ and $\Yt=\Dia$ (for $s'= c(\eps)t^{\Cr{C:condst}}$ instead of $s$), and the previous case $\Xt=\Dia$ and $\Yt=\Capa$ (for $s'$ instead of $t$, and $s\leq c(\eps)(s')^{\Cr{C:condst}}$), upon doubling the value of $\Cr{C:condst}$.
\item Case $\Xt=\Capa$ and $\Yt=\Dia$: This is trivial since $\mathrm{cap}(K)\geq (tr)^{\nu}$ implies $\delta(K)\geq c(tr)^{\nu}$ for all $K\subset\tilde{\G}$ in view of \eqref{eq:capBallBd}. 
\item Case $\Xt=\Capa$ and $\Yt=\Vol_r$: This follows by noting again that $\mathrm{cap}(K)\geq (tr)^{\nu}$ implies $\delta(K)\geq c(tr)^{\nu}$ for all $K\subset\tilde{\G}$, and using  the previous case $\Xt=\Dia$ and $\Yt=\Vol_r$ (for $ct$ instead of $t$).
\end{itemize}

It remains to prove \eqref{eq:volumeMrs}. Similarly as before, in view of the upper bound in \eqref{eq:MR0tight}, we can consider $\Xt\in{\{\Dia,\Vol_0^{\geq},\Capa\}}$ instead of $\Xt\in{\{\Dia,\Vol_0,\Capa\}}$.  We start with the lower bound, and will actually prove a slightly stronger statement \eqref{eq:MRsXlarge0}, whose proof is not more complicated.
 Abbreviating $\Lambda=B((1-s/10)R)\cap\Lambda_{sR}$, for $0<u<1/10$ to be fixed and $x\in{\Lambda}$,  we denote by $E_x$ the  event that there is a positive loop cluster $K$ in $\mathcal{L}_{\tilde{B}(x,usR)}$ with $|K|_r\geq (\eps sR/(10\rr))^{\alpha-\frac\nu2}$, $\mathrm{cap}(K)\geq (\eps sR/10)^{\nu}$, and $\delta(K)\geq \eps sR/10$. Then, noting that $\mathrm{cap}(K)\geq C(\eps sR)^{\nu}$ implies $\delta(K)\geq \eps sR$  by \eqref{eq:capBallBd}, if $R\geq C(\eta)\rr/(us)$ it follows from Proposition~\ref{thm:macroscopiclocalcluster} (the assumption $\inf p > 0$ therein follows from Cor. \ref{cor:supqimpliessupp} in combination with $\sup q < \infty$)  that 
$\Q(E_x)\geq 1-\eta/4$ upon fixing $\eps\leq cu$ for a small enough constant $c=c(\eta)$. Since the balls $\tilde{B}(x,sR/20)$ are disjoint for $x\in{\Lambda_{sR}}$ by \eqref{eq:defLambdaL}, the events $E_x$, $x\in{\Lambda}$, are independent. Hence by Hoeffding’s inequality, we have for $\eta\in{(0,1)}$, $R\geq C(\eta)\rr/(su)$,  $\eps\leq cu$ and $s\leq c$
\begin{equation}
\label{eq:Ebound}
\begin{split} 
\Q(E)\leq \exp\left(-c\eta^2|\Lambda|\right)\leq \exp(-c\eta^2s^{-\alpha}),\text{ with }E\stackrel{\text{def.}}{=}\Big\{\frac{1}{|\Lambda|}\sum_{x\in{\Lambda}}1\{E_x\}\leq 1-\frac{\eta}{2}\Big\},
\end{split}
\end{equation}
and where we used \eqref{eq:defLambdaL} in the second inequality. Let us also denote by $F_x$ the event that there is no loop cluster connecting $B(x,usR)$ to $\partial B(x,sR/10)$, then by Markov’s inequality and the isomorphism \eqref{eq:isoloopK} we have
\begin{equation}
\label{eq:firstmomentF}
\E_{\Q}\Big[\frac{1}{|\Lambda|}\sum_{x\in{\Lambda}}1_{F_x}\Big]\geq 1-\sup_{x\in{\Lambda}}\P\big(B(x,usR)\leftrightarrow \partial B(x,sR/10)\big)\geq 1-Cu^{\nu/2}\geq 1-\frac{\eta}{4}
\end{equation}
where the second inequality follows from \eqref{eq:crossingannuli} and the last one holds upon fixing $u=c$ for a small enough constant $c=c(\eta)>0$. Further note that for any $\ell\geq1$, on the event that there is no loop in $\mathcal{L}$ connecting $B(x,sR/10)$ to $B(x,\ell sR)$, the event $F_x$ can be replaced by the event $\tilde{F}_x$ that there is no loop cluster for $\mathcal{L}_{\tilde{B}(x,\ell sR)}$ connecting $B(x,usR)$ to $\partial B(x,sR/10)$. Therefore for all $x,y\in{\Lambda}$ with $d(x,y)>2\ell sR$, since $\tilde{F}_x$ and $\tilde{F}_y$ are independent we have
\begin{equation*}
\begin{split}
\text{Cov}\big(1_{F_x},1_{F_y}\big)&\leq C\sup_{z\in{\Lambda}}\P\big(\exists\,\gamma\in{\mathcal{L}}:\,B(z,sR/10)\leftrightarrow B(z,\ell sR)\text{ in }\text{trace}(\gamma)\big)
\leq C\ell^{-\nu}\leq \frac{\eta'\eta}{64},
\end{split}
\end{equation*}
where in the second inequality we used \cite[(3.6)]{DrePreRod8} (whose proof does not require the assumption $\nu\leq \alpha/2$ therein) and \eqref{eq:capBallBd}, and the last one holds upon fixing $\ell=C$ for a large enough constant $C=C(\eta',\eta)$. Therefore,
\begin{equation}
\label{eq:varianceF}
\text{Var}\Big(\frac{1}{|\Lambda|}\sum_{x\in{\Lambda}}1_{F_x}\Big)\leq \frac{1}{|\Lambda|^2}\sum_{x,y\in{\Lambda}:d(x,y)\leq 2\ell sR}\P(F_x)+\frac{1}{|\Lambda|^2}\sum_{x,y\in{\Lambda}}\frac{\eta'\eta}{64}\leq \frac{C}{(\ell s)^{\alpha}|\Lambda|}+\frac{\eta'\eta}{64}\leq  \frac{\eta'\eta}{32},
\end{equation}
where in the second inequality we used \eqref{eq:defLambdaL}, and the last one holds for $s\leq c(\eta,\eta')$. Denoting by $F$ the event that $\frac{1}{|\Lambda|}\sum_{x\in{\Lambda}}1_{F_x}$ is smaller than $1-\eta/2$, it follows from \eqref{eq:firstmomentF}, \eqref{eq:varianceF}, and Chebyshev's inequality that $\Q(F)\leq\eta'/2$. Further note that on the event $E_x\cap F_x$, there is a cluster $K$ in $\Mact_{x,sR/10}^{\eps}(\Xt)$ such that $K\cap\partial B(x,\tfrac{sR}{10})=\emptyset$  for any $\Xt\in{\{\Vol_r,\Capa,\Dia\}}$.  Hence combining this with \eqref{eq:Ebound} and the isomorphism \eqref{eq:isoloopK}, for $\Xt\in{\{\Vol_r,\Capa,\Dia\}}$, $\eta\in{(0,1)}$, $R\geq C(\eta)\rr/s$, $u\leq 1/10$ and $\eps\leq c(\eta)$  we have
\begin{equation}
\label{eq:MRsXlarge0}
\begin{split} 
\P&\Big(\frac{1}{|\Lambda|}\sum_{x\in{\Lambda}}1\{\exists K\in{\Mac_{x,\tfrac{sR}{10}}^{\eps}(\Xt),K\cap \partial B(x,\tfrac{sR}{10})=\emptyset}\}\leq 1-\eta \Big)
\\&\leq \Q\Big(\frac{1}{|\Lambda|}\sum_{x\in{\Lambda}}1\{E_x\cap F_x\}\leq 1-\eta \Big)\leq \exp\left(-c\eta^2s^{-\alpha}\right)+\frac{\eta'}{2}\leq\eta'
\end{split}
\end{equation}
for $s\leq c(\eta,\eta')$. Furthermore, note that each cluster $K\in{\Mac_{x,sR/10}^{\eps}(\Xt)}$ such that $K\cap \partial B(x,{sR/10})=\emptyset$ for some $x\in{\Lambda}$ appears exactly once in $\Mac_{R}^{s\eps/10}(\Xt)$. After a change of variable for $s$, the lower bound in \eqref{eq:volumeMrs} (note that we can assume w.l.o.g.\ $\eta\geq s^{\alpha}$ therein) follows readily from \eqref{eq:MRsXlarge0} (with $\eta=1/2$ therein) and \eqref{eq:defLambdaL}.

Let us now show that $\P(|\Mac_R^s(\Xt)|\geq  \eta^{-1} s^{-\alpha})\rightarrow0$ as $\eta\rightarrow0$, uniformly in $R,s$ for any $\Xt\in{\{\Dia,\Vol_0,\Capa\}}$. If $\Xt=\Vol_0$, this is Lemma~\ref{lem:volumeMrsV}. If $\Xt=\Dia$, by \eqref{eq:defLambdaL}, for each $K\in{\Mac_R^s(\Dia)}$ there exists $x\in{\Lambda_{sR}\cap B(R)}$ such that $K\cap B(x,sR)\neq\varnothing$, and thus there exists $K'\in{\Mac_{x,2sR}^{1/2}}$ such that $K'\subset K$. Therefore, abbreviating $\hat{\Mac}_{x,2sR}^{1/2}=\{K\in{\Mac_{x,2sR}^{1/2}}:\,K\cap B(x,sR)\neq\emptyset\}$, 
\begin{equation*}
\begin{split} 
\E\big[|\Mac_R^s(\Dia)|\big]&\leq\sum_{x\in{\Lambda_{sR}\cap B(R)}}\E\big[|\hat{\Mac}_{x,2sR}^{1/2}(\Dia)|\big].
\end{split}
\end{equation*}
Furthermore for all $s\in{[0,1]}$, $R\geq C/s$ and $x\in{G}$  we have by Lemma~\ref{lem:volumeMrsV} (for $2sR$ instead of $R$ and $1/2$ instead of $s$) and \eqref{eq:caseX=D} (for $\Yt=\Vol_0$, $2sR$ instead of $R$, $1/2$ instead of $t$ and  $s$)
\begin{equation*}
\begin{split} 
\E\big[|\hat{\Mac}_{x,2sR}^{1/2}(\Dia)|\big]&\leq \E\big[|\hat{\Mac}_{x,2sR}^{1/2}(\Vol_0)|\big]+\E\big[|\hat{\Mac}_{x,2sR}^{1/2}(\Dia)\setminus \Mac_{x,2sR}^{1/2}(\Vol_0)|\big]
\leq C.
\end{split}
\end{equation*}
Hence if $R\geq C/s$,
\begin{equation}
\label{eq:boundEMrsD}
\begin{split} 
\E\big[|\Mac_R^s(\Dia)|\big]\leq C|\Lambda_{sR}\cap B(R)|\leq Cs^{-\alpha},
\end{split}
\end{equation}
where we used \eqref{eq:defLambdaL} in the last inequality. Using Markov's inequality, we can conclude the case $\Xt=\Dia$.
Finally, if $\Xt=\Capa$, since any cluster with capacity at least $(sR)^{\nu}$ has diameter at least $csR$ by \eqref{eq:capBallBd}, we can conclude by the previous case $\Xt=\Dia$. 
\end{proof}

Let us now explain how to deduce Corollary~\ref{cor:macrocontainsbigloops}.
\begin{proof}[Proof of Corollary~\ref{cor:macrocontainsbigloops}]
We first  consider the case $\Xt=\Dia$. For $u>0$ to be chosen later, recalling \eqref{eq:defLambdaL}, let us define the set of "thick" subsets $K$ of $\tilde{B}(R)$ by
\begin{equation*}
\begin{split} 
\mathcal{T}=\big\{K\subset{\tilde{B}(R)}:\,\forall y\in{B(2R)\cap\Lambda_{sR}},K\cap B(y,sR)\neq\emptyset\Rightarrow\mathrm{cap}(K\cap B(y,2sR))\geq (uR)^{\nu}\big\}.
\end{split}
\end{equation*} 
Then if $s\geq C/R$ and $u\leq s\leq t/4$,
\begin{equation}
\label{eq:MrtDTc}
\begin{split} 
&\Q\big(\Mac_R^t(\Dia)\cap \mathcal{T}^{\mathsf c}\neq\emptyset\big)
\\&\leq \Q\big(\exists\,y\in{B(2R)\cap\Lambda_{sR}},\exists\,K\in{\Mac_R^t(\Dia)},K\cap B(y,sR)\neq \emptyset,\mathrm{cap}(K\cap B(y,2sR))\leq (uR)^{\nu}\big)
\\&\leq \sum_{y\in{B(2R)\cap\Lambda_{sR}}}\Q\big(\exists\, K\in{\Mac_{y,2sR}^{1/4}}(\Dia), K\notin{\Mac_{y,2sR}^{u/(2s)}(\Capa)},K\cap B(y,sR)\neq\emptyset\big)
\leq Cs^{-\alpha}e^{-cs/u},
\end{split}
\end{equation}
where we used in the third inequality that if $K\in{\Mac_R^t(\Dia)}$ and $K\cap B(y,sR)\neq\emptyset$, then $\delta(K\cap B(y,2sR))\geq sR/2=(2sR)*(1/4)$ if $sR\geq C$ and $t\geq 4s$, and the last inequality follows from \eqref{eq:defLambdaL}, the isomorphism \eqref{eq:isoloopK}, and \eqref{eq:caseX=D} for $\Yt=\Capa$, $t=1/4$ and $u/(2s)$ instead of $s$.

We can thus focus on the case $\Mac_R^t(\Dia)\subset\mathcal{T}$, that is the case where all the clusters with large diameter are thick. The idea will be to show that any thick set is hit with high probability by a long loop.  We first introduce some notation: let us denote by $\tilde{\mathcal{L}}_{R}^{s}$ the point process which consists of all the loops in $\mathcal{L}$ with diameter at most $sR$ which intersect $\tilde{B}(R)$. Let also $\mathcal{S}_{R}^s$ be the union of the traces of all the loops in $\mathcal{L}$ which have diameter at least $sR$, and are entirely included in $\tilde{B}(x,R)$. If there exists a cluster $K\in{\Mac_R^t(\Dia)\setminus \Mac_R^s(\mathcal{L})}$, then by Proposition~\ref{pro:noclusterinboundary} there exists $t'=t'(\eps,t)$ such that $K\cap B((1-t')R)\neq\emptyset$ except on an event with probability $\eps/2$. Define ${\Mac}_R^{t,s}(\Dia)$ as the set of connected components in $\tilde{B}(R)$  with diameter at least $tR$ and which intersect $B((1-t')R)$, for loops in $\tilde{\mathcal{L}}_{R}^{s}$. Note that with probability $1-\eps/2$ by definition any cluster $K$ as before avoids $\mathcal{S}_{R}^s$, and in particular $K\in{\Mac_R^{t,s}(\Dia)}$. Hence since  $\Mac_R^{t,s}(\Dia)$ is $\tilde{\mathcal{L}}_R^s$-measurable, and $\mathcal{S}_R^s$ is independent of $\tilde{\mathcal{L}}_R^s$, we have
\begin{equation}
\label{eq:boundonFu}
\begin{split} 
\Q\big(\Mac_R^t(\Dia)\setminus \Mac_R^s(\mathcal{L})\neq\emptyset,\Mac_R^t(\Dia)\subset\mathcal{T}\big)
 \leq \frac{\eps}{2}+\E_{\Q}\Big[\sum_{K\in{\Mac_R^{t,s}}(\Dia)\cap\mathcal{T}}\Q(K\cap\mathcal{S}^s_R =\emptyset)\Big].
\end{split}
\end{equation}

Let us fix some connected set $K\in{\mathcal{T}}$ with diameter at least $t'R/3$. %
Let $E_y=E_y(K)$ be the event that there is no loop in $\mathcal{L}_{B(y,\Cl{C:loopincluded}sR)}$ which intersect both $K\cap B(y,2sR)$ and $B(y,3sR)^{\mathsf c}$, for some constant $\Cr{C:loopincluded}\in{(3,\infty)}$ to be determined. Then for any $y\in{B(2R)\cap\Lambda_{sR}}$ such that $K\cap B(y,sR)\neq\emptyset$, we have with $K_{2s}= K\cap B(y,2sR)$ that
\begin{equation}
\label{eq:boundEyc}
\begin{split} 
\Q( E_y^{\mathsf c}) &\geq \Q\big(\exists\,\gamma\,\in{\mathcal{L}}:\,K_{2s}\leftrightarrow B(y,3sR)^{\mathsf c}\text{ in }\text{range}(\gamma)\big) 
\\&\quad-\Q\big(\exists\,\gamma\in{\mathcal{L}}: K_{2s}\leftrightarrow B(y,\Cr{C:loopincluded}sR)^{\mathsf c}\text{ in }\text{range}(\gamma)\big)
\\&\geq \exp\big\{-C(\Cr{C:loopincluded}sR)^{-\nu}\mathrm{cap}(K_{2s})\big\}-\exp\big\{-c(3sR)^{-\nu}\mathrm{cap}(K_{2s})\big\}\geq cu^{\nu}s^{-\nu},
\end{split}
\end{equation}
where the second inequality follows from \cite[Lemma~3.1]{DrePreRod8}, and the last one holds upon fixing $\Cr{C:loopincluded}\geq1$ large enough and using that $\mathrm{cap}(K_{2s})\geq (uR)^{\nu}$. Note that \cite[Lemma~3.1]{DrePreRod8} is stated under the additional condition $\nu\leq \alpha/2$, but the only time this condition is used in its proof is to ensure that $\mathrm{cap}(\gamma)\geq cR^{\nu}\log(R)^{-b_2}$ in \cite[(3.7)]{DrePreRod8}, which we do not use here.

Let $(x_1,\dots,x_n)$ be a path in $K$ with diameter at least $t'R/3$. Define recursively $y_1\in{\Lambda_{sR}}$ such that $x_1\in{B(y_1,sR)}$, and for all $i\geq1$, $y_{i+1}$ as the first vertex (with some arbitrary order in case on equality) in $\Lambda_{sR}$ such that $B(y_{i+1},sR)$ is visited by $(x_1,\dots,x_n)$ when first exiting $\bigcup_{1\leq j\leq i}B(y_j,3\Cr{C:loopincluded}sR)$. We iterate the procedure until the first index $p$ such that $\{x_1,\dots,x_n\}\subset\bigcup_{1\leq j\leq p}B(y_j,3\Cr{C:loopincluded}sR)$. Then since the diameter of $\{x_1,\dots,x_n\}$ is at least $t'R/3$ and the diameter of $\bigcup_{1\leq j\leq p}B(y_j,3\Cr{C:loopincluded}sR)$ is at most $6\Cr{C:loopincluded}psR$, we have $p\geq ct'/s$. Further note that $K\cap\mathcal{S}^s_R=\emptyset$ implies $E_y$ for all $y\in{B(x,(1-\Cr{C:loopincluded}s)R)}$ such that $B(y,sR)\cap K\neq\emptyset$,  that the events $E_{y_i}$, $1\leq i\leq p$, are independent since the balls $B(y_i,\Cr{C:loopincluded}sR)$, $1\leq i\leq p$, are disjoint by construction, that $K\cap B(y_i,sR)\neq\emptyset$ for all $1\leq i\leq p$, and that $y_i\in{B(K,sR)\subset B((1-\Cr{C:loopincluded}s)R)}$ if $K\subset B((1-t'/2)R)$ and $s\leq ct'$. Therefore, for any $K\in{\mathcal{T}}$ with diameter at least $t'R/3$ and such that $K\subset B((1-t'/2)R)$, we deduce from \eqref{eq:boundEyc} that  if $s\leq ct'$
\begin{equation*}
\begin{split} 
\Q(K\cap\mathcal{S}^s_R =\emptyset)\leq \prod_{i=1}^p\Q(E_{y_i})\leq (1-cu^{\nu}s^{-\nu})^{ct'/s}\leq\exp(-cu^{\nu}t's^{-\nu-1}).
\end{split}
\end{equation*}
Note that this bound remains true if $K\in{\mathcal{T}}$ has diameter at least $tR$ and intersect $B((1-t')R)$, since we can then apply it to $K\cap B((1-t'/2)R)$, which is in $\mathcal{T}$ and has diameter at least $t'R/3$. 
One can thus bound the last expectation in \eqref{eq:boundonFu} by
\begin{equation*}
\begin{split} 
 \E_{\Q}[|\Mac_R^{t,s}(\Dia)|]e^{-cu^{\nu}t's^{-\nu-1}}
&\leq \E_{\Q}[|\Mac_R^{t}(\Dia)|]e^{-cu^{\nu}t's^{-\nu-1}} \leq Ct^{-\alpha}e^{-cu^{\nu}t's^{-\nu-1}},
\end{split}
\end{equation*}
where the last inequality follows from \eqref{eq:boundEMrsD} and the isomorphism \eqref{eq:isoloopK} upon assuming $tR\geq 1$. We now choose $u=(s^{\nu+2}/t')^{\frac1{\nu+1}}$ and then by combining the previous equation with \eqref{eq:MrtDTc} and \eqref{eq:boundonFu} we have if $s\leq ct'$ (which implies $u\leq s$) and $R\geq C/s$
\begin{equation*}
\begin{split} 
\Q\big( \Mac_R^t(\Dia)\setminus \Mac_R^s(\mathcal{L})\neq\emptyset\big)\leq \frac{\eps}{2}+Cs^{-\alpha}\exp\big(-c(t'/s)^{\frac1{\nu+1}}\big)\leq \eps
\end{split}
\end{equation*}
for $s=s(t',\eps)$ small enough. When $\Xt=\Capa$, \eqref{eq:macrocontainsbigloops} follows from \eqref{eq:macrocontainsbigloops} for $\Xt=\Dia$ and the observation that $\mathrm{cap}(K)\geq tR^{\nu}$ implies $\delta(K)\geq ctR$. Finally, when $\Xt=\Vol_r$, \eqref{eq:macrocontainsbigloops} follows from Lemma~\ref{lem:bigvolumeimpliesbigradius} (for $s'$ instead of $s$) and \eqref{eq:macrocontainsbigloops} for $\Xt=\Dia$ (and for $s'$ instead of $t$), by first taking $s'=s'(\eps,t)$ small enough and then $s=s(\eps,s')$ small enough. 
\end{proof}

\begin{Rk}
\label{rk:otherchoicesforE}
\begin{enumerate}[1)]
\item\label{rk:Vr-} As the attentive reader will have noticed, the above proofs also work when additionally allowing $\Xt$ or $\Yt$ to be equal to $\Vol_r^{\geq}$ (see \eqref{eq:defVR-}) in Theorem~\ref{the:equivmacro} and Corollary~\ref{cor:macrocontainsbigloops}. In other words, every cluster with large volume has large capacity, large diameter and contains a large loop, but its volume cannot be too large. 
\item Theorem~\ref{the:equivmacro}, Corollary~\ref{cor:macrocontainsbigloops}, Proposition~\ref{cor:crossingboxseccrit} and Proposition~\ref{pro:noclusterinboundary} are still valid when replacing $\P$ by $\P^{B(R)^{\mathsf{c}}}$ and $\Q$ by the law of $\mathcal{L}_{\tilde{B}(R)}$ under $\Q$. In particular, one can also deduce Theorem~\ref{the:Hausdorff} when replacing $\P$ by $\P^{B(\sigma(n))^{\mathsf{c}}}$. %
The proofs are similar, and in fact, many of them are easier when killing on $B(R)^{\mathsf{c}}$. For instance, one does need the arguments from Section~\ref{sec:clusterinside} or Lemma~\ref{lem:boundmukilled}. We leave the details to the reader, and only note that the bounds from Proposition~\ref{cor:crossingboxseccrit} for $a=0$ and when additionally killing on $\partial B(N)$ were already obtained below \eqref{eq:crossingboxseccrit} in view of the isomorphism \eqref{eq:isoloopK}.
\end{enumerate}
\end{Rk}

\section{Scaling limits on \texorpdfstring{$\Z^d$}{Zd}}
\label{sec:proofhausdorffdim}

In this section, we specialize to $\G=\Z^d$, $d\in\{3,4,5\}$, and prove Theorem~\ref{the:Hausdorff} and Corollary~\ref{cor:loopscaling}. We start by introducing some notation in relation to sizes and distances involving subsets of $\R^d$, and then proceed in Section~\ref{sec:subdistanced} to define the distance $\mathbf{d}$ alluded to in Section~\ref{subsec:scalinglimitintro}. We also state therein that, when endowed with $\mathbf{d}$, the space $\mathcal{S}$ from above \eqref{eq:renormalizedlevelset-intro} is a Polish space (the proof is included in Appendix~\ref{App:dmetric}), and exhibit a useful collection of compacts in that space, see Lemma~\ref{L: compact}. In Section~\ref{sec:setup}, we detail the exact discrete and metric graph setup we will use, and gather some renormalized consequences of the results of Sections~\ref{sec:boxtobox}--\ref{sec:equimacro}. We then use these ingredients in Section~\ref{subsec:pf-mainresults-crit}, together with Theorem~\ref{the:equivmacro} and the other discrete results from Sections~\ref{sec:boxtobox}--\ref{sec:equimacro}, to deduce tightness of the family $(\mathscr{E}_n^{\geq0})_{n\in{\N}}$ and finish the proofs of Theorem~\ref{the:Hausdorff} and Corollary~\ref{cor:loopscaling}. In particular, this entails the Hausdorff dimension lower bounds, for which we use energy methods.

We endow $\R^d$, $d\in{\{3,4,5\}}$, with the $\ell^{\infty}$-distance, that we simply denote by $d$, denote by $B(x,r)$ 
 the closed ball in $\R^d$ centred at $x$ and with radius $r$, and abbreviate $B(r)=B(0,r)$. In particular if $x\in{\Z^d}$, we have $\tilde{B}(x,r)=B(x,r)\cap\tilde{\Z}^d$, see Section~\ref{subsec:metric}. For $K\subset\R^d$, we also denote by $B(K,r)$ the union of the sets $B(x,r)$ over all $x\in{K}$.  We  write $d(x,K)=\inf_{y\in{K}}d(y,x)$ for any $K\subset\R^d$ and $x\in \R^d$,  with the convention $d(x,\emptyset)=\infty$. For $B \subset \R^d$ we write $\partial B$ to denote the topological boundary of $B$. If $K \subset \R^d$ we write $\text{dim}_H (K)$ and $\text{dim}_M (K)$ for its Hausdorff and Minkowski (or box-counting) dimension, the latter being defined when the upper and lower Minkowski dimensions coincide; see \cite[Chapter~4]{MR2604525}. For $K,K' \subset \R^d$ we write $d_H(K,K')$ for their Hausdorff distance relative to $d$. Recall this is the infimum over $\varepsilon >0$ such that $K$ is included in the closed $\varepsilon$-neighbourhood of $K'$, and vice versa. We denote by $\delta(K)=\sup_{x,y\in{K}}d(x,y)$ the diameter of $K$ and set $\delta(\emptyset)=0$. By definition of $d_H$, one has for arbitrary $K,K' \subset \R^d$ that
\begin{equation}
\label{eq:d_H-diam}
     |\delta(K)- \delta(K')| \leq 2 d_H(K,K')
\end{equation}
(the factor $2$ is necessary, for instance consider $K=\{0\}$ and $K'=B(0,1)$). In particular, the diameter is (Lipschitz-)continuous with respect to $d_H$. The choice of sup-norm to measure diameter and Hausdorff distance is a matter of convenience. All our results remain valid for other choices of (equivalent) norm, e.g.~the Euclidean one.

\subsection{The distance \texorpdfstring{$\mathbf{d}$}{d} and compactness}
\label{sec:subdistanced}\label{sec:dist}

Recall the set $\mathcal{S}$ from above \eqref{eq:renormalizedlevelset-intro}.  For an unordered family $\mathscr{E}=(K_i)_{i\in{I}}\in \mathcal{S} $, with a slight abuse of notation we will often write $K\in{\mathscr{E}}$, or say that $K$ is an element of $\mathscr{E}$, if $K\in{\{K_i,i\in{I}\}}$. Note that the reason we define $\mathcal{S}$ as a collection of unordered families $(K_i)_{i\in{I}}$ instead of sets $\{K_i,i\in{I}\}$ is that we want to allow the same set to appear multiple times in $\mathscr{E}$, that is we allow $K_i=K_j$ for $i\neq j$. This does not occur for the family $\mathscr{E}_n^{\geq 0}$ from \eqref{eq:renormalizedlevelset-intro} that we consider in this article, but can in theory occur for its limit from Theorem~\ref{the:Hausdorff}, and occurs with positive probability for the family $\mathscr{L}_n$ from \eqref{eq:deftildeEL}. More precisely, if $\mathscr{E}^{(n)}=(K_i^{(n)})_{i\in{I^{(n)}}}$ converges to $\mathscr{E}=(K_i)_{i\in{I}}$, the multiplicity of $K_i$ in $\mathscr{E}$ will indicate the maximal number of pairwise eventually distinct sequences $(i_n)_{n\in{\N}}$ such that $K_{i_n}^{(n)}$ converges to the same set $K_i$.

We write $|A|$ for the cardinality of a set $A$, and for $\mathscr{E}=(K_i)_{i\in{I}}\in \mathcal{S} $ and $\varepsilon > 0$, we write 
\begin{equation}
\label{eq:largeeps}
I_{>\varepsilon}\stackrel{\text{def.}}{=}\{i\in{I}:\,\delta(K_i)>\varepsilon\},\ \mathscr{E}_{> \varepsilon}\stackrel{\text{def.}}{=}(K_i)_{i\in{I_{>\varepsilon}}}\text{ and }N_{>\eps}(\mathscr{E})\stackrel{\text{def.}}{=}|I_{>\eps}|
\end{equation}
for the unordered family of all sets in $\mathscr{E}$ with diameter strictly larger than $\varepsilon$, and the number of such sets. By definition of $\mathcal{S}$, $\mathscr{E}\in \mathcal{S} $ implies that $N_{>\eps}(\mathscr{E})$ is finite for all $\varepsilon > 0$. For each $i\in{I}$, since $\delta(K_i)>0$ by definition of $\mathcal{S}$, we deduce that the multiplicity of $K_i$ in $\mathscr{E}$ is finite.

We now introduce a distance $\mathbf{d}$ on $\mathcal{S}$, which turns it into a Polish space, see Lemma~\ref{L:d-metric} below for a proof. For index sets $I,J$ we call $\pi\subset I\times J$ a matching of $(I,J)$ if for each $i\in{I}$,  there exists at most one $j \in J$ such that $(i,j) \in \pi$, and  for each $j\in{J}$, there exists at most one $i \in I$ such that $(i,j) \in \pi$. 
We denote by $I^{\pi}=\{i\in{I}:\, \not\exists j \in J: (i,j) \in \pi\}$ and $J^{\pi}=\{j\in{J}:\,\, \not\exists i \in I: (i,j) \in \pi\}$ the sets of respective unmatched indices. With a slight abuse of notation, for each $i\in{I\setminus I^{\pi}}$, we will often write $\pi(i)$ for the unique $j\in{J}$ such that $(i,j)\in{\pi}$.
If $\mathscr{E}=(K_i)_{i\in{I}}$ and $\mathscr{E}'=(K'_j)_{j\in{J}}$ are two unordered families in $\mathcal{S}$, we define 
\begin{equation}
\label{eq:defdistance}
\begin{split} 
{\mathbf d}(\mathscr{E},\mathscr{E}')=\inf_{\pi}\max\Big(   2 \sup_{(i,j)\in{\pi}} d_H(K_i,K'_j),\sup_{i\in{I^{\pi}}}{\delta}(K_i),\sup_{j\in{J^{\pi}}}{\delta}(K'_j)\Big),
\end{split}
\end{equation}
where the infimum is taken over all matchings $\pi$ of $(I,J)$, and we use the convention $\sup\emptyset=0$. Note that $\mathbf{d}(\mathscr{E},\mathscr{E}')$ does not depend on the specific choice of the representative families $(K_i)_{i\in{I}}$ and $(K'_j)_{j\in{J}}$ for the unordered families $\mathscr{E}$ and $\mathscr{E}'$. The case $\pi=\emptyset$ is valid in \eqref{eq:defdistance}, and therefore $\mathbf{d}(\mathscr{E},\mathscr{E}') \leq \sup_K \delta(K)$ with the supremum ranging over all $K$ in either $\mathscr{E}$ or $\mathscr{E}'$. The factor $2$ appearing in \eqref{eq:defdistance} comes from \eqref{eq:d_H-diam}, and is necessary for the triangle inequality to be satisfied, see \eqref{eq:2appears}. We stress that \eqref{eq:defdistance} is well-defined if say $\mathscr{E}'=\emptyset$, in which case there is only one possible matching $\pi=\emptyset$, and $\mathbf{d}(\mathscr{E},\emptyset)= \sup_{K\in \mathscr{E}} \delta (K)$;  
 It is instructive to consider  the $\mathbf{d}$-distance between simple families, for instance consisting of boxes of small radius. Let $B_n^x=(x+[-1/n,1/n]^d)\cap[-1,1]^d$. Then for all finite sets $A,A'\subset [-1,1]^d$, $\mathbf{d}((B_n^x)_{x\in{A}}, (B_n^y)_{y\in{A'}})\to 0$ as $n \to \infty$, i.e.~two such families of boxes are $\mathbf{d}$-close (even though $B_n^x$ and $B_n^y$ are macroscopically separated in Hausdorff sense for large enough $n$ whenever $x \neq y$). Indeed, both families have the same limit $\emptyset$ as $n \to \infty$ relative to $\mathbf{d}$. The distance $\mathbf{d}$ thus becomes increasingly oblivious to sets of small diameter, and only describes closeness of macroscopic sets.

Recall the notation $I_{>\eps}$ from \eqref{eq:largeeps}. We will frequently use the following simple consequence of \eqref{eq:defdistance} in the sequel: for any $\varepsilon > 0$, there is a matching  $\pi=\pi(\varepsilon)$ such that the maximum on the right-hand side of \eqref{eq:defdistance} is bounded by $\mathbf{d} + \varepsilon$ where $\mathbf{d}= \mathbf{d}(\mathscr{E},\mathscr{E}')$ for $\mathscr{E}=(K_i)_{i\in{I}}$ and $\mathscr{E}'=(K'_i)_{i\in{J}}$. Moreover, for any such matching $\pi$, all indices in $I_{> \mathbf{d}+ \varepsilon}$ and $J_{> \mathbf{d}+ \varepsilon}$ must be matched, i.e.~they have empty intersection with $I^{\pi}$ and $J^{\pi}$, respectively.  For, otherwise picking an unmatched~$i_0 \in I^{\pi} \cap I_{>  \mathbf{d}+ \varepsilon} $ would entail that $\sup_{i\in{I^{\pi}}}{\delta}(K_i) \geq \delta(K_{i_0}) > \mathbf{d}+ \varepsilon$, violating the choice of $\pi$ (a similar reasoning applies to any index in $ J^{\pi} \cap J_{>  \mathbf{d}+ \varepsilon}$).

\begin{Lemme}
    \label{L:d-metric} The pair $(\mathcal{S},\mathbf{d})$ defines a complete, separable, metric space.
\end{Lemme}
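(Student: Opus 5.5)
The proof splits into three parts: the metric axioms, separability, and completeness. Throughout, families are unordered. Every family has only finitely many members of diameter $>\eps$ for each $\eps>0$, and all members are closed subsets of $[-1,1]^d$ with positive diameter.

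\emph{Metric axioms.} Symmetry is immediate from \eqref{eq:defdistance}, since a matching of $(I,J)$ is also a matching of $(J,I)$. For definiteness, suppose $\mathbf{d}(\mathscr{E},\mathscr{E}')=0$, and fix $\eps>0$. Choose $\pi$ realising the infimum up to $\eps/4$. As observed after \eqref{eq:defdistance}, every index in $I_{>\eps/4}$ and in $J_{>\eps/4}$ is then matched, and matched sets are at Hausdorff distance at most $\eps/8$. By \eqref{eq:d_H-diam}, $\pi$ restricts to a bijection between $I_{>\eps/2}$ and a subset of $J_{>\eps/4}$, and vice versa. Since $N_{>\eps/4}$ is finite, a pigeonhole argument over a decreasing sequence $\eps_k\downarrow0$ gives two conclusions. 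First, each set $K$ of diameter $>\eps$ appears in $\mathscr{E}$ and $\mathscr{E}'$ with the same multiplicity. Second, Hausdorff limits of closed sets are the closed sets themselves, so the matched sets coincide in the limit. Hence $\mathscr{E}=\mathscr{E}'$.

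For the triangle inequality, take near-optimal matchings $\pi_1$ of $(I,J)$ and $\pi_2$ of $(J,L)$, and use the composition $\pi=\pi_2\circ\pi_1$, restricted to indices $i$ with $i$ matched by $\pi_1$ and $\pi_1(i)$ matched by $\pi_2$. Matched pairs under $\pi$ satisfy $2d_H\le 2d_H+2d_H$ by the triangle inequality for $d_H$. An index $i\in I^\pi$ is of one of two kinds:
\begin{itemize}
\item $i$ is unmatched by $\pi_1$, so $\delta(K_i)\le \mathbf{d}_1$;
\item $i$ is matched to some $j$ with $j$ unmatched by $\pi_2$, so $\delta(K_i)\le \delta(K'_j)+2d_H(K_i,K'_j)\le \mathbf{d}_2+\mathbf{d}_1$ by \eqref{eq:d_H-diam}.
\end{itemize}
This is exactly where the factor $2$ in \eqref{eq:defdistance} is needed. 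The indices in $L^\pi$ are handled symmetrically.

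\emph{Separability.} Take the countable collection of finite families of finite unions of closed dyadic cubes with rational corners inside $[-1,1]^d$. Given $\mathscr{E}$ and $\eps>0$, keep the finitely many $K_i$ with $\delta(K_i)>\eps$. Approximate each in $d_H$ within $\eps/2$ by such a cube union, whose diameter is positive. Leave every other set unmatched; each has diameter $\le\eps$. This gives $\mathbf{d}\le\eps$.

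\emph{Completeness, the main obstacle.} Let $(\mathscr{E}^{(n)})$ be Cauchy. Pass to a subsequence with $\mathbf{d}(\mathscr{E}^{(n)},\mathscr{E}^{(n+1)})<2^{-n}$, and fix matchings $\pi_n$ at cost $<2^{-n}$. The key step is to build the candidate limit by following chains $i_n\mapsto\pi_n(i_n)\mapsto\cdots$.
\begin{enumerate}
\item Consider any chain started at an index whose set has diameter $>C2^{-n}$. Because the tail sum $\sum_{m\ge n}2\cdot 2^{-m}$ is small, the chain never becomes unmatched: its diameter stays above $\sum_{m\ge n}2^{-m}$.
\item Along the chain, the sets form a $d_H$-Cauchy sequence in the complete space of nonempty compact subsets of $[-1,1]^d$, so they converge to a closed set $K$. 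By \eqref{eq:d_H-diam}, $\delta(K)>0$.
\item Let $\mathscr{E}$ be the family of all such limits, where two chains are identified if they eventually agree.
\end{enumerate}
The main care is bookkeeping: ensuring $\mathscr{E}\in\mathcal{S}$, and that $\mathbf{d}(\mathscr{E}^{(n)},\mathscr{E})\to0$.
\begin{itemize}
\item Finiteness of $N_{>\eps}(\mathscr{E})$: each limit of diameter $>\eps$ comes from a chain whose sets have diameter $>\eps/2$ at a large time $n$. Distinct chains occupy distinct indices at that time, so $N_{>\eps}(\mathscr{E})\le N_{>\eps/2}(\mathscr{E}^{(n)})<\infty$.
\item Convergence: match each large set of $\mathscr{E}^{(n)}$ to the limit of its chain. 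Unmatched sets on either side then have diameter $O(2^{-n})$, by the stability from step 1.
\end{itemize}
Finally, a Cauchy sequence with a convergent subsequence converges, which completes the proof.
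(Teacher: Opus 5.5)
Your proposal is correct and follows essentially the same route as the paper. The paper also pigeonholes near-optimal matchings on the finitely many large-diameter indices for definiteness, composes matchings for the triangle inequality (with the factor $2$ entering through \eqref{eq:d_H-diam}), follows chains along matchings of a fast Cauchy subsequence and uses $d_H$-completeness to get the limit, and uses finite families of rational box unions for separability.

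One point needs to be made explicit. A chain class that first appears at a late time $m$ starts from a set of diameter $O(2^{-m})$: its index is either unmatched by $\pi_{m-1}$, or it continues a set too small to start a chain. This fact is what gives a single time $n$ in your bound $N_{>\eps}(\mathscr{E})\le N_{>\eps/2}(\mathscr{E}^{(n)})$, and it also gives the $O(2^{-n})$ bound on unmatched limit sets. Forward stability from your step 1 does not cover these chains, since it only concerns chains already present at time $n$. The paper handles this through its contradiction argument showing $k\ge M_i$.
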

The proof of Lemma~\ref{L:d-metric} is elementary but somewhat lengthy, and we defer it to Appendix~\ref{App:dmetric}.

\begin{Rk}[The choice of distance $\mathbf{d}$]
\label{rk:choiceofd}
The definition \eqref{eq:defdistance} is inspired by \cite[Section 2.7]{zbMATH07114711} which deals with planar clusters (the boundaries of which are paths), see also \cite[Section~4.1.2]{MR4091511} for a definition closer to our distance $\mathbf{d}$. In fact the distance functions defined in \cite{zbMATH07114711,MR4091511} need to be  modified by a factor $2$ as in \eqref{eq:defdistance} in order for the triangle inequality to hold. Convergence for the distance $d_H^*$ from  \cite[(1.6)]{cai2026scalinglimitcriticalloop} is equivalent to convergence for $\mathbf{d}$, at least for families with the same cardinality and a uniform positive lower bound on their diameters, as studied therein.

A different choice following \cite{MR1712629} would be to replace $\mathbf{d}$ by  $\mathbf{d}^\prime$ and $\mathscr{E}$ by $\mathscr{E}_{>\eps}$ for a fixed $\eps$, where
$$\mathbf{d}^\prime(\mathscr{E},\mathscr{E}')\stackrel{\text{def.}}{=}\sup_{K\in{\mathscr{E}}}\inf_{K'\in{\mathscr{E}'}}d_H(K,K')\vee \sup_{K'\in{\mathscr{E}'}}\inf_{K\in{\mathscr{E}}}d_H(K,K').$$
Note that in order to ensure $\mathbf{d}'(\mathscr{E},\mathscr{E}')>0$ whenever $\mathscr{E}\neq\mathscr{E}'$, one actually needs to define the distance $\mathbf{d}'$ on sets $\mathscr{E}$ which are $d_H$-closed and consist of  non-empty compact subsets of $[-1,1]^d$, instead of unordered families $\mathscr{E}$ of compacts with positive diameter for $\mathbf{d}$. The two distances $\mathbf{d}$ and $\mathbf{d}^\prime$ are of quite different nature, and $\mathbf{d}$ is designed to express `tightness' of the number of macroscopic clusters, at the cost of forgetting about microscopic ones, whereas $\mathbf{d}^\prime$ has rather opposite virtues. For instance, consider the sequence $\mathscr{E}^{(n)}=\{[\tfrac1n,1],[\tfrac1{n+1},1],\dots,[\tfrac1{2n},1]\}$ (viewed as subsets of $ \R^d$, or even assuming $d=1$ for simplicity). This sequence is $\mathbf{d}^\prime$-Cauchy, as $\mathbf{d}^\prime(\mathscr{E}^{(n)}, \mathscr{E}^{(m)}) \leq 1/(n\wedge m)$, and in fact it simply converges to $\{[0,1] \}$. In contrast, the unordered family corresponding to $\mathscr{E}^{(n)}$ has no subsequential limit under $\mathbf{d}$, and it is not $\mathbf{d}$-Cauchy since $\mathbf{d}(\mathscr{E}^{(n)}, \mathscr{E}^{(m)}) \geq \tfrac12$ for all $m>n \geq 2$. %
\end{Rk}

The following family of compact sets in $(\mathcal{S}, \mathbf{d})$ will be useful for tightness arguments; see also Remark~\ref{R:compact} below as to their necessity. Recall from \eqref{eq:largeeps} that $N_{>s}(\mathscr{E})$ denotes the number of sets in the family $\mathscr{E}$ having diameter larger than $s$. Given two sequences $s= (s_k)_{k \geq 1}\subset[0,\infty)^{\N}$ and $M=(M_k)_{k \geq 1}\subset\N_0^{\N}$ with $s_k, M_k > 0$ for all $k$, we define 
\begin{equation}
\label{eq:compact-def}
\mathcal{K}_{s,M}= \{ \mathscr{E} \in \mathcal{S}: \forall k, \, N_{>s_k}(\mathscr{E}) \leq M_k  \} .
\end{equation}
Below, compactness refers to (sequential) compactness in the metric space $(\mathcal{S}, \mathbf{d})$, cf.~Lemma~\ref{L:d-metric}.
\begin{Lemme}\label{L: compact}
For all $s,M$ such that $s_k \downarrow 0$ as $k \to \infty$, the set $\mathcal{K}_{s,M} \subset \mathcal{S}$ is compact.
\end{Lemme}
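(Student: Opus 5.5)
Since $(\mathcal{S},\mathbf{d})$ is a metric space by Lemma~\ref{L:d-metric}, it suffices to show sequential compactness. I would fix a sequence $(\mathscr{E}^{(n)})_n$ in $\mathcal{K}_{s,M}$ and extract a subsequence converging in $\mathbf{d}$ to some element of $\mathcal{K}_{s,M}$. I would proceed by a diagonal argument over the scales $s_k$. For each fixed $k$, the subfamily $\mathscr{E}^{(n)}_{>s_k}$ has at most $M_k$ elements. Passing to a subsequence, I can therefore assume that $N_{>s_k}(\mathscr{E}^{(n)})=m_k$ is constant in $n$. I would then enumerate these sets as $K^{(n)}_{k,1},\dots,K^{(n)}_{k,m_k}$, choosing the enumeration consistently across $k$: since $s_k\downarrow0$, the sets of diameter $>s_k$ are among those of diameter $>s_{k+1}$, so I would list the newly appearing ones after the old ones. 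The space of nonempty closed subsets of $[-1,1]^d$ with $d_H$ is compact (Blaschke). Hence, after a further subsequence, each $K^{(n)}_{k,i}$ converges in $d_H$ to some closed set $K_{k,i}$. A diagonal extraction makes this hold simultaneously for all $k$ and $i$. The candidate limit $\mathscr{E}$ is the family of all these limit sets $K_{k,i}$ that have positive diameter, counted with multiplicity and each index used once.

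Next I would check that $\mathscr{E}\in\mathcal{K}_{s,M}$. By \eqref{eq:d_H-diam}, $\delta(K_{k,i})=\lim_n\delta(K^{(n)}_{k,i})\ge s_k$, so every limit set coming from level $k$ has diameter at least $s_k$. Conversely, any limit set with $\delta>s_j$ comes from sets with $\delta(K^{(n)})>s_j$ for all large $n$, and there are at most $M_j$ of those. Hence $N_{>s_j}(\mathscr{E})\le M_j$. In particular, the number of sets of diameter $>\eps$ is finite for every $\eps$, since $s_j\downarrow0$, and all sets are closed subsets of $[-1,1]^d$. So $\mathscr{E}\in\mathcal{S}$, and also $\mathscr{E}\in\mathcal{K}_{s,M}$.

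Finally I would prove $\mathbf{d}(\mathscr{E}^{(n)},\mathscr{E})\to0$. Given $\eta>0$, I pick $k$ with $s_k<\eta/2$. I match each $K^{(n)}_{k,i}$ with the corresponding $K_{k,i}$, for $i\le m_k$, in every case where $\delta(K_{k,i})>0$. All other sets are left unmatched: on the $\mathscr{E}^{(n)}$ side they have diameter $\le s_k<\eta$. On the $\mathscr{E}$ side, the unmatched sets come from levels $k'>k$ and so have diameter $\le s_k$. Matched pairs satisfy $2d_H\to0$, since there are finitely many of them. Level-$k$ limit sets with diameter $0$ cannot occur, because their diameter is at least $s_k>0$. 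This gives $\limsup_n\mathbf{d}(\mathscr{E}^{(n)},\mathscr{E})\le\eta$, and letting $\eta\downarrow0$ gives convergence.

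I expect the bookkeeping of a consistent enumeration across levels to be the main obstacle. The subtle point is that a set with diameter $>s_k$ at finite $n$ could have a limit diameter exactly $s_k$ or slightly less. This is handled by using the non-strict bound $\delta\ge s_k$ and noting that unmatched limit sets from later levels automatically have diameter $\le s_k$.
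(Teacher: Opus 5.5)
Your proof is correct and follows essentially the same route as the paper. Both arguments use Blaschke compactness of the closed subsets of $[-1,1]^d$ under $d_H$, a diagonal extraction along an enumeration compatible with the diameter scales $s_k$, continuity of $\delta$ via \eqref{eq:d_H-diam} to obtain $N_{>s_k}\le M_k$ in the limit, and the diagonal matching of the finitely many indices at a scale $s_k<\eta/2$. The only cosmetic difference is that you additionally stabilize the counts $N_{>s_k}(\mathscr{E}^{(n)})=m_k$ along the subsequence (for each $k$, eventually), which makes every limit set have diameter at least $s_k>0$; the paper instead orders by decreasing diameter, pads with empty sets, and discards limit sets of zero diameter.
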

\begin{proof}
Let $(\mathscr{E}^{(n)})_n=((K_i^{(n)})_{i\in{I^{(n)}}})_n \subset  \mathcal{K}_{s,M}^{\N}$ be a sequence. Since $\mathscr{E}^{(n)}$ is an unordered family, we may assume w.l.o.g.\ that $I^{(n)}=\{1,\dots,|I^{(n)}|\}$ if $|I^{(n)}|<\infty$, and $I^{(n)}=\N$ otherwise, and that the sets in $\mathscr{E}^{(n)}$ are labelled according to decreasing diameter, i.e.\ $\delta(K^{(n)}_{1}) \geq \delta(K^{(n)}_{2}) \geq \dots$. If $|I^{(n)}|<\infty$, we further define $K^{(n)}_i=\emptyset$ for all $i\geq |I^{(n)}|+1$. Since $[-1,1]^d$ is compact (for the sup-norm), the family of compact subsets of $[-1,1]^d$, equipped with $d_H$ is also compact. Combining this with a straightforward diagonal argument, we find a subsequence $\sigma: \mathbb{N} \to \mathbb{N}$ and for each $i\in{\N}$ a compact set $K_i \subset [-1,1]^d$ such that
\begin{equation}
\label{eq:compact-pf1}
d_H(K^{(\sigma(n)) }_i, K_i) \tend{n}{\infty}0\text{ for all }i\in{\N}.
\end{equation} 
By continuity of $\delta(\cdot)$ w.r.t.~$d_H$, see \eqref{eq:d_H-diam}, one has that $\delta(K_1) \geq \delta(K_2) \geq \dots$ so the sets $K_i$ are ordered according to decreasing diameter, which will be convenient below. We now define
\begin{equation}
\label{eq:compact-pf2}
I^{(\infty)}=\{ i\in{\N} : \delta(K_i) > 0 \}\text{ and }{\mathscr{E}}^{(\infty)} = (K_i)_{i\in{I}}
\end{equation}
obtained by removing all limiting sets having vanishing diameter and proceed to verify that
\begin{align}
& \label{eq:compact-pf3} {\mathscr{E}}^{(\infty)} \in  \mathcal{K}_{s,M},  \text{ and}\\
&\label{eq:compact-pf4} \mathbf{d}(\mathscr{E}^{(\sigma(n))}, {\mathscr{E}}^{(\infty)} )\tend{n}{\infty}0,
\end{align}
by which the proof is complete.

We start with \eqref{eq:compact-pf3}. By construction in \eqref{eq:compact-pf2} it is clear that each set $K \in {\mathscr{E}}^{(\infty)} $ satisfies $\delta(K)>0$, and $K \subset [-1,1]^d$ is compact. It remains to argue that $N_{>s_k}({\mathscr{E}}^{(\infty)} ) \leq M_k$ holds for all $k$, which will imply at once together with the preceding observations that ${\mathscr{E}}^{(\infty)} \in \mathcal{S}$, and moreover that \eqref{eq:compact-pf3} holds.  Since the $K_i$'s are ordered we need to argue that $\delta(K_{M_k+1})\leq s_k$. Combining \eqref{eq:compact-pf1} and \eqref{eq:d_H-diam}, one obtains that $\delta(K^{(\sigma(n))}_{M_k+1}) \to \delta(K_{M_k+1}) \leq s_k$ as $n \to \infty$, since $N_{>s_k}( \mathscr{E}^{(\sigma(n))}) \leq M_{k}$ for all $n$ by the assumption that  $\mathscr{E}^{(\sigma(n))} \in \mathcal{K}_{s,M}$, cf.~\eqref{eq:compact-def}. %

We now show \eqref{eq:compact-pf4}. Let $\varepsilon > 0$. We need to choose $\Cl{C:compact}(\varepsilon)$
such that $\mathbf{d}(\mathscr{E}^{(\sigma(n))}, {\mathscr{E}}^{(\infty)} ) \leq \varepsilon$ whenever $n \geq \Cr{C:compact}$. Owing to \eqref{eq:defdistance} it is enough to construct for such $n$ a matching $\pi=\pi(n, \varepsilon)$ comprising all sets $K_i^{(\sigma(n))}$ and $K_i$ having diameter larger than $\varepsilon$, in such a way that 
\begin{equation}
\label{eq:compact-pf5}
\sup_{(i,j) \in \pi}d_H(K_i^{(\sigma(n))}, K_j) \leq \varepsilon/2.
\end{equation}
We first pick $k=k(\varepsilon)$ such that $s_k < \varepsilon$, which exists by assumption on $(s_k)$. This defines $M_k$ and since $\mathscr{E}^{(\sigma(n))} \in \mathcal{K}_{s,M}$ for all $n$, we have by \eqref{eq:compact-def} and our choice of $k$ that for all $n$, $\delta (K_i^{(\sigma(n))})> \varepsilon$ implies $i \leq N_{>s_k}(\mathscr{E}^{(\sigma(n))}) \leq M_k$. Similarly by \eqref{eq:compact-pf3} we have $\delta (K_i)> \varepsilon$ implies $i \leq M_k$. We now pick $\Cr{C:compact}(\varepsilon)$ large enough so that the left-hand side of \eqref{eq:compact-pf1} is at most $\varepsilon/2$ for all $i \leq M_k$ and $n \geq \Cr{C:compact}(\varepsilon)$. Finally we choose $\pi=\pi(n, \varepsilon)$ the diagonal matching comprising all pairs $(i,i)$ for indices $i\leq M_k$. Combining the previous ingredients readily yields \eqref{eq:compact-pf5}.
\end{proof}

\begin{Rk}\label{R:compact}
    Note that the space $(\mathcal{S},\mathbf{d})$ itself is not compact. For instance, if $K_i$, $i\in{\N}$, is an infinite family of disjoint and connected compact subsets of $[-1,1]^d$,  each with diameter at least $1/2$%
, and $\mathscr{E}^{(n)}=\{K_i,i\leq n\}$, then $\mathbf{d}(\mathscr{E}^{(n)},\mathscr{E}^{(m)})\geq 1/2$ for all $n\neq m$. Indeed, any matching $\pi$ between $\{1,\dots,n\}$ and $\{1,\dots,m\}$ must leave at least $|n-m| >0$ points unmatched, each corresponding to a set with diameter at least $1/2$.
\end{Rk}

\subsection{Setup and consequences of Theorem~\ref{the:equivmacro} on \texorpdfstring{$\Z^d$}{Zd}}
\label{sec:setup}\label{sec:usefulresults}
In the following we consider $\Z^d$, $d\in{\{3,4,5\}}$, respectively the associated metric graph $\tilde{\Z}^d$, as the subset of $\R^d$ consisting of points with integer coordinates, respectively with at most one non-integer coordinate. More precisely since we chose the convention $\lambda_{\cdot,\cdot}=1/2$, for any edge $e=\{x,y\}$ of $\Z^d$, the length of the cable $I_e$ in $\tilde{\Z}^d$ is $1$ (=$1/(2\lambda_{x,y})$), and so $I_e$ can be identified with the set $\{x+t(y-x),\,t\in{[0,1]}\}\subset\R^d$.  Recall our standing low-dimensional assumption \eqref{eq:lowdim}, and in particular on account of  \cite[Theorem 1.5.4]{MR1117680} and \cite[Theorem~1.1]{cai2024onearm} we have that
\begin{equation}
\label{eq:Zdinmacro}
\begin{gathered}
\text{the graph }\Z^d, d\in{\{3,4,5\}},\text{ satisfies the conditions from}
\\\text{Theorem~\ref{the:equivmacro} with }\nu=d-2\text{ and }\alpha=d\text{ therein}.
\end{gathered}
\end{equation}
In the proofs below, we can thus apply the results of Sections~\ref{sec:boxtobox}--\ref{sec:equimacro} directly for $\G=\Z^d$.

In order to study scaling limits as in Theorem~\ref{the:Hausdorff}, it will often be useful to consider the following renormalized lattice
\begin{equation}
\label{eq:Znd}
\Z_n^d\stackrel{\text{def.}}{=}\tfrac1n\Z^d\text{ and }\tilde{\Z}_n^d\stackrel{\text{def.}}{=}\tfrac1n\tilde{\Z}^d\text{ for all }n\geq1.
\end{equation}
In particular, the clusters in the families $\mathscr{E}^{\geq 0}_{n}$ and $\mathscr{E}_n$ from \eqref{eq:renormalizedlevelset-intro} and \eqref{eq:deftildeEnC} live on $\Z_n^d$. We will in fact not work directly with these families, but with slightly different ones living on $\tilde{\Z}_n^d$, which we now introduce. For $n \geq 1$ and $x\in{\Z_n^d}$, let
\begin{equation}
\label{eq:renormalizedlevelset}
\begin{split} 
\mathscr{E}^{\geq 0}_{n,x} \stackrel{\text{def.}}{=}\text{ an enumeration of }\big\{\text{$\overline{K}$: %
$K$ is a cluster of }n^{-1}\big\{y\in{\tilde{B}(xn,n)}:\,\phi_y> 0\big\}\big\},
\end{split}
\end{equation}
be the normalized clusters of the metric graph Gaussian free field under $\P$,  where $\overline{K}$ denotes the topological closure of a set $K$. Note that since the Gaussian free field on each edge $I_e$ behaves like a Brownian bridge, the closures of the connected components of $\{\phi>0\}$ are simply the connected components of $E^{\geq 0}$ with positive diameter, see \cite[Lemma 4.1]{DrePreRod3} for a proof. We also define the normalized clusters of the metric graph loop soup under $\tilde{\P}$ as
\begin{equation}
\label{eq:renormalizedlevelsetloops}
\begin{split} 
\mathscr{E}_{n,x} \stackrel{\text{def.}}{=}\text{ an enumeration of }\big\{\text{$\overline{K}$: %
$K$ is a cluster of }n^{-1}\big\{y\in{\tilde{B}(xn,n)}:\,L_y>0\big\}\big\}.
\end{split}
\end{equation}
We similarly define $\mathscr{L}_{n,x}$ as an enumeration of $\text{trace}(\gamma)/n$ among all loops $\gamma\in{\mathcal{L}_{\tilde{B}(xn,n)}}$.

As we now briefly explain, 
\begin{equation}\label{eq:new-clusters-ok}
\begin{gathered}
\text{ the family $\mathscr{E}^{\geq 0}_{n}$ from \eqref{eq:renormalizedlevelset-intro}, resp.\ $\mathscr{E}_{n}$ from \eqref{eq:deftildeEnC}, has a version  at $\mathbf{d}$-distance from }\\\mathscr{E}_{n,0}^{\geq 0},\text{ resp.\ }\mathscr{E}_{n,0},
 \text{ deterministically bounded by $\tfrac2{n}$ for each $n \ge 1$.} \end{gathered}
\end{equation}
 The relevant version $\mathscr{E}_n^{\geq 0}$ is obtained by replacing the decision rule \eqref{eq:edge-perco} with the indicator function that $\varphi \geq 0$ along the entire cable corresponding to $e$, similarly as in Section~\ref{subsec:metric}. In view of \eqref{eq:defdistance}, to deduce the desired bound on the distance between the families  $\mathscr{E}_n^{\geq 0}$ and  $\mathscr{E}_{n,0}^{\geq 0}$, one simply matches each cluster $K$ in \eqref{eq:renormalizedlevelset-intro} with the corresponding cluster $K'$ from \eqref{eq:renormalizedlevelset} that contains it (the cluster in \eqref{eq:renormalizedlevelset-intro} corresponds to the set of edges entirely contained in the latter). Since $K'$ may only further extend along edges on the outer boundary of $K$ without reaching a vertex outside $K$, one has $d_H(K,K') \leq \tfrac1n$. The family \eqref{eq:renormalizedlevelset} further contains all the clusters of $n^{-1}\{ \varphi \geq 0\} \cap [-1,1]^d$ that do not contain an entire cable, and the diameter of such clusters is bounded by $\tfrac{2}{n}$. By \eqref{eq:defdistance}, the claim \eqref{eq:new-clusters-ok} for $\mathscr{E}_n^{\geq 0}$ follows. Concerning the relevant version of $\mathscr{E}_n$, it can be obtained by replacing the decision rule to open an edge $e$ mentioned above \eqref{eq:deftildeEnC} by $\{L_x>0\text{ for all }x\in{I_e}\}$, similarly as in Section~\ref{subsec:metric}.

In light of \eqref{eq:new-clusters-ok}, as well as a similar result for $\mathscr{L}_{n,0}$ and $\mathscr{L}_n$, it is enough to prove all items of Theorem~\ref{the:Hausdorff} for the clusters $\mathscr{E}_{n,0}^{\geq 0}$ from \eqref{eq:renormalizedlevelset} instead of $\mathscr{E}_{n}^{\geq 0}$ from \eqref{eq:renormalizedlevelset-intro}, and of Corollary~\ref{cor:loopscaling} for the clusters $\mathscr{E}_{n,0}$ and $\mathscr{L}_{n,0}$ from \eqref{eq:renormalizedlevelsetloops} and below instead of $\mathscr{E}_{n}$ from \eqref{eq:deftildeEnC} and $\mathscr{L}_{n}$ from \eqref{eq:deftildeEL}. Following the convention below \eqref{eq:newnotation}, we will thus from now on forget altogether about the discrete notation from the introduction when proving the results from Section~\ref{subsec:scalinglimitintro}. In particular, from now on we simply define $\mathscr{E}_{n}^{\geq 0}=\mathscr{E}_{n,0}^{\geq 0}$, $\mathscr{E}_{n}=\mathscr{E}_{n,0},$ and $\mathscr{L}_n=\mathscr{L}_{n,0}$.%
We also define the normalized sign clusters of the Gaussian free field as
\begin{equation}
\label{eq:renormalizedlevelset0}
\begin{split} 
\mathscr{E}^{0}_{n} \stackrel{\text{def.}}{=}\text{ an enumeration of }\big\{\text{$\overline{K}$: 
$K$ is a cluster of }n^{-1}\big\{y\in{\tilde{B}(n)}:\,|\phi_y|>0\big\}\big\}.
\end{split}
\end{equation}
The interest of the sign clusters is that they are directly related to the clusters for the loop soup via the isomorphism \eqref{eq:isoloopK}, which directly implies that
\begin{equation}
\label{eq:isoclusters}
\mathscr{E}^{0}_{n}\text{ has the same law under }\P\text{ as }\mathscr{E}_n\text{ under }\tilde{\P}.
\end{equation}
This isomorphism  will allow us to transfer results from the level sets $\mathscr{E}^{\geq 0}_n$ of the Gaussian free field to the clusters $\mathscr{E}_n$ of  the loop soup.

\medskip

Let us finally introduce the notions of renormalized volume and capacity  on $\Z^d$. For each $K\subset\R^d$ and $r>0$ define the renormalized volume at scale $r$ as
\begin{equation}
\label{eq:newrenorm}
\mathrm{vol}_r(K)\stackrel{\text{def.}}=|\{x\in{r\Z^d}:\,B(x,r)\cap K\neq\emptyset\}|.
\end{equation}
The definition \eqref{eq:newrenorm} corresponds to a renormalized version of the notion introduced in \eqref{eq:vol} when $\G=\Z^d$: recalling from below \eqref{eq:defLambdaL} that $\Lambda_L=L\Z^d$ for each $L\in{\N}$ therein, for each $n\in{\N}$, $K\subset \Z_n^d$ and $r\in{\tfrac1n\N}$, we have
\begin{equation}
\label{eq:cardagree}
\mathrm{vol}_r(K){=}|nK|_{nr},\text{ where $|\cdot|_r$ is defined as in \eqref{eq:vol} with $\G=\Z^d$ therein.}
\end{equation}

Let us also define a renormalized version of the capacity of a set from \eqref{eq:cap}:  for all compacts $K\subset\R^d$
\begin{equation}
\label{eq:capren}
\mathrm{cap}_n(K)\stackrel{\text{def.}}{=}n^{2-d}\mathrm{cap}_{\tilde{\Z}^d}(nK\cap \tilde{\Z}^d).
\end{equation}
The scaling in \eqref{eq:capren} is chosen so that if $B\subset\R^d$ is a fixed ball, then $\mathrm{cap}_n(B)$, $n\in{\N}$, is bounded away from $0$ and $\infty$, see \eqref{eq:capBallBd}, and will in fact converge to the Brownian capacity $\mathrm{cap}_{\R^d}(B)$ of that ball, as follows for instance from combining \cite[Lemma~2.1 and (2.4)]{10.1214/aop/1176989003} and \cite[(1.58)]{MR2932978}. Note that the discrete Green function in \cite[p.1877]{10.1214/aop/1176989003} is $d$ times our discrete Green function due to our choice of weights $\lambda_{\cdot,\cdot}=1/2$ for $\Z^d$, but that the continuous Green function in \cite[(0.6)]{10.1214/aop/1176989003} is also $d$ times the one used in \cite[p.54]{zbMATH03642454}, which is the reference we use here to define the Brownian capacity, and so we do not need an additional normalization constant. %

In fact in order to obtain \eqref{eq:CAP}, we will need to lower bound the Brownian capacity of a cluster $K$ in the scaling limit of $\mathscr{E}_n^{\geq0}$ by the limit of the random walk capacity of the corresponding clusters in $\mathscr{E}_n^{\geq0}$. More precisely, we will use the following result from  \cite{li2014lower}, which relies on continuous approximation results as can be found e.g.~in \cite{zbMATH03642454}. 

\begin{Lemme}
Let $K^{(n)}\subset\tilde{\Z}_n^d$, $n\in{\N}$, and $K\subset\R^d$ be compact sets such that $d_H(K^{(n)},K)\rightarrow0$ as $n\rightarrow\infty$, then
\begin{equation}
\label{eq:approcap}
\mathrm{cap}_{\R^d}(K)\geq  \limsup_{n\rightarrow\infty}\mathrm{cap}_n(K^{(n)}).
\end{equation}
\end{Lemme}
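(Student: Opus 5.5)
The plan is to go through the energy (variational) characterization of capacity and show that the normalized discrete equilibrium measures have subsequential weak limits supported on $K$ whose continuous energy is at most $\liminf$ of the discrete energies. This route needs no regularity of $K$ and no neighbourhood approximation. Set $L=\limsup_n \mathrm{cap}_n(K^{(n)})$. If $L=0$ there is nothing to prove, so pass to a subsequence along which $\mathrm{cap}_n(K^{(n)})\to L>0$. For the variational formula on $\tilde{\Z}^d$, note that the equilibrium measure $e_n$ of $nK^{(n)}$ (finite support in $\partial(nK^{(n)})$, total mass $\mathrm{cap}_{\tilde{\Z}^d}(nK^{(n)})$) satisfies $\sum_y g(x,y)e_n(y)=1$ on its support. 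Hence the probability measure $\bar e_n=e_n/e_n(\mathbb{1})$ has energy $\sum_{x,y}g(x,y)\bar e_n(x)\bar e_n(y)=\mathrm{cap}_{\tilde{\Z}^d}(nK^{(n)})^{-1}$.

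Next rescale. Let $\mu_n$ be the push-forward of $\bar e_n$ under $y\mapsto y/n$, a probability measure on $K^{(n)}$, and let $g_n(x,y)=n^{d-2}g(nx,ny)$. Then
\[
\mathcal{E}_n(\mu_n)\stackrel{\text{def.}}{=}\int\!\!\int g_n\,\mathrm{d}\mu_n\,\mathrm{d}\mu_n=\mathrm{cap}_n(K^{(n)})^{-1}.
\]
All the $K^{(n)}$ lie in a fixed compact set, since they converge to $K$ in $d_H$. By Prokhorov, along a further subsequence $\mu_n\to\mu$ weakly, and since $d_H(K^{(n)},K)\to0$ the limit $\mu$ is a probability measure supported on $K$. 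By the continuous energy characterization $\mathrm{cap}_{\R^d}(K)=\big(\inf_{\nu}\int\!\!\int G\,\mathrm{d}\nu\,\mathrm{d}\nu\big)^{-1}$, where the infimum runs over probability measures on $K$ and $G(x,y)=c_d|x-y|^{2-d}$ is the Green function normalized as in \cite{zbMATH03642454}, it then suffices to show
\[
\int\!\!\int G\,\mathrm{d}\mu\,\mathrm{d}\mu\le\liminf_n\mathcal{E}_n(\mu_n)=L^{-1}.
\]

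This lower semicontinuity is the key step. Fix $M<\infty$ and the bounded continuous truncated kernel $G_M=G\wedge M$. The Green function asymptotics on $\Z^d$ (extended to the cable graph, with the normalization matching as discussed after \eqref{eq:capren}) give $g_n(x,y)\ge(1-o(1))\,G(x,y)$ uniformly over $|x-y|\ge\eta$. For $|x-y|<\eta$ we use $g_n(x,y)\ge c\,(|x-y|\vee n^{-1})^{2-d}$, which is at least $M$ once $\eta$ is small and $n$ is large. So $g_n\ge(1-o(1))G_M$ everywhere on the relevant compact set. Since $\mu_n\otimes\mu_n\to\mu\otimes\mu$ weakly and $G_M$ is bounded and continuous, we get $\liminf_n\mathcal{E}_n(\mu_n)\ge\int\!\!\int G_M\,\mathrm{d}\mu\,\mathrm{d}\mu$. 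Letting $M\to\infty$, monotone convergence yields the energy bound above, and hence $\mathrm{cap}_{\R^d}(K)\ge L$, which is \eqref{eq:approcap}.

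I expect the main obstacle to be the uniform comparison $g_n\ge(1-o(1))G_M$. It requires the sharp local asymptotics $g(x,y)\sim c'_d|x-y|^{2-d}$, with the correct constant relative to the Brownian normalization; the factor-$d$ bookkeeping flagged after \eqref{eq:capren} matters here. It also requires checking that points on cables, not only vertices, obey the same asymptotics, which follows because $g$ is linear along cables between vertex values up to $O(1)$ corrections that vanish after scaling. The remaining ingredients are standard.
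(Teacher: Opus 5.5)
Your proof is correct, but it takes a different route from the paper's.

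The paper works with the primal (Dirichlet) variational principle. It builds a discrete test function $h_n$, an $\eta$-regularization of the Brownian hitting probability of the $2\delta$-neighbourhood of $K$ (killed on exiting a large ball). For large $n$ this function equals $1$ on $n\overline{K}^{(n)}$, which gives
\[
\mathcal{E}_{\Z^d}(h_n)\ge \mathrm{cap}_{\Z^d}(n\overline{K}^{(n)})\ge \mathrm{cap}_{\tilde{\Z}^d}(nK^{(n)}).
\]
The analytic step $n^{2-d}\mathcal{E}_{\Z^d}(h_n)\le \mathrm{cap}_{\R^d}(K)+\varepsilon$ is then taken directly from \cite[Proposition~2.4]{li2014lower}.

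You use the dual (Gauss) principle instead. You take the rescaled normalized equilibrium measures, whose energy is exactly $\mathrm{cap}_n(K^{(n)})^{-1}$, extract a weak limit carried by $K$, and prove lower semicontinuity of the energy. That last step rests on the one-sided kernel comparison $g_n\ge(1-o(1))(G\wedge M)$, obtained as follows:
\begin{itemize}
\item far from the diagonal, from the sharp asymptotics $g(x)\sim G(x)$ in \cite[Theorem~1.5.4]{MR1117680}, extended to cable points by linear interpolation along cables;
\item near the diagonal, from the crude lower bound \eqref{eq:GreenGK}.
\end{itemize}
Your normalization bookkeeping is consistent: with weights $1/2$, each coordinate of the walk has variance rate $1$, so the asymptotic constant is exactly that of $\frac12\Delta$-Brownian motion. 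That is the normalization used in \cite{zbMATH03642454}.

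What each approach buys:
\begin{itemize}
\item Your argument is more self-contained. It needs no outer neighbourhood of $K$, no killing, and no external convergence result for discrete Dirichlet energies, because the limiting measure lives on $K$ from the start. It also transfers verbatim to any kernels converging locally uniformly off the diagonal with a uniform lower bound near it.
\item The paper's argument avoids handling the cable-graph Green function at non-vertex points, since it works only with the vertex Dirichlet form on $\Z^d$. It also keeps the sharp constant implicit, hidden inside the cited estimate.
\end{itemize}

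One small point, which the paper shares: your identity "energy $=\mathrm{cap}^{-1}$" uses that the equilibrium measure exists with potential identically $1$ on its finite support. This holds for compacts with finitely many components, which is the class for which \eqref{eq:cap} is defined and which covers the clusters to which the lemma is applied.
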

\begin{proof}
 For any $\delta\geq \eta>0$ and $R,n\geq1$, we denote by $h:\R^d\rightarrow [0,1]$ the probability of entering the  Euclidean  $2\delta$-neighbourhood $B_{2}(K,{2\delta})$ of $K$ for the standard Brownian motion in $\R^d$ killed when exiting the Euclidean ball of radius $R$ around $0$, and by $h_n:\Z^d\rightarrow [0,\infty)$ an $\eta$-regularization of $h(\cdot/ n)$, see \cite[(2.1)-(2.3)]{li2014lower} with  $N= n$ and $r_U=R$ therein. Write $\overline{K}^{(n)}$ for the set of points $x$ in $\Z_n^d$ such that there exists an edge $e$ of $\Z^d$ starting in $nx$ with $nK^{(n)}\cap I_{e}\neq\emptyset$.  For $n$ large enough we have $\overline{K}^{(n)}\subset B_{2}(K,\delta)$ since $d_H(K^{(n)},K)\rightarrow0$, $\overline{K}^{(n)}\subset B(K^{(n)},1/n)$, and by norm equivalence, and hence $h_n(x)=1$ for all $x\in{n\cdot\overline{K}^{(n)}}$. By the variational formula from \cite[(1.62)]{MR2932978}, the proof of which has a version on infinite graphs, we thus have that $\mathcal{E}_{\Z^d}(h_n)\geq \mathrm{cap}_{\Z^d}( n\overline{K}^{(n)})$ for $n$ large enough, where $\mathcal{E}_{\Z^d}(f)=\tfrac14\sum_{x\sim y\in{\Z^d}}(f(x)-f(y))^2$ is the Dirichlet form associated to the simple random walk on $\Z^d$ with weights $1/2$. Furthermore for any $\eps>0$  by \cite[Proposition~2.4]{li2014lower}, there exist $\delta\in{(0,1)}$, $R\geq1$ and $\eta\in{(0,1)}$, all depending on $\eps$, such that $\mathrm{cap}_{\R^d}(K)\geq n^{2-d}\mathcal{E}_{\Z^d}(h_n)-\eps$ for $n$ large enough. Note that there is an additional factor $1/d$ therein, but with our different choice of normalization for $\mathcal{E}_{\Z^d}$ this factor becomes $1$ here, similarly as below \eqref{eq:capren}. Combining the two previous bounds with \eqref{eq:capren}, and noting that $\mathrm{cap}_{\Z^d}(n\cdot\overline{K}^{(n)})\geq \mathrm{cap}_{\tilde{\Z}^d}(n\cdot K^{(n)})$ by monotonicity of capacity, see  for instance \cite[(2.23)]{DrePreRod3} in our context, we can conclude. 
\end{proof}

\medskip

Recalling \eqref{eq:Zdinmacro}, we now collect some consequences of the results of Sections~\ref{sec:boxtobox}--\ref{sec:equimacro} that will be useful in the proofs of Theorem~\ref{the:Hausdorff} and Corollary~\ref{cor:loopscaling}.  
We start with a version of \eqref{eq:CROS} at the level of the metric graph, which is a renormalized version of Proposition~\ref{cor:crossingboxseccrit} in view of Remark~\ref{rk:othercrossingbounds},\ref{eq:crossing-infinitevol} and \eqref{eq:Zdinmacro}. Fix some $\eps\in{(0,1)}$. There exists $c=c(\eps)>0$ and $C<\infty$ such that for all $n\geq1$,  $x,y\in{\Z_n^d}$ and $R,r\in{n^{-1}\N}$ such that $d(x,y)\geq 2(R+r)$ and $B(x,r),B(y,R)\subset B(1-\eps)$,  we have
\begin{equation}
\label{eq:crossingboxseccritH}
\begin{split} 
cd(x,y)^{2-d}(Rr)^{\frac{d-2}2}\leq \P\big({B}(x,r)\leftrightarrow {B}(y,R)\text{ in }\mathscr{E}_n^{\geq 0}\big)\leq Cd(x,y)^{2-d}(Rr)^{\frac{d-2}2}.
\end{split}
\end{equation}
Moreover, \eqref{eq:crossingboxseccritH} remains true when replacing $\P$ by $\Q$ and $\mathscr{E}_n^{\geq0}$ by $\tfrac1n$ times the family of clusters of loops in $\mathcal{L}_{\tilde{B}(n)}$.%
As explained in Remark~\ref{rk:othercrossingbounds}, one can easily deduce from \eqref{eq:crossingboxseccritH} bounds on the  probability to cross an annulus, which were first obtained in \cite[Theorem~1.2]{cai2024onearm}: for all $\eps>0$, $x\in{\Z_n^d}$, $R\geq r$ with $r,R\in{n^{-1}\Z^d}$ and $B(x,R)\subset B(1-\eps)$
\begin{equation}
\label{eq:crossingseccrit}
\begin{split} 
c(\eps)(r/R)^{\frac{d-2}2}\leq \P\big({B}(x,r)\leftrightarrow \partial {B}(x,R)\text{ in }\mathscr{E}_n^{\geq 0}\big)\leq C(r/R)^{\frac{d-2}2}.
\end{split}
\end{equation}

To obtain the second part of \eqref{eq:CAP}, we will also need that the clusters of $\mathscr{E}_n^{\geq0}$ do not stay too close to $\partial [-1,1]^d$. In fact, after renormalizing Proposition~\ref{pro:noclusterinboundary}, we have for all  $t\in{(0,1]}$,
\begin{equation}
\label{eq:noclusterinboundaryH}
\begin{split}
\inf_{n\geq C/s}\P\big(\forall K\in{(\mathscr{E}_n^{\geq 0})_{>t}}, K\cap B(1-s)\neq \emptyset\big)\tend{s}{0}1.
\end{split}
\end{equation}

\subsection{Proof of Theorem~\ref{the:Hausdorff}}\label{subsec:pf-mainresults-crit}

We now prove our main theorems using the results of Section~\ref{sec:mainresults} in the special case $\G=\Z^d$ (with weights $1/2$), which corresponds to the choice of parameters in \eqref{eq:Zdinmacro}. As specified below \eqref{eq:new-clusters-ok}, we consider connected components for the Gaussian free field and loop soup on the cable system $\tilde{\Z}^d$, instead of on edges of $\Z^d$ as in Section~\ref{sec:mainresults}.

We now give a brief overview of how the results of Section~\ref{sec:usefulresults}
 will come into play in the proof of Theorem~\ref{the:Hausdorff}. The existence \eqref{eq:Hausdorffconvergence} of a subsequential scaling limit as well as the tightness \eqref{eq:TIGHT} of the number of connected components will follow from \eqref{eq:volumeMrs}.  The bound \eqref{eq:CAP} on the capacity as well as the upper bound on the Hausdorff dimension and the value of the Minkowski dimension in \eqref{eq:DIM} will follow from \eqref{eq:equivmacro}. The absence of clusters contained in $\partial[-1,1]^d$ in \eqref{eq:CAP} will follow from Proposition~\ref{pro:noclusterinboundary}. The bounds \eqref{eq:CROS} will follow by means of Proposition~\ref{cor:crossingboxseccrit}. To obtain the remaining lower bound on the Hausdorff dimension,  we will also need the following result.  We abbreviate $A,B\leftrightarrow C$ for the intersection of the events $A\leftrightarrow C$ and $B\leftrightarrow C$.

\begin{Lemme}
\label{lem:lowMink}
Assume $G=\Z^d$, $3 \leq d\leq5$ (cf.~\eqref{eq:Zdinmacro}). For all $R\geq 1$, $r,r'\in{[0,cR]}$, $x\in{G}$ and $y\in{B(x,\Cl[c]{c:lowMink}R)}$, one has that
\begin{equation}
\label{eq:lowMink}
\begin{split} 
\P\big(B(x,r),B(y,r')\leftrightarrow\partial B(x,R)\big)\leq C\left(\frac{\rr\rr'}{Rd(x,y)}\right)^{\frac\nu2}.
\end{split}
\end{equation}
\end{Lemme}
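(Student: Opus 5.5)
The plan is a multiscale decomposition followed by the box-to-box estimate with obstacles, Proposition~\ref{pro:boxtobox}. Set $\rho=d(x,y)$. We may assume $\rho\geq C(\rr\vee\rr')$; otherwise the event is contained in $\{B(x,r)\leftrightarrow\partial B(x,R)\}$, whose probability is at most $C(\rr/R)^{\nu/2}$ by \eqref{eq:crossingannuli}, and this is already of the claimed order since then $\rr'/\rho\geq c$.

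On the event in \eqref{eq:lowMink}, the two clusters of $B(x,r)$ and $B(y,r')$ inside the ball $B(x,2\rho)$ both reach $\partial B(x,2\rho)$, or else they meet before doing so. Let $\mathcal{C}_1$ be the cluster of $B(x,r)$ explored inside $\tilde B(x,\rho/4)$. Splitting on whether this exploration stops before $\partial B(x,\rho/4)$, one of two events holds:
\begin{itemize}
\item[(a)] $B(x,r)\leftrightarrow\partial B(x,\rho/4)$ and $B(y,r')\leftrightarrow\partial B(y,\rho/4)$, via clusters in disjoint balls, and in addition $B(x,\rho)\leftrightarrow\partial B(x,R)$;
\item[(b)] the single cluster of $B(x,r)$ reaches $B(y,r')$ and then $\partial B(x,R)$.
\end{itemize}

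For (a), the intended route is independence of the loop soups in the two disjoint small balls: the loops contained in $\tilde B(x,\rho/4)$ and those contained in $\tilde B(y,\rho/4)$ are independent, and so are the loops hitting $B(x,\rho)^{\mathsf c}$. The difficulty is that loops crossing between scales break this independence. That is handled by the same mechanism as in the proof of Proposition~\ref{pro:radimpliesOavoided}: one sums over the scales at which a crossing loop intervenes and uses Lemma~\ref{lem:nomacroloop} with $O=\emptyset$, i.e.\ \cite[(3.6)]{DrePreRod8}, to pay for any loop linking scales. Each crossing at ratio $\lambda$ then costs $\lambda^{-\nu}$, which is summable against the $\lambda^{\nu/2}$ gained by removing an arm.

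The three arm probabilities are estimated by \eqref{eq:crossingannuli}, using $\sup q<\infty$ for $\Z^d$ with $d\leq5$ via \eqref{eq:Zdinmacro}. They give
\[
(\rr/\rho)^{\nu/2}\,(\rr'/\rho)^{\nu/2}\,(\rho/R)^{\nu/2},
\]
which is exactly $(\rr\rr'/(R\rho))^{\nu/2}$. The last factor is the arm from scale $\rho$ to scale $R$, and $y\in B(x,\Cr{c:lowMink}R)$ guarantees $\rho\leq cR$.

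Case (b) I would not treat separately: a single cluster is a special case of (a) under the isomorphism, since the cluster of $B(y,r')$ must still produce an arm to $\partial B(y,\rho/4)$.

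The main obstacle is making the decoupling rigorous at the GFF level. An alternative to the loop-soup argument is a Markov-property argument: condition on the clusters of $B(x,r)$ and $B(y,r')$ explored within $B(x,2\rho)$, then apply the one-arm bound from $B(x,2\rho)$ to $\partial B(x,R)$ conditionally. This is where Proposition~\ref{pro:boxtobox}, with the explored clusters playing the role of the obstacle, or the martingale bound of Lemma~\ref{lem:mainboxtobox}, would supply $q(\rho)(\rho/R)^{\nu/2}$ uniformly over the boundary data. If that step fails, I would fall back on the loop-soup decomposition above and accept logarithmic sums, which converge because $\nu>0$.
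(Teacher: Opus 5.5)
Your reduction to $\rho\geq C(\rr\vee\rr')$ is fine, except that you must also treat the case $\rr\geq c\rho\gg\rr'$ using the arm of $B(y,r')$ instead. So is the observation that the event always produces three spatially separated arms, whose probabilities multiply to $(\rr\rr'/(R\rho))^{\nu/2}$. Passing to loop clusters via \eqref{eq:isoloopK} is also a legitimate route, different from the paper's; the paper itself notes the lemma could be deduced from the loop-soup result \cite[Lemma~4.3]{cai2024quasimultiplicativityregularitycriticalmetric}.

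The gap is that the only nontrivial step, the decoupling, is asserted by analogy, and the tool you cite does not cover it. To implement it you would let $P_x$, $P_y$ be the largest dyadic scales reached by the inner arms using only loops contained in $\tilde B(x,\rho/4)$, resp.\ $\tilde B(y,\rho/4)$, and let $K$ record how far loops hitting $\tilde B(x,2\rho)$ extend. The three restricted arm events and a crossing event are then measurable with respect to disjoint loop collections. The crossing event asks for three loops:
\begin{itemize}
\item one hitting $B(x,a)$ and leaving $B(x,\rho/4)$, with $a=2^{P_x+1}\rr$;
\item one hitting $B(y,b)$ and leaving $B(y,\rho/4)$, with $b=2^{P_y+1}\rr'$;
\item one hitting $B(x,2\rho)$ and leaving $B(x,L)$, with $L=2^{K-1}\rho$.
\end{itemize}
These can be one and the same loop. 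For that term you need the three-target bound $\mu(\text{loops hitting }B(x,a),\,B(y,b),\,B(x,L)^{\mathsf c})\leq C(ab)^{\nu}(\rho L)^{-\nu}$.

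Lemma~\ref{lem:nomacroloop} and \cite[(3.6)]{DrePreRod8} only control loops linking two sets, and a two-set bound for this term is not enough. It loses either $(b/\rho)^{\nu}$, which after summing over $P_y$ costs a factor $(\rho/\rr')^{\nu/2}$, or $(\rho/L)^{\nu}$, after which the sum over $K$ no longer produces $(\rho/R)^{\nu/2}$. This triple estimate, provable by the excursion decomposition in the proof of Lemma~\ref{lem:nomacroloop}, together with the bookkeeping of loop coincidences, is what your sketch is missing.

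Your Gaussian free field fallback fails as stated, because no conditional outer-arm bound uniform in the boundary data is available. Proposition~\ref{pro:boxtobox} concerns obstacles with zero Dirichlet data, whereas explored positive clusters carry positive boundary values. The bound \eqref{eq:mainboxtobox} is $CR^{-\nu/2}q(R)\mathcal{H}$, where $\mathcal{H}$ from \eqref{eq:defcalH} is a random harmonic average of the field. It is controlled only in expectation (Lemma~\ref{lem:HbarvsH}) and is positively correlated with the inner arms, and the two inner arms are not independent under $\P$ either.

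Handling exactly these correlations is the content of the paper's proof:
\begin{itemize}
\item Resampling $\eta_{\partial B(x,\rho/4)}$ and $\eta_{\partial B(y,\rho/4)}$ turns each inner arm, conditionally on the distant field, into a crossing at level $-\gamma^x-C\rho^{-\nu/2}$ for an independent copy of the field.
\item That crossing is bounded by $C\rr^{\nu/2}(|\gamma^x|+\rho^{-\nu/2})$ via Lemma~\ref{lem:connectionnegative}, which uses random interlacements.
\item Lemma~\ref{lem:mainboxtobox} then contributes the factor $\mathcal{H}$.
\item Resampling $\eta_{\tilde B(x,2\rho)}$ and applying H\"older's inequality decouples $\mathcal{H}$ from $\gamma^x,\gamma^y$, using $\E[\mathcal{H}]\leq C\rho^{\nu/2}$ and $\E[|\gamma^x|^k]^{1/k}\leq C\rho^{-\nu/2}$.
\end{itemize}
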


Lemma~\ref{lem:lowMink} could be deduced from \cite[Lemma~4.3]{cai2024quasimultiplicativityregularitycriticalmetric}, which relies mainly on properties of loop soups. We give a different proof in Appendix~\ref{App:lowMink}, using only properties of the  free field.

\medskip

We are now ready to give the:

\begin{proof}[Proof of Theorem~\ref{the:Hausdorff}]
We start with \eqref{eq:Hausdorffconvergence}. On account of Lemma~\ref{L:d-metric}, by Prokhorov's theorem, see for instance \cite[Theorem 5.1, p.59]{MR1700749}, it is enough to show that the laws (on $\mathcal{S}$) of $\mathscr{E}_n^{\geq0}$, $n \geq 1$, are tight. For each $\varepsilon >0$, we will show with $\mathcal{K}_{s,M}$ as defined in \eqref{eq:compact-def} that for suitable choices of sequences $s=(s_k)$ with $\lim_k s_k =0$ and $M=(M_k)$ one has
\begin{equation}
    \label{eq:tightness-pf1}
    \sup_n\P\big(\mathscr{E}_n^{\geq0} \notin \mathcal{K}_{s,M} \big) \leq \varepsilon.
\end{equation}
By Lemma~\ref{L: compact}, the bound \eqref{eq:tightness-pf1} establishes the desired tightness, thus yielding \eqref{eq:Hausdorffconvergence}. %

In order to show \eqref{eq:tightness-pf1}, recalling that we endowed the graph $\Z^d$ with the distance $d(x,y)=|x-y|_{\infty}$ (see \eqref{eq:Zdinmacro}), we have that $N_{>s}(\mathscr{E}_n^{\geq 0})\leq |\Mac_n^s(\Dia)|$, see  \eqref{eq:renormalizedlevelset}, \eqref{eq:defMRs} and %
below \eqref{eq:d_H-diam}. Hence letting $k \geq 1$, applying \eqref{eq:volumeMrs} with $\Xt=\Dia$, $R=n$, and $\eta=\eta_k(\varepsilon)$ suitably small such that the left-hand side of \eqref{eq:volumeMrs} is at least $1-\varepsilon 2^{-(k+1)}$, it follows that 
\begin{equation}
    \label{eq:tightness-pf2}
    \sup_{s\in (0,1]}\sup_{n\geq 1}\P\big(N_{>s}(\mathscr{E}_n^{\geq0}) \geq \eta_k^{-1}s^{-d} \big) \leq  \varepsilon  2^{-(k+1)}.
\end{equation}
Note that \eqref{eq:volumeMrs} actually additionally requires that $n\geq C/s$, but the case $n\leq C/s$ is trivial upon choosing $\eta_k$ small enough since $N_{>s}(\mathscr{E}_n^{\geq0})\leq Cn^{d}\leq Cs^{-d}$ in view of \eqref{eq:renormalizedlevelset}. Let $s_k > 0$ be any sequence converging to zero (for definiteness $s_k=2^{-k}$) and let $M_k= M_k(\varepsilon):= \eta_k^{-1}s_k^{-d}$. With these choices, one obtains that
$$
\P\big(\mathscr{E}_n^{\geq0} \notin \mathcal{K}_{s,M} \big) \stackrel{\eqref{eq:compact-def}}{\leq} \sum_{k \geq 0} \P\big(N_{>s_k}(\mathscr{E}_n^{\geq0}) \geq M_k \big) \stackrel{\eqref{eq:tightness-pf2}}{\leq} \varepsilon,
$$
thus implying \eqref{eq:tightness-pf1}. 

\medskip
From here onwards, we fix any $(\sigma,\P_{\sigma}^{\infty})$ such that \eqref{eq:Hausdorffconvergence} is satisfied, and let $ \mathscr{E}_{\infty}^{\geq0}=(K_i)_{i\in{I}}$ be an $\mathcal{S}$-valued random variable with law $\P_{\sigma}^{\infty}$. To ease notation, we identify the subsequence $\sigma$ with its range, an unbounded subset of $\N$, and simply write $n \in \sigma$ to refer to indices in the subsequence. Likewise, in writing `as $n \to \infty$' or `for large enough $n$' below, we always implicitly mean that $n$ varies within the given subsequence $\sigma$. For the remainder of the proof, we further assume $\mathscr{E}_{\infty}^{\geq0}$ to be coupled  with $\{ \mathscr{E}_{n}^{\geq0} : n \in \sigma\}$ in such a way that $\mathbf{d}(\mathscr{E}_{\infty}^{\geq0}, \mathscr{E}_{n}^{\geq0}) \rightarrow 0$ as $n \to \infty$. This coupling exists by application of Skorokhod's representation theorem, see for instance \cite[Theorem~5.7, p.70]{MR1700749}, which is in force by Lemma~\ref{L:d-metric}. With a slight abuse of notation, we still denote by $\P$ the measure under which this coupling is in force. We then abbreviate $\mathbf{d}_n=\mathbf{d}(\mathscr{E}_{\infty}^{\geq0},\mathscr{E}_{n}^{\geq0})$, and write $\mathscr{E}_{n}^{\geq0}=(K_i^{(n)})_{i\in{I_n}}$, for a countable set $I_n$. By \eqref{eq:defdistance}, for each $n\in \sigma$ there exists a matching $\pi_n$ of $(I,I_n)$ such that for each $i\in{I\setminus I^{\pi_n}}$, which includes each $i\in{I}$ with $\delta(K_i)\geq 2\mathbf{d}_n$, there exists $j_n^i\in{I_n}$ with $(i,j_n^i)\in{\pi_n}$ and $d_{H}(K_i,K_{j_n^i}^{(n)})\leq \mathbf{d}_n$. Up to relabeling the indices and the set $I_n$ (for simplicity, we suppose that $I_n, I \subset \mathbb{N}$ in the sequel), we will henceforth assume w.l.o.g.\ that $I\setminus I^{\pi_n}\subset I_n$ and $j_n^i=i$ for all $i\in{I\setminus I^{\pi_n}}$ to simplify notation. For all $i\in{I\setminus I_n}$, we further define $K_i^{(n)}=\emptyset$ for definiteness. Since $\delta(K_i)>0$ for all $i\in{I}$ by definition of $\mathcal{S}$ and $\mathbf{d}_n\rightarrow 0$, we deduce that
\begin{equation}
\label{eq:hausdorffgoesto0}
\begin{split}
d_H(K_i,K_i^{(n)})\rightarrow 0\text{ a.s.\ as }n (\in \sigma)\rightarrow\infty\text{ for all }i\in{I}.
\end{split}
\end{equation}

\medskip
We now show \eqref{eq:TIGHT}. Write $I_{\geq\eps}=\{i\in{I}:{\delta}(K_i)\geq \eps\}$, which is finite by definition of $\mathcal{S}$, and recall the definition of $\Mac_R^{\eps}(\Dia)$ from \eqref{eq:defMRs}. By \eqref{eq:hausdorffgoesto0} a.s.\ for $n$ large enough, we have $d_H(K_i,K_i^{(n)})\leq\eps/4$ for all $i\in{I_{\geq\eps}}$, which implies that the diameter of $K_i^{(n)}$ is at least $\eps/2$ by \eqref{eq:d_H-diam}. Hence $|I_{\geq\eps}|\leq |\Mac^{\eps/2}_{n}(\Dia)|$ for $n (\in \sigma)$ large enough. Similarly consider $i\in{I_n}$ such that $\delta(K_i^{(n)})\geq 2\eps$, then since $\mathbf{d}_n\rightarrow 0$ we have for $n$ large enough that each such $i$ is paired via $\pi_n$ to some point in $I$, which implies $i\in{I}$ in view of the convention above \eqref{eq:hausdorffgoesto0}, and similarly as before we deduce from \eqref{eq:hausdorffgoesto0} that $|\Mac^{2\eps}_{n}(\Dia)|\leq |I_{\geq\eps}|$ for $n$ large enough. Combining these observations with \eqref{eq:volumeMrs} for $\Xt=\Dia$ yields that \eqref{eq:volumeMrs} for $s=\eps$ still holds when replacing $\Mac_{R}^{\eps}(\Dia)$ by $I_{\geq \eps}$, and we can conclude.

\medskip

We now turn to the proof of \eqref{eq:DIM}, and start with the upper bounds. Note that \eqref{eq:TIGHT} implies that $I$ is infinite, and it is countable by definition of $\mathcal{S}$. In writing $ \mathscr{E}_{\infty}^{\geq0}=(K_i)_{i\in{I}}$, we assume without loss of generality from here on that $I=\N$, $0<{\delta}(K_{i+1})\leq{\delta}(K_i)$ for all $i\in{\N}$, and \eqref{eq:hausdorffgoesto0} is satisfied. Fix some $i\in{\N}$ and $\eps>0$, and let $t=t(\eps,i)>0$ be such that $\P({\delta}(K_i)\leq 2t)\leq \eps$. Using \eqref{eq:hausdorffgoesto0}, we deduce from this that 
    \begin{equation}
\label{eq:deltaKinlarge}
\begin{split}
\P(\delta(K_i^{(n)})\leq t)\leq 2\eps\text{ for }n \in \sigma \text{ large enough},
\end{split}
\end{equation}
or, in other words, that on an event having probability $1-2\varepsilon$, the set $nK_i^{(n)}$ belongs to $\Mac^t_n(\Dia)$. Combining this, \eqref{eq:defMRs} and \eqref{eq:equivmacro}, applied for $\Xt=\Dia$ and $\Yt=\Vol_{\eta n}$, we obtain that for all $s\leq c(\eps,i)$ we have 
        \begin{equation}
    \label{eq:boundsizeKjsigman}
\begin{split} 
    \P\big((\eta/s)^{-\alpha+\frac\nu2}\leq | nK_i^{(n)}|_{\eta n}\leq (s\eta)^{-\alpha+\frac\nu2}\big)\geq 1-Cs^{c}\text{ for $n \in \sigma$ large enough }.
\end{split}
\end{equation}
Let $n$ large enough so that \eqref{eq:boundsizeKjsigman} is satisfied and $d_H(K_i,K_i^{(n)})\leq \eta$, see \eqref{eq:hausdorffgoesto0}. It follows that on an event with probability at least $1-Cs^c$, one can cover $nK_i^{(n)}$ using at most $(s\eta)^{-1-\frac{d}2}$  $\ell^{\infty}$-balls of radius $\eta n$, and so with probability $1-Cs^c$, $M_{2\eta}(K_i)\leq C(s\eta)^{-1-\frac{d}2}$. Taking $\eta=2^{-(k-1)}$ and $s=k^{-C}$ for a large enough constant $C$, and using Borel-Cantelli lemma, we deduce that a.s.
\begin{equation}
\label{eq:MkKi}
M_{2^{-k}}(K_i)\leq Ck^C(2^{-k})^{-1-\frac{d}2}\text{ for }k\text{ large enough}.
\end{equation}
In particular, a.s.\ for $t$ large enough, choosing $k\in{\N}$ so that $2^{-k}\leq t\leq 2^{-(k-1)}$, we deduce that $M_{t}(K_i)\leq M_{2^{-k}}(K_i)\leq Ck^C(2^{-k})^{-1-\frac{d}2}\leq Ct^{-1-\frac{d}2}\ln(1/t)^{C}$.%
 This concludes the proof of the upper bound $\text{dim}_H(K_i)\leq \overline{\text{dim}}_M(K_i)\leq 1+\frac{d}{2}$, where $\overline{\text{dim}}_M(K_i)$ denotes the upper Minkowski dimension.

    Let us now turn to the lower bounds. One could obtain the lower bound on the lower Minkowski dimension by a similar reasoning as around \eqref{eq:MkKi}, but since $\underline{\text{dim}}_M(K_i)\geq \text{dim}_H(K_i) $, where $\underline{\text{dim}}_M(K_i)$ denotes the lower Minkowski dimension, it will be enough to prove that $\text{dim}_H(K_i)\geq 1+d/2$. %
We use an energy method. Still for given $i \in \mathbb{N}$ as above, fix an arbitrary point $x_i^{(n)}\in{K_i^{(n)}}$ and write 
\begin{equation}\label{eq:nu-i}
\nu_{i,\eta}^{(n)}(\cdot)\stackrel{\text{def.}}{=} \eta^{-\frac\nu2} \lambda\vert_{ V_i^{(n)} }, \qquad V_i^{(n)} = B(K_i^{(n)},\eta)\cap B(x_i^{(n)},\tfrac{\Cr{c:lowMink}t}{8}),
\end{equation}
with $t=t(\eps,i)$ as declared above \eqref{eq:deltaKinlarge}, $\Cr{c:lowMink}$ as in Lemma~\ref{lem:lowMink}, and where $\lambda$ denotes the Lebesgue measure on $[-1,1]^d$ and $\lambda\vert_S(\cdot)= \lambda(\cdot \cap S)$ its restriction to the set $S$. Note that for $\eta\leq c  =c(\varepsilon, i)$, the Lebesgue measure of the ball $B(x,\eta/2)$ is larger than $c\eta^{\alpha}$ for each $x\in{(\frac1{n}\Lambda_{\eta n/2})\cap B(x_i^{(n)},\tfrac{\Cr{c:lowMink}t}{16})}$ such that $B(x,\eta/2)\cap K_i^{(n)}\neq\emptyset$, and moreover, %
a positive proportion of these balls are disjoints,  and they are all contained in the set $V_i^{(n)}$ from \eqref{eq:nu-i}. Then using \eqref{eq:boundsizeKjsigman} (with $\eta/2$ in place of $\eta$), we obtain in view of  \eqref{eq:nu-i} that for $\eta\leq c$,
\begin{equation}
\label{eq:nunetapos}
\begin{split}
\P\big(c\leq \nu_{i,\eta}^{(n)}([-2,2]^d)\leq C\big)\geq 1-3\eps\text{ for $n$ large enough},
\end{split}
\end{equation}
where $c=c(\eps,i)>0$ and $C=C(\eps,i)<\infty)$. Let $A_n =\Lambda_{\eta n}\cap B(n x_i^{(n)},\tfrac{\Cr{c:lowMink}tn}{4})$. Note that by \eqref{eq:defLambdaL} for $\eta\leq c$ and on the event $\delta(K_i^{(n)})\geq t$, the set $V_i^{(n)}$ is included in the union of the balls $B({x}/{n},\eta)$ with $x$ ranging over all $x\in{A_n}$ such that $B(x,2\eta n)\leftrightarrow \partial B(nx_i^{(n)},tn/2)$. 
Hence, writing 
$$I_{\eps}(\nu,\mu)\stackrel{\text{def.}}{=}\iint|x-y|_{\infty}^{-\alpha+\frac\nu2+\eps}\nu(\mathrm{d}x)\mu(\mathrm{d}y) \quad (\in [0,\infty])$$
 for any measures $\nu,\mu$ on $[-1,1]^d$, and $I_{\eps}(\nu,\nu)=I_{\eps}(\nu)$, we deduce by subadditivity that
\begin{equation}
\label{eq:mathcalE1}
\begin{split}
 &\E\big[I_{\eps}(\nu_{i,\eta}^{(n)})1\big\{{\delta}(K_i^{(n)})\geq t\big\}\big]
    \\&\leq \eta^{-\nu}\E\Big[\sum_{x,y\in{A_n}}I_{\eps}\big(\lambda_x,\lambda_y\big)1\big\{B(x,2\eta n),B(y,2\eta n)\leftrightarrow \partial B(n x_i^{(n)},{tn/2})\big\}\Big],
\end{split}
\end{equation}
where $\lambda_x$ denotes the restriction of the Lebesgue measure $\lambda$ to %
 $B({x}/{n},\eta)$. Let $A'_n=\Lambda_{\eta n}\cap [-2n,2n]^d$. We further have for all $x,y\in{A'_n}$ with $|x-y|_{\infty}\leq 3\eta n$, 
$$I_{\eps}\big(\lambda_{x},\lambda_{y}\big)\leq I_{\eps}\big(\lambda_{|B(x/n,4\eta)}\big) \leq C \lambda (B(x/n,4\eta)) \int_0^{8 \eta} r^{\alpha-1}r^{-\alpha+\frac\nu2+\eps} \mathrm{d}r  \leq C\eta^{\alpha+\frac\nu2+\eps}.$$ 
 Moreover for all $x,y\in{A'_n}$ with $|x-y|_{\infty}\geq 3\eta n$, since $|z-z'|_{\infty}\geq c|x-y|_{\infty}/ n$ for all $z\in{B(x/ n,\eta)}$ and $z'\in{B(y/ n,\eta)}$, we obtain that
\begin{equation*}
\begin{split}
I_{\eps}\big(\lambda_{x},\lambda_{y}\big)\leq C\eta^{2\alpha}\Big(\frac{|x-y|_{\infty}}{ n}\Big)^{-\alpha+\frac\nu2+\eps}.
\end{split}
\end{equation*}
Notice also that for all $x,y\in{A_n}$ the event $B(y,\eta n)\leftrightarrow \partial B( nx_i^{(n)},t n/2)$ implies $B(y,\eta n)\leftrightarrow \partial B(x,t n/4)$ since $d(x,nx_i^{(n)}) \leq \tfrac{tn}{4}$ and $d(x,y)\leq \tfrac{tn}{4}-\eta n$ upon assuming w.l.o.g.\ that $\Cr{c:lowMink}\leq 1/4$ and $\eta\leq t/8$. Combining these observations with \eqref{eq:defLambdaL} and noticing that $A_n\subset A'_n$ by definition, it follows that the last line of \eqref{eq:mathcalE1} is bounded by
    \begin{equation}
\label{eq:mathcalE2}
\begin{split} 
  &C\eta^{-\nu}  %
  \Big(\sum_{x\in{A'_n}}\eta^{\alpha+\frac\nu2+\eps}\P\big(B(x,\eta n)\leftrightarrow \partial B(x,t n/4)\big)
    \\&\quad+ \eta^{2\alpha}\sum_{x\neq y\in{A'_n}} \Big(\frac{|x-y|_{\infty}}{ n}\Big)^{-\alpha+\frac\nu2+\eps}\P\big(B(x,\eta n),B(y,\eta n)\leftrightarrow \partial B(x,t n/4)\big)\Big)
    \\&\leq  C \Big( \eta^{\alpha+\eps}|A'_n|+\eta^{2\alpha}
    \sum_{x\neq y\in{A'_n}} n^{\alpha-\eps}d(x,y)^{-\alpha+\eps} \Big)
\end{split}
\end{equation}
 for a constant $C=C(i,\eps)$ and $\eta \leq c= c(i\varepsilon)$, 
 where the last inequality follows by %
 combining the following estimates on relevant connection probabilities: Lemma~\ref{lem:lowMink}, which implies on account of \eqref{eq:Zdinmacro} that the quantity $\P\big(B(x,\eta n),B(y,\eta n)\leftrightarrow \partial B(x,t n/4)\big)$ is bounded by $C\eta^\nu n^{\nu/2} d(x,y)^{-\nu/2}$ for all $\eta\leq c$ when $x \neq y$, and Proposition~\ref{cor:crossingboxseccrit}, which implies that the same quantity is bounded by $C \eta^{\nu/2}$ when $x=y$. To see this last bound, one chooses  $N = Cn$ in \eqref{eq:crossingboxseccrit} and paves $\partial B(x,t n/4)$ with a constant number of boxes of radius $t n/16$, all at distance exceeding $t n/8$ from $B(x,\eta n)$ for small enough $\eta$, and applies a union bound using \eqref{eq:crossingboxseccrit}. Since $|A'_n| \leq C\eta^{-\alpha}$ by \eqref{eq:defLambdaL}, the term in parenthesis on the last line of \eqref{eq:mathcalE2} is further bounded by 
\begin{equation}
\label{eq:mathcalE3}
\begin{split}
 C \eta^{\varepsilon}+ C\eta^{2\alpha} \sum_{k \geq 0} \sum_{\substack{ x\neq y\in{A'_n}: \\ 2^{k}  \leq \frac{ d(x,y) }{c\eta n} < 2^{k+1}  }}  (2^k \eta )^{-\alpha+\eps} \leq  C\eta^{\varepsilon}+ C \eta^{\alpha + \varepsilon}|A_n'| \sum_{k = 0}^{\log_2 (C\eta^{-1})} 2^{k \alpha} 2^{k( \varepsilon-\alpha)} \leq C' 
\end{split}
\end{equation} 
using that the last sum in \eqref{eq:mathcalE3} is bounded by $C\eta^{-\varepsilon}$.
Combining \eqref{eq:defLambdaL}, \eqref{eq:mathcalE1}, \eqref{eq:mathcalE2} and \eqref{eq:mathcalE3}, we finally deduce that
\begin{equation}
\label{eq:matchalE4}
\begin{split}
    \E\big[I_{\eps}(\nu_{i,\eta}^{(n)})1\big\{{\delta}(K_i^{(n)})\geq t\big\}\big]\leq C.
\end{split}
\end{equation}
Combining \eqref{eq:deltaKinlarge}, \eqref{eq:nunetapos}, \eqref{eq:matchalE4} and Markov’s inequality one can find $c=c(\eps,i)>0$ and $C=C(\eps,i)>0$ such that
\begin{equation}
\label{eq:limsupnuietan}
\begin{split}
\P&\big(\limsup\{c\leq \nu_{i,\eta}^{(n)}([-2,2]^d)\leq C,I_{\eps}(\nu_{i,\eta}^{(n)})\leq C\}\big)
\\&\geq \limsup\P\big(c\leq \nu_{i,\eta}^{(n)}([-2,2]^d)\leq C,I_{\eps}(\nu_{i,\eta}^{(n)})\leq C\big)\geq 1-6\eps,
\end{split}
\end{equation}
where the above $\limsup$ are limits as $n\rightarrow\infty$, $n\in{\sigma}$, and then $\eta\rightarrow0$. Let $E$ be the event appearing on the first line of \eqref{eq:limsupnuietan}. On the event $E$, there exist (random) sequences $(n_{\ell})_{\ell\in{\N}}$ (part of $\sigma$) and $(\eta_k)_{k\in{\N}}$ so that $c\leq \nu_{i,\eta_k}^{(n_{\ell})}([-2,2]^d)\leq C$ and $I_{\eps}(\nu_{i,\eta_k}^{(n_{\ell})})\leq C$ for all $k,\ell\in{\N}$. Let $\nu_i$ be any subsequential weak limit of the measures $\nu_{i,\eta_k}^{(n_{\ell})}$ as $\ell \rightarrow\infty$ and then $k\rightarrow\infty$, which exists on $E$. With a slight abuse of notation, we still denote by $(\eta_k)$ and $(n_{\ell})$ the subsequence along which $\nu_{i,\eta_k}^{(n_{\ell})}$ converge to $\nu_i$.  Then on the event $E$, $\nu_i$ satisfies 
$$
\iint |x-y|^{-\alpha+\frac\nu2+\eps}1_{|x-y|\geq \delta}\, \nu_i(\mathrm{d}x)\nu_i(\mathrm{d}y)\leq C
$$
 for any $\delta>0$,  which follows readily by combining weak convergence of the product measures (along a chosen subsequence) with a straightforward approximation argument, thus replacing the integrand by a suitable bounded continuous function and applying dominated convergence.  Letting $\delta\rightarrow0$ we thus find that $I_{\eps}(\nu_i)<\infty$ on the event $E$ event with probability $1-4\eps$. Further note that on the event $E$, for any $x\in{K_i^{\mathsf{c}}}$ and any $\delta>0$ small enough so that $K_i\cap B(x,3\delta)=\emptyset$, $\nu_i((x-\delta,x+\delta)^d)\leq \liminf\nu_{i,\eta_k}^{(n_\ell)}((x-\delta,x+\delta)^d)=0$ for $\delta$ small enough on account of \eqref{eq:nu-i} and since  $B(x,2\delta)\cap K_i^{(n)}=\emptyset$  for all $n$ large enough by \eqref{eq:hausdorffgoesto0}; and that $c\leq \nu_i([-2,2]^d)=\lim\nu_{i,\eta_k}^{(n_\ell)}([-2,2]^d)\leq C$ since $[-2,2]^d$ is a boundary set (in $[-2,2]^d$, where all our measures are supported), where the limits are first taken as $\ell \rightarrow\infty$ and then as $k\rightarrow\infty$. Therefore, on the event $E$, $\nu_i$ is a positive and finite  measure supported on $K_i$ with $I_{\eps}(\nu_i)<\infty$. Hence in view of \eqref{eq:limsupnuietan}, we have $\text{dim}_H(K_i)\geq 1+\frac{d}{2}-\eps$ with probability at least $1-6\eps$  by application e.g.~of \cite[Theorem 4.27]{MR2604525}, which finishes the proof of \eqref{eq:DIM} upon letting $\eps\rightarrow0$. 

\medskip

Let us now turn to \eqref{eq:CAP}. %
For any $\eps,\eta>0$, in view of \eqref{eq:approcap} and \eqref{eq:hausdorffgoesto0} we have that 
\begin{equation}\label{eq:caplb-pf}
\P(\mathrm{cap}_{\R^d}(K_i)\geq \mathrm{cap}_n(K_i^{(n)})-\eta)\geq 1-\eps, \text{ for $n$ large enough,}
\end{equation}
It thus follows by combining \eqref{eq:capren} and \eqref{eq:caplb-pf} with \eqref{eq:equivmacro} for $\Xt=\Dia$ and $\Yt=\Capa$ and using \eqref{eq:deltaKinlarge} that there exists $s=s(\eps,i)$ such that $\P(\mathrm{cap}_{\R^d}(K_i)\geq s -\eta)\geq 1-4\eps$ for $n$ large enough. Letting $\eta\rightarrow 0$ and then $\eps\rightarrow0$, we conclude that $\mathrm{cap}_{\R^d}(K_i) >0$ a.s.

We finally show that $K_i\cap(-1,1)^d\neq\emptyset$ a.s. Fix $i\in{\N}$ and $\eps>0$, and let $t=t(\eps,i)$ be as above \eqref{eq:deltaKinlarge}. By \eqref{eq:noclusterinboundaryH}, there exists $s=s(\eps,t)\in{(0,1)}$ such that for all $n\geq C/s$, with probability at least $1-\eps$, every $K\in{(\mathscr{E}_n^{\geq0})_{>t}}$ intersects $B(1-s)$. Combining with \eqref{eq:deltaKinlarge}, the event $F_n=\{K_i^{(n)}\cap B(1-s)\neq\emptyset\}$ has probability at least $1-3\eps$ for all $n\in{\sigma}$ large enough, and thus $\P(\limsup_n F_n)\geq 1-3\eps$. On the event $\limsup_nF_n$, there exist a (random) subsequence $\sigma'$ of $\sigma$ and $x_n\in{K_i^{(n)}\cap B(1-s)}$, $n\in{\sigma'}$; by compactness of $B(1-s)$ we may assume that $x_n\to x\in{B(1-s)}$, and since $d_H(K_i^{(n)},K_i)\to0$ a.s.\ by \eqref{eq:hausdorffgoesto0} and $K_i$ is closed, we obtain $x\in{K_i}$. Hence $\P(K_i\cap(-1,1)^d\neq\emptyset)\geq 1-3\eps$, and we conclude by letting $\eps\to0$.

\medskip

Finally, it remains to prove \eqref{eq:CROS}. Let $x,y\in{[-1,1]^d}$ and $r,R>0$ be such that $B(x,r)\cap B(y,R)=\emptyset$, which implies that there exists $t\in  (0 ,1]$ such that $d(x,y)\geq r+R+2t$. Fix such $t$. If we have $B(x,r)\leftrightarrow B(y,R)$ in $\mathscr{E}_{\infty}^{\geq 0}$, then by \eqref{eq:hausdorffgoesto0}, except on an event of probability at most $\varepsilon$, for all $n (\in \sigma)$ with $n \ge C(\varepsilon)$ we have $B(x,r+t)\leftrightarrow B(y,R+t)$ in $\mathscr{E}_{n}^{\geq 0}$.  We now apply Proposition~\ref{cor:crossingboxseccrit}. Combining the previous observation with the upper bound from \eqref{eq:crossingboxseccrit} obtained after letting $N \uparrow \infty$, for $x,y,r$ and $R$ appearing therein replaced by $xn$, $yn$, $(r+t)n$ and $(R+t)n$ for any \emph{fixed}  value of $n \ge C(\varepsilon)$ (with $n$ belonging to the subsequence $\sigma$), we obtain the upper bound in \eqref{eq:CROS} after letting $\eps\rightarrow0$.

Similarly, up to an event of probability at most $\varepsilon$, for large enough $n (\in \sigma)$, if $B(x,rn/2)\leftrightarrow B(y,Rn/2)$ in $\mathscr{E}_{n}^{\geq 0}$, then $B(x,r)\leftrightarrow B(y,R)$ in $\mathscr{E}_{\infty}^{\geq 0}$. Moreover, the assumptions on $B(x,r)$ and $B(y,R)$ above \eqref{eq:CROS} imply the ones relevant to the application of Proposition~\ref{cor:crossingboxseccrit}, with $N :=n$. In much the same way as for the upper bound, the previous inclusion and the lower bound in  \eqref{eq:crossingboxseccrit} thus yield the lower bound in \eqref{eq:CROS}. 
\end{proof}

\begin{Rk}[Extension to sign clusters]\label{R:ext-signclusters}
Combining the symmetry of the law $\P$ of the free field under $\varphi \mapsto -\varphi$ with the results of Theorem~\ref{the:Hausdorff} and using straightforward adaptations of some of the arguments in the above proof, one readily deduces that all the conclusions of Theorem~\ref{the:Hausdorff} continue to hold with $\mathscr{E}_n^{\geq 0}$ in \eqref{eq:renormalizedlevelset} replaced by the collection of sign clusters $\mathscr{E}_n^{0}$ from \eqref{eq:renormalizedlevelset0}.
\end{Rk}

The interest of Remark~\ref{R:ext-signclusters} is that via the isomorphism \eqref{eq:isoclusters}, it can be rephrased in terms of the loop soup clusters on $\tilde{\Z}^d$, thus letting us finish the

\begin{proof}[Proof of  Corollary~\ref{cor:loopscaling}]
By the isomorphism theorem \eqref{eq:isoclusters}, symmetry of the Gaussian free field, and \eqref{eq:tightness-pf1}, the clusters $(\mathscr{E}_n)_n$ are tight in $\mathcal{S}$, yielding \eqref{eq:Hausdorffconvergenceloops}. The fact that the family  $(K_i)_{i\in{I}}$ satisfies \eqref{eq:TIGHT}--\eqref{eq:CROS} follows from \eqref{eq:isoclusters} and Remark~\ref{R:ext-signclusters}. It thus remains to prove \eqref{eq:loopbuildclusters}, that is that each loop cluster in the scaling limit is the closure of the union of scaling limits of loops. We first prove that each loop cluster in the scaling limit contains the scaling limit of some loop in $\mathscr{L}_{n}$, that is for any $\sigma$ so that \eqref{eq:Hausdorffconvergenceloops} holds,
\begin{equation}
\label{eq:loopbuildclusters1}
\tilde{\P}_\sigma^{\infty}\text{ a.s.\ for each }i\in{I}\text{ there exists }j\in{J}\text{ such that }K'_j\subset K_i.
\end{equation}
Fix some $\eps>0$ and $i\in{I}$, then there exists $t=t(\eps,i)>0$ such that $\tilde{\P}_{\sigma}^{\infty}(\delta(K_i)>2t)\geq 1-\eps$. In view of \eqref{eq:Hausdorffconvergenceloops} and by a reasoning similar to \eqref{eq:hausdorffgoesto0} under an appropriate coupling $\eta_n:=\mathbf{d}_2((\mathscr{E}_{n},\mathscr{L}_{n}),(\mathscr{E},\mathscr{L}))\rightarrow0$ a.s.\ as $n\rightarrow\infty$, $n\in{\sigma}$, and so there exists $K_i^{(n)}\in{\mathscr{E}_{n}}$, $n\in{\sigma}$, so that  $d_H(K_i^{(n)},K_i)\leq \eta_n$  for $n\in{\sigma}$ large enough, which implies particular on the event $\{\delta(K_i)>2t\}$ that $\delta(K_i^{(\sigma(n))})>t$ for all $n\in{\sigma}$ large enough.  In particular on that event, $nK_i^{(n)}\in{\Mact_{0,n}^t(\Dia)}$ for all $n\in{\sigma}$ large enough, see \eqref{eq:deftildeMRS}, and thus by \eqref{eq:macrocontainsbigloops} there exists $s=s(\eps,i)$ so that the probability of  $nK_i^{(n)}\in{\Mact_{0,n}^s(\mathcal{L})}$ is larger than $1-3\eps$ for all $n\in{\sigma}$ with $n\geq C(\eps,i)$. In view of  \eqref{eq:defMRSL} and \eqref{eq:deftildeEL}, this implies that for $n\in{\sigma}$ larger than $C(i,\eps)$,  there exists $\Gamma^{(n)}\in{\mathscr{L}_n}$ so that on an event with probability at least $1-3\eps$ we have $\Gamma^{(n)}\subset K_i^{(n)}$ and $\delta(\Gamma^{(n)})\geq s$. For $n\in{\sigma}$ large enough we have $\mathbf{d}(\mathscr{L}_n,\mathscr{L})<s/2$, and thus by \eqref{eq:defdistance} there exists $j_n\in{J}$ so that on an event with probability at least $1-4\eps$  we have $\delta(K'_{j_n})\geq s/2$ and $d_H(\Gamma^{(n)},K'_{j_n})\leq  \eta_n$. In particular for $n\in{\sigma}$ large enough, on an event with probability at least $1-4\eps$, $K'_{j_n}\subset B(\Gamma^{(n)},\eta_n)\subset B(K_i^{(n)},\eta_n)\subset B(K_i,2\eta_n)$.

Since $\eta_n\rightarrow 0$ a.s, we have proved that for each $\eps>0$ and $i\in{I}$, there exists a sequence $(\delta_n)_n$ with $\delta_n\rightarrow 0$, $s=s(\eps,i)$ and a random sequence $(j_n)_{n\in{\N}}$ so that $\limsup\tilde{\P}_{\sigma}^{\infty}(\delta(K'_{j_n})\geq s/2,K'_{j_n}\subset B(K_i,\delta_n))\geq 1-5\eps$ as $n\in{\sigma}\rightarrow\infty$, and so $\tilde{\P}_{\sigma}^{\infty}(\limsup\{\delta(K'_{j_n})\geq s/2,K'_{j_n}\subset B(K_i,\delta_n)\})\geq 1-5\eps$. In particular, on an event with probability at least $1-5\eps$, there is a (random) sequence $\sigma'\subset\sigma$ so that $\delta(K'_{j_{n}})\geq s/2$  and $K'_{j_n}\subset B(K_i,\delta_n)$ for all $n\in{\sigma'}$. Since $\mathscr{L}\in{\mathcal{S}}$ we have $|J_{>s/4}|<\infty$, and since $j_n$ is in this set for all $n\in{\sigma'}$ there is a (random) subsequence $\sigma''$ of $\sigma'$ so that $j_{\sigma''(n)}$ is constant equal to some $j\in{J}$. Therefore on an event with probability at least $1-5\eps$, $K'_j\subset B(K_i,\delta_n)$ for all $n\in{\sigma''}$. Since $\delta_n\rightarrow0$, we deduce that on the previous event we have $K'_j\subset K_i$, which finishes the proof of \eqref{eq:loopbuildclusters1} by letting $\eps\rightarrow0$.

\medskip

We now explain how to deduce \eqref{eq:loopbuildclusters} from \eqref{eq:loopbuildclusters1}. Let us denote by $A_i$ the union of all the sets $K'_j$ with $K'_j\subset K_i$, $j\in{J}$. We proceed by contradiction: if \eqref{eq:loopbuildclusters} does not hold, then on an event with positive probability, there exists a random $x\in{K_i}$ such that $x\notin{\bar{A_i}}$, and thus there exists a random $\eta>0$ so that $B(x,2\eta)\cap A_i=\emptyset$. Fix $P\in{\N}$ so that $\eta\geq 1/P$ and $\delta(K_i)> 1/(2P)$ with positive probability, then there exists some $y\in{(1/P)\Z^d\cap (-1,1)^d}$ such that $\tilde{\P}_\sigma^{\infty}(E_{y,P}^i)>0$, where 
$$E_{y,P}^i:=\big\{\exists\,x\in{K_i}:\,B(x,\tfrac1{2P})\cap [-1,1]^d\subset B(y,\tfrac1P),\delta(K_i)>\tfrac1{2P},B(y,\tfrac1P)\cap A_i=\emptyset\big\}.$$
Abbreviate $n_P=\lfloor n/P\rfloor$ and let $y_n\in{(1/(Pn_P))\Z^d}$ so that $B(Pn_Py_n,n_P)\subset B(0,n)$ and $d(y_n,y)\leq 1/n_P$. In case $B(y,1/P)\cap\partial[-1,1]^d\neq\emptyset$, we further choose such $y_n$ with the additional condition that $d_H(B(y,1/P)\cap \partial[-1,1]^d,B(Pn_Py_n/n,n_P/n)\cap\partial [-1,1]^d)\leq 3/n_P$. %
By tightness (see the proof of \eqref{eq:Hausdorffconvergenceloops})  and translation invariance, there is a subsequence $\sigma'\subset\sigma$ so that $(\mathscr{E}_{n_P,Py_n}-Py_n,\mathscr{L}_{n_P,Py_n}-Py_n)$ converges weakly as $n\rightarrow\infty$, $n\in{\sigma'}$ toward some limiting law, and let $(\mathscr{E}_{y},\mathscr{L}_{y})$ be random variables so that $(P\mathscr{E}_{y}-Py,P\mathscr{L}_{y}-Py)$ has this law. 

Since $y_n\rightarrow y$ as $n\rightarrow\infty$, and up to changing $\sigma'$ to a subsequence of $\sigma'$ if necessary to obtain joint convergence, one can now consider a coupling so that $\mathbf{d}((1/P)\mathscr{E}_{n_P,y_n},\mathscr{E}_{y})\rightarrow 0$ and $\mathbf{d}(\mathscr{E}_{n},\mathscr{E})\rightarrow 0$ a.s.\ as $n\rightarrow\infty$, $n\in{\sigma'}$,  as well as $\mathbf{d}((1/P)\mathscr{L}_{n_P,Py_n},\mathscr{L}_{y})\rightarrow0$ and $\mathbf{d}(\mathscr{L}_n,\mathscr{L})\rightarrow0$.  On the event $E_{y,P}^i$, there is an $x\in{K_i}$ such that $B(x,1/(2P))\cap[-1,1]^d\subset B(y,1/P)$, and $\delta(K_i)>1/(2P)$. Let $K_i^{(n)}\in{\mathscr{E}_n}$ so that $d_H(K_i^{(n)},K_i)\rightarrow 0$ a.s. For $n$ large enough by our choice of $y_n$, there exists $x_n\in{K_i^{(n)}}$ with $B(x_n,1/(4P))\cap[-1,1]^d\subset B(Pn_Py_n/n,n_P/n)$ and $\delta(K_i^{(n)})\geq 1/(4P)$. Note that $n_P\mathscr{E}_{n_P,Py_n}$ corresponds  to the clusters of $\mathcal{L}$ within $B(Pn_Py_n,n_P)$, which is included in $B(0,n)$. In particular on the event $E_{y,P}^i$ and for $n$ large enough, $K_i^{(n)}$ contains a cluster of $(n_P/n)\mathscr{E}_{n_P,Py_n}$ with diameter at least $1/(4P)$. 

Letting $n\rightarrow\infty$, so that $n_P/n\rightarrow 1/P$, we deduce that on the event $E_{y,P}^i$, there is a cluster $B\in{\mathscr{E}_{y}}$ such that $B\subset K_i$ . Applying \eqref{eq:loopbuildclusters1} to the sequence $\{n_P, n\in{\sigma'}\}$ instead of $\sigma$ and using translation invariance, we further know that there exists a cluster $B'\in{\mathscr{L}_{y}}$ so that $B'\subset B$. In particular on the event $E_{y,P}^i$ we have $B'\subset K_i$. Furthermore since any loop in $n_P\mathscr{L}_{n_P,Py_n}$ is also a loop in $n\mathscr{L}_n$ we deduce by letting $n\rightarrow\infty$, $n\in{\sigma'}$, that $B'\subset A_i$ on $E_{y,P}^i$. Since $B'\subset B(y,1/P)$, we thus have that $A_i\cap B(y,1/P)\neq\emptyset$ on $E_{y,P}^i$, which is a contradiction.
\end{proof}

\textbf{Acknowledgements.}  The research of AD has been supported by the Deutsche
Forschungsgemeinschaft (DFG) grant DR 1096/2-1. AP has been funded by the Deutsche
Forschungsgemeinschaft (DFG, German Research Foundation) – Projektnummer 552316285 as well as
CRC 1720 – 539309657. The research of PFR is supported by the European Research Council
(ERC) under the European Union’s Horizon Europe research and innovation program (grant agreement
No 101171046).
\appendix

\section{Proof of Lemma~\ref{L:d-metric}}
\label{App:dmetric}

In this appendix, we prove that $(\mathcal{S},\mathbf{d})$ is a Polish space, see Section~\ref{sec:subdistanced} for notation. Recall in particular the consequence of \eqref{eq:defdistance} stated above Lemma~\ref{L:d-metric}, which we use repeatedly below.

\begin{proof}[Proof of Lemma~\ref{L:d-metric}]
We first show that $(\mathcal{S},\mathbf{d})$ is a metric space. Recall that the empty family $\emptyset$ (corresponding to empty index set) is in $\mathcal{S}$, and we note once and for all that all the proofs below will also trivially work for that specific family.  Symmetry of $\mathbf{d}(\cdot,\cdot)$ is obvious from \eqref{eq:defdistance}, using that ${\pi}$ defines a matching of $J$ and $I$ via $\{(j,i): (i,j)\in \pi\}$ and using symmetry of the distance $d_H$. 
    
    We now show that $\mathbf{d}(\mathscr{E},\mathscr{E}')=0$ implies that $\mathscr{E}=\mathscr{E}'$ for any two families $\mathscr{E}=(K_i)_{i\in{I}}$ and $\mathscr{E}'=(K'_j)_{j\in{J}}$ in $\mathscr{S}$, the reverse implication being straightforward. Noting that  elements of $\mathcal{S}$ do not contain sets of vanishing diameter, %
    it is enough to show that for all $\varepsilon > 0$, one has $\mathscr{E}_{> \varepsilon} = \mathscr{E}_{> \varepsilon}'$ (see the paragraph above \eqref{eq:defdistance} for notation).    
    Since $\mathbf{d}(\mathscr{E},\mathscr{E}')=0$, for every $n\in{\N}$ there exists a matching $\pi_n$ such that all three items appearing in the max in \eqref{eq:defdistance} are each at most $1/n$. In particular, this implies by the above reasoning that whenever $1/n \leq \varepsilon$,
\begin{equation}
    \label{eq:d-distpf1}
    I^{\pi_n} \cap I_{> \varepsilon} = J^{\pi_n} \cap J_{> \varepsilon}=\emptyset.
\end{equation}
By combining \eqref{eq:d-distpf1} and \eqref{eq:defdistance}, it follows that for every $n\in{\N}$ satisfying $2/n \leq \varepsilon$ and all $i \in I_{> \varepsilon}$, we have that $d_H(K_i,K'_{\pi_n(i)})\leq 1/(2n)$, and in particular $\pi_n(i)\in{ J_{> \varepsilon/2}}$ by \eqref{eq:d_H-diam}. Since $|I_{> \varepsilon}|<\infty$ and $|J_{>\eps/2}|<\infty$ by definition of $\mathcal{S}$, there exists an infinite set $\sigma\subset \N\cap[2/\eps,\infty)$ such that, for each  $i\in{I_{> \varepsilon}}$,  $\pi_n(i)$ is constant equal to some $\pi(i)\in{J}$ for all $n\in{\sigma}$. In particular, $d_H(K_i,K_{\pi(i)}')=d_H(K_i,K_{\pi_n(i)}')\leq 1/n$ for all $n\in{\sigma}$, and letting $n\rightarrow\infty$, $n\in{\sigma}$, we deduce $K_{\pi(i)}'=K_i$. Since for $i\neq j\in{I_{>\varepsilon}}$ we have $\pi(i)=\pi_n(i)\neq\pi_n(j)=\pi(j)$ for any $n\in{\sigma}$, we deduce that $\mathscr{E}_{> \varepsilon} \subset \mathscr{E}'_{> \varepsilon}$ (as unordered families), and the other inclusion follows in the same way.

It remains to show the triangle inequality. For $k=1,2,3$, let $\mathscr{E}^k =(K_i^k)_{i \in I^k} \in \mathcal{S}$, and $\varepsilon >0$. We aim to show that 
\begin{equation}\label{eq:d-distpf-triangle}
d_{13}\leq d_{12} + d_{23}+ \varepsilon,
\end{equation}
where $d_{ij}= \mathbf{d}(\mathscr{E}^i,\mathscr{E}^j)$.  The claim then follows upon letting $\varepsilon \downarrow 0$.  To deduce \eqref{eq:d-distpf-triangle}, it is enough to exhibit a matching $\pi=\pi(\varepsilon)$ of sets in $\mathscr{E}^1,\mathscr{E}^3$ such that 
\begin{equation}
\label{eq:d-distpf2}
   ( I^k)^{\pi} \cap I^k_{> d_{12}+ d_{23} +\varepsilon} =\emptyset, \text{ for $k=1,3,$}
\end{equation}
and
\begin{equation}
    \label{eq:d-distpf3}
2 \sup_{(i,j)\in{\pi}}d_H(K_i^1,K^3_j) \leq d_{12}+ d_{23} +\varepsilon. 
\end{equation}

We proceed to construct $\pi$ with the desired properties.
For $(ij)=(12), (23)$, let $\pi_{ij}$ denote a matching between $I^i \times I^j$ achieving $d_{ij}$ up to an error $\varepsilon/2$. In particular, $\pi_{ij}$ matches all indices in $I^i_{>d_{ij}+\varepsilon/2}$ and $I^j_{>d_{ij}+\varepsilon/2}$, and for any matched indices $(k,p)\in{\pi_{ij}}$ we have $2d_H(K_{k}^i,K_p^j)\leq d_{ij}+\eps/2$. Defining $\pi=\{(i_1,i_3)\in{I^1\times I^3}:\,\exists\, i_2\in{I_2}\text{ with }(i_1,i_2)\in{\pi_{12}}\text{ and }(i_2,i_3)\in{\pi_{23}}\}$, we now verify \eqref{eq:d-distpf2} and \eqref{eq:d-distpf3}. For each $i_1\in{I^1_{>d_{12} + d_{23}+\varepsilon} (\subset I^1_{>d_{12} +\varepsilon/2} )}$, there exists $i_2\in{I^2}$ such that $(i_1,i_2)\in{\pi_{12}}$. Moreover, the diameter of $K_{i_2}^2$ cannot be too small: by \eqref{eq:d_H-diam}, 
\begin{equation}
\label{eq:2appears}
\delta(K^2_{i_2}) \geq \delta(K^1_{i_1})- 2d_H(K^1_{i_1},K^2_{i_2}) > d_{12} + d_{23}+\varepsilon - (d_{12} +\varepsilon/2) = d_{23}+\varepsilon/2,
\end{equation}
i.e.~$i_2 \in I^2_{> d_{23}+\varepsilon/2}$. Therefore there exists $i_3\in{I^3}$ such that $(i_2,i_3)\in{\pi_{23}}$. In particular by construction we have $(i_1,i_3)\in{\pi}$, which yields \eqref{eq:d-distpf2} for $k=1$. When $k=3$, one repeats the previous construction ``backwards,'' starting with an arbitrary index $i_3 \in I^3_{> d_{12}+ d_{23} +\varepsilon}$, and using the matchings $\pi_{23}$ and then $\pi_{12}$ to match $i_3$ with some $i_1\in{I^1}$ so that $(i_1,i_3)\in{\pi}$. It remains to verify \eqref{eq:d-distpf3}: for any $(i_1,i_3)\in{\pi}$, letting $i_2\in{I^2}$ so that $(i_1,i_2)\in{\pi_{12}}$ and $(i_2,i_3)\in{\pi_{23}}$, we have $2d_H(K_{i_1}^1,K_{i_3}^3)\leq 2d_H(K_{i_1}^1,K_{i_2}^2)+2d_H(K_{i_2}^2,K_{i_3}^3)\leq d_{12}+ d_{23} +\varepsilon$.

\medskip

Next, we argue that $(\mathcal{S},\mathbf{d})$ is complete.
Thus let $(\mathscr{E}^{(n)})_n=((K_i^{(n)})_{i\in{I^{(n)}}})_n\in \mathcal{S}^{\N}$ denote a Cauchy sequence with respect to $\mathbf{d}$. We first construct a candidate limit $\mathscr{E}^{(\infty)}$. For each $k \geq 0$, let $N_k$ denote the smallest integer larger than $N_{k-1}+1$ such that 
\begin{equation}\label{eq:Cauchy1}
\mathbf{d}(\mathscr{E}^{(n)}, \mathscr{E}^{(m)})< 2^{-k}, \text{ for all $n,m \geq N_k$.} 
\end{equation}
In particular, applying \eqref{eq:Cauchy1} with the choices $n=N_k$, $m=N_{k+1}$ yields via \eqref{eq:defdistance} a matching $\pi_{k,k+1}$ which to each index $i\in{I^{(N_k)}_{>2^{-k}}}$  associates an index $\pi_{k,k+1}(i)\in{I^{(N_{k+1})}}$ such that $d_H(K_i^{(N_k)},K_{\pi_{k,k+1}(i)}^{(N_{k+1})})\leq 2^{-k-1}$. 

Essentially, the set $\mathscr{E}^{(\infty)}$ will consist of all limit points $K_i$, ${i\in{I^{(\infty)}}}$, of certain (convergent, as it will turn out) sequences $((K^{(N_k)}_{\pi_i(k)})_{k\geq M_i})$, $i\in{I^{(\infty)}}$, as $k\rightarrow\infty$, with $M_i\in{\N}$ and ${\pi_i(k)} \in{I^{(N_k)}}$  for each $k\geq M_i$ and $i\in{I^{(\infty)}}$, constructed inductively as follows. Fix some $i\geq 1$, assume that $(\pi_j(k))_{k\in{\N}}$ and $M_j$ are constructed for all $1\leq j\leq i-1$, and write $I^{(N_k)}_{i-1}=I^{(N_k)}\setminus \{\pi_j(k):\,1\leq j\leq i-1\}$ for all $k\geq M_{i-1}$, with $M_0=1$ (in particular $I^{(N_k)}_0=I^{(N_k)}$). We then define $M_i$ as the first $k\geq M_{i-1}$ such that there exists $p\in{I^{(N_k)}_{i-1}}$ with $\delta(K_p^{(N_k)})\geq 2^{-p+1}$ (when such a $k$ exists), and let $\pi_{i}(M_i)$ as the index $p\in{I^{(N_{M_i})}_{i-1}}$ such that $\delta(K_p^{(N_{M_i})})$ is maximal, with some arbitrary rule in case of equality. One can then define $\pi_{i}(k+1)=\pi_{k,k+1}(\pi_i(k))$ for all $k\geq M_i$, and abbreviate $\tilde{K}_i^{(k)}=K_{\pi_i(k)}^{(N_k)}$. For $\pi_i(k+1)$ to be well defined, as explained below \eqref{eq:Cauchy1} it is enough that $\pi_i(k)\in{I^{(N_k)}_{>2^{-k}}}$ for all $k\geq M_i$, and we now prove the stronger statement $\delta(\tilde{K}_i^{(k)})\geq 2^{-k+1}$ recursively in $k\geq M_i$. The initialization is true by definition of $M_i$ and $\pi_i(M_i)$. Assume that $\delta(\tilde{K}_i^{(k)})\geq 2^{-k+1}$ for some $k\geq M_i$, then $\pi_i(k)\in{I_{>2^{-k}}^{(N_k)}}$, and so as explained below \eqref{eq:Cauchy1}  
\begin{equation}
\label{eq:hausdorffbetween2sets}
d_H\big(\tilde{K}_{i}^{(k)},\tilde{K}_{i}^{({k+1})}\big)=d_H\big(K_{\pi_i(k)}^{(N_k)},K_{\pi_{k,k+1}(\pi_i(k))}^{(N_{k+1})}\big)\leq 2^{-k-1}.
\end{equation}
In particular, one can deduce from \eqref{eq:d_H-diam} and our recursive assumption  that $\delta(\tilde{K}_{i}^{({k+1})})\geq 2^{-k+1}-2\cdot 2^{-k-1}\geq 2^{-(k+1)+1}$.  This finishes the recursive construction of $(\pi_i(k))_{k\geq M_i}$ and $M_i$. The only problem is that it is possible for some $i\geq1$ that $\delta(K_p^{(N_k)})< 2^{-p+1}$ for all $p\in{I_{i-1}^{(N_k)}}$ and $k\geq M_{i-1}$, in which case we cannot construct $M_i$ as before, and we then stop the recursive construction at this step and let $M=i$. Otherwise, we continue the recursion for each $i\in{\N}$, and let $M=\infty$.

 It is easy to see that any sequence $(\tilde{K}_{i}^{(k)})_{k\geq M_i}$ obtained in this way is Cauchy w.r.t.~$d_H$. Indeed, by \eqref{eq:hausdorffbetween2sets} and the triangle inequality we have $d_H(\tilde{K}_{i}^{(k)},\tilde{K}_{i}^{(n)})\leq 2^{-n}$ for all $i<M$ and $n\geq k\geq M_i$. But the set of compact subsets of $[-1,1]^d$ being $d_H$-complete, it follows that there exists $K_{i} \subset [-1,1]^d$ compact such that $d_H(\tilde{K}_{i}^{(k)},K_{i})\to 0$ as $k \to \infty$. In fact letting $n\rightarrow\infty$ in the previous inequality, we have $d_H(\tilde{K}_{i}^{(k)},K_{i})\leq 2^{-k}$ for all $k\in{\N}$. We then define $I^{(\infty)}=\{i\in{\N}:\,i<M\text{ and }\delta(K_i)>0\}$ and $\mathscr{E}^{(\infty)}=(K_i)_{i\in{I^{(\infty)}}}$.

 We first proceed to verify that $\mathscr{E}^{(\infty)} \in \mathcal{S}$. By definition $\mathscr{E}^{(\infty)}$ is an unordered family of compact sets of positive diameter. Let $\eta > 0$. It remains to argue that 
 \begin{equation}\label{eq:Cauchy2}
 N_{> \eta}(\mathscr{E}^{(\infty)})< \infty.
 \end{equation}
Since  $d_H(\tilde{K}_{i}^{(k)},K_{i})\leq 2^{-k}$, it follows from \eqref{eq:d_H-diam} that $\delta(\tilde{K}_i^{(k)})>\eta -2^{-k+1}\geq \eta/2$ for all $k\geq C(\eta)$ and $i\in{I^{(\infty)}_{>\eta}}$. Hence for $k\geq C(\eta)$ one can bound the left-hand side of \eqref{eq:Cauchy2} by $N_{>\eta/2}(\mathscr{E}^{(N_k)})<\infty$ since $\mathscr{E}^{(N_k)}\in{\mathcal{S}}$.

It remains to argue that $\mathscr{E}^{(\infty)}$ is indeed the desired limit point, i.e.~that $\mathbf{d}(\mathscr{E}^{(n)},\mathscr{E}^{(\infty)}) \to 0$ as $n \to \infty$. Fix some $n\in{\N}$, and let $k=k(n)\in{\N}$ be such that $N_k\leq n<N_{k+1}$, which always exists since we constructed $N_k$ as a strictly sequence, see above \eqref{eq:Cauchy1}. By \eqref{eq:Cauchy1} there exists a matching $\tilde{\pi}_{n,k}\subset I^{(n)}\times I^{(N_k)}$ which matches each index in $I^{(n)}_{>2^{-k}}$, as well as each index in $I^{(N_k)}_{>2^{-k}}$, and such that for each $(j,p)\in{\tilde{\pi}_{n,k}}$ we have $d_H(K_{j}^{(n)},K_p^{(N_k)})\leq 2^{-k-1}$. We further define
\begin{equation}
\label{eq:deftildepin}
\tilde{\pi}_n\stackrel{\text{def.}}{=}\{(j,i)\in{I^{(n)}\times I^{(\infty)}}:\,k\geq M_i\text{ and }(j,\pi_i(k))\in{\tilde{\pi}_{n,k}}\},
\end{equation}
where $\pi_i$ and $M_i$ were constructed above  \eqref{eq:hausdorffbetween2sets}, and $k=k(n)$ as above. Note that for each $(j,i)\in{\tilde{\pi}_n}$ since $K_{\pi_i(k)}^{(N_k)}=\tilde{K}_i^{(k)}$   we have 
\begin{equation}
\label{eq:dHjni}
d_H(K_j^{(n)},K_{i})\leq d_H(K_j^{(n)},K_{\pi_i(k)}^{(N_k)})+d_H(\tilde{K}_i^{(k)},K_{i})\leq 2^{-k-1}+2^{-k}\leq  2^{-k+1}.
\end{equation}
We now argue that for each index $j\in{I^{(n)}_{>2^{-k+2}}}$, resp.\ $i\in{I^{(\infty)}_{>2^{-k+2}}}$ there exists $i\in{I^{(\infty)}}$, resp.\ $j\in{I^{(n)}}$, such that $(j,i)\in{\tilde{\pi}_n}$. Once this is proven, it follows from \eqref{eq:defdistance} for $\tilde{\pi}_n$ and \eqref{eq:dHjni} that $\mathbf{d}(\mathscr{E}^{(n)},\mathscr{E}^{(\infty)})\leq 2^{-k+2}$, and since $k=k(n)\rightarrow\infty$ as $n\rightarrow\infty$, we can conclude.

Let us first take some index $j\in{I^{(n)}_{>2^{-k+2}}}$, then by definition of $\tilde{\pi}_{n,k}$ there exists $p\in{I^{(N_k)}}$ such that $(j,p)\in{\tilde{\pi}_{n,k}}$, and then $d_H(K_j^{(n)},K_p^{(N_k)})\leq 2^{-k-1}$. In particular, \eqref{eq:d_H-diam} yields that $\delta(K_p^{(N_k)})> 2^{-k+2}-2\cdot 2^{-k-1}= 3\cdot2^{-k}>2^{-k+1}$. Hence $p$ must be equal to $\pi_i(k)$ for some $i<M$ and $k\geq M_i$, where $\pi_i$ is the sequence constructed above \eqref{eq:hausdorffbetween2sets}. Indeed otherwise by definition of $M_i$ we would have $M_i\leq k$ for all $i<M$, and so if $M<\infty$ this would be in contradiction with the definition of $M$ below \eqref{eq:hausdorffbetween2sets}; and if $M=\infty$ we would have by construction that $\pi_i(k)$, $i\in{\N}$, are disjoint elements of $I^{(N_k)}_{>2^{-k}}$, see around  \eqref{eq:hausdorffbetween2sets}, which is also a contradiction as $|I^{(N_k)}_{>2^{-k}}|<\infty$. Further note by \eqref{eq:d_H-diam} that $\delta(K_{i})\geq \delta(K_p^{(N_k)})-d_H(\tilde{K}_i^{(k)},K_i)\geq 3\cdot 2^{-k}-2^{-k}>0$, and so $i\in{I^{(\infty)}}$. We have thus verified that $(j,i)\in{\tilde{\pi}_n}$, see \eqref{eq:deftildepin}.

We now take some index $i\in{I^{(\infty)}_{>2^{-k+2}}}$, and let us first prove that $k\geq M_i$ by contradiction. If $k<M_i$, which implies $i\in{I^{(\infty)}_{>2^{-M_i+1}}}$ then as explained below \eqref{eq:Cauchy1} there exists $p\in{I^{(N_{M_i-1})}}$ which is paired by $\pi_{M_i-1,M_i}$ with $\pi_{i}(M_i)$, and then $d_H(K_p^{(N_{M_i-1})},K_{\pi_i(M_i)}^{(N_{M_i})})\leq 2^{-M_i}$. Therefore by \eqref{eq:d_H-diam} we would have $$\delta(K_p^{(N_{M_i-1})})\geq \delta(K_{\pi_i(M_i)}^{(N_{M_i})})-2\cdot 2^{-M_i}\geq 2^{-k+2}-2\cdot 2^{-M_i}-2\cdot 2^{-M_i}\geq 2^{-M_i+2}>2^{-(M_i-1)+1},$$ where we used that $d_H(K_{\pi_i(M_i)}^{(N_{M_i})},K_i)=d_H(\tilde{K}_i^{(M_i)},K_i)\leq 2^{-M_i}$. Noting that $p\in{I_{i-1}^{(N_{M_{i}-1})}}$, since otherwise $\pi_i(M_i)=\pi_{M_{i}-1,M_i}(p)$ would also not be in $I_{i-1}^{(N_{M_i})}$ by construction, this is a contradiction by definition of $M_i$ above \eqref{eq:hausdorffbetween2sets}. This finishes the proof of $k\geq M_i$, and therefore $\pi_i(k)$ is well-defined and $d_H(K_{\pi_i(k)}^{(N_k)},K_i)=d_H(\tilde{K}_i^{(k)},K_i)\leq 2^{-k}$. In particular by \eqref{eq:d_H-diam} we have $\delta(K_{\pi_i(k)}^{(N_k)})\geq 2^{-k+2}-2^{-k}\geq 3\cdot 2^{-k}>2^{-k}$, and so $\pi_i(k)\in{I^{(N_k)}_{>2^{-k}}}$. Therefore, there exists $j\in{I^{(n)}}$ such that $(j,\pi_i(k))\in{\tilde{\pi}_{n,k}}$, and we conclude that $(j,i)\in{\tilde{\pi}_n}$, see \eqref{eq:deftildepin}. As explained below \eqref{eq:dHjni}, this finishes the proof of completeness for $(\mathcal{S},\mathbf{d})$.

\medskip

Finally, one needs to verify that $(\mathcal{S},\mathbf{d})$ is separable. Let us consider  the collection $\mathcal{B}$ which consists of any finite union of boxes of the form $[x,-r, x+r]^d \cap [-1,1]^d$, with rational centers $x\in{[-1,1]^d}$ and rational radii $r>0$. Then $\mathcal{B}$ is countable, and one easily checks that for each $\eps>0$, any compact $K\subset [-1,1]^d$ is at Hausdorff distance at most $\eps$ from some element of $\mathcal{B}$. Consider $\mathcal{S}'$ the set of unordered families $(K'_i)_{i\in{I}}$ with $K'_i\in{\mathcal{B}}$ for each $i\in{I}$ and $|I|<\infty$. Then $\mathcal{S}'$ is countable since $\mathcal{B}$ is countable. Furthermore, let $\mathscr{E}=(K_i)_{i\in{I}}\in{\mathcal{S}}$ and $\eps>0$. For each $i\in{I_{>\eps}}$, there exists $K'_i\in{\mathcal{B}}$ such that $d_H(K'_i,K_i)\leq \eps/2$. Therefore letting $\mathscr{E}'=(K'_i)_{i\in{I_{>\eps}}}$ we have $\mathscr{E}'\in{\mathcal{S}'}$ and $\mathbf{d}(\mathscr{E},\mathscr{E}')\leq \eps$ by \eqref{eq:defdistance} for $\pi(i)=i$ for each $i\in{I_{>\eps}}$, and we can conclude. 
\end{proof}

\section{Proof of Lemma~\ref{lem:lowMink}}
\label{App:lowMink}

In this appendix, we give the proof of Lemma~\ref{lem:lowMink}, used in the context of deriving our main results to obtain lower bounds on cluster dimensions.

The proof of Lemma~\ref{lem:lowMink} relies on a connection bound at (small) negative levels, Lemma~\ref{lem:connectionnegative} below, which we derive using random interlacements \cite{MR2680403} and their link to the Gaussian free field via an isomorphism theorem. Random interlacements is a Poisson point process of doubly-infinite trajectory modulo time-shift introduced on $\Z^d$ in \cite{MR2680403}, and extended to $\tilde{\Z}^d$ in \cite{MR3502602}. We will not need the full construction of interlacements here, and refer the reader to \cite[Section~2.5]{DrePreRod3} for the definition we use here. Instead, we will only give the following description of the restriction of their law to a compact. Let 
\begin{equation}
\label{eq:DefmuK}
P_{e_K}=\sum_{x\in{\partial K}}e_K(x)P_x\text{ for all connected compacts }K\subset\tilde{\Z}^d,
\end{equation}
where we recall from Section~\ref{sec:setup} that $P_x$ denotes the law of the Brownian motion on $\tilde{\Z}^d$ starting in $x$, and $e_K$ the equilibrium measure of $K$. Let $\OP$ be a probability measure under which the interlacement process at level $u$ is defined for each $u\in{(0,\infty)}$, then the restriction of this process to forward trajectories hitting $K$, and started the first time they hit $K$, is a Poisson point process with intensity $uP_{e_K}$. Let us denote by $\I^u$ the interlacement set, that is the set of points in $\tilde{\Z}^d$ intersected by some trajectories in the interlacement process, and write $\I_n^u=n^{-1}\I^u\subset\tilde{\Z}_n^d$. One readily deduces from this description and \eqref{eq:capren} that for any connected compact $K \subset \tilde\G $,
 \begin{equation}
 \label{eq:defIu}
\begin{split} 
\OP(\I^u_n\cap K=\emptyset)=\exp\big(-un^{d-2}\mathrm{cap}_n(K)\big).
\end{split}
\end{equation}

 We now discuss the isomorphism theorem involving interlacements and the Gaussian free field, see \cite{MR2892408,MR3502602,MR3492939,DrePreRod3}.
Under the product measure $\OP\otimes \P$, we denote by $\mathcal{C}^u$ the  union of the clusters of $\{x\in{\tilde{\G}}:|\phi_x|>0\}$ which intersect $\I^u$ (an open set). %
The isomorphism from  \cite[Theorem~2.4]{MR3492939}, see also \cite[(Isom)]{DrePreRod3}, is in force in view of \cite[Lemma~3.4 and Theorem~1.1]{DrePreRod3}. This implies that for all $u\geq0$,
\begin{align}
&\label{eqcouplingintergffszn}
	\begin{array}{l}
	(\mathcal{C}^u)^{\mathsf{c}}\cap E^{\geq0} \text{ has the same law under }\overline{\P}\otimes\P \text{ as }E^{\geq \sqrt{2u}}\text{ under }\P,
	\end{array}
\end{align}
where we recall the notation $E^{\geq a}$ from \eqref{eq:newnotation}. The isomorphism \eqref{eqcouplingintergffszn} lets us express the law of $\{\varphi \geq a \}$ in terms of random interlacements and \emph{critical} clusters of the Gaussian free field for any $a>0$, and in fact also for $a<0$. Indeed, taking complements in \eqref{eqcouplingintergffszn} and using the symmetry of the free field, we obtain from \eqref{eqcouplingintergffszn} that $E^{\geq -\sqrt{2u}}$ under $\P$ has the same law as the closure of $\mathcal{C}^u\cup \{x\in{\tilde{\Z}^d}:\,\phi_x >0\}$, that is $\overline{\mathcal{C}^u}\cup E^{\geq0}$, under $\overline{\P}\otimes\P$. %
Here, we used that the closure of $\{x\in{\tilde{\Z}^d}:\,\phi_x>a\}$ is a.s.~$E^{\geq a}$ for all $a\in{\R}$, for a similar reason as below \eqref{eq:renormalizedlevelset}.

\begin{Lemme}
\label{lem:connectionnegative}
For all  $a\geq0$, $x,y\in{\Z^d}$ and $R,r\geq1$ such that $d(x,y)\geq 2(r +R)$ 
\begin{equation}
\label{eq:crossingboxneg}
\begin{split} 
 \P&\big({B}(x,r)\leftrightarrow {B}(y,R)\text{ in }E^{\geq -a}\big)
 \\&\leq C(a\vee d(x,y)^{-\frac{d-2}2})\Big(\big(R^{\frac{d-2}2}r ^{\frac{d-2}2}(a\vee d(x,y)^{-\frac{d-2}2})\big)\wedge r^{\frac{d-2}2}\wedge R^{\frac{d-2}2}\Big),
\end{split}
\end{equation}
and if $R\geq r$ 
\begin{equation}
\label{eq:crossingannulineg}
\begin{split} 
 \P\big({B}(x,r)\leftrightarrow \partial{B}(x,R)\text{ in }E^{\geq -a}\big)\leq C(a\vee R^{-\frac{d-2}2})r ^{\frac{d-2}2}.
\end{split}
\end{equation}
\end{Lemme}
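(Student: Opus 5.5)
We use the isomorphism \eqref{eqcouplingintergffszn} with $u=a^2/2$: under $\OP\otimes\P$, the set $E^{\geq -a}$ has the law of $\overline{\mathcal{C}^u}\cup E^{\geq 0}$. On the event ${B}(x,r)\leftrightarrow {B}(y,R)$ in $\overline{\mathcal{C}^u}\cup E^{\geq0}$, either the connection already holds in $E^{\geq 0}$, or the connecting path uses some interlacement-touched sign cluster. In the first case, \eqref{eq:crossingboxseccrit} (with $N\to\infty$, Remark~\ref{rk:othercrossingbounds},\ref{eq:crossing-infinitevol}) gives the bound $Cd(x,y)^{-(d-2)}(rR)^{\frac{d-2}2}$. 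This matches the term $d(x,y)^{-\frac{d-2}2}\cdot R^{\frac{d-2}2}r^{\frac{d-2}2}d(x,y)^{-\frac{d-2}2}$, and it is trivially dominated by the minimum with $r^{\frac{d-2}2}\wedge R^{\frac{d-2}2}$ times $d(x,y)^{-\frac{d-2}2}$, since $r,R\le d(x,y)$. In the second case, some sign cluster (for $|\phi|>0$) meeting $\I^u$ is connected both to ${B}(x,r)$ and to ${B}(y,R)$ through the union of sign clusters and $\I^u$.

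For the second case we use the Poissonian description \eqref{eq:DefmuK}. Since interlacement trajectories are connected, $\I^u$ itself links the clusters it touches. The connection then decomposes into one of two situations: (i) a sign cluster of ${B}(x,r)$ reaches some ball hit by the interlacement and a cluster of ${B}(y,R)$ reaches that same trajectory; or (ii) a single trajectory hits both neighbourhoods. Taking the expectation over $\I^u$, which is independent of $\phi$, the expected number of trajectories hitting the cluster $\mathcal{K}$ of ${B}(x,r)$ in $\{|\phi|>0\}$ is $u\,\mathrm{cap}(\mathcal{K})$. A trajectory started on $\mathcal{K}$ then reaches the cluster of ${B}(y,R)$ with probability controlled by \eqref{eq:hittingvscap}. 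We therefore sum over dyadic locations $z$ where the trajectory meets a sign cluster of ${B}(y,R)$, using \eqref{eq:crossingboxseccrit} for sign clusters (by symmetry this costs a factor $2$), the Green function bound \eqref{eq:intro_Green}, and capacity bounds from \eqref{eq:capBallBd}.

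The key estimate is $\E[\mathrm{cap}(\mathcal{K}\cap B(x,\rho))]\lesssim r^{\frac{d-2}2}\rho^{\frac{d-2}2}$. This comes from the one-arm bound $\sup q<\infty$ together with the fact that a cluster reaching distance $\rho$ has capacity at most $C\rho^{d-2}$. With this, the dominant contribution becomes $u\cdot r^{\frac{d-2}2}\cdot(\text{hitting of }B(y,R)\text{-cluster})\approx a^2 r^{\frac{d-2}2}R^{\frac{d-2}2}$, which matches the first term $(a\vee\cdot)^2(rR)^{\frac{d-2}2}$. The alternatives $r^{\frac{d-2}2}$ and $R^{\frac{d-2}2}$ arise by bounding the other endpoint only through one-arm events. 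For example, for the minimum with $r^{\frac{d-2}2}$, we bound by the probability that ${B}(x,r)$ connects to distance $d(x,y)/2$ in $E^{\geq -a}$, which is exactly \eqref{eq:crossingannulineg}. So we prove \eqref{eq:crossingannulineg} first.

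For \eqref{eq:crossingannulineg}, the same decomposition is used. Either ${B}(x,r)\leftrightarrow\partial B(x,R)$ in $E^{\geq0}$, which has probability at most $C(r/R)^{\frac{d-2}2}$ by \eqref{eq:crossingannuli}, or $\I^u$ hits the cluster of ${B}(x,r)$. The latter is bounded by $u\,\E[\mathrm{cap}(\mathcal{K}\cap B(x,R))]+$ a one-arm term. Since $u\,\E[\mathrm{cap}]\lesssim a^2 r^{\frac{d-2}2}R^{\frac{d-2}2}$ may exceed $a r^{\frac{d-2}2}$ when $a>R^{-\frac{d-2}2}$, we instead stop at the scale $\rho=a^{-2/(d-2)}$. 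Up to scale $\rho$ we pay $(r/\rho)^{\frac{d-2}2}=ar^{\frac{d-2}2}$ for a critical one-arm. Below scale $\rho$, the interlacement hits with expected count $u\,r^{\frac{d-2}2}\rho^{\frac{d-2}2}\asymp a\,r^{\frac{d-2}2}$.

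The main obstacle is controlling clusters that are glued by interlacement trajectories over many steps (chains of clusters). Our plan is to bound only the first cluster touched and to absorb the rest into a one-arm event for $E^{\geq -a}$ from the relevant point, in the style of the scale-$\rho$ truncation. This is done inductively via \eqref{eq:crossingannulineg}.
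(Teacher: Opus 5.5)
Your proof of \eqref{eq:crossingannulineg} is sound. If the connection fails in $E^{\geq0}$, then under the coupling some sign cluster meeting $B(x,r)$ is hit by $\I^u$. Truncating at $\rho=u^{-1/(d-2)}\wedge R$ then gives $C(r/\rho)^{\frac{d-2}2}+Cu(r\rho)^{\frac{d-2}2}$. This is the paper's own argument in the case $u^{-1/(d-2)}\leq R$; the paper itself deduces \eqref{eq:crossingannulineg} from \eqref{eq:crossingboxneg} by a union bound. Your reduction of the terms $\wedge r^{\frac{d-2}2}\wedge R^{\frac{d-2}2}$ in \eqref{eq:crossingboxneg} to \eqref{eq:crossingannulineg} is also correct.

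The gap is in the product term $C(a\vee d(x,y)^{-\frac{d-2}2})^2(rR)^{\frac{d-2}2}$. This is the binding term when $d(x,y)^{2-d}\leq u\leq R^{2-d}$ (with $R\geq r$). Your description of that event is off in two ways. First, for an upper bound there are no chains to control. If the connection is not realized in $E^{\geq0}$, then a sign cluster meeting $B(x,r)$ and a sign cluster meeting $B(y,R)$ are both hit by $\I^u$, and nothing else about the path matters. Second, these two clusters are generically hit by \emph{different} trajectories, a case that both your alternatives (i) and (ii) exclude. It is in fact the main case: when $u\,d(x,y)^{d-2}\geq 1$ and the capacities satisfy $uc_i\leq1$, the single-trajectory term $uc_1c_2\,d(x,y)^{2-d}$ is at most $(uc_1)(uc_2)$.

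More seriously, every route you sketch to $u(rR)^{\frac{d-2}2}$ treats the clusters of $B(x,r)$ and $B(y,R)$ as independent. This covers both your single-trajectory computation and the plan to bound ``the first cluster'' and absorb the rest into an $E^{\geq-a}$ one-arm event; each multiplies the cost at $x$ by the cost at $y$. The two clusters are not independent: the arm events are increasing, hence positively correlated by FKG. Your plan contains no decoupling, and decoupling two arms directly in $E^{\geq-a}$ would be harder still.

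The paper supplies exactly this missing ingredient. Let $K_x,K_y$ be the dyadic scales reached in $E^{\geq0}$ by $B(x,r)$ and $B(y,R)$. Via the loop soup, the paper proves
$\P(K_x\geq k,K_y\geq k',B(x,r)\centernot\leftrightarrow B(y,R))\leq C2^{-\frac{(k+k')(d-2)}2}$.
The reason is that, conditionally on the loop cluster of $B(x,r)$ missing $B(y,R)$, the cluster of $B(y,R)$ is built from loops avoiding it, so it is stochastically dominated by an unconditioned one. The paper then uses the independence of $\I^u$ from $\phi$ together with the two-set bound \cite[Lemma~2.1]{MR2953562}:
$\OP(\I^u\cap A_1\neq\emptyset,\I^u\cap A_2\neq\emptyset)\leq(1-e^{-u\,\mathrm{cap}(A_1)})(1-e^{-u\,\mathrm{cap}(A_2)})+Cu\,\mathrm{cap}(A_1)\mathrm{cap}(A_2)d(x,y)^{2-d}$,
with $A_1=B(x,2^{k+1}r)$ and $A_2=B(y,2^{k'+1}R)$. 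Finally it sums dyadically, with cut-offs at $u^{-1/(d-2)}$. Without such a joint estimate your argument only yields $C(a\vee d(x,y)^{-\frac{d-2}2})(r^{\frac{d-2}2}\wedge R^{\frac{d-2}2})$, not the stated bound.
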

\begin{proof}
When $R\geq 3r$, the upper bound in \eqref{eq:crossingannulineg} follows by a union bound from  \eqref{eq:crossingboxneg} for $y\in{cR\Z^d\cap B(x,R)\setminus B(x,3R/4)}$ and $cR$ instead of $R$ for a small enough constant $c$. Since this bound is also trivial when $r\leq R\leq 3r,$ we can focus only on the proof of \eqref{eq:crossingboxneg}. Let $u=a^2/2$,  by monotonicity we can assume w.l.o.g.\ that $u\geq (d(x,y)/5)^{2-d}$, and by symmetry that $R\geq r$. As explained below \eqref{eqcouplingintergffszn}, the event ${B}(x,r)\leftrightarrow {B}(y,R)$ in $E^{\geq -a}$ has the same probability as the union of the events ${B}(x,r)\leftrightarrow {B}(y,R)\text{ in }E^{\geq 0}$, ${B}(x,r),{B}(y,R)\leftrightarrow\I^u$ in $E^{\geq0}$, and ${B}(x,r),{B}(y,R)\leftrightarrow\I^u$ in $(E^{\geq0})^{\mathsf c}$. Let $K_x$, resp.\ $K_y$, be the largest $k\in{\N_0}$ such that ${B}(x,r)\leftrightarrow \partial{B}(x,2^kr )$, resp.\ ${B}(y,R)\leftrightarrow \partial{B}(y,2^{k'}R)$ (recall that by convention above \eqref{eq:clustersDef} this means that these sets are connected in $E^{\geq0}$). Then by symmetry of the law of $E^{\geq 0}$ we have
\begin{equation}
\label{eq:decompKx}
\begin{split} 
\P&\big({B}(x,r)\leftrightarrow {B}(y,R)\text{ in }E^{\geq -a}\big)\leq 2\P\big({B}(x,r)\leftrightarrow {B}(y,R)\big)
\\&+2\P\big({B}(x,r)\centernot\leftrightarrow {B}(y,R),\I^u\cap {B}(x,2^{K_x+1}r )\neq\emptyset,\I^u\cap {B}(y,2^{K_y+1}R)\neq\emptyset\big).
\end{split}
\end{equation}
We first give bounds on $K_x$ and $K_y$ using the isomorphism \eqref{eq:isoclusters} with the loop soup $\mathcal{L}$. Let $\tilde{\K}(x,r)$ denote the cluster of $B(x,r)$ for the loops in $\mathcal{L}$. Conditionally on $\tilde{\K}(x,r)$ and on the event ${B}(x,r)\centernot\leftrightarrow {B}(y,R)$ in $\mathcal{L}$, that is $\tilde{\K}(x,r)\cap {B}(y,R)=\emptyset$, the 
event $K_y\geq k'$ for the loop soup is equal to the event that $B(y,R)\leftrightarrow \partial B(y,2^{k'}R)$ for the loops in $\mathcal{L}$ which avoid $\tilde{\K}(x,r)$, and by properties of Poisson point processes this conditional event is  stochastically dominated by the event that $B(y,R)\leftrightarrow \partial B(y,2^{k'}R)$ by loops in $\mathcal{L}$ under $\Q$. The probability of this last event can be bounded by $C2^{-\frac{k'(d-2)}2}$ using \eqref{eq:isoclusters} and \eqref{eq:crossingseccrit} as $n\rightarrow\infty$, and using \eqref{eq:crossingseccrit}  again but this time to bound ${\Q}(K_x\geq k)$, we obtain
\begin{equation*}
\begin{split} 
\P\big(K_x\geq k,K_y\geq k',{B}(x,r)\centernot\leftrightarrow {B}(y,R)\big)\leq C2^{-\frac{k(d-2)}2}2^{-\frac{k'(d-2)}2}.
\end{split}
\end{equation*}
Furthermore, it follows from \eqref{eq:defIu} and \cite[Lemma~2.1]{MR2953562} that
\begin{equation*}
\begin{split} 
\overline{\P}&\big(\I^u\cap {B}(x,2^{k+1}r )\neq\emptyset,\I^u\cap {B}(y,2^{k'+1}R)\neq\emptyset\big)
\\&\leq \Big(1-e^{-u\mathrm{cap}(B(x,2^{k+1}r ))}\Big)\Big(1-e^{-u\mathrm{cap}(B(y,2^{k'+1}R))}\Big)
\\&\quad+Cu\mathrm{cap}(B(x,2^{k+1}r ))\mathrm{cap}(B(y,2^{k'+1}R))d(x,y)^{-(d-2)}
\\&\leq Cu^2r^{d-2}R^{d-2}2^{k(d-2)}2^{k'(d-2)},
\end{split}
\end{equation*}
where we used \eqref{eq:capBallBd}, the inequality $1-e^{-t}\leq t$ and our assumption $u\geq (d(x,y)/5)^{2-d}$ in the last inequality. Abbreviate $M_{u,p}=\lfloor\log_2(u^{-1/(d-2)}/p)\rfloor$. Let us first assume that $u^{-1/(d-2)}\geq R(\geq r),$ then combining the last two equations with \eqref{eq:defIu} and \eqref{eq:capBallBd}, and summing on the events $K_x=k$ for $k\leq M_{u,r}-1$ and $K_x\geq M_{u,r}$, and similarly for $K_y$, one can thus bound the second line of \eqref{eq:decompKx} by
\begin{equation*}
\begin{split} 
&\sum_{k=0}^{M_{u,r}-1}\sum_{k'=0}^{M_{u,R} -1}Cu^2r^{d-2}R^{d-2}2^{\frac{k(d-2)}2}2^{\frac{k'(d-2)}2}
+C2^{-\frac{d-2}2M_{u,r}}2^{-\frac{d-2}2M_{u,R}}
\\&\quad+\sum_{k=0}^{M_{u,r}-1}Cur^{d-2}2^{\frac{k(d-2)}2}2^{-\frac{d-2}2M_{u,R}}
+\sum_{k'=0}^{M_{u,R} -1}CuR^{d-2}2^{k'\frac{d-2}2}2^{-\frac{d-2}2M_{u,r}}
\\&\leq Cur^{\frac{d-2}2}R^{\frac{d-2}2},
\end{split}
\end{equation*}
where in the last inequality we used that under our assumption $u^{-1/(d-2)}\geq R$ we have $cu^{-\frac12}p^{-\frac{d-2}{2}}\leq 2^{\frac{d-2}{2}M_{u,p}}\leq Cu^{-\frac12}p^{-\frac{d-2}{2}}$ for $p\in{\{r,R\}}$.
Combining this with \eqref{eq:decompKx} and \eqref{eq:crossingboxseccritH} (with $n\rightarrow\infty$ therein) and recalling that $u\geq (d(x,y)/5)^{2-d}$, we can conclude the case $u^{-1/(d-2)}\geq R$. It remains to consider the case $u^{-1/(d-2)}\leq R$. Note that $\P(K_x\geq k)\leq C2^{-k(d-2)/2}$ by \eqref{eq:crossingseccrit} as $n\rightarrow\infty$.  
Summing on the events $K_x=k$ for $k\leq \lfloor\log_2(u^{-1/(d-2)}/r)\rfloor-1$ and $K_x\geq \lfloor\log_2(u^{-1/(d-2)}/r)\rfloor$, we thus have by \eqref{eq:defIu} if $u^{-1/(d-2)}\geq r$
\begin{equation*}
\begin{split} 
\P\otimes\overline{\P}\big(\I^u\cap {B}(x,2^{K_x+1}r)\neq\emptyset\big)&\leq \sum_{k=0}^{M_{u,r}-1 }Cur^{d-2}2^{k(d-2) }2^{-\frac{k(d-2)}2}+2^{-\frac{d-2}2M_{u,r}}
 C\sqrt{u}r^{\frac{d-2}2}.
\end{split}
\end{equation*}
Note that this last bound remains clearly true if $u^{-1/(d-2)}\leq r$. One can thus bound the second line of \eqref{eq:decompKx} by $C\sqrt{u}r^{\frac{d-2}2}$ and combining this with \eqref{eq:crossingboxseccritH}, we can conclude the case $u^{-1/(d-2)}\leq R$.
\end{proof}

\begin{proof}[Proof of Lemma~\ref{lem:lowMink}] Throughout the proof, we abbreviate $\rho=d(x,y)$.
Since $\G=\Z^d$, it follows from \cite[Corollary~4.5]{MR3417515} and symmetry of the Gaussian free field that there exist constants $\Cl[c]{cboundeta}\in{(0,1)}$ and $\Cl{Cboundeta}<\infty$ such that for all $h>0$, $z\in{G}$, $n\geq \Cr{Cboundeta}h^{-2/\nu}$ and $K\subset \partial B(z,n)\cup B(z,n)^{\mathsf{c}}$, 
\begin{equation}
\label{eq:boundeta}
\begin{split} 
\P\big(\sup_{z'\in{\tilde{B}(z,\Cr{cboundeta}n)}}\eta_{K}(z')\geq h\big)\leq \exp\big(-ch^2n^{\nu}\big),
\end{split}
\end{equation}
where the field $\eta_{\partial B(x,n)}$ was introduced in \eqref{eq:defeta}. Note that \cite[Corollary~4.5]{MR3417515} actually implies \eqref{eq:boundeta} for $B(z,\Cr{cboundeta}n)$ instead of $\tilde{B}(z,\Cr{cboundeta}n)$ and $K=\partial B(z,n)$, but the supremum in \eqref{eq:boundeta} is anyway supported on $\partial B(z,\Cr{cboundeta}n)$ by the Markov property, and the proof from \cite{MR3417515} can easily be adapted for any $K\subset \partial B(z,n)\cup B(z,n)^{\mathsf{c}}$.
Let $\bar{\eta}_{\partial B(x,\rho/4)}$ be an independent copy of the field $\eta_{\partial B(x,\rho/4)}$, and abbreviate $\bar{\phi}=\phi-\eta_{\partial B(x,\rho/4)}+\bar{\eta}_{\partial B(x,\rho/4)}$ as well as $\bar{E}^{\geq h}=\{z\in{\tilde{\G}}:\,\bar{\phi}_z\geq h\}.$ Letting $\gamma^x=\sup_{z\in{\tilde{B}(x,\Cr{cboundeta}\rho/4)}}\eta_{\partial B(x,\rho/4)}(z)$ and $\bar{\gamma}^x=\inf_{z\in{\tilde{B}}(x,\rho/4)}\bar{\eta}_{\partial B(x,\rho/4)}(z)$, we then have $\bar{\phi}_z\geq {\phi}_z-\gamma^x+\bar{\gamma}^x$ for all $z\in{\tilde{B}(x,\rho/4)}$. By the Markov property \eqref{eq:Markov2}, $\bar{\phi}$ has the same law as $\phi$ and is independent of $\phi_{|\tilde{B}(x,\rho/4)^{\mathsf c}}$ and  $\gamma^x$. Therefore,
\begin{equation}
\label{eq:conditcrossing}
\begin{split} 
&\P\big(B(x,r)\leftrightarrow\partial B(x,\Cr{cboundeta}\rho/4)\,|\,\phi_{|\tilde{B}(x,\rho/4)^{\mathsf c}}\big)\P(\bar{\gamma}^x\geq -(\Cr{Cboundeta}\rho/4)^{-\frac\nu2})
\\& \quad \leq \P\big(B(x,r)\leftrightarrow\partial B(x,\Cr{cboundeta}\rho/4)\text{ in }\bar{E}^{\geq -\gamma^x-(\Cr{Cboundeta}\rho/4)^{-\frac\nu2}}\,|\,\phi_{|\tilde{B}(x,\rho/4)^{\mathsf c}}\big) \leq C(|\gamma^x|+\rho^{-\frac\nu2})r_1^{\frac\nu2},
\end{split}
\end{equation}
where in the last inequality we used \eqref{eq:crossingannulineg}, which is in force if $\rho\geq Cr_1$. Since $\P(\bar{\gamma}^x\geq -(\Cr{Cboundeta}\rho/4)^{-\frac\nu2})\geq c$ by \eqref{eq:boundeta} and $\gamma^x$ is measurable with respect to $\phi_{|\tilde{B}(y,\rho/4)^{\mathsf c}}$ by definition, we deduce that if $\rho\geq C(r_1+r'_1)$.
\begin{equation*}
\begin{split} 
&\P\big(B(x,r),B(y,r')\leftrightarrow\partial B(x,2\rho)\,|\,\phi_{|\tilde{B}(x,3\rho)^{\mathsf c}}\big)
\\&\leq Cr_1^{\frac\nu2}\E\big[\big(|\gamma^x|+\rho^{-\frac\nu2}\big)\P\big(B(y,r')\leftrightarrow \partial B(y,\Cr{cboundeta}\rho/4)\,|\,\phi_{|\tilde{B}(y,\rho/4)^{\mathsf c}}\big)\,|\,\phi_{|\tilde{B}(x,3\rho)^{\mathsf c}}\big]
\\&\leq Cr_1^{\frac\nu2}(r'_1)^{\frac\nu2}\big(|\gamma^x|+\rho^{-\frac\nu2}\big)\big(|\gamma^y|+\rho^{-\frac\nu2}\big),
\end{split}
\end{equation*}
where in the last inequality we used a reasoning similar to \eqref{eq:conditcrossing} replacing respectively $x$ by $y$ and $r$ by $r'$, and $\gamma^y$ is defined similarly as $\gamma^x$ but exchanging the roles of $x$ and $y$ therein.  Recalling that $\sup q<\infty$, see below \eqref{eq:defMRs}, and combining this with \eqref{eq:mainboxtobox}, whose hypotheses are satisfied for $\mathcal{O}=\emptyset$, $A=\tilde{B}(x,5\rho/\Cr{cboundeta})$, $A'=\partial B(x,R)$, $R/4$ instead of $R$, $r=5\rho/\Cr{cboundeta}$ and $R'=R$ upon assuming $\rho\leq cR$ and $R\wedge \rho\geq C$ (in particular $R'\geq Cr$ and $\delta(A)\leq 2r+1\leq 3r$, as required in Lemma~\ref{lem:mainboxtobox}), and noting that $\mathcal{H}:=\mathcal{H}^{\emptyset}_{\tilde{B}(x,5\rho/\Cr{cboundeta}),\partial B(x,R)}(5\rho/\Cr{cboundeta},R/4)$ (see \eqref{eq:defcalH}) is measurable with respect to $\phi_{|\tilde{B}(x,3\rho)^{\mathsf c}}$, we deduce that if $C(r_1+r'_1)\leq \rho\leq cR$ 
\begin{equation}
\label{eq:maindoubleconnection}
\begin{split} 
\P\big(B(x,r),B(y,r')\leftrightarrow\partial B(x,R)\big)\leq C\big({r_1r'_1}/{R}\big)^{\frac\nu2}\E\big[\big(|\gamma^x|+\rho^{-\frac\nu2}\big)\big(|\gamma^y|+\rho^{-\frac\nu2}\big)\mathcal{H}\big]
\end{split}
\end{equation}
It remains to bound this last expectation, and for that purpose let $\eta'_{B(x,2\rho)}$ be an independent copy of $\eta_{B(x,2\rho)}$, and $\phi'=\phi-\eta_{B(x,2\rho)}+\eta'_{B(x,2\rho)}$. Let also $\gamma=\sup_{z\in{\tilde{B}(x,4\rho/\Cr{cboundeta})^{\mathsf c}}}\eta_{\tilde{B}(x,2\rho)}(z)$ and $\gamma'=\inf_{z\in{\tilde{B}(x,4\rho/\Cr{cboundeta})^{\mathsf c}}}\eta'_{\tilde{B}(x,2\rho)}(z)$. By the Markov property \eqref{eq:Markov2}, $\phi'$ has the same law as $\phi$ and is independent of $\gamma^x$ and $\gamma^y$. Defining $\mathcal{H}'$ similarly as $\mathcal{H}^{\emptyset}_{\tilde{B}(x,5\rho/\Cr{cboundeta}),\partial B(x,R)}(5\rho/\Cr{cboundeta},R/4)$ in \eqref{eq:defcalH} but for $\phi'$ instead of $\phi$, we have $\mathcal{H}\leq \mathcal{H}'+C(\gamma-\gamma')\rho^{\nu}$ in view of \eqref{eq:expH2} and \eqref{eq:capBallBd}, and hence one can bound the last expectation in \eqref{eq:maindoubleconnection} by 
\begin{equation*}
\begin{split} 
&\E\big[\big(|\gamma^x|+\rho^{-\frac\nu2}\big)\big(|\gamma^y|+\rho^{-\frac\nu2}\big)\big]\E\big[\mathcal{H}'\big] +C\rho^{\nu}\E\big[\big(|\gamma^x|+\rho^{-\frac\nu2}\big)\big(|\gamma^y|+\rho^{-\frac\nu2}\big)(\gamma-\gamma')\big] 
\\&  \leq \E\big[\big(|\gamma^x|+\rho^{-\frac\nu2}\big)^2\big]^{\frac12}\E\big[\big(|\gamma^y|+\rho^{-\frac\nu2}\big)^2\big]^{\frac12}\E\big[\mathcal{H}\big] 
\\&\quad+C\rho^{\nu}\E\big[\big(|\gamma^x|+\rho^{-\frac\nu2}\big)^3\big]^{\frac13}\E\big[\big(|\gamma^y|+\rho^{-\frac\nu2}\big)^3\big]^{\frac13}\E[|\gamma|^3]^{\frac13},
\end{split}
\end{equation*}
where we used the (generalized) Hölder's inequality. Note that \eqref{eq:boundeta} still hold when replacing $\tilde{B}(z,\Cr{cboundeta}n)$ by $\tilde{B}(z',n)$ for any $z'$ such that $d(z,z')\geq n/\Cr{cboundeta}$ (up to decreasing the constant $\Cr{cboundeta}$). Indeed, the proof of \cite[Corollary~4.5]{MR3417515} can easily be adapted in this context, noticing in particular that \cite[(4.23) and (4.28)]{MR3417515} are still satisfied under this hypothesis. By the Markov property and \eqref{eq:defLambdaL}, we further have $\gamma\leq \sup_{z\in{\partial B(x,4\rho/\Cr{cboundeta})^{\mathsf c}}}\eta_{\tilde{B}(x,2\rho)}(z)\leq \sup_{z'\in{\Lambda_{\rho}\cap B(x,6\rho/\Cr{cboundeta})\setminus B(x,3\rho/\Cr{cboundeta})}}\sup_{z\in{\tilde{B}(z',\rho)}}\eta_{\tilde{B}(x,2\rho)}(z)$. Combining these observations with \eqref{eq:defLambdaL} and \eqref{eq:boundeta}, with $z,$ $n$ and $K$ therein being either $x$, $\rho/4$ and $\partial B(x,\rho/4)$, or $y$, $\rho/4$ and $\partial B(y,\rho/4)$, or $z'$, $\rho/\Cr{cboundeta}$ and $\tilde{B}(x,2\rho)$ for $z'\in{\Lambda_{\rho}\cap B(x,6\rho/\Cr{cboundeta})\setminus B(x,3\rho/\Cr{cboundeta})}$,  yields
\begin{equation*}
\begin{split} 
\sup_{k\in{\{1,2,3\}}}\E\big[|\gamma^x|^k\big]^{\frac1k}\vee\E\big[|\gamma^y|^k\big]^{\frac1k}\vee\E\big[|\gamma|^k\big]^{\frac1k}\leq \rho^{-\frac{\nu}2}.
\end{split}
\end{equation*}
If $C(r_1+r'_1)\leq \rho\leq cR$, we can readily conclude by combining the last two displays with \eqref{eq:maindoubleconnection}, and noting that by \eqref{eq:expH} and since $\sup q<\infty$,  we have  $\E[\mathcal{H}]\leq  \rho^{\nu/2}$. Finally,  if $\rho\leq C(r_1+r'_1)$, \eqref{eq:lowMink} follows easily from \eqref{eq:crossingannuli}.
\end{proof}

\begin{Rk}
\label{rk:CSbounds}
The proof of Lemma~\ref{lem:lowMink} relies on bounds of Cauchy--Schwarz type, see the use of Hölder's inequality above. These allow to control the effect of the random shifts $\gamma^x$, $\gamma^y$ and $\gamma$ of the level multiplicatively, thereby avoiding additive error terms as well as the cost of working at a fixed near-critical level. This is resourceful. Another example  can be found in \cite{rodriguez2025anomalous}, to the effect of exhibiting near-critical costs in the context of the two-dimensional Gaussian free field.
\end{Rk}

\bibliography{bibliographie}
\bibliographystyle{abbrv}

\end{document}